\documentclass[11pt,reqno]{amsart}
\usepackage{amsmath,amssymb,geometry,color,tikz-cd}
\usepackage[colorlinks=true, linkcolor=red!80!black, urlcolor=purple, citecolor=blue!70!black]{hyperref}
\usepackage{amssymb}% see geometry.pdf on how to lay out the page. There's lots.
\usepackage[scr=boondox, cal=esstix]{mathalpha}

\usepackage{bbm}
\usepackage{enumitem}
\usepackage{upgreek}
\usepackage{bm}
\usepackage{stmaryrd}
\usepackage{cleveref}
\usepackage[all]{xy}
\allowdisplaybreaks

\newcommand{\A}{\mathbb{A}}

\newcommand{\C}{\mathbb{C}}
\newcommand{\G}{\mathbb{G}}
\newcommand{\Q}{\mathbb{Q}}
\newcommand{\R}{\mathbb{R}}
\newcommand{\Z}{\mathbb{Z}}
\newcommand{\N}{\mathbb{N}}
\renewcommand{\P}{\mathbb{P}}

\newcommand{\cD}{\mathcal{D}}

\newcommand{\cF}{\mathcal{F}}

\newcommand{\cH}{\mathcal{H}}

\newcommand{\cL}{\mathcal{L}}
\newcommand{\cM}{\mathcal{M}}

\newcommand{\cO}{\mathcal{O}}

\newcommand{\cR}{\mathcal{R}}

\newcommand{\cV}{\mathcal{V}}

\newcommand{\cX}{\mathcal{X}}
\newcommand{\cY}{\mathcal{Y}}

\newcommand{\fa}{\mathfrak{a}}

\newcommand{\fm}{\mathfrak{m}}
\newcommand{\fn}{\mathfrak{n}}

\newcommand{\bP}{\mathbb{P}}
\newcommand{\bC}{\mathbb{C}}

\newcommand{\bQ}{\mathbb{Q}}

\newcommand{\bG}{\mathbb{G}}

\newcommand{\bN}{\mathbb{N}}

\newcommand{\PP}{\mathbb{P}}

\newcommand{\isom}{\cong}

\DeclareMathOperator{\Hilb}{Hilb}
\DeclareMathOperator{\lct}{lct}
\DeclareMathOperator{\Spec}{Spec}
\DeclareMathOperator{\vol}{vol}

\DeclareMathOperator{\nvol}{\widehat{vol}}

\DeclareMathOperator{\gr}{gr}

\DeclareMathOperator{\im}{Im}
\DeclareMathOperator{\ord}{ord}

\DeclareMathOperator{\Proj}{Proj}

\DeclareMathOperator{\DivVal}{DivVal}

\newcommand{\PGL}{\mathrm{PGL}}

\newcommand{\Hom}{\mathrm{Hom}}
\newcommand{\bT}{\mathbb{T}}

\newcommand{\tpsi}{\tilde{\psi}}
\newcommand{\fS}{\mathfrak{S}}

\newcommand{\Fe}{F_* ^e}

\numberwithin{equation}{section}

\newtheorem{prop} {Proposition} [section]
\newtheorem{thm}[prop] {Theorem} 

\newtheorem{lem}[prop] {Lemma}
\newtheorem{cor}[prop]{Corollary}
\newtheorem{prop-def}[prop]{Proposition-Definition}

\newtheorem{conj}[prop]{Conjecture}

\newtheorem{thm-defn}[prop]{Theorem-Definition}

\theoremstyle{definition}
 
\newtheorem{rem}[prop] {Remark} 
\newtheorem{defi}[prop] {Definition}

\newtheorem{que}[prop]{Question}

\newcommand{\FA}{\alpha_{F}}

\newcommand{\s}{\mathscr{s}}

\title{On Positivity of the Limit $F$-signature}
\author{Yuchen Liu}
\address{Department of Mathematics, Northwestern University, Evanston, IL 60208, USA}
\email{yuchenl@northwestern.edu}
\author{Suchitra Pande}
\address{Department of Mathematics, University of Utah, Salt Lake City, UT 84112, USA}
\email{suchitra.pande@utah.edu}
\date{\today}

\begin{document}

\begin{abstract}
    We study a conjecture of Carvajal-Rojas, Schwede and Tucker which states that for a complex KLT singularity $(R, \fm)$, the $F$-signatures of the reductions of $R$ to characteristic $p \gg 0$ remain bounded away from zero as $p \to \infty$. We prove that this conjecture holds for three-dimensional non-weakly exceptional singularities by an inductive argument. We also prove that the conjecture holds for smooth hypersurfaces of very low degree by constructing isotrivial normal toric degenerations. By considering the version of this conjecture for the Frobenius-alpha invariant, our techniques are inspired by K-stability theory and involve using degenerations and birational geometry.
\end{abstract}

\maketitle

\section{Introduction}

The \emph{$F$-signature} is a numerical invariant of singularities of local rings in positive characteristics that measures the \emph{asymptotic} flatness of the \emph{Frobenius} morphism. For an $F$-finite local ring $R$ over $\mathbb{F}_p$ (meaning that the Frobenius map on $\Spec(R)$ is finite), the $F$-signature $\s(R)$ is a real number between $0$ and $1$; it equals $1$ exactly when $R$ is a regular ring \cite{HunekeLeuschkeTwoTheoremsAboutMaximal}, \cite{TuckerFSigExists}. This latter fact is a strengthening of the classical theorem of Kunz that characterizes regular rings via the flatness of the Frobenius map \cite{KunzCharacterizationsOfRegularLocalRings}. Similarly, the $F$-signature is positive exactly when $R$ has only \emph{strongly $F$-regular} singularities \cite{AberbachLeuschke}, a mild class of singularities that implies that $R$ is normal and Cohen-Macaulay \cite{HochsterHunekeFRegularityTestElementsBaseChange}. The numerical measurement encoded by the $F$-signature has found important applications, notably to bounding the local fundamental groups and torsion in divisor class groups of singularities \cite{CarvajalRojasSchwedeTuckerFundamentalGroups}, \cite{PolstraATheoremaboutMCMmodule}, \cite{MartinTorsionindivisorclassgroup}.

Although strongly $F$-regular singularities are defined via splittings of the Frobenius map of $R$, they turn out to be deeply connected to a class of complex singularities arising in the minimal model program, namely, \emph{Kawamata log terminal (KLT)} singularities. Indeed, it follows from the works \cite{HaraWatanabeFRegFPure}, \cite{HaraYoshidaGeneralizationOfTightClosure} and \cite{TakagiInterpretationOfMultiplierIdeals} that a normal, $\Q$-Gorenstein ring $R$ of finite type over $\C$ is KLT if and only if the reduction of $R$ to characteristic $p \gg 0$ is strongly $F$-regular for all $p \gg 0$. Since the $F$-signature is positive for strongly $F$-regular rings, this suggests a connection between the properties of a KLT singularity $R$ over $\C$ and the $F$-signature of its reductions to positive characteristics. In this direction, Carvajal-Rojas, Schwede and Tucker made the following conjecture in \cite{CarvajalRojasSchwedeTuckerFundamentalGroups}:
\begin{conj} \label{mainconjintro}
    Let $(R,\fm)$ be a KLT singularity that is essentially of finite type over $\C$. For any prime $p \gg 0$, let $\s_p (R)$ denote the $F$-signature of the reduction of $R$ to characteristic $p $ (see \Cref{liminfFsigdfn} for a more precise formulation). Then, there is a constant $C >0$ (independent of $p$) such that for all $p \gg 0$, we have $\s_p (R) > C$. In other words, we have
    \[  \liminf_{p \to \infty} \s_p(R) > 0 .\]
\end{conj}

This conjecture has been established in the following cases: toric singularities \cite{SinghFSignatureOfAffineSemigroup}, \cite{VonKorffthesis}, finite quotient singularities \cite{HunekeLeuschkeTwoTheoremsAboutMaximal}, quadric hypersurfaces \cite{TrivediHKDensitfunctionofquadrics}, diagonal hypersurfaces \cite{CaminataShidelerTuckerZermanFsignatureofdiagonalhypersurfaces}, full flag varieties \cite{PandeFrobeniusAlpha} and reductive quotient singularities \cite{TakagiYamaguchiUniformPositivity}. In this paper, we develop several tools to address this conjecture and apply them to prove it in many new cases. Our first observation is that \Cref{mainconjintro} for KLT singularities can be reduced to a similar conjecture about the \emph{Frobenius-alpha invariant} for log Fano pairs (\Cref{alphadefinitions}):

\begin{conj} \label{mainconj2intro}
    Let $(X, \Delta)$ be a projective log Fano pair over $\C$ (\Cref{def:logFano}). There exists a constant $C>0$ such that for all $p \gg 0$ and reductions $(X_p, \Delta_p)$, we have $\FA (X_p, \Delta_p) \geq C$.
\end{conj}
Relating the two conjectures, we prove:
\begin{thm} \label{alphavssintro}(\Cref{thm:localtoglobalreduction}, \Cref{alphavss})
    \Cref{mainconjintro} holds for all KLT pairs $(R,\Delta)$ of dimension $n \geq 2$ if and only if \Cref{mainconj2intro} holds for log Fano pairs in dimension $n-1$.
\end{thm}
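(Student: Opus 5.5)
Both directions run through cones and through Kollár components, using two facts I take from the body of the paper. The first is a comparison between the $F$-signature of a section ring and the Frobenius-alpha invariant of the log Fano base. If $(X,\Delta)$ is log Fano and $R=\bigoplus_m H^0(X,\cO_X(-mr(K_X+\Delta)))$ is the associated cone, then $\s(R,\Delta_R)$ is bounded above and below by explicit functions of $\FA(X,\Delta)$, the dimension, and the volume of $-(K_X+\Delta)$. This is the content of \Cref{alphavss}, and it is the analogue of the relation between the normalized volume of a cone vertex and the $\delta$/$\alpha$-invariants of its base. The second fact is that the $F$-signature is lower semicontinuous and behaves well under degeneration. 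In particular, if a local ring degenerates isotrivially, or via the associated graded ring of a filtration, to $R_0$, then $\s(R)\ge \s(R_0)$, uniformly in $p$ after reduction.

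For the direction ``\Cref{mainconjintro} in dimension $n$ $\Rightarrow$ \Cref{mainconj2intro} in dimension $n-1$'', take a log Fano pair $(X,\Delta)$ of dimension $n-1$ and form its cone $R$, which is a KLT singularity of dimension $n$. Reduction mod $p$ commutes with forming section rings for $p\gg0$. The upper bound in \Cref{alphavss} reads $\s_p(R)\le f(\FA(X_p,\Delta_p))$, where $f(t)\to 0$ as $t\to0$. Hence a uniform positive lower bound on $\s_p(R)$ forces a uniform positive lower bound on $\FA(X_p,\Delta_p)$.

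The converse is \Cref{thm:localtoglobalreduction}. Let $(x\in R,\Delta)$ be KLT of dimension $n\ge2$. By Xu's theorem, it admits a Kollár component $E$, obtained from a plt blowup $Y\to\Spec R$. The associated graded ring $\gr_E R$ is the section ring of the log Fano pair $(E,\Delta_E)$ given by adjunction, where $\Delta_E$ is the different. We may take $E$ over a finitely generated ring, so the construction spreads out and reduces mod $p$ compatibly. By the degeneration fact,
\[ \s_p(R,\Delta)\ \ge\ \s_p(\gr_E R,\Delta_0) \]
for $p\gg0$. The lower bound in \Cref{alphavss} then gives $\s_p(\gr_E R)\ge g(\FA(E_p,\Delta_{E,p}))$, with $g$ positive and increasing. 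Since $(E,\Delta_E)$ is log Fano of dimension $n-1$, \Cref{mainconj2intro} gives $\FA(E_p,\Delta_{E,p})\ge C$ uniformly in $p$. The volume of the polarization is independent of $p$, so the bound is uniform.

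The main obstacle is the semicontinuity step, namely $\s(R)\ge\s(\gr_E R)$, in the presence of boundary divisors and in a form uniform in $p$. I would first prove it for Rees-type degenerations. Consider the Rees algebra $\bigoplus_m \fa_m t^{-m}$, where $\fa_m$ is the $E$-valuation filtration. Filtering a splitting of $F^e_*R$ and passing to the graded ring produces free summands of $F^e_*\gr_E R$. This comparison uses the fact that the filtration is induced by a divisorial valuation, and it must be checked with the twisted modules $F^e_*R(\lceil (p^e-1)\Delta\rceil)$ when $\Delta\ne0$. A secondary technical point is that the Kollár component and its plt property survive reduction mod $p$ for $p\gg0$, and that adjunction gives $(E_p,\Delta_{E,p})$ as the reduction of $(E,\Delta_E)$. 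Both follow from standard spreading-out arguments.
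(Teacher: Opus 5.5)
Your overall architecture matches the paper's. For one direction you use the cone over $(X,\Delta)$ with the upper bound in \Cref{alphavss}, and that direction is fine. For the other you degenerate $(R,\Delta)$ to $\gr_E R$ along a Koll\'ar component and apply the lower bound in \Cref{alphavss}.

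The gap is in the step you flag as the main obstacle. The inequality $\s(R,\Delta)\ge\s(\gr_E R,\Delta_C)$ is exactly the content of \Cref{thm:localtoglobalreduction}, so it cannot be taken from the paper. Moreover, the mechanism you sketch runs in the wrong direction. Pushing splittings of $F^e_*R$ down to free summands of $F^e_*\gr_E R$ bounds the free rank of $F^e_*\gr_E R$ below by that of $F^e_*R$, i.e.\ it would prove $\s(\gr_E R)\ge\s(R)$. That inequality is false in general. For instance, take $R=\C[x,y,z]$ and the valuation with $v(x)=v(y)=v(z)=1$ and $v(z^2-xy)=3$. Then $\gr_v R\cong\C[X,Y,Z,W]/(XY-Z^2)$, which is finitely generated and klt, so $v$ is induced by a Koll\'ar component. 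Yet $\s$ drops from $1$ to $1/2$.

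What you need instead is to lift splittings from the special fiber back to $R$, equivalently $\mathrm{in}(I_e(R))\subseteq I_e(\gr_E R)$ for the splitting ideals. This is an inversion-of-adjunction problem, and the paper handles it on the extended Rees algebra $\cX=\Spec\cR\to\A^1$ in three steps:
\begin{enumerate}
\item Taylor's $F$-signature adjunction \cite{TaylorAdjunctionforFsig} for the Cartier divisor $C=\{t=0\}$ gives $\s(\cX,\fn)\ge\s(C,\fm_0)$.
\item The localization inequality of \cite{PolstraTuckerCombinedApproach} gives $\s(\cX,\fn)\le\s(\cX,\mathfrak p)$, where $\mathfrak p$ is the section through the vertex.
\item Regular base change along $R\to R[t,t^{-1}]$ \cite[Theorem~3.6]{CRSTBertiniTheorems} gives $\s(\cX,\mathfrak p)=\s(R)$.
\end{enumerate}

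There is also a second, smaller gap: $\gr_E R$ is not the section ring of $-r(K_E+\Delta_E)$. It is the orbifold cone $\bigoplus_m H^0(E,\cO_E(\lfloor -mE|_E\rfloor))$ with its boundary $\Delta_C$. So \Cref{alphavss}, which concerns the honest cone, does not apply to it directly. The paper bridges this with the finite degree-$r$ map $h_r\colon C\to C_r$ onto the honest cone, which satisfies $h_r^*(K_{C_r}+\Delta_{C_r})=K_C+\Delta_C$. Combined with the transformation rule \cite[Theorem~4.4]{CarvajalRojasSchwedeTuckerFundamentalGroups}, this gives $\s(C,\Delta_C)=A_{X,\Delta}(E)\,\s(E,\Delta_E)$, a fixed positive multiple independent of $p$. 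Only then is \Cref{alphavss} invoked.
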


This result was also independently obtained by Takagi and Yamaguchi in \cite{TakagiYamaguchiUniformPositivity}. The Frobenius-alpha invariant of a log Fano pair in positive characteristics was introduced in \cite{PandeFrobeniusAlpha} as an analog of Tian's alpha-invariant from complex geometry. Thus, it is more amenable to tools involving degenerations and birational geometry, which we exploit. We also note that \Cref{mainconj2intro} for a $\Q$-Fano variety $X$ is equivalent to \Cref{mainconjintro} for the \emph{cone over $X$} with respect to the embedding defined by any $-rK_X$ (for any $r>0$).

Furthermore, we establish a partial inductive approach to proving \Cref{mainconjintro}. For a projective log Fano pair $(X, \Delta)$, we let $\alpha(X, \Delta)$ denote Tian's alpha-invariant, or equivalently, the global log canonical threshold (\Cref{def:complexalpha}).

\begin{thm}[\Cref{inductivethm}] \label{mainthm3intro}
    Suppose \Cref{mainconj2intro} is true for all complex log Fano pairs of dimension $n \geq 1$. Then, \Cref{mainconj2intro} also holds for all log Fano pairs $(X, \Delta)$ of dimension $n+1$ with $\alpha (X, \Delta) < 1$.
\end{thm}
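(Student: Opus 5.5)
The plan is to use the condition $\alpha(X,\Delta)<1$ to produce a Kollár component / plt-type blowup of the cone, and so reduce the positivity of the Frobenius-alpha invariant of $(X,\Delta)$ to a lower-dimensional log Fano pair. That lower-dimensional pair is covered by the inductive hypothesis.

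\textbf{Step 1: find a special divisor.} Since $\alpha(X,\Delta)<1$, there is an effective $\Q$-divisor $D\sim_\Q -(K_X+\Delta)$ with $\lct(X,\Delta;D)<1$. Equivalently, there is a divisor $E$ over $X$ with $A_{X,\Delta}(E)<T(E)$, where $T$ is the pseudoeffective threshold. By standard MMP arguments (BCHM applied to a tie-breaking perturbation), I would replace $E$ by a divisor that is \emph{plt-type} for a suitable complement. Concretely: take $\Gamma\sim_\Q -(K_X+\Delta)$ with $(X,\Delta+c\Gamma)$ lc but not klt for some $c<1$, and extract a unique lc place $E$ via a plt blowup $Y\to X$. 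Then $(Y,\Delta_Y+E)$ is plt, so by adjunction $(E,\Delta_E)$ is klt of dimension $n$. Moreover, running a $-E$-MMP (or using $c<1$) should make $-(K_E+\Delta_E)$ big. Passing to the ample model, we get a log Fano pair $(E',\Delta_{E'})$ of dimension $\le n$, to which the hypothesis applies. This whole configuration is defined over a finitely generated ring, so it reduces to all $p\gg0$ compatibly.

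\textbf{Step 2: compare Frobenius-alpha invariants.} In characteristic $p$, I would show an inequality of the form
\[\FA(X_p,\Delta_p)\ \ge\ \min\{\,\epsilon,\ \kappa\cdot \FA(E'_p,\Delta_{E'_p})\,\}\]
with $\epsilon,\kappa>0$ independent of $p$. The argument runs through F-adjunction and inversion of adjunction in the sense of F-purity (Schwede's F-adjunction and Das's plt inversion, valid for $p\gg0$ with uniform constants). One would take a divisor $G\sim_\Q -(K_X+\Delta)$ with $(X_p,\Delta_p+tG)$ not globally F-regular for a small $t$. This $G$ is then decomposed along $E$: the part of its order along $E$ is bounded using $A(E)<T(E)$ together with the Frobenius-splitting version of the valuative estimate. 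The restriction of the residual divisor to $E$ gives an anticanonical-type divisor on $E'$, whose F-regularity threshold is controlled by $\FA(E'_p)$.

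This can equivalently be phrased on the cone. Via \Cref{alphavssintro}, it suffices to bound the F-signature of the cone $R$ over $(X,\Delta)$ from below. The Kollár component $E$ induces a degeneration of $R$ to the associated graded ring. F-signature is lower semicontinuous in families (uniformly for $p\gg0$, by Polstra–Tucker style bounds), so the problem reduces to the central fiber, whose F-signature is governed by its own Fano base $E'$ together with the discrepancy data.

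\textbf{Main obstacle.} The hard step is making Step 2 uniform in $p$, in particular the F-adjunction / inversion-of-adjunction bound with a constant not depending on $p$. The extraction in Step 1 is characteristic zero and reduces well. I would first try the cone/degeneration formulation, since semicontinuity of F-signature in flat families and the multiplicativity-type bound for the associated graded ring seem the most robust routes to uniform constants.
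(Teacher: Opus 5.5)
Your Step~1 is essentially the paper's setup: a divisor $E$ with $A_{X,\Delta}(E)<T_{X,\Delta}(E)$ that is the unique lc place of a $\Q$-complement. One correction: the lower-dimensional pair comes from running a $-(K_Y+\Delta_Y+E)$-MMP on $Y$ and taking the ample model $Y'$, then checking that $E$ is not contracted and that $(Y',\Delta_{Y'}+E')$ is plt. A ``$-E$-MMP'' does not do this. Nor does an ample model of $-(K_E+\Delta_E)$ on $E$ itself: that could drop dimension, which your hypothesis does not cover, and it cuts $E'$ off from the ambient variety where F-adjunction must be applied.

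The genuine gap is Step~2. Take an arbitrary $D\sim_\Q-(K_{X_p}+\Delta_p)$ and set $\lambda=\ord_E D$. Then $D_Y\sim_\Q-(K_Y+\Delta_Y+E)+(A_{X,\Delta}(E)-\lambda)E$. So unless $\lambda=A_{X,\Delta}(E)$, three things go wrong:
\begin{itemize}
\item[(a)] $D_{Y'}$ need not be $\Q$-Cartier on the possibly non-$\Q$-factorial $Y'$.
\item[(b)] Its restriction to $E'$ is \emph{not} in the anticanonical class. Your claim that the residual part restricts to an ``anticanonical-type'' divisor controlled by $\FA(E'_p)$ is false in general.
\item[(c)] The $f_i$-nefness of $K_{Y_i}+\Delta_{Y_i}+E_i+cD_{Y_i}$ is no longer automatic. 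You need it to pull global F-regularity back across the divisorial contractions of the MMP (\Cref{lem:birationalcontraction}(2) with \Cref{lem:GFRunderbirationalmap}(3)).
\end{itemize}
Bounding $\lambda$ via $A(E)<T(E)$ repairs none of this.

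The missing idea is the perturbation of \Cref{lem:existenceofD+}. Replace $D$ by $D^+\ge\alpha_0D$ with $D^+\sim_\Q-(K+\Delta)$ and $\ord_E D^+=A_{X,\Delta}(E)$ (Liu--Xu--Zhuang, Lemma~4.9). Here $\alpha_0=\alpha(X_0,\Delta_0)>0$ is the alpha invariant of the central fibre of the special test configuration induced by $E$. Then on every model $D^+_{Y_i}\sim_\Q-(K_{Y_i}+\Delta_{Y_i}+E_i)$, so $K_{Y_i}+E_i+\Delta_{Y_i}+cD^+_{Y_i}\sim_\Q(1-c)(K_{Y_i}+\Delta_{Y_i}+E_i)$ is $f_i$-nef. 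F-adjunction on $Y'$ plus descending induction along the MMP then gives $\FA(X_s,\Delta_s)\ge\alpha_0\,\FA(E'_s,\Delta_{E'_s})$, so your $\kappa$ is $\alpha_0$.

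Liu--Xu--Zhuang is a characteristic-zero statement, while $D$ lives in characteristic $p$, so it must be transported to every $p\gg0$. That is \Cref{fromCtocharp}. It uses flatness over $A$ of the graded pieces of the filtration $\mathscr{F}_E$ on the section ring, plus a DVR specialization argument. This is where uniformity in $p$ is actually won. It is not won in F-adjunction, which is an exact equivalence (\Cref{lem:Fadjunction}) with no constants to control, so the obstacle you identified is misplaced.

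Your preferred cone route does not reach $E'$ at all. A Koll\'ar component of the cone over $X$ has dimension $\dim X$. The degeneration induced by $E$ takes the cone over $X$ to the cone over $X_0$, whose base is $X_0$, not $E'$. Semicontinuity does bound the general fibre below by the central fibre (as in \Cref{variationthm}). But the inductive hypothesis says nothing about $X_0$, a log Fano of the same dimension as $X$ with one less dimension of complexity. You would still have to run the adjunction argument above for $X_0$, so the problem is only moved, not solved.
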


We now discuss our results in more detail.

\subsection{Threefold singularities.} We apply \Cref{mainthm3intro} to complex threefold KLT singularities that are not \emph{weakly exceptional}. A weakly-exceptional singularity is defined as a KLT singularity $x \in (X, \Delta)$ that admits a unique PLT blow-up; see \cite{ProkhorovBlowupsofcanonicalsingularity}. Weakly exceptional singularities are local versions of log Fano pairs $(S, \Delta_S) $ such that $\alpha (S, \Delta_S) >1$. 
\Cref{mainthm3intro} and \Cref{alphavssintro} together reduce \Cref{mainconjintro} for non-weakly exceptional threefold singularities to the case of pairs $(\P^1, \Delta)$ where $\Delta$ has at most three components (see \Cref{inductivethm}). The latter has been established in \cite{BCPTlimitFsigarxiv} and thus we obtain:
\begin{thm}
   \Cref{mainconjintro} holds for all non-weakly exceptional threefold KLT singularities $(R, \fm)$ over $\C$. Similarly, \Cref{mainconj2intro} holds for all log del Pezzo surfaces $S$ satisfying $\alpha (S) < 1$.
\end{thm}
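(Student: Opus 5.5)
We derive both statements from \Cref{alphavssintro}, \Cref{mainthm3intro} (in its precise form \Cref{inductivethm}), and the result of \cite{BCPTlimitFsigarxiv} for $(\P^1,\Delta)$. We treat the global statement about log del Pezzo surfaces first and then deduce the local statement.

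\textbf{Step 1: the global statement.} Let $S$ (or more generally $(S,\Delta_S)$) be a log del Pezzo pair with $\alpha(S,\Delta_S)<1$. We apply \Cref{inductivethm} with $n=1$. The dimension-one log Fano pairs are exactly the pairs $(\P^1,\Delta)$ with $\deg\Delta<2$. The mechanism of \Cref{inductivethm} should work as follows:
\begin{enumerate}
\item Since $\alpha<1$, there is a complement-type divisor $D\sim_\Q -(K_S+\Delta_S)$ with $(S,\Delta_S+D)$ not log canonical.
\item After tie-breaking, one obtains a plt-type divisorial valuation $E$ over $S$, or a degeneration to a pair with a $\G_m$-action, with log Fano different $(E,\Delta_E)$.
\item One bounds $\FA$ on $S$ from below in terms of $\FA(E_p,\Delta_{E,p})$ together with the discrepancy data, which do not depend on $p$.
\end{enumerate}
Here $E$ is a curve, so $(E,\Delta_E)$ is $(\P^1,\Delta_E)$ with standard coefficients coming from the adjunction different. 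The refinement recorded in \Cref{inductivethm} only requires the conjecture for the pairs $(\P^1,\Delta)$ with at most three components of $\Delta$ that actually occur in this way. That case is proved in \cite{BCPTlimitFsigarxiv}, which gives \Cref{mainconj2intro} for $(S,\Delta_S)$.

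\textbf{Step 2: the local statement.} Let $x\in(X,\Delta)$ be a non-weakly exceptional threefold KLT singularity. We reduce it to a surface log Fano pair via a plt blow-up. By definition, $x$ admits at least two plt blow-ups. Following Prokhorov, non-weak exceptionality is equivalent to the following: for some (equivalently, any) plt blow-up $Y\to X$ with exceptional divisor $S$ and different $\Delta_S$, the log del Pezzo pair $(S,\Delta_S)$ satisfies $\alpha(S,\Delta_S)<1$. More precisely, weakly exceptional means $\alpha\ge1$ for the unique plt blow-up. We should choose $S$ so that this inequality is genuinely strict. If instead we land in the boundary case $\alpha=1$ with a non-unique blow-up, we argue that another plt blow-up, or a Kollár component, has $\alpha<1$.

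Next we use the local-to-global reduction \Cref{thm:localtoglobalreduction} and \Cref{alphavss}. Applied through the degeneration of $R$ to the cone associated to the Kollár component $S$, these should bound $\liminf_p \s_p(R)$ from below in terms of $\liminf_p\FA(S_p,\Delta_{S,p})$. Step 1 shows the latter is positive, which proves \Cref{mainconjintro} for $R$.

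\textbf{Main obstacle.} The main obstacle is Step 2. \Cref{alphavssintro} is stated as an equivalence over all pairs of a given dimension, but we need a pointwise version: positivity for one chosen Kollár component $S$ must imply positivity for $R$. So we must check that the proof of \Cref{thm:localtoglobalreduction} proceeds by degenerating to a single Kollár component, and that $\s_p$ is lower semicontinuous or comparable under that degeneration uniformly in $p$. We must also make sure Prokhorov's characterization produces a component with strict inequality $\alpha<1$. If the reduction only gives the statement for all Kollár components at once, we would instead show directly that every Kollár component of a non-weakly exceptional singularity has $\alpha<1$. This holds because the existence of a second plt blow-up produces a non-lc complement on each of them.
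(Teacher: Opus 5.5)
Your proposal is correct and follows the paper's route: the surface statement comes from \Cref{inductivethm} (with $n=2$ in that theorem's numbering, parts (3)--(4)) combined with \cite{BCPTlimitFsigarxiv}. The restriction to at most three points on $\P^1$ holds because $\Delta_S$ has standard coefficients, hence so does the different $\Delta_{E'}$. This is true for $\Delta_S=0$ and for the different on a Koll\'ar component of a singularity with $\Delta=0$, but not for an arbitrary $(S,\Delta_S)$ as your parenthetical suggests. The threefold statement then follows by taking a Koll\'ar component with $\alpha<1$, which exists by \Cref{defi:weaklyexceptionalsing}, and applying \Cref{thm:localtoglobalreduction}; that theorem is already stated for a single Koll\'ar component through the pointwise bound $\s(X_y,\Delta_y,x_y)\geq A_{X,\Delta}(S)\,\s(S_y,\Delta_{S_y})$ of \Cref{Prop:lowerboundKollarcomponent}, which resolves the concern in your last paragraph.
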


This theorem covers the case of smooth del Pezzo surfaces of degree at least $2$ and is new for all the non-toric and non-diagonal ones. In forthcoming work, we use results of Monsky \cite{MonskyMasonstheorem} to extend this result to all non-weakly exceptional pairs $x \in (X, \Delta)$ in dimension $3$. We also obtain uniform versions of \Cref{mainconjintro} for non-weakly exceptional threefold singularities by comparing the $F$-signature to the \emph{normalized volume} (\Cref{defi:normalizedvolume}) defined by Li \cite{LiMinimizingNormalizedVolumes}. See Section \ref{section:threefolds} for the details.

% While we are not able to answer this question in dimension $3$, we obtain the following weaker version:
%\begin{thm}[\Cref{thm:effectivelimFsig}]
  %  Let $\varepsilon > 0 $ be any fixed real number and $\cS_{\varepsilon}$ denote the set of three-dimensional KLT singularities $(R, \fm)$ (over $\C$) such that every Koll\'ar component $(S, \Delta_S)$ of $X$ satisfies $ \alpha (S, \Delta_S) < 1 - \varepsilon$. Then, there exists a constant $c : = c( \varepsilon) $ such that for any $R \in \cS_\varepsilon$, we have 
  %  \[ \liminf_{p \to \infty} \s_ p (R_p) \geq c \, \nvol (R). \]
    
%\end{thm}

Our techniques also allow us to prove an interesting result about $F$-regularity (resp. $+$-regularity) of non-weakly exceptional three-dimensional singularities in positive and mixed characteristics when the residue characteristic is larger than $5$. More precisely, we have

\begin{thm} (\Cref{+regforthreedimsingularities}) \label{introthm:Fregularity}
    Let $X = \Spec(R)$ be a three-dimensional KLT singularity over a perfect field $k$ of characteristic $p>5$ (resp. a complete DVR $A$ with residue field $k$). Assume that $X$ is non-weakly exceptional (\Cref{defi:weaklyexceptionalsing}). Then, $X$ is $F$-regular (resp. $+$/BCM-regular).
\end{thm}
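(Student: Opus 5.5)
Since $X$ is non-weakly exceptional, it admits at least two distinct PLT blow-ups, i.e. Koll\'ar components. Over $\C$ this means there is a Koll\'ar component $(S,\Delta_S)$, a log del Pezzo pair, with $\alpha(S,\Delta_S)\le 1$ (in fact $<1$ in the appropriate sense). The plan mirrors the inductive argument of \Cref{mainthm3intro}, run in a fixed characteristic $p>5$ rather than asymptotically. First, extract a Koll\'ar component $E\cong S$ over the closed point via a PLT blow-up $\pi\colon Y\to X$. In dimension three, the MMP (Hacon--Xu, Birkar, and Bhatt et al.\ in mixed characteristic) is available for $p>5$, which makes this construction possible. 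Adjunction gives a two-dimensional log del Pezzo pair $(S,\Delta_S)$ over $k$.

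The standard reduction (the positive/mixed characteristic analogue of \Cref{thm:localtoglobalreduction}) says that $X$ is $F$-regular (resp.\ $+$-regular) if $(S,\Delta_S)$ is globally $F$-regular (resp.\ globally $+$-regular). One proves this by inversion of adjunction along $E$ for the PLT pair $(Y,E)$, as in Hacon--Witaszek and Bhatt et al. So it suffices to prove global regularity of $(S,\Delta_S)$ in a form that survives the ``$\alpha<1$'' input. The non-weakly exceptional hypothesis supplies a divisor $D\sim_\Q -(K_S+\Delta_S)$ with $(S,\Delta_S+D)$ not klt. By the structure theory behind \Cref{mainthm3intro}, we may arrange a log canonical center of $(S,\Delta_S+D)$ that is a curve $C$, after a tie-breaking and perturbation step. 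This curve yields a plt-type degeneration, or a birational model, reducing to a pair $(\P^1,\Delta_{\P^1})$ obtained by adjunction to $C$.

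Next, verify that the resulting $(\P^1,\Delta_{\P^1})$ is globally $F$-regular when $p>5$. Such a pair is log Fano with standard coefficients, or coefficients constrained by the hyperstandard set arising from adjunction. Once the different has at most three points, global $F$-regularity of $(\P^1,\sum a_i P_i)$ with $\sum a_i<2$ follows for $p>5$ from Watanabe-type criteria or an explicit Fedder computation. This is where the bound $p>5$ enters: the worst triples $(1/2,2/3,4/5)$ involve only the primes $2,3,5$. Then lift global $F$-regularity back up: from $C$ to $S$ by inversion of adjunction for global $F$-regularity (F-adjunction for purely $F$-regular centers), and from $S$ to $X$ as above. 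In mixed characteristic, the same chain runs with $+$-regularity, using the $+$-adjunction results of Bhatt--Ma--Patakfalvi--Schwede--Tucker--Waldron--Witaszek; their two-dimensional input is again global $+$-regularity of log del Pezzo pairs over $\P^1$-type centers for $p>5$.

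The main obstacle is the middle step: producing from non-weak exceptionality a curve lc center on $S$, or a suitable model, to which the lifting/adjunction theorems apply. This must be done in positive characteristic, where the complex tie-breaking, Kawamata--Viehweg vanishing and the $\alpha$-invariant tools behind \Cref{mainthm3intro} are not directly available. I would try first to work with the characteristic-free description of weak exceptionality via multiple Koll\'ar components. I would then use the three-dimensional MMP for $p>5$ to run the relevant contractions explicitly, substituting vanishing results valid for surfaces/threefolds in $p>5$ (Arvidsson--Bernasconi--Lacini type Kawamata--Viehweg vanishing for log del Pezzo surfaces).
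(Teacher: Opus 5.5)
Your outer frame agrees with the paper's. You take a Koll\'ar component $(S,\Delta_S)$ that is not weakly exceptional. Here that is just the definition of a non-weakly exceptional singularity, so $\alpha(S,\Delta_S)<1$ directly, with no detour through $\C$ or through ``two PLT blow-ups''. You then show $(S,\Delta_S)$ is globally $F$-regular and conclude by inversion of adjunction along the PLT blow-up; in mixed characteristic the paper completes and applies Ma--Schwede--Tucker--Waldron--Witaszek, Theorem F, which only needs global $F$-regularity of $(S,\Delta_S)$ over $k$.

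However, the step carrying all the content is global $F$-regularity of a non-weakly exceptional log del Pezzo pair with standard coefficients when $p>5$. That is exactly what you label ``the main obstacle'' and leave open, so this is not yet a proof. The route you sketch for it also points the wrong way:
\begin{itemize}
\item There is no reason to get a \emph{curve} lc center of $(S,\Delta_S+D)$ on $S$ itself. The lc places are in general exceptional over points of $S$, and tie-breaking does not change that.
\item Lifting from a curve $C$ by $F$-adjunction requires $-(K+\Delta+C)$ to be \emph{ample} on the ambient surface. This fails on $S$ and on a mere extraction of $C$.
\end{itemize}

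The missing argument (\Cref{thm:specialdivisor} and \Cref{thm:gFrofsurfaces}) is purely two-dimensional and needs neither the threefold MMP nor any Kawamata--Viehweg vanishing.
\begin{itemize}
\item Choose $D\sim_\Q-(K_S+\Delta_S)$ and $t\in(0,1)$ with $(S,\Delta_S+tD)$ lc but not klt, and take an lc place $E$. Use Tanaka's log MMP for surfaces (valid in all characteristics) to get $\mu\colon Z\to S$ extracting only $E$. Then $(Z,\Delta_Z+E)$ is plt, and $-(K_Z+\Delta_Z+E)\sim_\Q-(1-t)\mu^*(K_S+\Delta_S)+tD_Z$ is big since $t<1$.
\item Pass to the ample model $\pi\colon Z\to Z'$ of $-(K_Z+\Delta_Z+E)$. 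Since $K_Z+\Delta_Z+E$ is $\pi$-nef, negativity gives $K_Z+\Delta_Z+E\le\pi^*(K_{Z'}+\Delta_{Z'}+E')$.
\item $(Z',\Delta_{Z'})$ is klt, because it inherits the crepant transform of a klt $\Q$-complement $\Delta_S+(t-\lambda)D+H$. Hence $E$ cannot be contracted, and $(Z',\Delta_{Z'}+E')$ is plt with $-(K_{Z'}+\Delta_{Z'}+E')$ ample.
\item The different $(E',\Delta_{E'})$ is a log Fano curve with standard coefficients, hence globally $F$-regular for $p>5$ by Watanabe.
\item \Cref{lem:Fadjunction}, which needs only that ampleness, makes $(Z',\Delta_{Z'}+E')$ globally purely $F$-regular. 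The same inequality together with \Cref{lem:GFRunderbirationalmap}(3) transfers this to $(Z,\Delta_Z+E)$, and pushing forward gives $(S,\Delta_S)$ globally $F$-regular.
\end{itemize}
You should also justify that $S$ is normal with standard-coefficient different (\Cref{rem:normalityofpltcenter}), which your proposal omits.
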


By the examples constructed in \cite{CasciniTanakaWitaszekcounterexamplestoFregularity}, it is known that the above theorem fails without the non-weakly exceptional assumption.

\subsection{Toric degenerations.} We also consider another method for proving \Cref{mainconj2intro} which involves using an \emph{isotrivial degeneration} of a Fano variety $X$ to a \emph{normal toric variety} $X_0$ (see \Cref{isotrivialdfn}). An isotrivial degeneration $\pi: \cX \to C$ over a pointed curve $0 \in C$ has the special property that all fibers of $\pi$ over the punctured curve $C \setminus \{0\}$ are isomorphic to $X$. Combined with the semi-continuity properties of the $F$-signature or the $\FA$-invariant, this provides the required control while reducing modulo $p \gg 0$:

\begin{thm}
     Suppose a complex $\Q$-Fano variety $X$ admits an isotrivial degeneration $\pi: \cX \to C$ to a normal toric variety $X_0$ over a pointed curve $0 \in C$. Assume that $-K_{\cX/C}$ is $\pi$-ample. Then, \Cref{mainconj2intro} holds for $X$.
\end{thm}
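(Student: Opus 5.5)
We will reduce \Cref{mainconj2intro} for $X$ to the toric case via the degeneration, using semicontinuity of $\FA$ in families. Write $L=-rK_{\cX/C}$ for some $r>0$ such that $L$ is Cartier and $\pi$-very ample; this is possible since $-K_{\cX/C}$ is $\pi$-ample and $\Q$-Cartier. Shrinking $C$ around $0$, we may assume $\cX \to C$ is flat with normal fibers, and that $X_0$ is a $\Q$-Fano toric variety with $-K_{X_0} = -K_{\cX/C}|_{X_0}$; the latter uses that $X_0$ is normal, hence $\cX$ is normal and $K_{\cX/C}$ restricts to $K_{X_0}$. Toric varieties have log terminal singularities, so $X_0$ is a toric log Fano variety. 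Forming the section ring $\bigoplus_m \pi_*\cO(mL)$ then gives a flat family of cones $\cY\to C$ whose fibers are the cones over $X_t$ with respect to $-rK_{X_t}$.

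Next, we spread the whole family out over a finitely generated $\Z$-algebra $A$, obtaining $\cX_A\to C_A$ with a section $0_A$. For closed points of $\Spec A$ of characteristic $p\gg0$, we get reductions $\cX_p\to C_p$ in which the fiber over $0_p$ is the reduction of the toric variety $X_0$. This reduction is again the toric variety of the same fan over $\overline{\mathbb F}_p$. The nonzero fibers are all isomorphic to $X$ over $\C$, so after enlarging $A$, the generic fiber over $C_p\setminus 0_p$ is a reduction of $X$. Some care is needed here: the reductions $X_p$ appearing in \Cref{mainconj2intro} are arbitrary reductions, but different models agree for $p\gg0$ after base change to $\overline{\mathbb F}_p$, and $\FA$ is insensitive to algebraically closed base field extension. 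We may therefore take $X_p$ to be a closed fiber of $\cX_p$ over a point $t\neq 0_p$.

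The key step, and the expected main obstacle, is the lower semicontinuity
\[\FA(X_{p,t}) \geq \FA(X_{p,0})\]
for the family $\cX_p\to C_p$. We would prove this through the local version: via \Cref{alphavss}, $\FA$ of a $\Q$-Fano is comparable to the $F$-signature of the cone over it (up to constants depending only on dimension and $r$). The $F$-signature is lower semicontinuous in flat families of cones whose fibers are strongly $F$-regular at the vertex (Polstra and Polstra–Smirnov type results). Alternatively, we can argue directly: $\FA$ is defined through $F$-splitting of sections of $\cO(m(-K))$ for $\Q$-divisors, and by Fedder-type/upper semicontinuity of splitting ideals in families the threshold can only increase under generization. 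The delicate point is uniformity. The cone family must be flat with constant Hilbert function, which follows from $\pi$-ampleness and Kawamata–Viehweg vanishing for Fanos. For $p\gg0$ this vanishing persists, since it holds over $\C$ and spreads out on the finitely many graded pieces needed.

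Finally, $\FA(X_{0,p})$ is bounded below independently of $p$ by the toric case. For a toric log Fano, $\FA$ is computed combinatorially from the polytope, as is the $F$-signature of the toric cone \cite{SinghFSignatureOfAffineSemigroup, VonKorffthesis}; it is given by a volume that is independent of $p$ and positive. Combining these facts gives $\FA(X_p)\geq C:=\FA(X_0)$, or a dimension-dependent multiple of it, for all $p\gg0$, which is \Cref{mainconj2intro} for $X$.
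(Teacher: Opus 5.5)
Your proposal is correct and is essentially the paper's argument. You spread the isotrivial degeneration out over $A$ and get $\FA(X_p)\ge \FA(X_{0,p})$ from semicontinuity in the family; passing through the $F$-signature of the section rings costs only the harmless loss from \Cref{alphavss} and mirrors the proof of \Cref{thm:localtoglobalreduction}. You then use that the toric reductions have $p$-independent positive invariants, which the paper gets from $\FA(X_{0,p})=\alpha_\C(X_0)$ \cite[Theorem~4.12]{PandeFrobeniusAlpha} together with \cite[Theorem~5.8]{PandeFrobeniusAlpha}.

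One correction: your claim that different reductions of $X$ agree after base change to $\overline{\mathbb F}_p$ is false. Reductions at different closed points of $\Spec(A)$ over the same $p$ can have different moduli over $\overline{\mathbb F}_p$. You do not need this claim, though: spreading out the isomorphism $\cX^{\circ}\cong X\times C^{\circ}$ over $A$, as the paper does, already makes every reduction $X_y$ a fiber of $\cX_y\to C_y$ over $C_y^{\circ}$.
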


This theorem readily implies \Cref{mainconj2intro} for spherical $\Q$-Fano varieties over $\C$ (\Cref{cor:spherical}) thanks to the normal toric degenerations constructed by \cite{AlexeevBrionToricDegenerationsofspherical}. This includes Grassmannians and partial flag varieties. Furthermore, we construct new isotrivial degenerations to normal toric varieties for smooth low-degree hypersurfaces: 

\begin{thm}[\Cref{thmlowdegree}] \label{mainthm2intro}
    Let $X \subset \P^n$ be a smooth hypersurface of degree $d$ in $\P^n$ such that $n \geq \binom{2d-1}{d} + d -1$. Then there is an isotrivial degeneration of $X$ to $X_0$ (over some pointed curve) such that $X_0$ is isomorphic to the hypersurface defined by $(x_0 ^d = x_1 \dots x_d)$ in $\P^n$. Thus, \Cref{mainconj2intro} holds for all such $X$.
\end{thm}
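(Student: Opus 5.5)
The plan is to prove the statement in two parts. First, construct an isotrivial degeneration of $X$ to the hypersurface $X_0=(x_0^d = x_1\cdots x_d)\subset\P^n$. Second, apply the preceding theorem on isotrivial toric degenerations.

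\textbf{Structure of $X_0$.} The variety $X_0$ is a normal toric variety. It is the cone, with vertex the linear space $\{x_0=\dots=x_d=0\}$, over the toric hypersurface $x_0^d=x_1\cdots x_d$ in $\P^d$, which is a quotient of $\P^{d-1}$ by $(\Z/d)^{d-2}$. Its singularities are quotient singularities times affine space, hence KLT and normal. Since $d\le n$ by the numerical hypothesis, $-K_{X_0}=\cO(n+1-d)$ is ample.

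\textbf{The degeneration.} The key step is to show that $X$ can be written, after a linear change of coordinates, as
\[ f = x_0^d - x_1\cdots x_d + \sum_{i} \ell_i g_i, \]
where the correction terms are chosen so that a one-parameter subgroup $\lambda$ of $\PGL_{n+1}$ (or a suitable family of coordinate changes) sends $f$ to $x_0^d - x_1\cdots x_d$ in the limit $t\to 0$.

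A more robust approach is the following. The GIT/Hilbert-scheme picture says it suffices to find a family $X_t$, $t\neq 0$, all projectively equivalent to $X$ and converging to $X_0$. For this I would show that $X$ contains a linear subspace $\Lambda$ of the right dimension on which the equation has a special shape. Concretely, I would look for coordinates such that $f = x_0^d - x_1\cdots x_d + h$, where every monomial of $h$ involves some variable $x_j$ with $j>d$ and has positive $\lambda$-weight for a cocharacter $\lambda$ that has weights $0$ on $x_0,\dots,x_d$ and weight $1$ on $x_j$ for $j>d$. Then $\lambda(t)\cdot f\to x_0^d-x_1\cdots x_d$. That shape is too strong for $x_{d+1},\dots,x_n$, because the limit would involve those variables. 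The correct requirement is that $f|_{\{x_{d+1}=\dots=x_n=0\}}$ equals $x_0^d-x_1\cdots x_d$ in suitable coordinates of that $\P^d$, with $\lambda$ acting with negative weights on $x_{d+1},\ldots,x_n$ so that those monomials die.

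So the problem reduces to finding a $d$-plane $P\cong\P^d\subset\P^n$ such that $X\cap P$ is projectively equivalent to the toric hypersurface $x_0^d=x_1\cdots x_d$. The plan for this step is as follows:
\begin{enumerate}
\item Use that low-degree hypersurfaces with $n$ large contain many linear spaces. A dimension count shows $X$ contains a linear space of dimension $\ge d-1$ once $n\ge\binom{2d-1}{d}+d-1$; the binomial $\binom{2d-1}{d}$ is the number of degree-$d$ monomials in $d$ variables.
\item Choose $d$ hyperplanes $H_1,\dots,H_d$ in $P$ such that $X\cap P$ contains the $d$ linear $(d-1)$-spaces $H_i\cap\{x_0=0\}$ appropriately, so that $f|_P$ lies in the span of $x_0^d$ and multiples of $x_0$, together with $x_1\cdots x_d$.
\item Kill the remaining terms by a further choice or degeneration inside $P$, using a cocharacter of the torus of $P$.
\end{enumerate}

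The main obstacle is this existence statement: producing enough linear subspaces of $X$ in special position to pin down $f|_P$ up to the two toric monomials. I would handle it by an incidence-variety dimension count over the space of flags of linear subspaces. This is where the bound $n\ge\binom{2d-1}{d}+d-1$ enters, as the number of conditions imposed on the coefficients.

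\textbf{Conclusion.} Once the $\G_m$-family $\lambda(t)\cdot X$ with limit $X_0$ is in hand, take $C=\A^1$ and let $\cX$ be the closure. The total space $\cX\subset\P^n\times\A^1$ is a flat family of hypersurfaces of degree $d$. By adjunction, $-K_{\cX/C}=\cO(n+1-d)|_{\cX}$, which is $\pi$-ample. All fibers with $t\ne0$ are isomorphic to $X$, and the central fiber $X_0$ is a normal toric variety. The isotrivial toric degeneration theorem then gives \Cref{mainconj2intro} for $X$.
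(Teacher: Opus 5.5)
Your outer framework is fine. $X_0$ is the cone over $V=(x_0^d=x_1\cdots x_d)\subset\P^d\cong\P^{d-1}/(\Z/d)^{d-2}$. The cocharacter contracting $x_{d+1},\dots,x_n$ degenerates $X$ to the cone over any $d$-plane section $X\cap P$. The final appeal to the toric-degeneration theorem is also correct.

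The gap is the step you flag as the main obstacle: producing a $d$-plane $P$ with $X\cap P\cong V$, or even with $X\cap P$ degenerating to $V$ under $\PGL_{d+1}$. Since $V$ is GIT polystable, the latter just says $\Phi(P)=[V]$ for the section map $\Phi:\mathbb{G}(d,n)\dashrightarrow\mathcal{M}_{d,d}$. The bound $n\ge\binom{2d-1}{d}+d-1$ is exactly Starr's condition (with $k=d$) for $\Phi$ to be \emph{dominant}. Dominance, like any incidence-variety dimension count, only reaches general points of $\mathcal{M}_{d,d}$. The toric point $[V]$ is special and need not lie in the constructible image of $\Phi$.

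Your concrete steps also fail as stated. Requiring $X\cap P$ to contain the $d$ linear spaces $\{x_0=x_i=0\}$ only yields $f|_P=x_0g+c\,x_1\cdots x_d$; note these are $\P^{d-2}$'s, not $(d-1)$-spaces. No cocharacter of the diagonal torus of $P$ can then kill the monomials of $x_0g$ other than $x_0^d$ in general. Normalize $w_0=0$ and keep the minimal-weight part. Keeping both $x_0^d$ and $x_1\cdots x_d$ forces $\sum_{i\ge1}w_i=0$. Killing $x_0x_i^{d-1}$ for every $i$ forces all $w_i>0$, a contradiction.

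What is missing is a limiting argument that never asks a single linear section of $X$ to equal $V$. The paper's proof runs as follows.
\begin{enumerate}
\item $V$ is K-polystable by Zhuang's equivariant criterion for the action of $\mathbb{G}_m^{d-1}\rtimes\fS_d$. Hence it is GIT polystable by Paul--Tian, so $[V]$ is a point of the projective quotient $\mathcal{M}_{d,d}$.
\item Choose a pointed curve $(0\in C)\to\mathcal{M}_{d,d}$ with $0\mapsto[V]$ and $C\setminus\{0\}$ landing in the image of $\Phi$.
\item After a finite base change, lift this to a family of sections $\cV_c=X\cap\cH_c$ for $c\neq0$. Use properness of the GIT quotient to complete it with a semistable fiber $\cV_0$ whose polystable degeneration is $V$.
\item Each cone $C(\cV_c)$ lies in the orbit closure $\overline{\PGL_{n+1}\cdot X}$ inside $\P(H^0(\P^n,\cO(d)))$. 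This set is closed, so it contains the limit $C(\cV_0)$. It is $\PGL_{n+1}$-stable, so it also contains $C(V)$.
\end{enumerate}
The orbit-closure criterion and the composition lemma for embedded isotrivial degenerations then give $X\rightsquigarrow X_0$. This degeneration is over some pointed curve, not necessarily the single one-parameter subgroup over $\A^1$ used in your conclusion.
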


 \subsection{Proof strategies} The strategy for proving \Cref{mainthm3intro} is to use the theory of $F$-adjunction (\cite{SchwedeFAdjunction},\cite{CasciniGongyoSchwedeUniformBounds}, \cite{CTW17}), along with the techniques from the minimal model program. The key idea is that the condition $\alpha (X, \Delta) < 1$ on a log Fano pair $(X, \Delta)$ implies the existence of a \emph{special divisor} over $(X, \Delta)$ (\cite{BLX22}), which by a result of Zhuang, provides a lower-dimensional log Fano pair $(E, \Delta_E)$ on a birational model of $X$. Then, the proof proceeds by a careful reduction modulo $p$ argument and lifting $F$-splittings from $(E, \Delta_E)$. See Section \ref{section:inductivestep} for the details.
 
 The same idea also allows us to show \Cref{introthm:Fregularity}, where we deduce global $F$-regularity from the results of Watanabe \cite{WatanabeFregularFpureRings} which deal with the case of pairs on $\PP^1$. This is discussed in Section \ref{section:Fregularity}.

Constructing \emph{normal} toric degenerations as in \Cref{mainthm2intro} involves exploiting the projectivity of appropriate moduli spaces of degree $d$ hypersurfaces along with a result of Starr from \cite{Sta06}. Starr shows that when the degree of a smooth hypersurface $X \subset \P^n $ is very low relative to $n$, then any general hypersurface of degree $d$ in $\P^d$ can be realized as a linear section of $X$ (up to isomorphism). Since $X$ degenerates to the cone over any of its linear sections, \Cref{mainthm2intro} follows from a limiting argument. In Section \ref{section:hypersurfaces}, we provide two different proofs using the GIT and K-moduli spaces respectively.

%leading to natural questions such as: \textit{how does $\s_p (R)$, the $F$-signature of the reduction of $R$ to characteristic $p$ depend on $p$? when is it independent of $p$? does the limit of $\s_p (R)$ as $p \to \infty$ exist? For $p \gg 0$, does $\s_p (R)$ have an interpretation in terms of the singularities of $R$ (over $\C$)?}

\subsection*{Acknowledgements}
We would like to thank Harold Blum, Anna Brosowsky, James Hotchkiss, Seungsu Lee, Devlin Mallory, Mircea Musta\c t\u a, Karl Schwede, Karen Smith, Sridhar Venkatesh, Kevin Tucker and Ziquan Zhuang for valuable conversations. YL was partially supported by the NSF grant DMS-2237139 and an AT\&T Research Fellowship from Northwestern University.
SP was partially supported by the NSF grants \#1952399, \#1801697 and \#2101075.
We also thank the departments at the University of Michigan, Northwestern University and the University of Utah for their hospitality during various visits by the authors.
Part of this material is based upon work supported by the National Science Foundation under Grant No. DMS-1928930 and by the Alfred P. Sloan Foundation under grant G-2021-16778, while the author was in residence at the Simons Laufer Mathematical Sciences Institute (formerly MSRI) in Berkeley, California, during the Spring 2024 semester.

\section{Preliminaries}

\subsection{Singularities of MMP and Negativity Lemma}
We recall the relevant notions of singularities of the MMP. Throughout this subsection a pair $(X, \Delta)$ will refer to a normal variety over an algebraically closed field $k$ and an effective $\Q$-divisor $\Delta $ such that some multiple of $K_X + \Delta$ is Cartier, where $K_X$ denotes a canonical divisor of $X$. 

\begin{defi}
    A divisor $E$ \emph{over} $X$ refers to some prime divisor $E \subset Y \xrightarrow{\pi}  X$, where $Y$ is a normal variety over $k$ and $\pi: Y \to X$ is a projective, birational morphism to $X$. The \emph{log discrepancy} of such a divisor $E$ with respect to the pair $(X, \Delta)$ is a rational number defined by
    \[ A_{(X, \Delta)} (E) = \ord_E (K_Y + \Delta_Y  + E - \pi^* (K_X + \Delta_X) ) \]
    where $\Delta_Y$ is the strict transform of $\Delta$ and $K_Y$ is a canonical divisor of $Y$ that is compatible with $K_X$ on the locus where $\pi$ is an isomorphism.
\end{defi}

We use log discrepancies to define the following class of singularities.

\begin{defi} \label{def:KLT}
    A pair $(X, \Delta)$ is defined to be
    \begin{enumerate}
        \item \emph{Kawamata log terminal (KLT)} if $A_{(X, \Delta)} (E) > 0$ for every divisor $E$ over $X$.

        \item \emph{log canonical (LC)} if for every divisor $E$ over $X$, we have $A_{(X, \Delta)} (E) \geq 0$. Any divisor $E$ over $X$  with $A_{(X, \Delta)} (E) = 0$ is called an \emph{LC-place} of $(X, \Delta)$.

        \item \emph{purely log terminal (PLT)} if $(X, \Delta)$ is log canonical, $S = \lfloor \Delta \rfloor $ is a prime divisor on $X$ and $S$ is the only LC-place of $(X, \Delta)$.
    \end{enumerate}
\end{defi}

We also have the following global versions for projective varieties:

\begin{defi} \label{def:logFano}
    A pair $(X, \Delta)$ is called a \emph{log Fano} pair if:
    \begin{itemize}
        \item $X$ is a normal, projective variety,
        \item $(X, \Delta)$ is KLT and
        \item the $\Q$-divisor $-K_X -\Delta$ is ample.
    \end{itemize}
    A projective variety $X$ is said to be of \emph{Fano type} if there exists some effective $\Q$-divisor $\Delta$ such that $(X, \Delta)$ is a log Fano pair.
\end{defi}

The following numerical invariants of singularities will play a central role in this paper:

\begin{defi} \label{def:complexalpha}
    Given a log canonical pair $(X, \Delta)$ and an effective $\Q$-Cartier divisor $D \geq 0$ on $X$, the \emph{log canonical threshold} of $D$ with respect to $(X, \Delta)$ is defined to be
    \[ \lct (X, \Delta; D) := \sup \{ t \geq 0 \, | \, (X, \Delta + tD) \, \text{is log canonical} \}.\]
    Globally, given a log Fano pair $(X, \Delta)$, the \emph{$\alpha$-invariant} or the \emph{global log canonical threshold} of $(X, \Delta)$ is defined to be the infimum:
    \[ \alpha (X, \Delta) := \inf \{ \lct (X, \Delta; D) \ | \ \text{$D \geq 0$ is any $\Q$-divisor with} \,  -(K_X + \Delta) \sim _\Q D\}.  \]
    
\end{defi}

We will also repeatedly use the negativity lemma, which holds over algebraically closed fields of arbitrary characteristic:

\begin{lem} \cite[Section 2.3]{Birkar16} Let $f: Y \to X$ be a projective, birational morphism between normal projective varieties over $k$ and suppose we have an $f$-exceptional, $\Q$-Cartier divisor $E $ on $Y$. If $E$ is $f$-nef, then $E \leq 0$. In particular, if $E$ is numerically trivial over $X$, then $E = 0$.
    
\end{lem}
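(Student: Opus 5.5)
We prove the negativity lemma by reducing to the case of a surface, where the intersection form on exceptional curves is negative definite. Throughout, write $E = E^+ - E^-$ with $E^+, E^- \geq 0$ effective $f$-exceptional divisors having no common components. The goal is to show $E^+ = 0$.

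\emph{Step 1: reduction to $X$ local and $f(E^+)$ a point.} Suppose $E^+ \neq 0$. Choose a component $P$ of $E^+$ and let $Z = f(P)$, which has codimension at least $2$ since $P$ is exceptional. Cut $X$ by $\dim Z$ general hyperplane sections $H_1, \dots, H_{\dim Z}$. The intersection $X' = X \cap H_1 \cap \dots \cap H_{\dim Z}$ is normal by Bertini, and so is $Y' = f^{-1}(X')$. Then $Z$ meets $X'$ in finitely many points, and $E|_{Y'}$ is still $f'$-exceptional and $f'$-nef, since the restriction of a nef divisor is nef. Its positive part contains $P|_{Y'} \neq 0$. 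We may also localize around one point of $Z \cap X'$. So we may assume $f(P)$ is a closed point $x$.

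\emph{Step 2: cutting down to a surface over $x$.} Next cut $Y$ by $\dim Y - 2$ general very ample divisors on $Y$, to obtain a normal surface $S \subset Y$ with $g = f|_S : S \to T = f(S)$ birational onto its image. The curves $C = P \cap S$ (and the other components of $E \cap S$ lying over $x$) are $g$-exceptional. Their intersection matrix is negative definite by the Hodge index theorem, equivalently Grauert/Mumford on a resolution of $S$. This holds in any characteristic; to avoid resolution of singularities in characteristic $p$, we may instead use Mumford's intersection theory on normal surfaces directly. The restriction $E|_S$ is still $g$-nef.

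\emph{Step 3: the surface case.} Let $F = E|_S = F^+ - F^-$. Nefness gives $F \cdot F^+ \geq 0$. Also $F^- \cdot F^+ \geq 0$, since the two divisors share no components. Hence
\[ (F^+)^2 = F \cdot F^+ + F^- \cdot F^+ \geq 0, \]
and negative definiteness then forces $F^+ = 0$ on the part supported over $x$. This contradicts $C \subset F^+$. Therefore $E^+ = 0$, that is, $E \leq 0$.

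\emph{The numerically trivial case.} If $E \equiv_f 0$, apply the statement to both $E$ and $-E$. This gives $E \leq 0$ and $E \geq 0$, so $E = 0$.

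\emph{Main obstacle.} The delicate point is Step 1–2 in positive characteristic. We need Bertini so that the general sections stay normal and so that $P$ is not lost. For non-normal sections one can replace $S$ by its normalization, since pulling back along a finite morphism preserves nefness and exceptionality. I would handle this by passing to normalizations rather than relying on Bertini for normality.
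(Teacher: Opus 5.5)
Your outline is the standard argument. Note that the paper does not prove this lemma itself: it only cites \cite[Section 2.3]{Birkar16}, where the Koll\'ar--Mori proof is observed to work in characteristic $p$. That proof cuts down by general hyperplane sections to a surface and uses negative definiteness of the exceptional intersection matrix. Your plan to pass to normalizations instead of relying on Bertini for normality is fine. There is, however, a real gap at the junction of Steps 1 and 3, caused by taking an \emph{arbitrary} component $P$ of $E^+$.

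Suppose another component $P'$ of $E^+$ has $\dim f(P') > \dim f(P)$ and $x \in f(P')$. Then after your cuts, $P' \cap S$ maps finitely onto a curve in $T$, so it is a non-$g$-exceptional component of $F^+$. Relative nefness says nothing about such a curve, so $F \cdot F^+ \ge 0$ is unjustified. Negative definiteness does not apply to $F^+$ either. Restricting to ``the part over $x$'' does not rescue the argument. Write $F^+ = F^+_x + F^+_o$ with $F^+_x$ the $g$-exceptional part. Nefness then yields only $(F^+_x)^2 \ge F^- \cdot F^+_x - F^+_o \cdot F^+_x$, and the last term has the wrong sign.

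This is not a technicality. Let $g\colon S \to T$ blow up a smooth point $x$ and let $C \ni x$ be a curve. Then $g^*C$ is $g$-numerically trivial, yet its exceptional part $\mathrm{mult}_x(C)\, C_1$ is strictly positive. So the surface step genuinely needs $F^+$ to be entirely $g$-exceptional, and your reduction does not guarantee this.

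The fix is to choose $P$ so that $\dim f(P)$ is maximal among the images of components of $E^+$. Take $X'$ general and shrink to a neighbourhood of a point $x \in f(P) \cap X'$. Then every component of $E^+$ over that neighbourhood maps to $x$:
\begin{enumerate}
\item components with image $f(P)$ map to $x$ directly;
\item components whose image has the same dimension but differs from $f(P)$ meet $f(P)$ in a lower-dimensional set, which a general $X'$ misses, so their finitely many image points can be shrunk away from $x$;
\item components with smaller-dimensional image are missed by $X'$ entirely.
\end{enumerate}
Now $F^+$ is supported on $g$-exceptional curves. $F^-$ may still contain non-exceptional curves, but you only use $F^- \cdot F^+ \ge 0$, so your Step 3 computation goes through verbatim. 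Koll\'ar--Mori reach the same situation differently: they use induction on $\dim X$ via general hyperplane sections of the base, which first rules out components of $E^+$ with positive-dimensional image.
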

As a consequence, we have:
\begin{lem} \label{lem:birationalcontraction}
    Let $Y$ and $ Y'$ be normal projective varieties over an algebraically closed field $k$, and $f: Y \to Y' $ be a birational contraction. Let $\Delta \geq 0$ be an effective $\Q$-divisor on $Y$ and let $\Delta ' = f_{*} \Delta$ be the corresponding divisor on $Y'$. Assume that $K_Y +\Delta$ and $K_{Y'} + \Delta'$ are  both $\Q$-Cartier. Then, we have
    
    \begin{enumerate}
        \item If $K_Y 
 + \Delta \sim 0$, then $K_{Y'} + \Delta ' \sim 0$ as well. Moreover, in this case, if $(Y, \Delta)$ is KLT then so is $(Y', \Delta')$.

        \item   Assume that $K_Y + \Delta$ is nef over $Y'$. Then, 
\begin{equation}\label{eq:negativity}
K_Y+\Delta \leq  f^*(K_{Y'}+\Delta').
\end{equation}
In this case, if $(Y', \Delta')$ is KLT, then so is $(Y, \Delta)$.
    \end{enumerate}
\end{lem}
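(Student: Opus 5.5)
Both parts follow from the negativity lemma applied on a common resolution. Take a normal projective variety $W$ with birational morphisms $g: W \to Y$ and $h = f\circ g: W \to Y'$; for instance, a resolution of the graph of $f$, or simply $W = Y$ if $f$ is a morphism. Since $f$ is a birational contraction, every prime divisor on $Y'$ is the image of a divisor on $Y$, and $\Delta' = f_*\Delta$. Consider the $\Q$-Cartier divisor
\[ G := g^*(K_Y+\Delta) - h^*(K_{Y'}+\Delta'), \]
with canonical divisors chosen compatibly. Because $f_*(K_Y+\Delta) = K_{Y'}+\Delta'$, the pushforward $h_*G$ vanishes, so $G$ is $h$-exceptional.

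For (2), $K_Y+\Delta$ is nef over $Y'$, so $g^*(K_Y+\Delta)$ is nef over $Y'$ (via $h$). Also $h^*(K_{Y'}+\Delta')$ is numerically trivial over $Y'$. Hence $G$ is $h$-nef and $h$-exceptional, and the negativity lemma gives $G \le 0$. Pushing forward by $g$ gives
\[ K_Y+\Delta \le f^*(K_{Y'}+\Delta'), \]
where $f^*$ is interpreted as $g_*h^*$, which is the meaning of \eqref{eq:negativity} when $f$ is not a morphism.

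For the KLT statement in (2), let $E$ be any divisor over $Y$, realized on some model dominating $W$. Writing $K_Y + \Delta = f^*(K_{Y'}+\Delta') - F$ with $F\ge 0$ and pulling back, we get
\[ A_{(Y,\Delta)}(E) = A_{(Y',\Delta')}(E) + \ord_E(F) \ge A_{(Y',\Delta')}(E) > 0. \]
Here $\ord_E(F)$ means the order of the pullback of $F$; it is nonnegative since $F$ is effective and $\Q$-Cartier. So $(Y,\Delta)$ is KLT.

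For (1), if $K_Y+\Delta\sim 0$, then pushing forward a rational function $\phi$ with $K_Y+\Delta = \operatorname{div}(\phi)$ gives $K_{Y'}+\Delta' = f_*\operatorname{div}(\phi) = \operatorname{div}(\phi)$, since $f$ is birational and pushforward of principal divisors under a birational contraction is principal (the same rational function). Hence $K_{Y'}+\Delta'\sim 0$. Then $G$ is numerically trivial over $Y'$ and $h$-exceptional, so $G=0$ by the negativity lemma. Therefore the two pairs are crepant: $A_{(Y,\Delta)}(E)=A_{(Y',\Delta')}(E)$ for every divisor $E$ over both. Every divisor over $Y'$ is a divisor over $Y$ via $W$, so KLT of $(Y,\Delta)$ implies KLT of $(Y',\Delta')$.

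The only delicate point is the bookkeeping when $f$ is only a birational map. One must check that $G$ is genuinely $h$-exceptional, which uses that $f$ extracts no divisors, and that log discrepancies are computed on a common model. Both are standard.
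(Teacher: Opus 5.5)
Your argument is correct and follows the paper's route: express the difference between $K_Y+\Delta$ and $f^*(K_{Y'}+\Delta')$ as an exceptional $\Q$-Cartier divisor, use the negativity lemma to show it vanishes in (1) or has a sign in (2), and then compare log discrepancies divisor by divisor. The common model $W$ is unnecessary here because $f$ is a morphism, so you can take $W=Y$ as the paper does; comparing log discrepancies for arbitrary divisors over $Y$, rather than only those on a log resolution, is if anything slightly cleaner in positive characteristic.
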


\begin{proof}
    Write
    \begin{equation} \label{restrictiontoU}
        K_Y + \Delta + E = f^* (K_{Y'} + \Delta') 
    \end{equation}
    for some $f$-exceptional divisor $E$ on $Y$. Moreover, let $\mu: Z \to Y$ be a log resolution of $(Y, \Delta + E)$. Then, note that for any prime divisor $F \subset Z$, we have $A_{Y, \Delta +E} (F) = A_{Y', \Delta'} (F)$. 
    
    \begin{enumerate}
        \item We now apply the negativity lemma for $f$. Thus, if $K_Y + \Delta \sim 0$, $E$ is numerically trivial over $Y'$ and hence must be zero. This proves part (1).

        \item  For part (2), note that if $K_Y + \Delta $ is nef over $Y'$, then $-E$ is nef over $Y'$ which means that $E$ must be effective. Since $E$ is effective, we also have $A_{Y, \Delta} (F) \geq A_{Y, \Delta + E} (F) = A_{Y', \Delta'} (F)$ for any prime divisor $F \subset Z$. This proves $(2)$. \qedhere
    \end{enumerate}
\end{proof}

\subsection{$F$-signature}
	
 For any prime number $p$, let $R$ be a ring of characteristic $p$. For each $e \geq 1$, we have the $R$-algebra $\Fe R$ obtained by the pushforward of $R$ along the Frobenius map $F^e: \Spec(R) \to \Spec(R)$. Concretely, $F_{*} ^e R$ is the same as $R$ as a ring, but the $R$-module action is given by:
 $$ r\cdot F_* ^e s := F_{*} ^e (r^{p^e} s) \textrm{\quad for any $r\in R$ and $F_* ^e s \in F_* ^e R$}   .$$

 	Now let $(R, \fm)$ denote a normal local ring of Krull dimension $d>0$ and $X$ denote the normal scheme $\Spec(R)$. Throughout, we will assume that $R$ is the localization of a finitely generated $k$-algebra at a maximal ideal,  which also makes it  $F$-finite (i.e., $F_{*} ^e R$ is a finitely generated $R$-module for any $e \geq 1$), with the rank of $\Fe R $ over $R$ being $p^{ed}$. Let $\Delta$ be an effective $\Q$-divisor on $X = \Spec(R)$. Since $\Delta$ is effective, we have a natural inclusion $R \subset R( \lceil (p^e -1) \Delta \rceil)$ of reflexive $R$-modules. Here, $R(\lceil (p^e -1) \Delta \rceil) $ denotes the $R$-module corresponding to the reflexive sheaf $\cO_X (\lceil (p^e -1) \Delta \rceil)$. Thus, applying $\Hom_R ( \, \textunderscore \, , \, R)$ to the natural inclusion $\Fe R \subset \Fe (R(\lceil (p^e -1) \Delta \rceil))
  $, we get
  \[   \Hom_R \big( \Fe R(\lceil (p^e -1) \Delta \rceil), R \big) \subset \Hom _R (\Fe R, R).  \]
Thus, given any element  $\varphi \in \Hom_R \big( \Fe R(\lceil (p^e -1) \Delta \rceil), R \big) $, it can be naturally viewed as a map $\varphi: \Fe R \to R$.
 	
 	\begin{defi}[Splitting ideals and free rank] \label{frrkkdfn}
 	For any $e \geq 1$, we define the subset $I_e ^{\Delta} \subseteq R$ as 
 	\[I_e ^{\Delta} = \left\{x\in R \mid \varphi(\Fe x) \in \fm \,\text{for every map } \varphi \in \Hom_R \big( \Fe R(\lceil (p^e -1) \Delta \rceil), R \big) \, \right \}.\]
 		We observe that $I_e ^{\Delta}$ is an ideal of finite colength in $R$
 	and we call 
 	 $$ a_e ^{\Delta} = \ell_{R}(R/I_{e} ^{\Delta}) $$
 	the \emph{$\Delta$-free rank} of $\Fe R$, where $\ell_R$ denotes the length as an $R$-module.
 	\end{defi}

	\begin{defi} \cite[Theorem 3.11, Proposition 3.5]{BlickleSchwedeTuckerFSigPairs1} \label{F-sigdfn}
 		Let $(R, \Delta)$ be a pair as above, and $a_e ^{\Delta} (R)$ denote the $\Delta$-free rank of $\Fe R$ (\Cref{frrkkdfn}). Then the \emph{$F$-signature} of $(R, \Delta)$ is defined to be the limit:
 		$$ \s(R, \Delta) := \lim_{e \to \infty} \frac{a_{e} ^{\Delta}}{p^{ed}} $$
 		where $d$ is the Krull dimension of $R$. This limit exists by \cite{BlickleSchwedeTuckerFSigPairs1}.
 	\end{defi}

\begin{defi} \label{Fsigofsectionrings}
    Let $(X, \Delta)$ be a normal, projective pair over $k$ of positive dimension and let $L$ be an ample divisor over $X$. Let $(S, \fm)$ denote the $\N$-graded section ring of $X$ with respect to $L$:
    \[ S(X, L) = \bigoplus _{m \geq 0} H^0 (X, \cO (mL)). \]
    Moreover, let $\tilde{\Delta}$ denote the cone over $\Delta$ with respect to $L$, which is a $\Q$-divisor over $\Spec(S)$.
    
   Then we define the $F$-signature of $(X, \Delta)$ with respect to $L$ as:
  \[ \s(X, \Delta;L) = \s(S_\fm, \tilde{\Delta}),\]
   where $S_\fm$ denotes the localization of $S$ at $\fm$ and the $F$-signature in the local setting is as defined in \Cref{F-sigdfn}.

   Let $(X, \Delta)$ be a log Fano pair over $k$ and $r$ denote a positive integer divisible by the index of $(X, \Delta)$. Let $L$ denote the ample divisor $-r(K_X + \Delta)$. Then, the \emph{$F$-signature} of $(X, \Delta)$ is defined to be
    $$ \s(X, \Delta) := r \, \s (X, \Delta; L) $$
    where $\s$ denotes the $F$-signature of pairs, as defined in \Cref{Fsigofsectionrings}.
\end{defi}

\subsection{$F$-splitting, $F$-regularity and global $F$-adjunction} Let $k$ denote a perfect field of characteristic $p>0$.

\begin{defi}[Sharp $F$-splitting] \cite[Definition 3.1]{SchwedeSmithLogFanoVsGloballyFRegular} \label{defnsharpFsplitting}  Let $X$ be a normal variety over $k$ and $\Delta \geq 0$ be an effective $\Q$-divisor. The pair $(X, \Delta) $ is said to be \emph{globally sharply $F$-split} (resp. \emph{locally} sharply $F$-split) if there exists an integer $e > 0$ such that the natural map 
\begin{equation*} 
\cO_X \to F^e_* \cO_X( \lceil (p^e -1) \Delta \rceil)
 \end{equation*}
splits (resp. splits locally) as a map of $\cO_{X}$-modules.
A normal variety $X$ is said to globally $F$-split if the pair $(X, 0)$ is globally sharply $F$-split. 
\end{defi}

\begin{defi}[$F$-regularity]\label{defn.GlobFreg} \cite[Definition 3.1]{SchwedeSmithLogFanoVsGloballyFRegular} Let $X$ be a normal variety over $k$ and $\Delta \geq 0$ be an effective $\Q$-divisor. The pair $(X, \Delta) $ is said to be \emph{globally $F$-regular} (resp. locally strongly $F$-regular) if for any effective Weil divisor $D$ on $X$, there exists an integer $e \gg 0$ such that the natural map 
\begin{equation*} 
\cO_X \to F^e_* \cO_X( \lceil (p^e -1) \Delta \rceil + D)
 \end{equation*}
splits (resp. splits locally) as a map of $\cO_{X}$-modules.
A normal variety $X$ is said to globally $F$-regular if the pair $(X, 0)$ is globally $F$-regular. Similarly, a ring $R$ is called strongly $F$-regular if the pair $(\Spec(R), 0)$ is locally (equivalently, globally) strongly $F$-regular.
\end{defi}

\begin{rem} When $X= \Spec(R)$ is an affine variety and $\Delta$ is an effective $\Q$-divisor, the pair $(X, \Delta)$ being globally $F$-regular (resp. globally sharply $F$-split) is equivalent to the pair $(R, \Delta)$ being locally strongly $F$-regular (resp. locally sharply $F$-split) \cite{SchwedeSmithLogFanoVsGloballyFRegular}.
\end{rem}

\begin{rem}
    A central result of Schwede and Smith is that if $X$ is a globally $F$-regular variety over an algebraically closed field $k$, then $X$ is of Fano type \cite{SchwedeSmithLogFanoVsGloballyFRegular}. Moreover, there exists a log Fano pair $(X,\Delta)$ such that $(X, \Delta)$ is globally $F$-regular.
\end{rem}

\begin{rem}\label{FregularityremarK}
A local domain $R$ is strongly $F$-regular if and only if its $F$-signature $\s(R)$ is positive \cite{AberbachLeuschke}. More generally, a pair $(R, \Delta)$ (where $R$ is normal, local) is strongly $F$-regular if and only if the $F$-signature $\s(R, \Delta)$ (\Cref{F-sigdfn}) is positive \cite[Theorem 3.18]{BlickleSchwedeTuckerFSigPairs1}.
\end{rem}

\begin{lem} \label{lem:GFRunderbirationalmap} 
 Let $f: Y \dashrightarrow Y'$ be a birational map between two normal projective varieties over a perfect field $k$ of characteristic $p>0$. Let $\Delta$ be an effective $\Q$-divisor over $Y$ and let $\Delta' = f_* \Delta$.
\begin{enumerate}
\item Assume that $f$ is an isomorphism in codimension $1$, i.e., there exist open subsets $U \subset Y$ and $U' \subset Y'$ such that the codimension of $Y \setminus U$ and $Y' \setminus U'$ is at least two in $Y$ and $Y'$ respectively, and such that $f$ restricts to an isomorphism $f|_U : U \to U'$. Then, $Y$ is globally $F$-regular if and only if $Y'$ is globally $F$-regular. Moreover, if $(Y, \Delta)$ is a globally $F$-regular pair, then so is $(Y', f_* \Delta)$.

\item Suppose $f$ is a birational morphism and $(Y, \Delta$) is globally $F$-regular. Then, $(Y', f_* \Delta) $ is globally $F$-regular.

\item Suppose that $K_Y+\Delta \leq f^*(K_{Y'}+\Delta')$. Then, $(Y',\Delta')$ is globally $F$-regular  if and only if $(Y,\Delta)$ is.
\end{enumerate}
\end{lem}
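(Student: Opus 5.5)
The key observation is that global $F$-regularity of a pair on a normal projective variety is a statement about splittings of maps $\cO_X \to F^e_*\cO_X(\lceil (p^e-1)\Delta\rceil + D)$ of reflexive sheaves. Such a splitting is the same as a global section of a reflexive $\mathcal{H}om$ sheaf, namely of $F^e_*\cO_X\big((1-p^e)K_X - \lceil (p^e-1)\Delta\rceil - D\big)$ by duality for the finite morphism $F^e$, mapping to a unit under evaluation at $1$. On normal varieties, global sections of reflexive sheaves, and the evaluation map, only depend on the complement of a closed subset of codimension $\geq 2$. So I will prove all three parts by comparing these section spaces on suitable big open sets, using that every effective divisor on one side can be dominated by (the transform of) an effective divisor on the other side.

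For part (1), take $U, U'$ as in the statement and identify them via $f$. Given an effective Weil divisor $D'$ on $Y'$, let $D$ be the closure of $f^{-1}_*(D'|_{U'})$; similarly in reverse. A splitting of $\cO_{Y} \to F^e_*\cO_Y(\lceil (p^e-1)\Delta\rceil + D)$ restricts to a splitting on $U \cong U'$. Because all sheaves involved are reflexive and $\operatorname{codim}(Y'\setminus U')\ge 2$, it extends uniquely to a map $F^e_*\cO_{Y'}(\lceil (p^e-1)\Delta'\rceil + D') \to \cO_{Y'}$. The composite with the natural map is an endomorphism of $\cO_{Y'}$ equal to the identity on $U'$, hence the identity. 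This gives the statement for pairs, and, applied with $\Delta=0$ in both directions, the equivalence for $Y, Y'$.

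For part (2), let $U' \subset Y'$ be the big open set over which $f$ is an isomorphism; it has complement of codimension $\geq 2$ since $Y'$ is normal. For effective $D'$ on $Y'$, take $D$ to be the strict transform of $D'$. Restricting a splitting on $Y$ to $f^{-1}(U')\cong U'$ and extending as in part (1) gives the required splitting on $Y'$. Alternatively, push forward: $f_*\cO_Y = \cO_{Y'}$, and $f_*$ of the map agrees with the relevant map on $Y'$ on $U'$, hence up to reflexive hull.

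Part (3) is the substantive one, and the main obstacle is the direction from $Y'$ to $Y$, since $Y$ may contain $f$-exceptional divisors on which the hypothesis must be used. I will pass to a common resolution $g\colon W\to Y$, $g'\colon W\to Y'$, although the real content is the sections-of-$\mathcal{H}om$ description. Splittings for $(Y,\Delta)$ with divisor $D$ correspond to sections of $\cO_Y((1-p^e)(K_Y+\Delta) - D)$, up to rounding, that are nonvanishing in the trace sense at the generic point. The inequality $K_Y+\Delta \le f^*(K_{Y'}+\Delta')$ gives
\[ (1-p^e)(K_Y+\Delta) \ \geq\ f^*\big((1-p^e)(K_{Y'}+\Delta')\big), \]
so every section on $Y'$ pulls back to a section on $Y$. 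Here I will handle the rounding by absorbing it into $D$, and any exceptional part of $D$ by choosing $D'$ with $f^*D' \ge D$, which is possible since $f$ is birational. The converse direction, from $Y$ to $Y'$, follows as in part (2) by pushing forward on the big open set where the map is an isomorphism. Care is needed to make "$f$" meaningful when it is only a rational map; working on $W$, where the comparison of divisors is via $g^*, g'^*$, resolves this.
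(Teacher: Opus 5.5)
Your parts (1) and (2) follow the paper's argument. The paper records that $(Y,\Delta)$ is globally $F$-regular if and only if $(U,\Delta|_U)$ is, for any big open $U$: restrict splittings, and extend them by reflexivity. It then applies this on $U\cong U'$, and on the big open subset of $Y'$ over which the morphism $f$ is an isomorphism. For the first claim of (1) the paper cites Schwede--Smith, Proposition 6.3, while you get it from the same argument with $\Delta=0$.

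The real difference is (3). The paper simply cites Hacon--Xu, Proposition 2.11. You reprove it by pulling back the section of $\omega^{1-p^e}$ attached to a splitting. That is the standard argument, it makes the lemma self-contained, and it is cleaner than your ``up to rounding'' sketch suggests:
\begin{itemize}
\item Take a splitting on $Y'$ for an effective \emph{Cartier} $D'$ with $f^*D'\geq D$. You need $D'$ Cartier, not merely effective, for $f^*D'$ to make sense.
\item Let $\Gamma'\geq\lceil (p^e-1)\Delta'\rceil+D'$ be the divisor of this splitting, so that $(p^e-1)K_{Y'}+\Gamma'\sim 0$ is Cartier.
\item Set $\Gamma:=f^*\big((p^e-1)K_{Y'}+\Gamma'\big)-(p^e-1)K_Y$. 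This is an integral divisor, and the hypothesis gives $\Gamma\geq (p^e-1)\Delta+f^*D'$.
\item Hence $\Gamma\geq\lceil (p^e-1)\Delta\rceil+D$, with no rounding left to absorb, and the same map of function fields is a splitting on $Y$.
\end{itemize}
Note that what singles out splittings is $\phi(F^e_*1)=1$, not nonvanishing at the generic point. This condition is preserved because the map on function fields does not change.

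You should, however, drop the claim that working on a common resolution $W$ handles arbitrary birational maps. The implication from $(Y,\Delta)$ to $(Y',\Delta')$ uses a big open subset of $Y'$ on which $f^{-1}$ is an isomorphism. Such a set exists only when $f$ is a birational contraction, and without it (3) is false.

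For a counterexample, let $b\colon Y'\to Y=\PP^2$ be the blow-up of nine general points, $f=b^{-1}$, and $\Delta=0$. On $W=Y'$ we have $b^*K_{\PP^2}\leq K_{Y'}=b^*K_{\PP^2}+\sum E_i$, and $\PP^2$ is globally $F$-regular. But $-K_{Y'}$ is not big, so $Y'$ is not globally $F$-regular. The implication from $Y'$ to $Y$ does still go through on $W$.

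The paper only applies (3) to birational morphisms: the divisorial steps of the MMP in the inductive step, and the ample-model map in the surface case. There your argument is complete, so you should state and prove (3) for morphisms or birational contractions.
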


\begin{proof}
We first check the following well-known fact, but we include a proof for completeness. If $Y$ is a normal variety and $U \subset Y$ is a big open subset, i.e., $U$ is a non-empty open subset of $Y$ such that the codimension of $Y \setminus U$ in $Y$ is at least two. Then, $(Y, \Delta)$ is globally $F$-regular if and only if $(U, \Delta|_U)$ is globally $F$-regular. Moreover, the only if direction holds for any open subset $U \subset Y$. Suppose $D_U$ is an effective Weil-divisor on $U$ and let $D$ be its closure in $X$. Then for any $e$ such that the map $\cO_X \to F_* ^e \cO_X( \lceil (p^e -1) \Delta \rceil + D)$ splits (which exists if $(X, \Delta)$ is globally $F$-regular), the restriction $\cO_U \to F_* ^e \cO_U( \lceil (p^e -1) \Delta|_U \rceil + D_U) $ also splits.

        Conversely, suppose $(U, \Delta|_U)$ is globally $F$-regular and let $D$ be a fixed effective Weil divisor on $X$. Consider an $e \gg 0 $ and a splitting $\varphi: F_* ^e \cO_U( \lceil (p^e -1) \Delta|_U \rceil + D|_U) \to \cO_U$. Since both sheaves are reflexive, the splitting $\varphi$ extends to a map on $X$ giving a splitting of the map $\cO_X \to F_* ^e \cO_X (\lceil (p^e -1 ) \Delta \rceil + D)$.
    \begin{enumerate}
        \item The first statement is proved in \cite[Proposition 6.3]{SchwedeSmithLogFanoVsGloballyFRegular}.
        To prove the second statement in the lemma using the above observation, we may replace $Y'$ by $U'$ and the lemma follows since $f$ restricts to an isomorphism over $U'$ by assumption.

        \item Since $f$ is a birational morphism, there is a big open set $W $ of $Y'$ such that $f$ restricts to an isomorphism over $W$. Now, the global $F$-regularity of $(W, \Delta' |_W)$ follows from the global $F$-regularity of $(Y, \Delta)$. 

        \item This is proved in \cite[Proposition 2.11]{HaconXuThreeDimensionalMinimalModel}. \qedhere
        \end{enumerate}
\end{proof}

\begin{defi}
    A pair $(X, \Delta)$ is said to be globally \emph{purely} $F$-regular if $S = \lfloor \Delta \rfloor $ is a prime divisor and if for any effective Weil divisor $D$ not containing $S$ in the support, there exists an integer $e \gg 0$, such that, the natural map 
\begin{equation*} 
\cO_X \to F^e_* \cO_X( \lceil (p^e -1) \Delta \rceil + D)
 \end{equation*}
splits as a map of $\cO_{X}$-modules.
\end{defi}

The following lemma is a key tool for the inductive arguments in Section 6.
\begin{lem} \cite[Lemma 2.7]{CTW17} \label{lem:Fadjunction}
    Let $(X, \Delta = S + B)$ be a pair such that $X$ is projective over $k$, $S$ is a prime divisor on $X$ and $\lfloor B \rfloor  = 0$. Assume that $-(K_X + S + B)$ is ample.
    Let $\nu: S' \to S$ be the normalization map and $B_{S'}$ be defined by adjunction (see \cite{das_different_different_different}):
    \[ K_{S'} + B_{S'}  = \nu^*((K_X + S+ B)|_S).  \]
    Then, the pair $(S', B_{S'})$ is globally $F$-regular if and only if the pair $(X, S + B)$ is globally purely $F$-regular. And in that case, $S$ is already normal and hence $\nu$ is an isomorphism. 
\end{lem}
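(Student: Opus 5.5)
The proof splits into the two directions of the equivalence, and each reduces to a splitting statement on one side that we transport to the other. The main tool is the Frobenius trace on $S$.

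\textbf{From $(X,S+B)$ to $(S',B_{S'})$.} Since $-(K_X+S+B)$ is ample, we may choose $e$ and a divisor $L$ with $(1-p^e)(K_X+S+B)\sim L$ ample, after clearing denominators in $(p^e-1)(K_X+\Delta)$. Then
\[
\Hom_{\cO_X}\bigl(F^e_*\cO_X(\lceil (p^e-1)(S+B)\rceil + D),\cO_X\bigr)\cong H^0\bigl(X,F^e_*\cO_X((1-p^e)(K_X+S+B)-D)\bigr)
\]
up to rounding. Restriction to $S$ and the $F$-adjunction formalism of Schwede (\cite{SchwedeFAdjunction}; see also \cite{das_different_different_different}) give a compatible map
\[
\Hom_{\cO_X}\bigl(F^e_*\cO_X(\lceil (p^e-1)(S+B)\rceil+D),\cO_X\bigr)\to\Hom_{\cO_{S'}}\bigl(F^e_*\cO_{S'}(\lceil (p^e-1)B_{S'}\rceil+D|_{S'}),\cO_{S'}\bigr).
\]
This map sends splittings to splittings, because $1\in\cO_X$ restricts to $1$ on $S'$ and the construction is compatible with the normalization map. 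Given an effective divisor $D'$ on $S'$, we choose an effective $D$ on $X$ not containing $S$ with $D|_{S'}\geq D'$; this is possible by taking $D$ in a sufficiently ample linear system. A splitting for $(X,S+B,D)$ then yields one for $(S',B_{S'},D')$. The same argument shows that the conductor is trivial: $\cO_X\to\nu_*\cO_{S'}$ being compatible with a splitting of $\cO_S$ forces $S$ to be $F$-split, hence weakly normal. A standard argument then gives normality, following \cite[Lemma 2.7]{CTW17}.

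\textbf{From $(S',B_{S'})$ to $(X,S+B)$.} This is the harder direction: a splitting on $S'$ must be lifted to $X$. Fix $D$ not containing $S$. Twisting by $M=(1-p^e)(K_X+S+B)-D$ gives the exact sequence
\[
0\to\cO_X(M-S)\to\cO_X(M)\to\cO_S(M|_S)\to 0,
\]
and we must show that the trace map $H^0(X,\cO_X(M))\to H^0(S,\cdot)$ hits the section corresponding to a given splitting $\psi$ on $S'$. The obstruction lies in $H^1(X,\cO_X(M-S))$, so we will not argue by vanishing, which can fail in characteristic $p$. Instead we will use the stabilized images $S^0$, i.e.\ the Frobenius-stable sections in the sense of \cite{CasciniGongyoSchwedeUniformBounds,SchwedeFAdjunction}. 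For $e\gg0$ the images of $H^0(X,F^e_*(\cdots))$ in $H^0(X,\cO_X(M))$ stabilize, and surjectivity onto $S^0$ of the restriction holds by \cite[Prop.\ 5.3]{SchwedeFAdjunction}-type arguments, using that $-(K_X+S+B)$ is ample and that $(X,S+B)$ is purely $F$-regular near $S$.

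That last hypothesis will be deduced from the global $F$-regularity of $(S',B_{S'})$ via local $F$-inversion of adjunction. Global $F$-regularity of $(S',B_{S'})$ gives $S^0=H^0$ for the relevant line bundle, so the splitting on $S'$ lies in $S^0$ and therefore lifts to a map $\varphi$ on $X$. The element $\varphi(F^e_*1)$ restricts to $1$ on $S$, so it is a unit near $S$. We then handle the locus away from $S$ by perturbation: by ampleness there is a global splitting with an extra $D$ supported away from $S$.

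I expect the main obstacle to be this lifting step, and in particular the separate treatment of the locus away from $S$.
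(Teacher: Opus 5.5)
Your outline has the right shape: restrict splittings to $S$ in one direction, lift them from $S$ in the other. But it silently assumes something the statement does not. You choose $e$ with $(1-p^e)(K_X+S+B)$ Cartier ``after clearing denominators'', and this is impossible whenever $p$ divides the index of $K_X+S+B$, which the lemma does not exclude. Every later step uses it:
\begin{itemize}
\item the identification of $\Hom_{\cO_X}(F^e_*\cO_X(\lceil (p^e-1)(S+B)\rceil+D),\cO_X)$ with sections of a Cartier twist $M$;
\item the sequence $0\to\cO_X(M-S)\to\cO_X(M)\to\cO_S(M|_S)\to 0$;
\item the comparison of $F$-different and different;
\item local $F$-inversion of adjunction;
\item the $S^0$-surjectivity.
\end{itemize}
Handling this is exactly the content of the paper's proof. \cite[Lemma 2.7]{CTW17} gives only the globally $F$-split version, and only when $(p^e-1)(K_X+S+B)$ is Cartier. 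The $F$-regular statement is deduced via the criterion that $(S',B_{S'})$ is globally $F$-regular if and only if for every $H\ge 0$ there are $\epsilon>0$ and $H'\ge \epsilon H$ with $(p^e-1)(K_{S'}+B_{S'}+H')$ Cartier and $(S',B_{S'}+H')$ globally $F$-split.

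The same perturbation is how the test divisor $D$ has to enter. Your sentence that global $F$-regularity ``gives $S^0=H^0$ for the relevant line bundle'' only makes sense for a perturbed pair $(S',B_{S'}+\epsilon D|_{S'})$. Lifting its splitting then requires $-(K_X+S+B+\epsilon D)$ ample and index prime to $p$. Your reason for avoiding vanishing is also a misdiagnosis. Once the index is prime to $p$, $(1-p^e)(K_X+S+B)$ is a growing multiple of a fixed ample Cartier divisor, so $H^1(X,\cO_X(M-S))=0$ for $e\gg 0$ by Serre vanishing. This is how the $S^0$-surjectivity you quote is proved in the first place.

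Two further points.

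\emph{The obstacle you flag is not one.} $\varphi(F^e_*1)$ is a global section of $\cO_X$ on a normal projective variety, hence a constant. Since it restricts to $1$ on $S$, it equals $1$, and nothing needs to be done away from $S$. The perturbation you propose there is also unjustified: it asserts the existence of exactly the kind of global splitting you are trying to construct.

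\emph{Your normality argument fails.} $F$-split implies weakly normal but not normal; a node such as $k[x,y]/(xy)$ is $F$-split. What is needed is the $F$-regular part. The conductor of $S$ is compatible with every $p^{-e}$-linear map compatible with $S$. So a splitting with $D$ passing through the non-normal locus of $S$ (but not containing $S$) would force $1$ into the preimage of the conductor ideal. In the lifting direction you also need $S$ normal before a splitting on $S'$ can be pushed through $S$ to $X$. You only get this implicitly from local $F$-inversion of adjunction, which in the forms you would cite is again stated under the index-prime-to-$p$ hypothesis.
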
 

\begin{proof}
    \cite[Lemma 2.7]{CTW17} proves this statement in the globally $F$-split case under the additional assumption that $(p^e -1) (K_X + S +B)$ is a Cartier divisor for some $e >0$. The $F$-regular case follows from this by the following well-known criterion: the pair $(S', B_{S'})$ is globally $F$-regular if and only if for any effective divisor $H \geq 0 $, there exists $\epsilon > 0$ (depending on $H$) and another divisor $H' \geq  \epsilon H$  such that $(p^e -1) (K_{S'} + B_{S'} + H')$ is Cartier for some $e >0$ and $(S', B_{S'} + H')$ is globally $F$-split.
\end{proof}

\subsection{The $\FA$-invariant.}
Now we recall a Frobenius analog of the $\alpha$-invariant for log Fano pairs.

\begin{defi}{\cite[Lemma 3.7]{PandeFrobeniusAlpha}}  \label{alphadefinitions}
 Let $(X, \Delta)$ be a projective, globally $F$-regular pair and $L$ be an ample Cartier divisor on $X$. Then, $\FA(X, \Delta; L)$ is equal to the supremum of any of the following sets:
 \begin{enumerate}
     \item The set of rational numbers $\lambda \geq 0$ such that the pair $(S, \tilde{\Delta} + \frac{\lambda}{n} D)$ is sharply $F$-split for every $n \in \N$ and every effective divisor $D \sim nL$.
     \item The set of $\lambda \geq 0$ such that the pair $(S, \tilde{\Delta} + \frac{\lambda}{n} D)$ is strongly $F$-regular for every $n \in \N$ and every effective divisor $D \sim nL$.
     \item The set of $\lambda \geq 0$ such that the pair $(X, \Delta +\frac{\lambda}{n} D)$ is globally sharply $F$-split for every $n \in \N$ and every effective divisor $D \sim nL$.
     \item The set of $\lambda \geq 0$ such that the pair $(X, \Delta+\frac{\lambda}{n} D)$ is globally $F$-regular for every $n \in \N$ and every effective divisor $D \sim nL$.
     \item The set of $\lambda \geq 0$ such that the pair $(X, \Delta + \lambda D)$ is globally $F$-regular for every effective $\Q$-divisor $D \sim_\Q L$.
 \end{enumerate}
\end{defi}

\begin{defi} \label{FAXdfn}
    Let $(X, \Delta)$ be a globally $F$-regular log Fano pair over $k$ and $r$ be a positive integer divisible by the index of $(X, \Delta)$. Let $L = -r(K_X + \Delta)$ denote the corresponding ample divisor. Then, the $\FA$-\emph{invariant} of $(X, \Delta)$ is defined to be
    $$ \FA (X, \Delta) := r \, \FA (X, \Delta; L ) $$
    where $\FA$ denotes the $\FA$-invariant as defined in Definition~\ref{alphadefinitions}.
\end{defi}

\begin{rem}
    The $\FA$-invariant of any globally $F$-regular log Fano pair $(X, \Delta)$ is positive \cite[Theorem 4.6]{PandeFrobeniusAlpha}. Moreover, it is shown in \cite[Theorem 5.5]{PandeFrobeniusAlpha} that in this case we have $\FA(X, \Delta)  \leq 1/2$.
\end{rem}

\begin{rem}
    The motivation relating the $\FA$-invariant and the complex $\alpha$-invariant (\Cref{def:complexalpha}) comes from the following theorem due to Hara and Yoshida \cite{HaraYoshidaGeneralizationOfTightClosure}, and Takagi \cite{TakagiInterpretationOfMultiplierIdeals}: Let $(X, \Delta)$ be a complex log Fano pair and $D \sim _\Q -(K_X + \Delta)$ be an effective divisor over $X$ and let $0 < t < 1$ be a fixed rational number. Then $(X, \Delta + tD)$ is KLT if and only if for all $p \gg 0$, the characteristic $p$ models $(X_p, \Delta_p + t D_p)$ are globally $F$-regular.
\end{rem}

\section{Lim-inf $F$-signature}

In this section, we will define the lim-inf $F$-signature of a complex KLT singularity and formulate the precise versions of the conjecture on the positivity of the lim-inf $F$-signature. We will focus on the case of log Fano pairs and show that the global case implies the local case of KLT singularities (\Cref{thm:localtoglobalreduction}).

\subsection{Definition of the lim-inf $F$-signature}
\begin{defi}[Reduction to characteristic $p \gg 0$ of a log Fano pair] \label{reductindfn}
    Let $(X, \Delta)$ be a log Fano pair over $\C$. A \emph{family of characteristic $p \gg 0$ models} of $(X, \Delta)$, denoted by $(X_A, \Delta_A) \to \Spec(A)$, consists of the following data:
    \begin{enumerate}
        \item A finitely generated smooth $\Z$-algebra $A \subset \C$.
        \item A normal projective scheme $X_A$, that is flat over $\Spec(A)$ and such that the fiber $X_A \times_{A} \Spec(\C) $ is  isomorphic to $X$. Let $\phi: X_A \times_A \C \to X$ be an isomorphism.
        \item An effective $\Q$-Weil divisor $\Delta_A$ on $X_A$ that is flat over $A$ and such that we have  $\phi(\Delta _A \times_A \C) = \Delta$. We also assume that $K_{X_A} + \Delta_{A}$ is $\Q$-Cartier.
        \item Assume that $-(K_{X_A} + \Delta_A)$ is ample over $\Spec(A)$.
    \end{enumerate}
    Given a family of characteristic $p \gg 0$ models of $(X, \Delta)$, for any prime $p \gg 0$, the \emph{reduction to characteristic $p$} of $(X, \Delta)$ refers to a fiber
    \[ (X_{\fm}, \Delta_\fm) :=  (X_A \times_A \Spec(A/\fm), \Delta_A \times_A \Spec(A/\fm) ),\] where $\fm \subset A$ is any maximal ideal whose residue characteristic is $p$. Recall that in that case $A/\fm$ is a finite field of characteristic $p$.
\end{defi}  

\begin{rem}
    Given $(X_A, \Delta_A)$, a family of characteristic $p \gg 0$ models of a log Fano pair $(X, \Delta)$, we may assume, using \cite[Theorem~5.1]{SchwedeSmithLogFanoVsGloballyFRegular} that (after possibly replacing $A$ by a finitely generated extension) for all maximal ideals $\fm \subset A$, the fiber $(X_{\fm}, \Delta_\fm)$ is a globally $F$-regular log Fano pair. In particular, we may assume that all fibers of $ X_A \to \Spec(A)$ are geometrically normal. \end{rem}

Let $(X, \Delta)$ be a log Fano pair over $\C$ and let $(X_A, \Delta_A) \to \Spec(A)$ be a family of characteristic $p \gg 0$ models of $(X, \Delta)$. Note that if the ring $A$ is not a one dimensional ring, then the reduction to characteristic $p \gg 0$ of $(X, \Delta)$ over $A$ is not necessarily uniquely defined. Rather, for all prime numbers $p \gg 0$, we get a family of globally $F$-regular log-Fano pairs $ (X _{p}, \Delta_p):= (X_A \times_{A} \Spec{(A/pA)},  \Delta |_{X_p})$ over $ \Spec(A/pA)$ where $\Delta_p$ denotes the restriction of $\Delta$ to the normal subscheme $X_p$. For any maximal ideal $\fm$ of $A$ that lies over $p$, we call the fiber $X_{\kappa(\fm)} = X \times_{A} \Spec{(A/\fm)}$ a \emph{characteristic $p$ model} of $X$.

\begin{defi}[The lim-inf $F$-signature and $\FA$-invariant of complex log Fano pairs] \label{liminfFsigdfn}
    Let $(X, \Delta)$ be a log Fano pair over $\C$. Consider a family of characteristic $p$ models $(X_A, \Delta_A) \to \Spec(A)$ of $(X, \Delta)$. Then, for each prime number $p \gg 0$,
    \begin{itemize}
        \item we define the \emph{$F$-signature in characteristic $p$}, denoted by $\s_{p}(X, \Delta)$ as
    $$ \s_{p}(X, \Delta) \,  := \sup_{\fm \subset A/pA \,  \text{maximal} } \s(X_{\fm}, \Delta_\fm). $$
    And we define the \emph{lim-inf $F$-signature} of $(X,\Delta)$ to be:
    $$ \s_{-} (X, \Delta) = \liminf_{p \to \infty} \s_{p}(X, \Delta) .$$

    \item Similarly, we define the \emph{$\FA$-invariant in characteristic $p$}, denoted by $\alpha_{p}(X, \Delta)$ as
    $$ \alpha_{p}(X, \Delta) \,  := \sup_{ \fm \subset A/pA \, \text{maximal} } \alpha_{F}(X_{\fm}, \Delta_\fm). $$
     And we define the \emph{lim-inf $\alpha_F$-invariant} of $(X, \Delta)$ to be:
    $$ \alpha_{-} (X, \Delta) = \liminf_{p \to \infty} \alpha_{p}(X, \Delta).$$    
    \end{itemize}
\end{defi}

%\begin{defi}
 %   Let $X$ be a $\Q$-Fano variety over $\C$. Let $s_{p}(X)$ denote the characteristic $p$ $F$-signature (computed using any family of models of models for $X$). 
%\end{defi}

%\begin{defi}
 %   Let $X$ be a $\Q$-Fano variety over $\C$ and $X_A \to \Spec(A)$ be a family of models for $X$ over a smooth $\Z$-algebra $A$. Let $\alpha_{p}(X_A)$ denote the characteristic $p$ $\alpha_{F}$-invariant of $X_A$.
%\end{defi}

Note that by definition, the lim-inf $F$-signature and the $\FA$-invariant are both computed using a chosen family of characteristic $p$-models of $(X,\Delta)$. We now check that these numerical invariants are indeed independent of the models chosen.

\begin{lem}
    Let $(X, \Delta)$ be a log Fano pair over $\C$. Consider two families of characteristic $p$ models $(X_A, \Delta_A) \to \Spec(A)$ and $(X_B, \Delta_B) \to \Spec(B)$ of $(X, \Delta)$ over finitely generated smooth $\Z$-algebras $A$ and $B$. Then, for any prime $p \gg 0$,
    \begin{enumerate}
        \item we have
\[ \sup_{ \fm \subset A/pA \,  \text{maximal} } \s(X_{\fm}, \Delta_\fm)  = \sup_{ \mathfrak{n} \subset B/pB \,  \text{maximal} } \s(X_{\mathfrak{n}}, \Delta_\mathfrak{n}). \]
        \item Similarly,
        \[  \sup_{ \fm \subset A/pA \, \text{maximal} } \alpha_{F}(X_{\fm}, \Delta_\fm) = \sup_{ \mathfrak{n} \subset B/pB \, \text{maximal} } \alpha_{F}(X_{\mathfrak{n}}, \Delta_\mathfrak{n}).  \]
    \end{enumerate}
\end{lem}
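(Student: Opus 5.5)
The plan is to compare both families through a common refinement. Both $X_A\times_A\C$ and $X_B\times_B\C$ are identified with $X$, with $\Delta$ matching. The isomorphism between the generic fibers, and its inverse, are defined by finitely many equations with coefficients in $\C$. So there is a finitely generated $\Z$-algebra $D\subset\C$ containing $A$ and $B$ over which they are already defined. After inverting finitely many elements we may take $D$ smooth over $\Z$. Spreading out then gives an isomorphism
\[ (X_A,\Delta_A)\times_A D\;\cong\;(X_B,\Delta_B)\times_B D \]
over $\Spec(D)$, compatible with the divisors. Shrinking $\Spec D$ further only removes finitely many primes $p$, which is harmless since the statement is for $p\gg0$.

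The key step is to show that base change along $A\to D$ (and likewise $B\to D$) does not change $\sup_{\fm\subset A/pA}$ for $p\gg0$. Let $\fn\subset D$ be a maximal ideal over $\fm\subset A$. Then $\kappa(\fn)\supset\kappa(\fm)$ is a finite extension of finite fields, hence separable, and
\[ (X_\fn,\Delta_\fn)=(X_\fm,\Delta_\fm)\times_{\kappa(\fm)}\kappa(\fn). \]
We need the invariance of $\s$ and $\FA$ under finite separable extension $k\subset k'$ of perfect fields. For the $F$-signature, the section ring base changes as $S'=S\otimes_k k'$, and $S'_{\fm'}$ is flat and local over $S_\fm$ with $\fm S'=\fm'$ (étale base change). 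Splitting ideals satisfy $I_e^{\Delta}S'=I_e^{\Delta'}$, since $\Hom(F^e_*(-),-)$ commutes with flat base change, and $F^e_*S'=F^e_*S\otimes_S S'$ because the extension is étale. Lengths are therefore preserved, so $\s$ is unchanged. For $\FA$, we use characterization (3) of \Cref{alphadefinitions}. Global sharp $F$-splitting of $(X,\Delta+\frac{\lambda}{n}D)$ descends and ascends along $k\to k'$, because $\Hom$ groups base change and a splitting exists iff the evaluation map hits $1$, which is a linear-algebra condition. This gives $\FA(X_{k'};L)\ge\FA(X_k;L)$ directly.

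The main obstacle is the reverse inequality for $\FA$. Divisors $D'\sim nL$ defined over $k'$ need not descend to $k$. I would first handle this with a norm/Galois argument. Take $\sigma$ ranging over $\mathrm{Gal}(k'/k)$ of order $m$, and consider $\sum_\sigma \sigma(D')$, which is defined over $k$ and $\sim mnL$. If $(X_k,\Delta+\frac{\lambda}{mn}\sum\sigma D')$ is globally $F$-regular, this does not immediately give the needed statement for $D'$ alone. A cleaner route avoids comparing individual fibers altogether. Since every $\fm$ has some $\fn$ above it, and every $\fn$ lies over some $\fm$, it suffices to show $\FA(X_\fn)\ge\FA(X_\fm)$ and the reverse inequality $\sup_\fm\ge\sup_\fn$. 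For the reverse inequality, note that $\kappa(\fn)$ is itself a residue field of some maximal ideal of $A[t]$ obtained by adjoining a primitive element. If that route fails, I would instead prove that $\FA$ is invariant under arbitrary extension of perfect base field, by showing that the failure of $F$-regularity for some $(D,\lambda)$ is a constructible condition on the linear system $|nL|$ with $k$-rational points dense after finite extension.

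Combining the steps, both suprema equal the supremum over maximal ideals of $D/pD$.
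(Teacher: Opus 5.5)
There are two genuine gaps: one in the reduction to a common refinement $D$, and one in the $\FA$ part.

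\textbf{The comparison through $D$.} It rests on the claim that every maximal ideal $\fm$ of $A/pA$ lies under some maximal ideal of $D/pD$. This is false in general, because $\Spec(D)\to\Spec(A)$ is only dominant. Its image misses a closed subset, and that subset can meet every fiber $\Spec(A/pA)$. For example, with $A=\Z[x]$ and $D=\Z[x,x^{-1}]$, the point $(p,x)$ is missed for every $p$. Likewise, inverting elements to make $D$ smooth over $\Z$ does not ``only remove finitely many primes'': inverting a non-integer element removes a closed set meeting every fiber.

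Since the supremum is over all closed points of the given $A/pA$, you cannot shrink $\Spec(A)$ for free. You must show that the value at a missed point is dominated by values at points that are hit, and this is a semicontinuity statement you never supply. It is exactly the core of the paper's proof, which avoids fiber-by-fiber matching altogether. On each component of $\Spec(A/pA)$, with fraction field $K$, the paper shows $\s(X_\fm,\Delta_\fm)\le \s(X_{K^\infty},\Delta_{K^\infty})$ for every closed point $\fm$. This uses Taylor's adjunction, Polstra's lower semicontinuity, and invariance under base change with regular fibers. Conversely, by \cite[Theorem 4.10]{CRSTBertiniTheorems}, every $\lambda<\s(X_{K^\infty},\Delta_{K^\infty})$ is exceeded on a dense open set of closed points. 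So the supremum equals the generic value, and the same argument works for $\FA$ via lower semicontinuity of $\FA$ in families. Only then does the paper compare generic values for $K\subset L$.

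\textbf{The $\FA$ part.} You have only the easy inequality, and you state it in the wrong direction. Descent of splittings from $k'$ to $k$ gives $\FA(X_{k'};L)\le \FA(X_k;L)$, since over $k$ there are fewer test divisors. It does not give $\ge$.

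The inequality you actually need, for $\sup_\fm\le\sup_\fn$, is $\FA(X_{\kappa(\fn)})\ge \FA(X_{\kappa(\fm)})$. This must control divisors over $k'$ that do not descend to $k$, and none of your three suggestions proves it:
\begin{itemize}
\item You abandon the Galois-norm argument yourself.
\item The remark about $A[t]$ concerns a different family and yields no inequality.
\item The constructibility idea is not carried out.
\end{itemize}

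This invariance of $\FA$ under extension of the perfect base field is a nontrivial input. The paper imports it as \cite[Corollary 4.14]{PandeFrobeniusAlpha}, applied to $K^\infty\subset L^\infty$. Any version of your argument needs it or a substitute, because even routing through generic points reduces to this same inequality.

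Your étale base-change argument showing that $\s$ is unchanged under a finite extension of finite fields is fine. For the $F$-signature, only the surjectivity gap remains.
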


\begin{proof}
By finding a common enlargement for $A$ and $B$ if necessary, it is sufficient to prove the lemma when $A $ is a subring of $B$. Furthermore, by \cite[\href{https://stacks.math.columbia.edu/tag/00TF}{Tag 00TF}]{stacks-project} and generic smoothness (applied over $\text{Frac}(A)$, which is a field of characteristic zero), there exist localizations $A_f$ and $B_g$ such that $A_f \subset B_g$ and the induced map $\Spec(B_g) \to \Spec(A_f)$ is a smooth map of schemes. Therefore, it is sufficient to prove the lemma when $\Spec(B) \to \Spec(A)$ is a dominant and smooth map of smooth schemes over $\Z$.

Fix a prime $p \gg 0$ such that the induced map $ \Spec(B/pB) \to \Spec(A/pA)$ is dominant and smooth. Let $\Spec(A_p)$ and $\Spec(B_p)$ be fixed connected components of $\Spec(A/pA) $ and $\Spec(B/pB)$ respectively, such that there is an induced dominant map $\Spec(B_p) \to \Spec(A_p)$. Note that $A_p$ and $B_p$ are regular domains of finite type over $\mathbb{F}_p$. Moreover, we may assume that the map between the fraction fields
\[   K:= \text{Frac}(A_p) \subset L:=\text{Frac}(B_p)  \]
is separable.
\begin{enumerate}
    \item  First we claim that
\begin{equation} \label{supequalsgeneric1}   \sup_{ \fm \subset A_p \,  \text{maximal} } \s(X_{\fm}, \Delta_\fm) = \s(X_{K}, \Delta_{K}) = \s(X_{K^\infty}, \Delta_{K^\infty}), \end{equation}
and similarly,
\begin{equation} \label{supequalsgeneric2} \sup_{\mathfrak{n} \subset B_p \,  \text{maximal}} \s(X_{\mathfrak{n}} , \Delta_{\mathfrak{n}}) =\s (X_{L}, \Delta_{L}) = \s (X_{L^\infty}, \Delta_{L^\infty}) .  \end{equation}
By symmetry, it is enough to prove the claim for $A_p$. To see this, note that since $A_p$ is regular, each maximal ideal $\fm \subset A_p$ is a complete intersection (i.e., generated by a regular sequence). Thus, by \cite[Corollary 6.1]{TaylorAdjunctionforFsig}, for any maximal ideal $\fm \subset A_p$, we have $\s(X_{B_{\fm}}, \Delta_{B_\fm}) \geq \s(X_{\fm}, \Delta_\fm)$. Moreover, by the lower semicontinuity of the $F$-signature \cite[Theorem 5.7]{PolstraSemicontinuityofFsig}, we have $\s(X_K, \Delta_K) \geq \s(X_{B_{\fm}}, \Delta_{B_\fm})$. Lastly, \cite[Theorem~3.6]{CRSTBertiniTheorems} implies that 
\[ \s(X_{K}, \Delta_{K}) = \s(X_{K^\infty}, \Delta_{K^\infty}) \]
since the induced map on the section rings of $X_{K^\infty} $ and $ X_K$ has a regular fiber (isomorphic to $\Spec(K^\infty)$).
This shows that the RHS is at least the LHS in \Cref{supequalsgeneric1} and \Cref{supequalsgeneric2}.

For the reverse inequality, we use \cite[Theorem 4.10]{CRSTBertiniTheorems} to see that for any $0 < \lambda < \s(X_{K^{\infty}}, \Delta_{K^\infty})$, there exists a dense open set $U_{\lambda} \subset \Spec(A_p)$ such that for any maximal ideal $\fm \in U_{\lambda}$, we have $\s(X_{\fm}, \Delta_{\fm}) > \lambda$. Then, the reverse inequality follows by taking $\lambda \to \s(X_{L^{\infty}}, \Delta_{L^\infty}).$ This completes the proof of the claim.

Now, to prove Part (1) of the lemma using the above claim, it suffices to show that $\s(X_K, \Delta_K) = \s(X_L, \Delta_L)$. But this follows again from \cite[Theorem 3.6]{CRSTBertiniTheorems}.

\item Arguing as in the proof of Part (1), we first show that
\begin{equation} \label{supequalsgeneric3} \FA(X_{K^\infty}, \Delta_{K^\infty}) =  \sup_{ \fm \subset A_p \,  \text{maximal} } \FA(X_{\fm}, \Delta_\fm) . \end{equation}

First suppose that $\FA(X_{K^\infty}, \Delta_{K^\infty}) = 0$. Then, by \cite[Theorem~3.10]{PandeFrobeniusAlpha}, $(X_{K^\infty}, \Delta_{K^\infty})$ is not globally $F$-regular. By \Cref{supequalsgeneric1}, we know that $(X_\fm, \Delta_\fm)$ is not globally $F$-regular for any maximal ideal $\fm \subset A_p$. Therefore, by \cite[Theorem~3.10]{PandeFrobeniusAlpha} again, the LHS of \Cref{supequalsgeneric3} is zero as well. Therefore, we may assume that the locus
\[ U = \{ y \in \Spec(A_p) \, | \, (X_{y^\infty}, \Delta_{y^\infty}) \, \text{is globally $F$-regular} \} \]
is non-empty. But this locus is also open in $\Spec(A_p)$. Therefore, it is sufficient to prove \Cref{supequalsgeneric3}  when $U = \Spec(A_p)$. This follows immediately from the lower semi-continuity of the $\FA$-invariant proved in \cite[Theorem~V.2.1]{Pandethesis}.

Finally, we use \cite[Corollary 4.14]{PandeFrobeniusAlpha} to conclude that
\[ \FA(X_{K^\infty}, \Delta_{K^\infty}) = \FA(X_{L^\infty}, \Delta_{L^\infty}). \]
This completes the proof of the lemma. \qedhere
\end{enumerate}
\end{proof}

\subsection{Statements of the main conjectures.}
We now formulate two precise versions of the question first raised by Carvajal-Rojas, Schwede and Tucker \cite[Question 5.9]{CarvajalRojasSchwedeTuckerFundamentalGroups} using the lim-inf $F$-signature and the lim-inf $\FA$-invariant introduced above (\Cref{liminfFsigdfn}):
\begin{conj}\label{conj:alpha-lim-positive}
    Let $(X,\Delta)$ be a KLT log Fano pair over $\bC$. Then, both $\s_{-} (X, \Delta)$ and $\alpha_-(X,\Delta)$ are positive.
\end{conj}
There is a natural strengthening of the above conjecture that takes into account the $F$-signature of the family of characteristic $p$ reductions of $(X, \Delta)$.
\begin{conj} \label{strongerconjversion}
Let $(X, \Delta)$ be a KLT log Fano pair over $\C$. Suppose $(X_A, \Delta_A)$ is a family of characteristic $p \gg 0$ models of $(X, \Delta)$. Then, there are positive constants $C_1$ and $C_2$ and a dense open set $U \subset \Spec(A)$ such that for any closed point $y \in U$, we have
\[\s(X_y, \Delta_y) > C_1, \]
and similarly,
\[ \FA(X_y, \Delta_y) > C_2 . \]
Note that the characteristic of the residue field (and hence the meaning of the $F$-signature and the $\FA$-invariant) depends on the closed point $y$.
\end{conj}

Now, we observe that in both \Cref{conj:alpha-lim-positive} and \Cref{strongerconjversion}, the version for the $F$-signature is equivalent to the version for the $\FA$-invariant. This follows immediately from the following lemma.

\begin{lem} \label{alphavss}
Let $(X, \Delta)$ be a log Fano pair over $\C$. Set $d = \dim (X)$ and $V = \vol(-K_X - \Delta)$. Then there exist positive real-valued functions $f$ and $g$ depending only on $d$ and $V$ such that
for any $p \gg 0$ and $(X_p, \Delta_p)$ a characteristic $p$-model of $(X, \Delta)$, we have
\begin{enumerate}
    \item If $\s (X_p, \Delta_p) \geq C$ then $\FA(X_p, \Delta_p) \geq f(C)$.
    \item And conversely, if $\FA(X_p, \Delta_p) \geq C$ then $\s (X_p, \Delta_p) \geq g(C)$.
\end{enumerate}
\end{lem}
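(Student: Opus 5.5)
Both directions should come from comparing the section ring $S=S(X_p,L)$, with $L=-r(K_{X_p}+\Delta_p)$, to the cone pair $(S,\tilde\Delta)$. Two facts do the work: the splitting-ideal description of the $F$-signature, and the known comparisons between the $F$-signature, the Hilbert--Kunz multiplicity and the volume. The constants will depend only on $d=\dim X$ and $V=\vol(-K_X-\Delta)$. Since $V$, the dimension and the degree are preserved under reduction mod $p$, the same $V$ works for every $(X_p,\Delta_p)$.

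For (2), suppose $\FA(X_p,\Delta_p)\ge C$. By \Cref{alphadefinitions}(1)/(3), for every $n$ and every $D\in|nL|$ the pair $(S,\tilde\Delta+\tfrac{\lambda}{nr}\tilde D)$ is sharply $F$-split for $\lambda<C$. I would use this to show that every homogeneous element of degree $m\le c\,p^e$ lies outside the splitting ideal $I_e^{\tilde\Delta}$, where $c\approx C/r$. Take $x\in S_m=H^0(mL)$ with divisor $D_x$. Splitting of $(S,\tilde\Delta+\tfrac{c}{m}\tilde D_x)$ at level $e$ gives $\varphi$ with $\varphi(F^e_*x^{\lceil (p^e-1)c/m\rceil})=1$. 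The largest graded pieces one can reach are $x$ itself, and this is what $c\,p^e\ge m$ is arranged to guarantee. Taking $\varphi$ composed with multiplication then shows $x\notin I_e^{\tilde\Delta}$. Hence
\[ a_e^{\tilde\Delta}\ \ge\ \sum_{m\le c p^e}\dim S_m \ \approx\ \frac{V\,r^{d}\,(c p^e)^{d+1}}{(d+1)!}, \]
and with $\dim S=d+1$ this gives $\s(S_\fm,\tilde\Delta)\ge V r^d c^{d+1}/(d+1)!$. Multiplying by $r$, the factors of $r$ should cancel to give $g(C)\approx V C^{d+1}/(d+1)!$. The point needing care is that $I_e$ is not graded-homogeneous a priori. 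Since $\tilde\Delta$ is torus-invariant, however, $I_e$ is homogeneous, and a non-membership count on graded pieces is legitimate.

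For (1), suppose $\s\ge C$. I would use the inequality $\s(R,\Delta)\le \mathrm{e}_{HK}$-type bounds relating the $F$-signature to pairs. Concretely, for $D\in|nL|$, if $(S,\tilde\Delta+\tfrac{\lambda}{n}\tilde D)$ fails to be sharply $F$-split, then $\tilde D$'s equation $f$ (degree $n$) satisfies $f^{\lceil \lambda(p^e-1)/n\rceil}\in I_e^{\tilde\Delta}$ for all large $e$. Then $I_e\supseteq (f^{t})+\fm^{[\,\cdot\,]}$-type ideals, and the length count gives $a_e\le \ell(S/(f^t)+I_e)$. Degree considerations bound this by roughly $t\,n\cdot(\text{Hilbert function growth of } S/f)$, of order $\lambda\, p^e\cdot (\deg) \cdot p^{e d}$ over the colength range, which is at most of order $\lambda V r^{d}\,p^{e(d+1)}$ times a constant depending on $d$. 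A cleaner way is to use Polstra--Tucker style: $a_e\le \ell(S/I_e)\le \dim_k S/(f^t, I_e)$ with $I_e\supseteq \fm^{[p^e]}$, bounded by $t$ times $\ell$ of $S/(f,\fm^{[p^e]})\le n\,p^{ed}\cdot(\text{mult})$. This gives $C\le \s\le c_d\,\lambda V$ up to factors of $r$ that cancel, so $\FA\ge f(C):=C/(c_dV)$.

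The main obstacle I expect is (1): bounding $\ell(S/(f^t)+I_e)$ uniformly in $p$. I would first try to control it via $I_e\supseteq \fm^{[p^e]}$ together with the multiplicity $e(S)=Vr^d$ and the inequality $\ell(S/\fm^{[p^e]})\le e(S)p^{e(d+1)}$-type uniform estimates, relying on the fact that all these quantities are fixed under reduction.
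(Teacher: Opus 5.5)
You are effectively trying to reprove the two-sided estimate of \cite[Theorem 4.7]{PandeFrobeniusAlpha}. The paper simply inverts that estimate, using \cite[Corollary 4.8]{PandeFrobeniusAlpha} to make $f$ well defined.

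\textbf{Direction (1) has a genuine gap.} The only containment you use is $I_e\supseteq\fm^{[p^e]}$, and that cannot give a bound depending only on $d$ and $V$.
\begin{itemize}
\item Even when $S$ is generated in degree one, $S/\fm^{[p^e]}$ lives in degrees up to about $(d+1)p^e$. (When $S$ is not generated in degree one, $e(\fm,S)\neq Vr^d$ in general.)
\item Your own count then gives $\s(S,\tilde\Delta)\lesssim c_d\,\lambda Vr^d$, i.e.\ $\s(X_p,\Delta_p)\lesssim c_d\,r^d\,V\,\FA(X_p,\Delta_p)$. The factor $r^d$ does not cancel.
\item $r$ may be replaced by any multiple without changing $\s(X_p,\Delta_p)$ or $\FA(X_p,\Delta_p)$, so a surviving $r^d$ shows the degree cutoff is wrong.
\end{itemize}
The missing input is $S_{>N}\subseteq I_e^{\tilde\Delta}$ with $N=(p^e-1)/r$. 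This holds because $r(K_X+\Delta)\sim -L$. Hence $\Hom_S(F^e_*S(\lceil(p^e-1)\tilde\Delta\rceil),S)$ is generated by a homogeneous map sending $F^e_*S_m$ to degree $(m-N)/p^e$, which lies in $\fm$ when $m>N$. With $I_e\supseteq (f^t)+S_{>N}$ and $\deg f^t\approx\lambda p^e$ you get
\[ a_e\le\sum_{m\le N}\bigl(\dim S_m-\dim S_{m-\deg f^t}\bigr),\qquad \s(X_p,\Delta_p)\le\frac{V}{(d+1)!}\Bigl(1-\bigl(1-\FA(X_p,\Delta_p)\bigr)^{d+1}\Bigr)\le\frac{V}{d!}\,\FA(X_p,\Delta_p). \]
So $f(C)=d!\,C/V$ works. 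Adding the duality $\dim(S/I_e)_m=\dim(S/I_e)_{N-m}$, coming from the pairing $(x,y)\mapsto\Phi_e(F^e_*xy)$, recovers Pande's sharper constants.

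\textbf{Direction (2) is essentially right.} It gives $g(C)=VC^{d+1}/(d+1)!$; you only lose Pande's factor $2$ from that duality. However, your justification that $(I_e)_m=0$ for $m<cp^e$ is not valid as written.
\begin{itemize}
\item Sharp $F$-splitting of $(S,\tilde\Delta+\frac{c}{m}\tilde D_x)$ occurs at some level and its multiples, not necessarily at the $e$ you are counting.
\item If it held at every $e$, your argument would exclude every homogeneous $x$ from $I_e$, since $\lceil (p^e-1)c/m\rceil\ge 1$ always. That contradicts $S_{>N}\subseteq I_e$.
\end{itemize}
The degree bound has to come from a Frobenius-power argument. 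Take $e$ divisible enough that $(p^e-1)\tilde\Delta$ is integral. If $x\in(I_e)_m$, then $x^{p^{e'-e}}\in I_{e'}$ for every multiple $e'$ of $e$. Moreover $x^{p^{e'-e}}$ divides $x^{\lceil (p^{e'}-1)\lambda/m\rceil}$ once $\lambda\ge m/p^e$. So $(S,\tilde\Delta+\frac{\lambda}{m}\operatorname{div}x)$ fails to split at every multiple of $e$. Since splitting levels are closed under multiples, the pair is not sharply $F$-split at all, which forces $\FA(X_p,\Delta_p;L)\le m/p^e$.
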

\begin{proof}
For a log Fano pair $(X_p, \Delta_p)$ over a perfect field of characteristic $p>0$, let $\s_p$ and $\alpha_p$ denote the $F$-signature and the $\FA$-invariant of $(X_p, \Delta_p)$ respectively. Then, following \cite[Theorem 4.7]{PandeFrobeniusAlpha} we have:
$$ \frac{2 \, \alpha_p ^{d+1} \,  V}{(d+1)!} \leq \s_p \leq  \frac{2 \, \big( (\frac{1}{2})^{d+1} - (\frac{1}{2} - \alpha_p) ^{d+1} \big) \, V}{(d+1)!}. $$
Note that here we have used the fact that the dimension of $X_p$ is also $d$ and the volume of $-K_{X_p} - \Delta_p$ is $V$. Thus, we may take
\[ g(C) = \frac{2V C^{d+1}}{(d+1)!} \]
and 
\[ f(C) = \frac{1}{2} -  \left( \left(\frac{1}{2} \right)^{d+1} - \frac{C (d+1)!}{2V} \right )^{1/d+1} .\]
Note that the function $f(C)$ makes sense for all $ 0 \leq C \leq \frac{V}{2^d \, (d+1)!}$. This is sufficient because by \cite[Corollary 4.8]{PandeFrobeniusAlpha}, we always have $\s_p (X_p, \Delta_p) \leq \frac{V}{2^d (d+1)!}$. This completes the proof of the lemma.
\end{proof}

\begin{rem}
    These conjectures are known to hold in the following cases using a variety of techniques:
    \begin{enumerate}
        \item finite quotient singularities (with $\Delta = 0$) where the $F$-signature is essentially determined by the size of the finite group   \cite{HunekeLeuschkeTwoTheoremsAboutMaximal}.
        
        \item toric singularities (and $\Delta = 0$) using formulas for computing the $F$-signature in terms of the convex geometry of the defining cones  \cite{SinghFSignatureOfAffineSemigroup}, \cite{VonKorffthesis}.
        
        \item diagonal hypersurfaces using difficult computations in the Representation Ring of Han and Monsky \cite{CaminataShidelerTuckerZermanFsignatureofdiagonalhypersurfaces}. For the special case of quadric hypersurfaces, one can also study the decomposition of Frobenius push-forwards directly \cite{TrivediHKDensitfunctionofquadrics}. 
        \item full flag varieties using the $\FA$-invariant and the irreducibility of the Steinberg representations \cite{PandeFrobeniusAlpha}.
        \item Three-point configurations on $\PP^1$ i.e., $X = \PP^1$ and $\Delta $ has at most three components using explicit computation using Groebner bases \cite{BCPTlimitFsigarxiv}.
        \item Reductive quotient singularities using non-standard techniques and ultra-limits \cite{TakagiYamaguchiUniformPositivity}.
    \end{enumerate}
    In cases (1)-(5), it is additionally known that the limit of the $F$-signatures exist as $p \to \infty$.
\end{rem}

\subsection{Local to global reduction.}
Now we explain how \Cref{conj:alpha-lim-positive} and \Cref{strongerconjversion} for log Fano pairs imply the corresponding versions for arbitrary KLT singularities $(X,\Delta,x)$. For this, we recall the construction of degeneration to the orbifold cone over a Koll\'ar component:

\begin{defi} \label{dfn:Kollarcomponent}
    Let $x \in (X, \Delta)$ be a closed point, where $(X, \Delta)$ is a KLT pair over $\C$. Then, a proper birational map $\pi: Y \to X$ provides a Koll\'ar component $S$ of $(X, \Delta)$ if the following conditions hold:
    \begin{enumerate}
        \item $S \subset Y$ is the unique, prime $\pi$-exceptional divisor on $Y$ and $\pi(S) = \{x\}$. 
        \item $\pi$ is an isomorphism over $X \setminus \{x\}$.
        \item The pair $(Y, S + \Delta_Y)$ is PLT, where $\Delta_Y$ denotes the strict transform of $\Delta$ on $Y$ (\Cref{def:KLT}).
        \item $-S$ is $\Q$-Cartier and ample over $X$.
    \end{enumerate}
\end{defi}

Now assume that $X = \Spec(R)$ is an affine variety. Given a Koll\'ar component $S$ of $(X, \Delta)$, the pair $(S, \Delta_S)$ is a projective, log Fano pair of one dimension less than the dimension of $(X, \Delta)$. Here $\Delta_S$ is defined by the adjunction formula
\[ (K_Y + S + \Delta_Y)|_{S} = K_S + \Delta_S.  \]
Furthermore, let $v$ denote the valuation centered at $x$ defined by the prime divisor $S$, and let $\mathfrak{a}_m (v) = \{f \in R \, | \, v(f) \geq m \}$ be the $m^{\text{th}}$-valuation ideal. Following \cite[Section 2.4]{LiXuStabilityofvaluations} (also see \cite[Section 2.4]{LiuZhuangSharpnessofTianscriterion}), let $R_{*}$ denote the associated graded ring
\[ R_* = \bigoplus_{m \geq 0} \fa_m(v)/\fa_{m+1}(v). \]
Then, there is a natural effective $\Q$-divisor $\Delta_C$ on $C = \Spec(R_*)$ so that $(C, \Delta_C)$ is a KLT pair. In particular, $R_*$ is a Noetherian normal domain. Moreover, if $d$ is a positive integer such that $dS$ is Cartier and $-d(K_Y + S + \Delta_Y)$ is Cartier, then there is an associated degree $d$ finite map $h: C \to C_d = C (S; -d(K_S + \Delta_S))$ such that
\begin{equation} \label{eqn:orbifoldconepullback} h^* (K_{C_d} + \Delta_{C_d}) = K_C + \Delta _C. \end{equation}
Here, $C_d := C(S; -d(K_S + \Delta_S))$ denotes the affine cone over $S$ with respect to the ample divisor $-d(K_S + \Delta_S)$ and $\Delta_{C_d}$ denotes the cone over $\Delta_S$. Thus, we say that $(C, \Delta_C)$ is the \emph{orbifold cone} over the log Fano pair $(S, \Delta_S)$.

Again assuming $X = \Spec(R)$ is affine, any Koll\'ar component $S$ of the KLT pair $(X, \Delta, x)$ defines a degeneration of $(X, \Delta)$ to the orbifold cone $C = \Spec(R_*)$. Following \cite[Section 4.1]{LiXuStabilityofvaluations}, this is obtained by considering the extended Rees algebra $\cR$ which is the graded subring $\cR \subset R[t, t^{-1}]$ defined as
\[  \cR = \bigoplus _{k \in \Z}  \fa_{k} (v) t^{-k}. \]
Then let $\cX = \Spec(\cR)$ denote the total space and we have the following properties for $\cR$ and $\cX$:
\begin{enumerate}
    \item The map $\varphi: \cX \to \A^1$ corresponding to the inclusion $\C[t] \subset \cR$ is flat.
    \item We have $\cR \otimes _{\C[t]} \C[t,t^{-1}] \isom R[t, t^{-1}]$. In other words, the restriction $\cX|_{\A^1 \setminus \{0\}}$ is just the product $X \times (\A^1 \setminus \{0\}) $.
    \item $\cR/t\cR \isom R_*$, i.e., the Cartier divisor defined by $t$ is isomorphic to $C = \Spec(R_*)$.
    \item If $\fm$ denotes the maximal ideal of $R$ corresponding to $x$, the ideal
    \[ \mathfrak{p} = \bigoplus_{k>0} \fa_k (v) t^{-k} \oplus \bigoplus _{k \geq 0} \fm t^{k} \] defines a section $i : \A^1 \to \cX$ such that $\varphi \circ i = \text{id}_{\A^1}$. Thus, under this degeneration of $X$ to $C$, the point $x$ degenerates to the vertex of $C$ defined by the homogeneous maximal ideal of $R_*$.
    \item Let $K_\cX$ denote the canonical divisor of $\cX$ and $\Delta_{\cX}$ denote the closure of the pull-back of $\Delta$ in $\cX$. Then $K_\cX + \Delta_\cX$ is $\Q$-Cartier and we have
    \[ (K_\cX + \Delta_\cX)|_{C} = K_C + \Delta_C, \]
    where $\Delta_C$ is as defined above and satisfies \Cref{eqn:orbifoldconepullback}.
\end{enumerate}

Using this construction, we prove the following theorem:

\begin{thm} \label{thm:localtoglobalreduction}
  Let $(X, \Delta)$ be a KLT pair over $\C$, and $x \in X$ be a closed point. Suppose $(S, \Delta_S)$ is a Koll\'ar component of $(X, \Delta)$ (\Cref{dfn:Kollarcomponent}). Assume that \Cref{strongerconjversion} holds for the pair $(S, \Delta_S)$, then for any model $(X_A, \Delta_A, x_A)$ of $(X, \Delta, x)$ over a finitely generated smooth $\Z$-algebra $A$, there exists a dense open set $U \subset \Spec(A)$ and a positive constant $C>0$ such that we have
  \[ \s(X_y, \Delta_y, x_y) \geq C  \]
  for all closed points $y \in U$.
  \end{thm}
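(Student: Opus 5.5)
The plan is to spread the degeneration to the orbifold cone over $\Spec(A)$, use lower semicontinuity of the $F$-signature in the degeneration parameter $t$ to compare $X_y$ with the orbifold cone $C_y$, and then compare $C_y$ with the honest cone $C_{d,y}$ over $(S_y,\Delta_{S_y})$, where \Cref{strongerconjversion} applies.

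\textbf{Step 1: spread out the degeneration.} After enlarging $A$ and shrinking $\Spec(A)$, spread out the following data:
\begin{itemize}
\item[(i)] the plt blow-up $\pi: Y\to X$ with Koll\'ar component $S$;
\item[(ii)] the valuation ideals $\fa_m(v)$ for $m$ up to generators of the (finitely generated) extended Rees algebra $\cR$;
\item[(iii)] the degeneration $\cX=\Spec(\cR)\to \A^1$ together with $\Delta_\cX$ and the section $i$;
\item[(iv)] the orbifold cone $(C,\Delta_C)$ and the finite degree $d$ map $h:C\to C_d$ satisfying \eqref{eqn:orbifoldconepullback}.
\end{itemize}
By generic flatness and constructibility, for closed points $y$ in a dense open $U$ the fibers have the expected shape. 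Concretely, $\cX_y\to\A^1_{\kappa(y)}$ is flat, with general fiber $X_y$ and special fiber $C_y=\Spec((R_*)_y)$. The section passes through $x_y$ and through the vertex $0_y$. The pair $(C_y,\Delta_{C_y})$ is the orbifold cone over $(S_y,\Delta_{S_y})$, and $h_y$ still satisfies $h_y^*(K+\Delta)=K+\Delta$. We may also arrange that $(S_y,\Delta_{S_y})$ is one of the reductions governed by \Cref{strongerconjversion}, so $\s(S_y,\Delta_{S_y})>C_1$ on $U$. The fibers $(C_y)$ are normal and $K+\Delta$ restricts correctly, since these are open conditions on the base once they hold generically.

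\textbf{Step 2: semicontinuity along the degeneration.} Apply lower semicontinuity of the $F$-signature of pairs \cite[Theorem 5.7]{PolstraSemicontinuityofFsig} to the family $\cX_y\to\A^1_{\kappa(y)}$, at points of the section $i$. The fiber over $t=0$ is a Cartier divisor, so we can combine this with the inversion-of-adjunction type inequality \cite[Corollary 6.1]{TaylorAdjunctionforFsig}, exactly as in the proof that the invariants are independent of the model. This gives
\[
\s(X_y,\Delta_y,x_y)=\s(\cX_y,\Delta_{\cX_y},i(1))\ \ge\ \s(\cX_y,\Delta_{\cX_y},i(0))\ \ge\ \s(C_y,\Delta_{C_y},0_y).
\]
The equality holds because $\cX_y$ is a product $X_y\times\G_m$ over $t\neq 0$, and the $F$-signature is unchanged by smooth base change \cite[Theorem 3.6]{CRSTBertiniTheorems}.

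\textbf{Step 3: from the orbifold cone to the honest cone.} Compare $\s(C_y,\Delta_{C_y})$ with $\s(C_{d,y},\Delta_{C_{d,y}})$ using the finite map $h_y$ of degree $d$. Since $p\gg0$ exceeds $d$, $h_y$ is a cyclic quotient by $\mu_d$ that is étale in codimension one up to the pullback formula for $K+\Delta$. The transformation rule for the $F$-signature under such crepant finite covers (Carvajal-Rojas–Schwede–Tucker) gives
\[
\s(C_y,\Delta_{C_y})\ \ge\ \s(C_{d,y},\Delta_{C_{d,y}}).
\]
At worst we lose a factor depending only on $d$, which is still a uniform positive constant. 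By \Cref{Fsigofsectionrings}, the right-hand side is $\s(S_y,\Delta_{S_y})/d$, taking $r=d$, which is at least $C_1/d$. Thus $C=C_1/d$ (up to that uniform factor) works.

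\textbf{Main obstacle.} I expect Step 2 to be the hardest, as it needs a clean lower semicontinuity statement for pairs along a flat family over $\A^1_{\kappa(y)}$ at non-closed-fiber points, including a divisor $\Delta_\cX$ that need not be Cartier. If the direct semicontinuity route is awkward, I would instead deform via the Rees-algebra filtration. The idea is to bound the splitting ideals $I_e^{\Delta}$ of $R_y$ through their initial ideals with respect to $v$, showing $\ell(R_y/I_e)\ge \ell((R_*)_y/I_e^{*})$, which yields the inequality of $F$-signatures in the limit.
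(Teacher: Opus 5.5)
Your proposal is correct and follows essentially the paper's route: degenerate to the orbifold cone, combine Taylor's inversion of adjunction at the Cartier fiber $t=0$ with semicontinuity/localization along the section to get $\s(X_y,\Delta_y,x_y)\ge \s(C_y,\Delta_{C_y},0_y)$, then apply the Carvajal-Rojas--Schwede--Tucker transformation rule to the cyclic crepant cover $C_y\to C_{d,y}$. The paper compares $i(0)$ with the generic point of the section rather than with $i(1)$, and it tracks the degree of the cover exactly to obtain $\s(C_y,\Delta_{C_y},0_y)=A_{X,\Delta}(S)\,\s(S_y,\Delta_{S_y})$; that sharper identity is recorded as \Cref{Prop:lowerboundKollarcomponent} and reused for the normalized-volume bound, whereas your inequality $\s(C_y)\ge \s(C_{d,y})$ gives only the weaker constant $C_1/d$, which still suffices for this theorem.
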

\begin{proof}
        By enlarging $A$ if necessary, we may assume that we also have a model for $Y, \pi, S, \Delta_S, C$ and $\Delta_C$ over $A$. Moreover, we may also assume that properties (1) to (5) listed above hold over each closed point of $\Spec(A).$

        In this situation, by our assumption there exists a constant $C>0$ and a dense open set $U \subset \Spec(A)$ such that for each closed point $y \in U$, we have
        $\s(S_y, \Delta_{S_y}) \geq C$. Therefore, to prove the theorem it is sufficient to justify that for each closed point $y \in U$, we have $\s(X_y,\Delta_y, x_y) \geq  A_{X, \Delta}(S)\ \s(S_y, \Delta_{S_y}) $.

        To see this, we first note that applying \cite[Corollary 1.2]{TaylorAdjunctionforFsig}, we have $\s(\cX_y, \Delta_{\cX_y}, \fn) \geq \s(C_y, \Delta_{C_y}, \fm_{0,y})$. Here $\fm_{0,y}$ denotes the closed point corresponding to the homogeneous maximal ideal of $R_{*, y}$, and $\fn$ corresponds to the same point in $\cX_y$. Then using \cite[Theorem 4.9]{PolstraTuckerCombinedApproach}, we have
        \[ \s(\cX_y, \Delta_{\cX_y}, \fn) \leq \s(\cX_y, \Delta_{\cX_y}, \mathfrak{p}_y) \leq \s(\cX_y|_{\A^1 \setminus \{0\}}, \Delta_{\cX_y}|_{\A^1 \setminus \{0\}}, (\mathfrak{p}_y, t^{-1}))\]
        where $\mathfrak{p}_y$ denotes the reduction of the ideal $\mathfrak{p} = \bigoplus_{k>0} \fa_k (v) t^{-k} \oplus \bigoplus _{k \geq 0} \fm t^{k} $. But we note that $\s(\cX_y|_{\A^1 \setminus \{0\}}, \Delta_{\cX_y}|_{\A^1 \setminus \{0\}}, (\mathfrak{p}_y, t^{-1}))$ is just the $F$-signature $\s(R_y \otimes _{k_y} k_y [t,t^{-1}], \Delta_{R_y}, \fm_{y}[t,t^{-1}]) $, which in turn is equal to $\s(R_y, \Delta_{R_y}, \fm_y)$ by Theorem 3.6 of \cite{CRSTBertiniTheorems}. Thus, we have shown that $\s(X_y, \Delta_{X_y}, x_y) \geq  \s(C_y, \Delta_{C_y}, \fm_{0,y})$.

        Now we will show that $\s(C_y, \Delta_{C_y}, \fm_{0,y})$ is equal to $ A_{(X, \Delta)} (S) \ \s(S_y, \Delta_{S_y})$ which will complete the proof. Note that by definition, $\s(S_y, \Delta_{S_y}) = r \, \s(\oplus_{j \geq 0} (H^ 0 (S, \cO_S (-rj (K_S + \Delta_S)))), \Delta_r)$ for any sufficiently divisible $r$. But at the same time,
        we have $ C= \Spec (R_*) \isom \Spec \oplus _{j \geq 0} H^0 (S, \cO (\lfloor -jS|_S \rfloor) ) $ by \cite[Proposition 2.10]{LiuZhuangSharpnessofTianscriterion} and we have
        \[ K_S + \Delta_S = (K_X + \Delta + S)|S \sim A_{X, \Delta} (S) S |S .\]
        Therefore, choosing a $d$ such that $r = \frac{d}{ A_{X, \Delta} (S)}$ is integral, we have the degree $r$ map $h_r: C \to C_r$ such that $h^* _ r (K_{C_{r}} + \Delta_{C_{r}}) = K_{C} + \Delta_{C}$. Then, applying \cite[Theorem 4.4]{CarvajalRojasSchwedeTuckerFundamentalGroups}, we have $\s(C, \Delta_{C}) = r \, \s(C_{r}, \Delta_{C_{r}}) = A_{X, \Delta} (S) \, \s (S, \Delta_S)$. This completes the proof of the theorem.
\end{proof}

In the above proof, we have shown the following useful statement that we record for future reference:

\begin{prop} \label{Prop:lowerboundKollarcomponent}
    Let $k$ be an algebraically closed field of characteristic $p$ and suppose that we have a normal Koll\'ar component $(S, \Delta_S)$ of a KLT singularity $x \in (X, \Delta)$. Then, we have
    \[  \s(X, \Delta, x) \geq {A_{X, \Delta} (S)} \, \s (S, \Delta_S).  \]
\end{prop}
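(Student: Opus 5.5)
The argument is the one carried out in the proof of \Cref{thm:localtoglobalreduction}, now done directly over an algebraically closed field $k$ of characteristic $p$ with no reduction modulo $p$. We may assume $X=\Spec(R)$ is affine with $x$ corresponding to $\fm$. Let $v=\ord_S$, let $R_*=\bigoplus_m \fa_m(v)/\fa_{m+1}(v)$, and let $\cR=\bigoplus_{k\in\Z}\fa_k(v)t^{-k}$ be the extended Rees algebra, so that $\cX=\Spec(\cR)\to\A^1_k$ degenerates $(X,\Delta)$ to the orbifold cone $(C,\Delta_C)=(\Spec R_*,\Delta_C)$. We need properties (1)--(5) of this degeneration in characteristic $p$: flatness, triviality away from $0$, $\cR/t\cR\cong R_*$, the section $\mathfrak p$, and the adjunction $(K_\cX+\Delta_\cX)|_C=K_C+\Delta_C$. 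We also need $R_*\cong\bigoplus_{j\ge0}H^0(S,\cO_S(\lfloor -jS|_S\rfloor))$.

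The first step is to check that these hold over $k$. The description of $R_*$ as a section ring follows from the vanishing $H^1(Y,\cO_Y(-jS))$ relative to $X$ (more precisely, surjectivity of $\pi_*\cO_Y(-jS)\to H^0(S,\cO_S(-jS|_S))$). This is the delicate point in positive characteristic, since Kawamata--Viehweg vanishing is not available. Here we would use the hypothesis that $S$ is a normal Koll\'ar component, together with $F$-adjunction-type arguments, or else interpret the statement as holding whenever the associated graded ring is the section ring. I expect this step to be the main obstacle. If that surjectivity fails, the inequality $\s(R_*)\le \s(R)$ still holds for the graded ring. In that case I would compare $R_*$ with the section ring through the natural inclusion $R_*\hookrightarrow\bigoplus_j H^0(S,\cO_S(\lfloor -jS|_S\rfloor))$, which is an isomorphism in large degrees, or I would use $\mathrm{Proj}$-level arguments.

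The second step is the semicontinuity chain. Taylor's inversion of adjunction for $F$-signature, applied to the Cartier divisor $t=0$, gives $\s(\cX,\Delta_\cX,\fn)\ge\s(C,\Delta_C,\fm_0)$. The lower semicontinuity of Polstra--Tucker along the generization $\fn\rightsquigarrow\mathfrak p\rightsquigarrow(\mathfrak p,t^{-1})$ gives
\[\s(\cX,\Delta_\cX,\fn)\le \s\bigl(R[t,t^{-1}],\Delta,\fm[t,t^{-1}]\bigr)=\s(R,\Delta,\fm),\]
where the last equality is \cite[Theorem 3.6]{CRSTBertiniTheorems}, since the base change has regular fibres. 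Together these give $\s(X,\Delta,x)\ge\s(C,\Delta_C,\fm_0)$.

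The third step computes the cone's signature. Since $K_S+\Delta_S\sim_\Q A_{X,\Delta}(S)\,S|_S$, we choose $d$ with $r=d/A_{X,\Delta}(S)$ integral. Then there is a degree $r$ finite map $h:C\to C_r=C(S;-r(K_S+\Delta_S))$ with $h^*(K_{C_r}+\Delta_{C_r})=K_C+\Delta_C$, which is étale in codimension one after accounting for the boundary. The transformation rule for $F$-signature under such crepant finite covers \cite[Theorem 4.4]{CarvajalRojasSchwedeTuckerFundamentalGroups} then gives
\[\s(C,\Delta_C)=r\,\s(C_r,\Delta_{C_r}).\]
Finally, combining this with \Cref{Fsigofsectionrings}, the $r$-scaling normalization yields $\s(C,\Delta_C)=A_{X,\Delta}(S)\,\s(S,\Delta_S)$. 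Chaining the three steps gives the claimed inequality.
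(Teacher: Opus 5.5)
Your route is the paper's: the proposition is recorded as a by-product of the proof of \Cref{thm:localtoglobalreduction}, which uses your chain exactly (Rees degeneration to the orbifold cone, Taylor's inversion of adjunction, Polstra--Tucker semicontinuity with the Bertini-type base change, then the crepant finite-cover formula). However, the step you flag is left open, and the paper does not prove it by a characteristic-$p$ argument either. Properties (1)--(4) of the Rees degeneration are formal. The identification $R_*\isom\bigoplus_{j\ge0}H^0(S,\cO_S(\lfloor -jS|_S\rfloor))$, and with it the normality of $R_*$ and property (5), rests on characteristic-zero vanishing (essentially $R^1\pi_*\cO_Y(-(j+1)S)=0$). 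In the proof of \Cref{thm:localtoglobalreduction} these facts are obtained at closed points of $\Spec(A)$ by spreading out from $\C$. So the proposition is really established for reductions to $p\gg0$ of a complex Koll\'ar component, which is the only way it is used (in \Cref{thm:effectivelimFsig}). Your fallback does not rescue the argument. If $R_*\subsetneq T:=\bigoplus_j H^0(S,\cO_S(\lfloor -jS|_S\rfloor))$ with equality in large degrees, then $T$ is integral over $R_*$ with the same fraction field, i.e.\ $T$ is the normalization of $R_*$. Then $R_*$ is not normal, hence not strongly $F$-regular, and the chain only yields $\s(X,\Delta,x)\ge 0$. For a genuinely characteristic-$p$ proof, note that the inequality is trivial unless $\s(S,\Delta_S)>0$, i.e.\ unless $(S,\Delta_S)$ is globally $F$-regular. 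That is the case in which Frobenius-splitting arguments might supply the missing vanishing, but neither you nor the paper carries this out.

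Second, your final computation gives the wrong constant as written; the paper's text contains the same slip. Suppose $h\colon C\to C_r=C(S;-r(K_S+\Delta_S))$ had degree $r$. Combined with $\s(S,\Delta_S)=r\,\s(C_r,\Delta_{C_r})$ from \Cref{Fsigofsectionrings}, this gives $\s(C,\Delta_C)=\s(S,\Delta_S)$, with no factor $A:=A_{X,\Delta}(S)$. The correct map is the $d$-th Veronese $C=\Spec R_*\to\Spec R_*^{(d)}=C(S;-dS|_S)$, where $dS$ is Cartier and $-dS|_S\sim_\Q -r(K_S+\Delta_S)$ with $r=d/A$. Since the value group of $\ord_S$ is $\Z$, this map has degree $d$, so $\s(C,\Delta_C)=d\,\s(C_r,\Delta_{C_r})=\frac{d}{r}\,\s(S,\Delta_S)=A\,\s(S,\Delta_S)$. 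As a check, take $0\in\A^2$ with $S$ the exceptional curve of the blow-up, so $A=2$, $d=2$, $r=1$ and $\s(\PP^1,0)=\frac12$. Here $C=\A^2$ has $F$-signature $1$, whereas the degree-$r$ version would give $\frac12$. Also, your claim that this cover is crepant and ``\'etale in codimension one after accounting for the boundary'' requires $p\nmid d$; for $d=p$ the extension of fraction fields is purely inseparable. That claim is what \cite[Theorem 4.4]{CarvajalRojasSchwedeTuckerFundamentalGroups} needs, and $p\nmid d$ is again automatic only for $p\gg0$ relative to fixed complex data.
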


\begin{rem}
    \Cref{thm:localtoglobalreduction} was also independently obtained by Takagi and Yamaguchi in \cite{TakagiYamaguchiUniformPositivity}.
\end{rem}

\subsection{Isotrivial degenerations}
We now prove that we may use isotrivial degenerations to prove \Cref{conj:alpha-lim-positive}.

\begin{defi} \label{isotrivialdfn}
    Let $X$ be a $\Q$-Fano variety over $\C$. Suppose $C$ is a smooth affine curve over $\C$ and let $c$ be a fixed point on $C$. Set $C^{\circ} =  C \setminus \{c\}$. Then an \emph{isotrivial degeneration} of $X$ over $(c \in C)$ to a normal projective variety $X_{0}$ is given by a flat, projective map $\pi: \cX \to C$, and a ($\pi$-)ample $\Q$-divisor $\cL$ on $\cX$ such that:
    
    \begin{enumerate}
        \item $\pi$ is trivial over $C^\circ$: 
        \[ \cX^{\circ} := \pi^{-1}(C^{\circ}) = \cX  \times_{C} C^{\circ} \isom X \times_{\C} C^{\circ} .\]
        \item the fiber over $c$ is isomorphic to $X_{0}$:
        \[ \cX_{c} := \cX \times_{C} \Spec(\kappa(c)) \isom X_0. \]
        \item The restriction (as a $\Q$-Cartier divisor) $\cL|_{\cX^{\circ}}$ to $\cX^{\circ}$ is linearly equivalent to $-K_{\cX^{\circ}}$, the anti-canonical divisor on $\cX^{\circ}$.
    \end{enumerate}  
\end{defi}

\begin{thm}\label{variationthm}
    Let $X$ be a $\Q$-Fano variety over $\C$. Assume that $X$ has an isotrivial degeneration to a $\Q$-Fano variety $X_{0}$. Then, if either \Cref{conj:alpha-lim-positive} or \Cref{strongerconjversion} is true for $X_0$, then the same is true for $X$. Moreover, we have $\s_{-} (X) \geq \s_{-} (X_0)$
    and 
    $ \alpha_{-} (X) \geq \alpha_{-} (X_0) $.
\end{thm}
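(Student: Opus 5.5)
We will spread out the isotrivial family over a finitely generated $\Z$-algebra and then use lower semicontinuity of the $F$-signature (resp. the $\FA$-invariant) in families; the isotriviality lets us compare the generic fiber with $X$.

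\textbf{Step 1: spreading out.} Choose a finitely generated smooth $\Z$-algebra $A \subset \C$ over which everything is defined: the curve $C$ and the point $c$, the flat projective family $\pi:\cX\to C$ with the $\pi$-ample $\Q$-divisor $\cL$, and the isomorphisms $\cX^\circ \isom X\times C^\circ$ and $\cX_c \isom X_0$. After shrinking $\Spec(A)$ we may assume the following for every closed point $y \in \Spec(A)$. The fibers $X_y$ and $(X_0)_y$ are globally $F$-regular $\Q$-Fano varieties. The isomorphisms persist, so every fiber of $\cX_y\to C_y$ over $C_y^\circ$ is isomorphic to $X_y\otimes \kappa$ for the relevant residue field $\kappa$. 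Moreover $\cL_y$ restricts to $-K$ on each fiber; on the central fiber this uses that $\cL|_{X_0}\sim_\Q -K_{X_0}$, which holds because $\cX$ is $\Q$-Gorenstein along $X_0$ with $\cL \sim_{\Q} -K_{\cX/C}$. Replacing $\cL$ by $-rK_{\cX/C}$ for $r$ divisible enough, we may take the relative section ring $\bigoplus_m \pi_*\cO(-mrK_{\cX/C})$ to be flat over $C_A$ with formation commuting with base change.

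\textbf{Step 2: semicontinuity over the curve.} Fix a closed point $y$ of characteristic $p$ and work over $C_y$, which is a smooth curve over a finite field. Consider the relative cone $\Spec_{C_y}\bigoplus_m \pi_*\cO(-mrK)$ with its vertex section. Lower semicontinuity of the $F$-signature along this section (\cite[Theorem 5.7]{PolstraSemicontinuityofFsig}), or for $\FA$ the lower semicontinuity \cite[Theorem~V.2.1]{Pandethesis}, gives
\[
\s(\text{generic fiber}) \geq \s((X_0)_y),
\qquad
\FA(\text{generic fiber}) \geq \FA((X_0)_y).
\]
Here the special fiber is the cone over $(X_0)_y$, and the generic fiber is $X_y$ base-changed to the function field $\kappa(C_y)$, or to its perfection. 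By the base-change invariance results \cite[Theorem 3.6]{CRSTBertiniTheorems} and \cite[Corollary 4.14]{PandeFrobeniusAlpha}, already used in the model-independence lemma, these invariants of the generic fiber equal those of $X_y$. Combined with Step 1 this gives $\s(X_y)\ge \s((X_0)_y)$ and $\FA(X_y)\ge\FA((X_0)_y)$ for all closed $y$ in a dense open set.

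\textbf{Step 3: conclusion.} Taking suprema over maximal ideals above $p$ and then $\liminf_{p\to\infty}$ gives $\s_-(X)\ge\s_-(X_0)$ and $\alpha_-(X)\ge\alpha_-(X_0)$, so \Cref{conj:alpha-lim-positive} for $X_0$ implies it for $X$. For \Cref{strongerconjversion}, the closed points $y$ of $\Spec(A)$ give a model of $X_0$ and, via $\cX^\circ$, a model of $X$. Lower bounds for $(X_0)_y$ on a dense open set transfer to $X_y$ on the same open set; by the model-independence lemma the choice of model does not matter.

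\textbf{Main obstacle.} The hardest part is Step 2. One must set up the relative section ring so that its fibers really are the section rings of the anticanonical embeddings, with the correct boundary (none here, since $\Delta=0$) and the correct normalization by $r$. Then the semicontinuity theorems, which are stated for families over a base that is not a field, must genuinely apply along the vertex section. I would first try to apply Polstra's theorem to the finite-type $\kappa(y)[C_y]$-algebra given by the relative section ring, evaluated at the primes of the vertex section. The need for flatness and base change is why we may have to shrink $A$ and take $r$ sufficiently divisible, so that $\pi_*\cO(-mrK_{\cX/C})$ commutes with base change for all $m$ by Kodaira-type vanishing on the Fano fibers in characteristic $0$, spread out.
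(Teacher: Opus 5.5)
Your overall strategy matches the paper's, which refers back to the proof of \Cref{thm:localtoglobalreduction}: spread out the isotrivial family, compare special and generic fibers by semicontinuity in characteristic $p$, then identify the generic fiber with $X_y$ via \cite[Theorem 3.6]{CRSTBertiniTheorems}. However, Step~2 has a genuine gap for the $F$-signature. Polstra's theorem concerns the single function $\mathfrak{q}\mapsto \s(\mathcal{S}_{\mathfrak q})$ on $\Spec \mathcal{S}$, where $\mathcal{S}$ is the relative section ring. At the generic point of the vertex section, the local ring is indeed the localized cone over the generic fiber. But at the closed point $\mathfrak{n}$ of the vertex section over $c$, the local ring $\mathcal{S}_{\mathfrak n}$ is the total space of the family, of dimension $\dim X+2$. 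It is not the cone over $(X_0)_y$; that cone is $\mathcal{S}_{\mathfrak n}/t\mathcal{S}_{\mathfrak n}$ for a uniformizer $t$ at $c$. So semicontinuity only yields $\s(\text{generic fiber})\geq \s(\mathcal{S}_{\mathfrak n})$, and you still need $\s(\mathcal{S}_{\mathfrak n})\geq \s(\mathcal{S}_{\mathfrak n}/t\mathcal{S}_{\mathfrak n})$. This inequality is not formal and is false in general: strong $F$-regularity does not deform, and Singh's examples give $R/tR$ strongly $F$-regular with $\s(R)=0$.

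The missing input is inversion of adjunction for the $F$-signature, \cite[Corollary 1.2]{TaylorAdjunctionforFsig}. This is exactly the first inequality in the paper's proof of \Cref{thm:localtoglobalreduction}, and it needs a $\Q$-Gorenstein hypothesis on the total space, with index prime to $p$. Here that hypothesis is supplied by \Cref{propertiesofisotriv}: $\cX$ is normal and $\Q$-Gorenstein with $\cL\sim_\Q -K_{\cX/C}$. Hence the relative anticanonical cone is $\Q$-Gorenstein, with index prime to $p$ for $p\gg 0$. You used that lemma only to identify $\cL|_{X_0}$ with $-K_{X_0}$, but it is precisely what makes the specialization step valid. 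Insert Taylor's inequality before the semicontinuity step (Polstra, or \cite[Theorem 4.9]{PolstraTuckerCombinedApproach}), and your argument for $\s$ goes through. For $\FA$, your Step~2 is fine provided the cited semicontinuity is the fiberwise statement for flat families with globally $F$-regular fibers, which is how the paper uses it. Routing the $F$-signature through $\FA$ and \Cref{alphavss} instead would only recover positivity of $\s_-(X)$, not the inequality $\s_-(X)\geq \s_-(X_0)$.
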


We first note some technical properties of isotrivial degenerations to normal varieties. 
\begin{lem} \label{propertiesofisotriv}
Suppose we have an isotrivial degeneration $\pi: (\cX, \cL) \to C$ of a $\Q$-Fano variety $X$ to a normal projective variety $X_{0}$ over a smooth, affine pointed curve $(c \in C)$. Then,
\begin{enumerate}
    \item $\cX$ is a normal variety over $\C$.
    \item Both $\cX$ and $X_{0}$ are $\Q$-Gorenstein with ample the anti-canonical divisor. Moreover, we have $\cL \sim_\Q -K_{\cX|_C}$.
    \item For any $m \geq 0$, every divisor $\cD^{\circ}$ on $\cX^{\circ}$ that is linearly equivalent to $m\cL|_{\cX^{\circ}}$ extends uniquely to a $\Q$-Cartier divisor $\cD$ on $\cX$ such that $\cD$ is linearly equivalent to $m\cL$ and $\cD$ does not vanish along $X_0$.
    
\end{enumerate}
\end{lem}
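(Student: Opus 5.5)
For part (1), I will use that $\cX$ is flat over the smooth curve $C$ with normal fibers. The fibers over $C^\circ$ are isomorphic to $X$, and the central fiber $X_0$ is normal by hypothesis. A flat family over a regular one-dimensional base whose fibers are all normal has normal total space; this is the standard Serre-criterion argument. The properties $R_1$ and $S_2$ lift from the fibers because the base is regular and the fiber $X_0$ is a Cartier divisor. Concretely, $\cX$ is $S_2$ since $X_0$ is $S_2$ and cut out by a non-zero-divisor. It is $R_1$ because $\cX^\circ \cong X\times C^\circ$ is normal, and at a codimension-one point of $\cX$ lying on $X_0$ (a generic point of $X_0$), the local ring has a regular quotient by a regular element, hence is regular.

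For part (2), the key observation is that $X_0$ is irreducible, so it is a prime Cartier divisor $\pi^*c$, which is linearly equivalent to $0$ after shrinking $C$ so that $c$ is principal. First, $K_{\cX}$ is a Weil divisor on the normal variety $\cX$. On $\cX^\circ$ we have $\cL|_{\cX^\circ}\sim_\Q -K_{\cX^\circ}$, so the Weil divisor $\cL+K_{\cX}$ is $\Q$-linearly equivalent to a divisor supported on $\cX\setminus\cX^\circ = X_0$, say $aX_0$. Since $X_0\sim 0$, we get $-K_{\cX}\sim_\Q \cL$. Hence $K_\cX$ is $\Q$-Cartier and $-K_{\cX/C}=-K_\cX+\pi^*K_C$ is $\pi$-ample. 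By adjunction for the Cartier fiber, $K_{X_0}=(K_\cX+X_0)|_{X_0}=K_\cX|_{X_0}$. This holds as Weil divisors, and the codimension-one points of $X_0$ are Gorenstein points of $\cX$ by (1) and the $S_2$/regularity argument, so no different appears. Therefore $K_{X_0}$ is $\Q$-Cartier and $-K_{X_0}\sim_\Q\cL|_{X_0}$ is ample.

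For part (3), take $\cD^\circ\sim m\cL|_{\cX^\circ}$ and let $\cD'$ be its closure in $\cX$. Then $\cD'-m\cL$ is linearly equivalent on $\cX^\circ$ to $0$, so $\cD'-m\cL\sim bX_0$ for some integer $b$ (using the rational function realizing the equivalence). Since $X_0\sim 0$, the divisor $\cD:=\cD'$ is linearly equivalent to $m\cL$ and is $\Q$-Cartier because $\cL$ is. The closure has no component equal to $X_0$, which gives "does not vanish along $X_0$." Uniqueness follows because any two extensions differ by a multiple of the prime divisor $X_0$, and only one of them omits $X_0$.

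The main obstacle I expect is the adjunction step in (2). There I need that $\cX$ is Gorenstein (or at least that $K_\cX$ restricts correctly) at the generic points of $X_0$. I would handle this through the regularity of $\cX$ in codimension one along $X_0$, established in (1).
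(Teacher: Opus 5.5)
Your proposal is correct and follows essentially the same route as the paper. Both arguments shrink $C$ so that $X_0$ is a principal prime divisor, and both get the linear equivalences in (2) and (3) by taking closures and absorbing the discrepancy $aX_0$. Both then apply adjunction where $X_0$, and hence $\mathcal{X}$, is regular, extending across codimension two by normality of $X_0$. You supply more detail than the paper, which gives no real argument for the Serre-criterion normality proof or the uniqueness in (3).
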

\begin{proof}
First, we note that since $\cX^{\circ}$ is isomorphic to the product $X \times_{\C} C^{\circ}$, we may shrink $C$ to a neighbourhood of $c$ to prove the theorem. Hence, without loss of generality, we may assume that the point $c$ is a principal divisor on $C$. Therefore, $X_{0}$ is a principal divisor on $\cX$.
    \begin{enumerate}
        \item Since $X_{0}$ is a principal divisor on $\cX$ and $X_{0}$ is assumed to be normal it follows that $\cX$ is normal.
        \item Since $\cX$ is normal, let $K_{\cX}$ denote a canonical Weil-divisor on $\cX$. We claim that $\cL \sim_{\Q} -K_{\cX}$. Since by assumption, $\cL|_{\cX^{\circ}} \sim_{\Q} -K_{\cX^{\circ}}$, taking closures in $\cX$ (as $\Q$-Weil divisors), we must have
        $$ -K_{\cX} \sim_{\Q} \cL + a X_{0}, \quad \text{for some $a \in \Q$} $$
        since $X_{0}$ is a prime divisor. But the claim follows from the fact that $X_{0}$ is actually principal. This shows that $\cX$ is $\Q$-Gorenstein and $-K_{\cX}$ is ample. \\
        Next, let $U \subset X_{0}$ denote the smooth locus of $X_{0}$, and $V \subset 
 \cX_{\text{sm}}$ be an open set such that $V|_{X_{0}} = U$. Note that $U$ is a principal divisor on $V$. Then, applying the adjunction formula, we have
 $$ K_U \sim K_V + U |_{U} \sim K_{V}|_U. $$
 Taking closures and using the normality of $X_{0}$ and $\cX$, we get that
 $$ -K_{X_{0}} \sim - K_{\cX}|_{X_{0}} \sim_{\Q} \cL|_{X_{0}} . $$
 This proves that $X_{0}$ is $\Q$-Gorenstein as well, and since $\cL$ is ample on $\cX$, $-K_{X_{0}}$ is ample on $X_{0}$.
 \item Given a divisor $\cD^{\circ}$ on $\cX^{\circ}$, we may take its closure $\cD$, which is a Weil-divisor on $\cX$. Since $\cD|_{\cX^{\circ}} \sim \cL|_{\cX^{\circ}}$, we must again have  
    $$ \cD \sim_{\Q} \cL + a X_{0}, \quad \text{for some $a \in \Q$}. $$
    Now the claim follows from the fact that $X_{0}$ is a principal divisor. Note that this automatically makes $\cD$ a $\Q$-Cartier divisor. \qedhere
    \end{enumerate}
\end{proof}

\begin{proof}[Proof of \Cref{variationthm}]
Given an isotrivial degeneration $\pi: \cX \to C$ of $X$ to $X_0$ over a pointed curve $(c \in C)$, we may choose models for $\cX$, $X_0$, $\pi$, $c$ and $C$ over a smooth $\Z$-algebra $A$. Furthermore, we may assume that the isomorphisms in the properties (1) - (3) in \Cref{isotrivialdfn} hold over $A$ and hence hold over every closed fiber over $\Spec(A)$. Thus, for any $p \gg 0$, we may assume that we have an isotrivial degeneration of the characteristic $p$ models of $X$ to the characteristic $p $ reductions of $X_0$. The rest of the proof is exactly the same as \Cref{thm:localtoglobalreduction} and relies on the semicontinuity properties of the $F$-signature and the $\FA$-invariant \cite[Theorem~5.8]{PandeFrobeniusAlpha}. So we omit the details here.
\end{proof}

\subsection{Spherical Fano varieties and normal toric degenerations.}

By applying \Cref{variationthm}, we can use isotrivial degenerations to normal toric varieties to prove \Cref{strongerconjversion} and \Cref{conj:alpha-lim-positive} for spherical Fano varieties.

\begin{thm} \label{normaltoricdegeneration}
      Let $X$ be a $\Q$-Fano variety over $\C$. Assume that $X$ admits an isotrivial degeneration (over some smooth curve $(c \in C)$) to $X_0$ where $X_0$ is a normal (projective) toric variety. Then, \Cref{conj:alpha-lim-positive} and \Cref{strongerconjversion} hold for $X$. Moreover, we have
    \[ \alpha_{-} (X) \geq \alpha_\C (X_0) \]
    where $\alpha_\C$ denotes Tian's $\alpha$-invariant over $\C$.
\end{thm}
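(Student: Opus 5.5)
The plan is to reduce to the toric fiber via \Cref{variationthm} and then settle the toric case directly. Before invoking \Cref{variationthm}, we check that $X_0$ is a $\Q$-Fano variety. By \Cref{propertiesofisotriv}(2), $X_0$ is $\Q$-Gorenstein and $-K_{X_0}\sim_\Q \cL|_{X_0}$ is ample. A normal $\Q$-Gorenstein projective toric variety is KLT, since toric pairs $(X_0,\partial X_0)$ are log canonical and $\partial X_0\ge 0$ is torus invariant. Hence $X_0$ is a toric $\Q$-Fano variety. \Cref{variationthm} then gives $\s_-(X)\ge \s_-(X_0)$ and $\alpha_-(X)\ge\alpha_-(X_0)$, and it transfers \Cref{strongerconjversion} from $X_0$ to $X$. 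So it suffices to prove both conjectures for $X_0$, together with the inequality $\alpha_-(X_0)\ge \alpha_\C(X_0)$.

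For the toric variety $X_0$ we use the canonical model over $\Z$ given by its fan, i.e.\ $A=\Z$ (or a localization). Its reductions $X_{0,p}$ are the toric varieties over $\mathbb{F}_p$ with the same fan. The key step is to show $\FA(X_{0,p})\ge \alpha_\C(X_0)$ for every $p$. The argument runs through the cone. By \Cref{alphadefinitions}(1), we need that for every $D\sim n L$, $L=-rK_{X_0}$, and every $\lambda<r\,\alpha_\C(X_0)$, the pair $(S,\tfrac{\lambda}{n}\tilde D)$ is sharply $F$-split, where $S$ is the (toric, semigroup) section ring. The tool here is a toric degeneration or initial-term argument, which reduces to monomial divisors. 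For a toric variety, the $\alpha$-invariant is computed by torus-invariant divisors: $\alpha_\C(X_0)=\min_\rho$ of $1/\max$ over the moment polytope, i.e.\ $\alpha$ equals the lct of the toric boundary $\partial X_0\sim -K_{X_0}$, which is $1$ when $X_0$ is Gorenstein and in general is given by the polytope. On the characteristic $p$ side, $F$-splitting of monomial pairs on semigroup rings is characterized by the same polytope condition, independently of $p$. This is the standard computation of $F$-pure thresholds of monomial ideals as in \cite{SinghFSignatureOfAffineSemigroup}.

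To pass from arbitrary $D$ to monomial ones, degenerate $D$ via a one-parameter subgroup of the torus to its initial divisor $\mathrm{in}(D)$. Semicontinuity of $F$-splitting, as in the proof of \Cref{variationthm} via \cite[Theorem~5.8]{PandeFrobeniusAlpha}, then shows that splitting for $\mathrm{in}(D)$ implies splitting for $D$. Iterating, or choosing a generic weight so that $\mathrm{in}(D)$ has torus-invariant support, reduces to the monomial case. Alternatively, one can directly cite that $\FA$ of a toric $\Q$-Fano variety in characteristic $p$ equals the combinatorial quantity $\min$ over facets, which is $\alpha_\C(X_0)$ by the known toric formula for Tian's invariant. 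Either way we obtain $\FA(X_{0,p})\ge\alpha_\C(X_0)>0$ uniformly in $p$.

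Finally, \Cref{alphavss} converts this uniform lower bound into a uniform lower bound on $\s(X_{0,p})$, since $d$ and $V$ are fixed. This gives \Cref{conj:alpha-lim-positive} and \Cref{strongerconjversion} for $X_0$, and hence for $X$, together with $\alpha_-(X)\ge\alpha_-(X_0)\ge\alpha_\C(X_0)$. The main obstacle is the uniform-in-$p$ toric computation. In particular, the reduction from arbitrary, non-torus-invariant $D$ to monomial divisors must preserve sharp $F$-splitting and must be valid over the finite fields $\mathbb{F}_p$, not only over algebraically closed ones. I would try first to handle this by the Gr\"obner-degeneration argument on the section ring combined with the model-independence lemma.
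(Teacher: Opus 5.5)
Your proposal is correct and is essentially the paper's proof. You verify via \Cref{propertiesofisotriv} that $X_0$ is a toric $\Q$-Fano variety, use the fan models so that $\FA(X_{0,p})=\alpha_\C(X_0)$ uniformly in $p$, and transfer to $X$ with \Cref{variationthm}. The paper simply cites \cite[Theorem~4.12]{PandeFrobeniusAlpha} for the toric step, which is your ``alternative''; your Gr\"obner reduction to torus-invariant divisors also works.

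Two harmless slips should be corrected. First, the threshold should read $\lambda<\alpha_\C(X_0)/r$. Second, $\alpha_\C(X_0)$ is not the lct of $\partial X_0$: that lct is always $1$, whereas $\alpha(\P^n)=1/(n+1)$. It is the minimum of $\lct(X_0;D)$ over torus-invariant $D\sim_\Q -K_{X_0}$, and your degeneration argument only needs $\lct(X_0;D_0)\ge \alpha_\C(X_0)$ for the torus-invariant limit $D_0$, which holds by definition.
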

\begin{proof}
    Since $X_0$ is a normal projective toric variety, it is of log Fano type. By \Cref{propertiesofisotriv}, we have that $-K_{X_0}$ is automatically ample and $\Q$-Cartier. Moreover, suppose $X_0$ is a projective toric variety corresponding to a fan $\cF$. Then, we may pick characteristic $p \gg 0$-models of $X_0$ that are also normal toric varieties corresponding to $\cF$. In particular, \Cref{strongerconjversion} holds for $X_0$ and the $\FA$-invariant and the $F$-signature of all characteristic $p$ models are independent of $p$. Furthermore, by \cite[Theorem~4.12]{PandeFrobeniusAlpha}, we have that $\FA(X_{0,p}) = \alpha_\C (X_0)$ for each characteristic $p$-model of $X_0$.
    Therefore, we may apply \Cref{variationthm} to conclude that \[ \alpha_{-} (X) \geq \alpha_{\C} (X_0) . \qedhere \]
\end{proof}

\begin{defi} \label{dfn:spherical}
    Let $X$ be a normal projective variety over $\C$. Then $X$ is called \emph{spherical} if there exists a reductive algebraic group $G$ acting on $X$ and a Borel subgroup $B$ of $G$ such that $B$ has a dense orbit in $X$. A \emph{spherical $\Q$-Fano} variety is a $\Q$-Fano spherical variety where the action of $G$ is linearized to the ample line bundle defined by $-rK_X$ for some $r >0$.
\end{defi}

\begin{cor} \label{cor:spherical}
Let $X$ be a spherical $\Q$-Fano variety over $\C$.
 Then, \Cref{conj:alpha-lim-positive} and \Cref{strongerconjversion} hold for $X$.
\end{cor}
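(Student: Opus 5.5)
The plan is to deduce the corollary directly from \Cref{normaltoricdegeneration}. It suffices to produce an isotrivial degeneration of $X$, over some smooth pointed curve $(c \in C)$, to a normal projective toric variety $X_0$, in the sense of \Cref{isotrivialdfn}. The input is the theorem of Alexeev and Brion \cite{AlexeevBrionToricDegenerationsofspherical}. Let $(X, L)$ be a polarized spherical variety with $L$ a $G$-linearized ample line bundle. Then $X$ admits a flat projective family over $\A^1$ whose fibers over $\A^1 \setminus \{0\}$ are isomorphic to $X$ and whose special fiber is a normal projective toric variety $X_0$. The family carries a relatively ample line bundle restricting to $L$ on the general fibers.

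First I choose $r>0$ such that the $G$-action is linearized on $L = \cO_X(-rK_X)$, as in \Cref{dfn:spherical}, and apply the Alexeev--Brion construction to $(X,L)$. This gives $\pi:\cX \to \A^1$ and a $\pi$-ample line bundle $\cL_r$. I then set $C = \A^1$, $c = 0$ and $\cL = \frac{1}{r}\cL_r$. I expect the degeneration to be built from a Rees-type algebra of a $\C^*$-equivariant filtration on the section ring $\bigoplus_m H^0(X, mL)$, coming from a valuation on the field of $B$-semi-invariants. If so, the family over $\A^1\setminus\{0\}$ is trivialized by the $\C^*$-action, so $\cX^\circ \isom X \times C^\circ$ and condition (1) of \Cref{isotrivialdfn} holds. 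Under this trivialization $\cL_r|_{\cX^\circ}$ corresponds to the pullback of $L$, so $\cL|_{\cX^\circ} \sim_\Q -K_{\cX^\circ}$, which is condition (3). Condition (2) holds with $X_0$ normal toric by the Alexeev--Brion theorem itself.

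The main thing to check is that the hypotheses of \Cref{normaltoricdegeneration} are met as stated. There $X_0$ is only assumed to be a normal toric variety. \Cref{propertiesofisotriv} then shows that $X_0$ is $\Q$-Gorenstein with $-K_{X_0}$ ample, and $\cL \sim_\Q -K_{\cX/C}$. So $X_0$ is a $\Q$-Fano toric variety, and \Cref{variationthm} applies with it. The genuine subtlety is the normality of the special fiber, but this is precisely the content of \cite{AlexeevBrionToricDegenerationsofspherical}. I also need the degeneration to be defined over a curve rather than over a higher-dimensional base; their construction uses a one-parameter subgroup, so the base is $\A^1$.

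With these checks done, \Cref{normaltoricdegeneration} gives \Cref{conj:alpha-lim-positive} and \Cref{strongerconjversion} for $X$, together with the bound $\alpha_-(X) \geq \alpha_\C(X_0) > 0$.
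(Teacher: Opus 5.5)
Your proposal is correct and follows the paper's proof. The paper likewise cites \cite[Theorem 3.2]{AlexeevBrionToricDegenerationsofspherical}, together with Proposition 2.4 and Remark 3.9 there for normality of the special fiber, to obtain an isotrivial degeneration of $X$ to a normal toric variety, and then applies \Cref{normaltoricdegeneration}. Your extra checks make explicit what the paper leaves implicit: that the Rees-type family satisfies the conditions of \Cref{isotrivialdfn}, and that $X_0$ is $\Q$-Fano by \Cref{propertiesofisotriv}.
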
 
\begin{proof}
    By \cite[Theorem 3.2]{AlexeevBrionToricDegenerationsofspherical}, every spherical $\Q$-Fano variety isotrivially degenerates to a toric variety $X_0$. Furthermore, by \cite[Proposition 2.4 and Remark 3.9]{AlexeevBrionToricDegenerationsofspherical}, we have that $X_0$ is a normal toric variety. The corollary now follows from \Cref{normaltoricdegeneration} and \Cref{variationthm}.
\end{proof}
%\begin{cor}
%Let $X$ be either:
%\begin{enumerate}
%    \item a smooth cubic hypersurface in $\PP^n$ for $n \geq 3$.
%    \item a smooth complete intersection of two quadrics in $\PP^n$ for $n \geq 4$.
%\end{enumerate}
%Then, $s_{-}(X) > 0$ and $\alpha_{-}(X) > 0$.
%\end{cor}

\section{Toric degeneration of hypersurfaces of very low degree} \label{section:hypersurfaces}

In this section, we work over $\bC$. For notions regarding K-stability and K-moduli spaces, we refer to \cite{XuKstabilitybook} for the precise definitions.

Let $X\subset \bP^{n}$ be an arbitrary smooth hypersurface of degree $d\geq 3$. Let $\cM_{k,d} := \bP(H^0(\bP^k, \cO(d)))\sslash \PGL_{k+1}$ denote the GIT moduli space of degree $d$ hypersurfaces of $\PP^k$. Consider the rational map  $\Phi:\bG(k,n)\dashrightarrow \cM_{k,d}$ by taking linear sections of $X$ with a $k$-plane $\bP^k$ in $\bP^n$. Then, Starr in \cite{Sta06} showed that the map $\Phi$ is dominant if $n\geq \binom{d+k -1}{k} +k-1$. We consider the special case where $k=d$, and we set $n_0=n_0(d):=\binom{2d -1}{d} +d-1$.

\begin{thm} \label{thmlowdegree}
Assume that $n\geq n_0$. Then there exists an isotrivial degeneration $X\rightsquigarrow X_0$ (\Cref{isotrivialdfn}) such that $X_0$ is a toric hypersurface of degree $d$ in $\bP^n$ with Gorenstein canonical singularities. Moreover, $X_0$ has the equation $x_0^d= x_1\cdots x_d$.
\end{thm}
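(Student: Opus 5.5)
The plan has three steps: degenerate $X$ to a cone over one of its $d$-dimensional linear sections, use Starr's dominance to reach a particular toric section in a limit, and then pass to a curve of hypersurfaces that is isotrivial away from the special point.

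\textbf{Step 1: degeneration to cones.} Fix a linear subspace $\Lambda\cong\PP^d\subset\PP^n$ with $Y=X\cap\Lambda$ a degree $d$ hypersurface in $\Lambda$. Choose coordinates so that $\Lambda=\{y_{d+1}=\dots=y_n=0\}$, and let $\lambda(t)$ be the one-parameter subgroup scaling $y_{d+1},\dots,y_n$ by $t$. Then the flat limit $\lim_{t\to0}\lambda(t)\cdot X$ is the projective cone $C_pY\subset\PP^n$ over $Y$ with vertex $\PP^{n-d-1}$, provided $Y$ is a reduced hypersurface (i.e. $F|_\Lambda\neq0$).

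\textbf{Step 2: the target section and the limiting argument.} The target is $Y_0=\{x_0^d=x_1\cdots x_d\}\subset\PP^d$, whose cone in $\PP^n$ is exactly $X_0$. The polystable point needed is $Y_0$ itself. It has a $(d-1)$-dimensional torus acting with finite stabilizer data, and it is GIT polystable by the Hilbert–Mumford criterion, since the weight polytope contains the barycenter: the monomials $x_0^d$ and $x_1\cdots x_d$ average to weight $(1,\dots,1)$. Starr's theorem says $\Phi:\bG(d,n)\dashrightarrow\cM_{d,d}$ is dominant for $n\ge n_0$. Since $\bG(d,n)$ is projective, I will resolve $\Phi$ on the domain to $\widetilde{\bG}\to\cM_{d,d}$, which is proper and dominant, hence surjective. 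So there is a point of $\widetilde\bG$ mapping to $[Y_0]$, and this gives a one-parameter family of $d$-planes $\Lambda_s$, $s\in C$, with $[X\cap\Lambda_s]\to[Y_0]$ in the GIT quotient.

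\textbf{Step 3: the isotrivial family.} The family of cones $C(X\cap\Lambda_s)$ over $s\ne0$ is reached from $X$, but it is not isotrivial. I will fix this by moving $\Lambda$ rather than degenerating. Using Step 1, the family $\{\lambda_s(t)\cdot X\}$ has cones over $X\cap\Lambda_s$ as limits, and each hypersurface $g\cdot X$ with $g\in\PGL_{n+1}$ is isomorphic to $X$. The parameter space $\PGL_{n+1}\cdot[X]\subset\P(H^0(\cO(d)))$ contains all these cones in its closure. Since $[Y_0]$ is polystable, the orbit of $X_0$ is closed in the semistable locus of $\P(H^0(\PP^n,\cO(d)))$. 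Given that the cones over $X\cap\Lambda_s$ with $s\ne0$ lie in $\overline{\PGL\cdot X}$, and the cones over sections close to $[Y_0]$ in the quotient have $X_0$ in the closure of their orbit (Luna étale slice), I conclude $[X_0]\in\overline{\PGL_{n+1}\cdot[X]}$. A curve $C$ in $\overline{\PGL\cdot X}$ through $[X_0]$, with general point in the orbit, gives after base change a family $\cX\to C$ that is trivial over $C^\circ$ (lift $C^\circ\to\PGL_{n+1}$ after an étale base change) with central fiber $X_0$, and $\cL=\cO(n+1-d)$ satisfies condition (3) of \Cref{isotrivialdfn}.

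Finally, $X_0$ is a toric, normal (it is a cone over a normal toric hypersurface with isolated-type singularities), Gorenstein hypersurface with $-K=\cO(n+1-d)$ ample. It is canonical because toric Gorenstein singularities are canonical.

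The main obstacle is the closure claim in Step 3: passing from $[X\cap\Lambda_s]\to[Y_0]$ in the quotient to the actual orbit closure containing the cone $X_0$. Semistability of the cones in $\PP^n$ needs care, since cones are typically unstable. I would therefore argue instead with K-moduli. Here $X_0$ is K-polystable, being toric with the barycenter condition and $\alpha$-type symmetry, and $X$ is smooth Fano, so the relevant cones are K-semistable with polystable degeneration $X_0$. Properness and separatedness of the K-moduli space then give the degeneration, since its points are K-polystable objects and degenerations respect S-equivalence.
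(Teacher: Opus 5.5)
Steps 1 and 2 are essentially sound. Step 3 has a genuine gap, however, and neither of your two ways around it works.

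\textbf{The GIT route in $\PP^n$.} The equation of $X_0$, like that of any cone over a section, omits $x_{d+1},\dots,x_n$. So it is $\mathrm{SL}_{n+1}$-unstable, and ``the orbit of $X_0$ is closed in the semistable locus'' has no content. Your Luna-slice claim is also false. A section $V$ with $[V]$ near but different from $[Y_0]$ in $\cM_{d,d}$ does not have $Y_0$ in its orbit closure; only sections S-equivalent to $Y_0$ do. Moreover, the curve $\Lambda_s$ you extract from $\widetilde{\bG}$ may pass through an indeterminacy point of $\Phi$ at $s=0$. In that case the limit of $X\cap\Lambda_s$ can be unstable, and its cone need not specialize to $X_0$.

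\textbf{The K-moduli fallback.} This fails for a different reason: $X_0$ with empty boundary is K-unstable. The $\G_m$-action scaling $x_n$ preserves $X_0$ and has non-zero Futaki invariant, since $\beta(\ord_{X_0\cap(x_n=0)})=\frac{d-1}{n}\neq0$; for $d=2$, $n=3$ this is the quadric cone $\PP(1,1,2)$. The same holds for every cone over a section. So no K-moduli argument for varieties without boundary can reach $X_0$. The paper's second proof makes K-moduli work only by adding the boundary $(1-\frac1d)\sum_i H_i$. By \Cref{lemmaconepolystable}, $(X,(1-\frac1d)\sum_i H_i)$ is then K-semistable with K-polystable degeneration $(X_0,(1-\frac1d)\sum_i(x_{d+i}=0))$.

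\textbf{The missing idea.} The paper's first proof uses properness only in $\cM_{d,d}$, and mere Zariski-closedness in $\PP^n$.
\begin{enumerate}
\item After a finite base change and a $\PGL_{d+1}$-valued change of coordinates over $C^\circ$, semistable replacement for the proper quotient $\cM_{d,d}$ extends your family of sections to $\overline{\cV}\to C$. Its central fiber $\cV_0$ is semistable and degenerates to $Y_0$ under a one-parameter subgroup of $\PGL_{d+1}$.
\item Coning the whole family gives a morphism $C\to\PP(H^0(\PP^n,\cO(d)))$ sending $C^\circ$ into $\overline{\PGL_{n+1}\cdot[X]}$, by your Step 1. Here use that this set is $\PGL_{n+1}$-stable and that $\PGL_{d+1}$ embeds block-diagonally.
\item That orbit closure is closed in a projective space, so the cone over $\cV_0$ lies in it. No stability condition in $\PP^n$ is needed.
\item The one-parameter subgroup from (1), extended trivially on $x_{d+1},\dots,x_n$, degenerates the cone over $\cV_0$ to $X_0$. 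Then \Cref{lem:compositionofisotrivdegn} and \Cref{lem:embeddedisotriv} finish.
\end{enumerate}

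\textbf{Minor slips.} First, $x_0^d$ and $x_1\cdots x_d$ have weights $(d,0,\dots,0)$ and $(0,1,\dots,1)$. It is their average with weights $\frac1{d+1},\frac d{d+1}$ that gives the barycenter $\frac d{d+1}(1,\dots,1)$, not an equal-weight average giving $(1,\dots,1)$. Even then, a single-torus computation needs something like Luna's centralizer criterion to yield $\mathrm{SL}_{d+1}$-polystability; the paper uses Zhuang's equivariant criterion and Paul--Tian instead. Second, for $d\ge4$ the hypersurface $Y_0$ is singular along codimension-two linear spaces, not at isolated points. $X_0$ is still normal, being a hypersurface regular in codimension one.
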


Combining this with \Cref{variationthm}, we obtain:

\begin{cor} \label{cor:hypersurface}
Let $X \subset \P^n$ be a smooth hypersurface of degree $d$ over $\C$. Assume that $n \geq \binom{2d -1}{d} +d-1 $. Then \Cref{strongerconjversion} holds for $X$.    
\end{cor}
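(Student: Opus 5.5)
The corollary should follow by combining \Cref{thmlowdegree} with \Cref{normaltoricdegeneration}, so the plan is mainly to check that hypotheses match. Since $n \geq \binom{2d-1}{d}+d-1 = n_0(d)$, \Cref{thmlowdegree} gives an isotrivial degeneration $\pi:\cX\to C$ of $X$ to $X_0=(x_0^d=x_1\cdots x_d)\subset\P^n$. This $X_0$ is a normal projective toric hypersurface with Gorenstein canonical singularities. We may assume $d\geq 3$, as in the section; for $d\le 2$, $X$ is a projective space or a quadric, and these are covered by the known cases or by the spherical case \Cref{cor:spherical}.

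First I check that $X$ is $\Q$-Fano and that the degeneration satisfies \Cref{isotrivialdfn}, including a $\pi$-ample $\cL$ restricting to $-K$ on $\cX^\circ$.
\begin{itemize}
\item The degeneration is realized inside $\P^n\times C$ as a family of degree $d$ hypersurfaces.
\item Take $\cL=\cO(n+1-d)|_{\cX}$. This is $\pi$-ample, and by adjunction it restricts to $-K_{\cX_t}$ on each fiber $\cX_t\cong X$ over $C^\circ$.
\item Since $n+1-d>0$ in this range, $X$ is Fano.
\end{itemize}
\Cref{propertiesofisotriv} then shows that $\cX$ is normal, that $X_0$ is $\Q$-Gorenstein, and that $-K_{X_0}$ is ample, so $X_0$ is a $\Q$-Fano (indeed log Fano) toric variety.

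Next I apply \Cref{normaltoricdegeneration}. Normal projective toric varieties have characteristic-$p$ models given by the same fan. Their $F$-signature and $\FA$-invariant are independent of $p$ and positive, with $\FA(X_{0,p})=\alpha_\C(X_0)>0$. Hence \Cref{strongerconjversion} holds for $X_0$. \Cref{variationthm} then transfers it to $X$.

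The one point needing care is that \Cref{strongerconjversion} is stated for a dense open set of closed points of $\Spec(A)$, not just for $\liminf_p$. I therefore spread out the degeneration $\cX\to C$, $X_0$, and the trivialization over $C^\circ$ over a smooth $\Z$-algebra $A$. Over every closed point $y$ in an open set, this gives an isotrivial degeneration $X_y\rightsquigarrow X_{0,y}$. By the semicontinuity of $\FA$ and $\s$ in the family (\cite[Theorem~5.8]{PandeFrobeniusAlpha}, used as in \Cref{thm:localtoglobalreduction}), we get $\FA(X_y)\ge \FA(X_{0,y})=\alpha_\C(X_0)$. \Cref{alphavss} then gives the bound for the $F$-signature. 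I expect this fiberwise spreading-out to be the only real obstacle. It is routine once the isotrivial degeneration is defined over $A$, so the conclusion should follow directly.
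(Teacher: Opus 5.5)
Your proposal is correct and follows the paper's own argument. The paper obtains the corollary by feeding the isotrivial degeneration of \Cref{thmlowdegree} to the normal toric hypersurface $(x_0^d = x_1\cdots x_d)$ into \Cref{normaltoricdegeneration} and \Cref{variationthm}, with spreading out and semicontinuity of $\FA$ and $\s$ handled as you describe. Your separate treatment of $d\le 2$ via the spherical and projective-space cases fills a small point the paper leaves implicit, since \Cref{thmlowdegree} is stated under the section's standing assumption $d\geq 3$.
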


\begin{lem} \label{lem:embeddedisotriv}
    Suppose $X_1 $ and $X_2$ are two projective subvarieties of $\P^n$ with the same Hilbert polynomial. Then, there is an \emph{embedded} isotrivial degeneration of $X_1 \rightsquigarrow X_2$ if and only if the orbit closure $\overline{\PGL_{n+1} \cdot X_1}$ of $X_1$ intersects the $\PGL_{n+1}$-orbit of $X_2$ in the corresponding Hilbert scheme.
\end{lem}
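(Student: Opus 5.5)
We prove both directions by working in the Hilbert scheme $H := \Hilb^{P}(\P^n)$, where $P$ is the common Hilbert polynomial, with its natural $\PGL_{n+1}$-action. We read an \emph{embedded} isotrivial degeneration $X_1 \rightsquigarrow X_2$ as a closed subscheme $\cX \subset \P^n \times C$, flat over a smooth pointed curve $(c \in C)$, such that:
\begin{itemize}
\item every fiber over $C^\circ = C\setminus\{c\}$ is projectively equivalent to $X_1$, and the family is trivial over $C^\circ$ after a base change;
\item the fiber $\cX_c$ is projectively equivalent to $X_2$.
\end{itemize}
The statement is then essentially a translation between curves in $H$ and one-parameter families.

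\emph{Only if.} The flat family $\cX \to C$ gives a classifying morphism $\mu: C \to H$ by the universal property of $H$. Condition (1) says $\mu(C^\circ)$ lies in the orbit $O_1 := \PGL_{n+1}\cdot[X_1]$. Since $C^\circ$ is dense in $C$ and $\mu$ is continuous, $\mu(c) \in \overline{\mu(C^\circ)} \subset \overline{O_1}$. Condition (2) says $\mu(c) \in O_2 := \PGL_{n+1}\cdot[X_2]$. Hence $\overline{O_1} \cap O_2 \neq \emptyset$.

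\emph{If.} Suppose $[X_2'] \in \overline{O_1} \cap O_2$.
\begin{enumerate}
\item The orbit $O_1$ is the image of the orbit map $\PGL_{n+1} \to H$, $g \mapsto g\cdot[X_1]$. It is constructible and irreducible, so $O_1$ is open and dense in $\overline{O_1}$.
\item Choose an irreducible curve $B' \subset \overline{O_1}$ through $[X_2']$ meeting $O_1$; this is the standard curve selection lemma in an irreducible variety. Let $C \to B'$ be its normalization, a smooth curve, with a point $c$ mapping to $[X_2']$. After shrinking $C$ to an affine neighborhood of $c$, all points of $C^\circ$ map into $O_1$.
\item Pull back the universal family to get a flat projective $\cX \to C$ with $\cX_c \isom X_2' \isom X_2$, and every fiber over $C^\circ$ projectively equivalent to $X_1$.
\item To obtain triviality over $C^\circ$, lift $C^\circ \to O_1 \isom \PGL_{n+1}/\mathrm{Stab}$ to a map $\gamma: C'^\circ \to \PGL_{n+1}$, possibly after a finite base change $C' \to C$. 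Then $(x,t) \mapsto \gamma(t)\cdot x$ identifies $X_1 \times C'^\circ$ with $\cX|_{C'^\circ}$.
\end{enumerate}

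The main obstacle is this last lifting step. The quotient map $\PGL_{n+1} \to O_1$ is a principal $\mathrm{Stab}$-bundle but need not have sections over $C^\circ$. I would handle this by taking an irreducible component of the preimage curve in $\PGL_{n+1}$, which is quasi-finite and dominant over $C^\circ$, then normalizing and compactifying over $C$ to obtain $C'$. Alternatively, and more simply, one can choose the curve directly in $\PGL_{n+1}$: since $\PGL_{n+1} \to \overline{O_1}$ is dominant, pick a curve in a compactification of $\PGL_{n+1}$ whose image passes through $[X_2']$. If the definition of isotrivial only requires fibers to be isomorphic over $C^\circ$, this step is unnecessary and the lemma follows from steps 1–3.
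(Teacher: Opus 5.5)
Your proposal is correct and is essentially the paper's argument: the classifying map $C \to \Hilb_{\P^n}(P)$ together with closedness of $\overline{\PGL_{n+1}\cdot[X_1]}$ gives the ``only if'' direction, and for the converse one takes a smooth curve in the orbit closure through a point of the orbit of $X_2$, shrunk so that the punctured curve lands in the orbit of $X_1$. Your step 4 (lifting to $\PGL_{n+1}$ after a finite base change, to get literal triviality over the punctured curve) supplies a detail the paper leaves implicit; the only small slip is in step 1, where openness of $O_1$ in its closure comes from homogeneity under $\PGL_{n+1}$, not from constructibility and irreducibility alone.
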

Here, an embedded isotrivial degeneration is a family $\cX \to C$ as in \Cref{isotrivialdfn} that arises from a subscheme of $\PP^n \times C \to C$.
\begin{proof}
    The proof relies on the existence and projectivity of the Hilbert scheme parametrizing subschemes of $\PP^n$ with a fixed Hilbert polynomial. Let $P$ be the Hilbert polynomial of $X_1$ and let $Z : = \overline{\PGL_{n+1} \cdot [X_1]} \subset \Hilb_{\P^n} (P)$ denote the orbit closure of $X_1$ under the natural $\PGL_{n+1}$-action on $\Hilb_{\P^n} (P)$. First, suppose there is a flat subscheme $S \subset \P^n \times C $ over some pointed curve $(0 \in C)$ that defines an isotrivial degeneration of $X_1 \rightsquigarrow X_2$. Then, by considering the corresponding map $ \pi: C \to \Hilb_{\P^n} (P)$, we note that the image $\pi (C \setminus \{0\})$ is contained in the orbit of $X_1$, while $\pi(0) $ is contained in the orbit of $X_2$. Since $\Hilb_{\P^n}(P)$ is proper and the orbit closure $Z$ is a closed subscheme of $\Hilb_{\P^n}(P)$, $\pi(0)$ must also be contained in the subscheme $S$. Therefore, $S$ intersects the orbit of $X_2$.

    Conversely, suppose $z \in Z \cap (\PGL_{n+1} \cdot X_2)$. Then, since $Z$ is a projective scheme, there is a map from a smooth curve $\pi: C \to Z$ that joins $z$ to some point of $\PGL_{n+1} \cdot X_1$ in $Z$. Let $0  \in C$ be the point mapping to $z$. Since the orbit of $X_1$ is dense in $Z$, there must be an open set $U \subset C$ that maps into the orbit of $X_1$. Now, since $C \setminus U$ is finite, we may replace $C$ by $U \cup \{0\}$ so that $\pi: C \to Z$ now defines an embedded isotrivial degeneration of $X_1$ to $X_2$.
\end{proof}

\begin{lem} \label{lem:compositionofisotrivdegn}
    For subschemes $X_1, X_2, X_3$ of $\PP^n$ with the same Hilbert polynomial, if $X_1$ admits an embedded isotrivial degeneration to $X_2 $ (as in \Cref{lem:embeddedisotriv}) and if $X_2$ admits an embedded isotrivial degeneration to $X_3$, then $X_1$ admits an embedded isotrivial degeneration to $X_3$.
\end{lem}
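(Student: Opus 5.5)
We prove \Cref{lem:compositionofisotrivdegn} using the orbit-closure criterion of \Cref{lem:embeddedisotriv}, working inside the fixed Hilbert scheme $H := \Hilb_{\P^n}(P)$, where $P$ is the common Hilbert polynomial. Write $G = \PGL_{n+1}$ and $O_i = G \cdot [X_i] \subset H$. The hypotheses, via \Cref{lem:embeddedisotriv}, say that $\overline{O_1} \cap O_2 \neq \emptyset$ and $\overline{O_2} \cap O_3 \neq \emptyset$. By the same lemma, it suffices to show $\overline{O_1} \cap O_3 \neq \emptyset$.

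The first step is to upgrade the first intersection to a containment. Since $\overline{O_1}$ is closed and $G$-stable (the closure of a $G$-stable subset under a continuous group action is $G$-stable), the condition $\overline{O_1} \cap O_2 \neq \emptyset$ implies that the whole orbit $O_2$ lies in $\overline{O_1}$. Taking closures then gives $\overline{O_2} \subseteq \overline{O_1}$.

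The second step is immediate. Any point of $\overline{O_2} \cap O_3$ lies in $\overline{O_1} \cap O_3$, so this intersection is nonempty. Applying the converse direction of \Cref{lem:embeddedisotriv} produces a pointed smooth curve $(0\in C)$ with a map $C \to \overline{O_1}$ sending $0$ into $O_3$ and $C\setminus\{0\}$ into $O_1$. This is an embedded isotrivial degeneration $X_1 \rightsquigarrow X_3$.

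The only point requiring care is the $G$-stability of $\overline{O_1}$. It follows because each $g\in G$ acts on $H$ by an automorphism, so $g\cdot\overline{O_1}$ is a closed set containing $g \cdot O_1 = O_1$, and therefore $g\cdot\overline{O_1} \supseteq \overline{O_1}$ for every $g$. Applying this also to $g^{-1}$ gives equality. Everything else reduces to \Cref{lem:embeddedisotriv}.
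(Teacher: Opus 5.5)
Your proposal is correct and follows essentially the same route as the paper: reduce via \Cref{lem:embeddedisotriv} to showing $\overline{O_1}\cap O_3\neq\emptyset$, use that $\overline{O_1}$ is closed and $\PGL_{n+1}$-stable to get $\overline{O_2}\subseteq\overline{O_1}$, and conclude. The paper asserts the $\PGL_{n+1}$-stability of $\overline{O_1}$ without proof, and your short argument for it fills in that step.
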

\begin{proof}
    By \Cref{lem:embeddedisotriv}, it is sufficient to show that the closure $Z$ of the $\PGL_{n+1}$-orbit of $X_1$ intersects the orbit of $X_3$ in the corresponding Hilbert scheme. Note that $Z$ is a $\PGL_{n+1}$-stable closed, projective subscheme of the Hilbert scheme. Therefore, since $Z$ intersects the orbit of $X_2$, it must contain the closure of the orbit of $X_2$ and hence intersects the orbit of $X_3$ as well.
\end{proof}
\begin{lem}\label{lem1}
Let $V:= (x_0^d = x_1\cdots x_d)\subset \bP^d$. Then $V$ is K-polystable.
\end{lem}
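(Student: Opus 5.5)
Since $V$ is a projective toric variety, the natural approach is the toric criterion for K-polystability: a $\Q$-Fano toric variety is K-polystable if and only if the barycenter of the moment polytope of $-K_V$ (equivalently, of the dual of the polytope spanned by the primitive ray generators of the fan) is the origin. So the plan is to realize $V$ explicitly as a toric variety, identify its anticanonical polytope, and check that the barycenter vanishes by a symmetry argument.

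First I would set up the torus and the fan. The torus $\{x_0^d = x_1\cdots x_d\}\cap(\G_m)^{d}$ (in affine coordinates $x_0=1$) is $\{x_1\cdots x_d=1\}\cong \G_m^{d-1}$. A cleaner route is to present $V$ as a quotient of $\P^{d-1}$. The map $\P^{d-1}\to\P^d$, $[y_1:\dots:y_d]\mapsto[y_1\cdots y_d : y_1^d:\dots:y_d^d]$, lands in $V$ and is the quotient by $\mu_d^{d}/\mu_d$ restricted to the subgroup of elements with product $1$. Concretely, this gives $V\cong \P^{d-1}/G$ with $G=\{(\zeta_1,\dots,\zeta_d)\in\mu_d^d:\prod\zeta_i=1\}/\mu_d$, acting diagonally.

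With this presentation, the fan of $V$ is the fan of $\P^{d-1}$, with rays $e_1,\dots,e_{d-1}$ and $-\sum e_i$, but taken with respect to the overlattice $N'\supset N$ determined by $G$. The quotient map is étale in codimension one, since $G$ contains no pseudo-reflections: an element fixing a coordinate hyperplane pointwise would have all $\zeta_i$ equal except one, and the product condition then forces it to be trivial modulo $\mu_d$. Hence $K_{\P^{d-1}}$ pulls back from $K_V$. This also shows $V$ is Gorenstein canonical and $\Q$-Fano.

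The anticanonical polytope of $V$ in $M_\R=M'_\R$ is the same real polytope as that of $\P^{d-1}$, namely $\{m:\langle m,u_\rho\rangle\ge -1\}$ for the $d$ rays $u_\rho$, since only the lattice changes. This is a simplex invariant under the $\fS_d$-action permuting the rays, and that action fixes only the origin. So the barycenter is $0$, and $V$ is K-polystable. Alternatively, one can invoke the standard fact that K-polystability is preserved under finite quotients that are étale in codimension one, combined with the K-polystability of $\P^{d-1}$.

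The main obstacle I expect is the lattice bookkeeping: verifying that the map above is exactly the quotient by $G$ (surjective, with generic fiber a $G$-torsor), and that $G$ has no pseudo-reflections so that the anticanonical divisors match. The barycenter computation itself is immediate by symmetry.
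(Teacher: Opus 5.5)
Your proof is correct, but it takes a different route from the paper. The paper computes no polytope. It notes that $V$ carries an action of $\bT\rtimes\fS_d$, where $\bT\cong\G_m^{d-1}$ is the diagonal torus and $\fS_d$ permutes $x_1,\dots,x_d$. It then observes that no proper irreducible closed subvariety of $V$ is invariant under this group: torus-invariant ones are orbit closures of nonzero cones, and $\fS_d$ acts transitively on the $d$ boundary divisors $\{x_0=x_i=0\}$, so it fixes no nonzero cone. Zhuang's equivariant K-stability theorem then gives K-polystability.

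You instead use the toric barycenter criterion. This forces you to pin down the anticanonical polytope, hence the quasi-\'etale presentation $V\cong\P^{d-1}/G$. Your route rests on a more classical criterion and produces explicit extra information: the fan, the quasi-\'etale cover by $\P^{d-1}$, and the canonical singularities. The paper's route is shorter and avoids all lattice bookkeeping.

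The same $\fS_d$-symmetry does the work in both, and you could skip the bookkeeping you were worried about. Since $\fS_d$ normalizes the torus, it permutes the boundary divisors and so preserves $-K_V=\sum_\rho D_\rho$ and its moment polytope. It acts on $M_\R\cong\R^d/\R(1,\dots,1)$ (the characters of $\{t_1\cdots t_d=1\}$) with no nonzero fixed vector. Hence the barycenter is $0$ without knowing the fan.

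If you keep the quotient presentation, two details need tightening. First, surjectivity plus a generic $G$-torsor fiber only gives a finite birational morphism $\P^{d-1}/G\to V$. To get an isomorphism you need $V$ normal, which holds: $V$ is a hypersurface whose singular locus ($x_0=0$ with two of $x_1,\dots,x_d$ vanishing) has codimension $2$ in $V$. Alternatively, check directly that the monomials $y^a$ invariant under $\{\zeta\in\mu_d^d:\prod\zeta_i=1\}$ are exactly those with all $a_i$ congruent mod $d$, and these are generated by $y_1^d,\dots,y_d^d,y_1\cdots y_d$. Second, ``only the lattice changes'' is not by itself why the real polytope is unchanged, since passing to the overlattice $N'$ could make the ray generators non-primitive. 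It is your no-pseudo-reflection check that guarantees they stay primitive.
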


\begin{proof}
Clearly $V$ admits an action of the reductive group $G:=\bT\rtimes \fS_d$ where $\bT=\bG_m^{d-1}$ acts diagonally and $\fS_d$ permutes the coordinates $x_1,\cdots, x_d$. It is not hard to check that there does not exist a $G$-invariant irreducible closed subvariety of $V$. Hence by \cite{Zhu21} we know that $V$ is K-polystable.
\end{proof}

\begin{proof}[Proof of \Cref{thmlowdegree}]
    By \Cref{lem:embeddedisotriv}, it is enough to show that the closure of the $\PGL_{n+1}$-orbit of $X$ intersects the $\PGL_{n+1}$-orbit of $X_0$ in the projective space parametrizing degree $d$ hypersurfaces in $\PP^n$. First, we note that by deformation to the normal cone, any given hypersurface $X_1$ in $\PP^n$ isotrivially degenerates to the cone over a hyperplane section of $X_1$. Applying this observation and \Cref{lem:compositionofisotrivdegn} iteratively, we see that if $Z$ is a linear section of $X_1$, then $X_1$ admits an embedded isotrivial degeneration to the cone over $Z$ in $\PP^n$.

   Next, by Lemma \ref{lem1} we know that $V_0 = (x_0^d = x_1\cdots x_d)\subset \bP^d$ is K-polystable which implies that it is GIT polystable by Paul-Tian \cite{PT06}. Let $\Phi:\bG(d,n)\dashrightarrow \cM_{d,d}  := \bP(H^0(\bP^d, \cO(d)))\sslash \PGL_{d+1}$ be the rational map obtained by intersecting a $d$-plane in $\P^n $ with $X$. 
   
   Now, since the map $\Phi:\bG(d,n)\dashrightarrow \cM_{d,d}$ is dominant under our assumptions on $n$ and $d$ by \cite{Sta06}, we consider a map from a pointed curve $\pi: (0 \in C) \to \bP(H^0(\bP^d,  \cO(d)))\sslash \PGL_{d+1}$ such that for each $c \neq 0$, $\pi(c)$ is a GIT-stable hypersurface in the image of $
   \Phi$ and $\pi(0)$ is equal to $[V_0]$. After replacing $C$ by a finite cover, we may lift $\pi$ to a map $\tilde{\pi}: C \setminus \{0\} \to \bP(H^0(\bP^d,  \cO(d)))$, so that we have a family $\psi: \cV \to C \setminus \{0\} $ of hypersurfaces in $\P ^d$ such that the fiber $\psi^{-1}(c)$ is a GIT-stable hypersurface isomorphic to $\pi(c)$ for each $c \neq 0$. By the properness of the GIT-moduli space of hypersurfaces, we know that $\psi$ can be completed to a family $\overline{\psi}: \overline{\cV} \to C $ so that $\cV_0$, the fiber of $\overline{\psi}$ over $0 \in C $, is a GIT-semistable hypersurface whose polystable degeneration is isomorphic to $V_0$.

   Next, since the general fiber of $\overline{\psi}$ is in the image of $\Phi$, by shrinking $C$ if required, we may trivialize the Grassmann-bundle locally over $\Phi^{-1} (\pi(C))$. Thus, we may assume that there is a trivial family of $d$-planes: $ \cH := C  \times \P^d \subset C \times \P^n$ such that for each $c \neq 0$, by intersecting $\cH_c$ with $X$ we have $ \cH_{c} \cap X = \cV_c $. Viewing the family $\overline{\psi}$ inside $\overline{\cV} \subset C \times \P^d \subset C \times \P^n $, we may take the projective cones over $\cV$. Thus, we get a family of hypersurfaces $\cY \subset C \times \PP^n  \to C$ such that for each $c$ the fiber $\cY_c$ is isomorphic to the projective cone over $\cV_c$.
   
    Lastly, now consider the orbit closure $\overline{\cO}$ of $X$ in the projective space of degree $d$ hypersurfaces in $\PP^n$. Since the orbit of $\cY_c$ is contained in $\overline{\cO}$ for each $c \neq 0$ (since $X$ isotrivially degenerates to the cone $\cY_c$), by the properness of $\overline{\cO}$, the orbit of $\cY_0$ must also be contained in $\overline{\cO}$ and therefore, $X$ admits an isotrivial degeneration to the projective cone over $\cV_0$ (by \Cref{lem:embeddedisotriv}). Since $V_0$ is the (GIT)-polystable degeneration of $\cV_0$, the projective cone over $V_0$ isotrivially degenerates to the projective cone over $V$. Therefore, by \Cref{lem:compositionofisotrivdegn} $X$ isotrivially degenerates to the projective cone over $V$.
\end{proof}

We also give another proof of \Cref{thmlowdegree} using K-moduli spaces instead of the GIT moduli space.

\begin{proof}[Second proof of \Cref{thmlowdegree}]
Firstly, by Lemma \ref{lem1} we know that $V= (x_0^d = x_1\cdots x_d)\subset \bP^d$ is K-polystable which implies that it is GIT polystable by Paul-Tian \cite{PT06}. 
Hence from the result of Starr \cite{Sta06}, there exists a map $\psi$ from a pointed curve $0\in T$ to $\cM_{d,d}$ such that $\psi(0)= [V]$ and $\psi(T\setminus \{0\})$ lies in the image of $\Phi$ intersecting the K-stable locus, i.e.  $[V_t]:=\psi(t)$ is smooth and K-stable for every $t\in T\setminus \{0\}$. After replacing $T$ by a quasi-finite base change, we can assume that $\psi|_{T\setminus \{0\}}$ lifts to a map $\tpsi:T\setminus \{0\}\to \bG(d,n)$. After a further base change, we may assume that there are maps $h_i: T\setminus \{0\} \to \bG(n-1, n)=(\bP^n)^\vee$ for $1\leq i\leq n-d$ such that $V_t = D_{1,t} \cap \cdots \cap D_{n-d,t}$ where $D_{i,t} = h_i(t)\cap X$ is a hyperplane section.

Below, for any pointed curve $( 0 \in C)$, we denote by $C^\circ$ the curve $C \setminus \{0\}$.

The maps $h_i$ give us families of hyperplanes $\cH_i \subset  \P^n \times T^{\circ} \to T^{\circ}$ that we may equivalently consider as a family of log-pairs  $\Phi: (\P^n _{T^\circ}, (1 - \frac{1}{d}) \sum _i \cH_i)\to T^{\circ} $. Note that by construction, the divisors $\cH_i$ intersect transversely with $X_{T^\circ} := X \times T^{\circ} \subset \P^n _{T^\circ}$. Thus, by restricting to $X_{T^\circ}$, we get a family of log Fano pairs $\phi_1: (X_{T^\circ}, \cD = (1 - \frac{1}{d}) \sum _i \cH_i |_{T^\circ} )\to T^{\circ} $. Since $V_t$ is K-stable for every $t\in T^{\circ}$, by Lemma \ref{lemmaconepolystable} we know that $(X, (1-\frac{1}{d})\sum_{i} D_{i,t})$ is K-semistable. Thus, $\phi_1$ is family of K-semistable log Fano pairs over $T^{\circ}$. Then, by the properness of the K-moduli space of  log Fano pairs (see \cite[Corollary~7.4]{BlumHalpernLeistnerLiXuProperness} and \cite{LiuXuZhuangHigherrankfinitegeneration}), upto a finite dominant map $(0 \in T_1) \to (0 \in T)$ from a pointed smooth curve $0 \in T_1$, we may extend the family $\phi_1$ (originally over $T^\circ$) to a family $\overline{\phi_1}: (\cX_1, \overline{\cD_1}) \to T_1$ of K-semistable log Fano pairs, such that
\[ \overline {\phi_1} |_{T_1 ^{\circ}} = \phi_1 \times _{T^\circ} T^\circ _1. \]
 Moreover, the $\Q$-divisor $\overline{\cD_1}$ will have coefficients in the set $\frac{1}{d} \N_{\geq 0}$. Let $(X_1, D_1)$ denote the fiber of $\overline{\phi_1}$ over $0$.

On the other hand, as in \Cref{lemmaconepolystable}, we can construct a K-polystable degeneration of the geometric generic fiber of $\phi_1$, and up to another finite dominant map $(0 \in T_2 ) \to (0 \in T_1)$, this extends to a family $\phi_2: \cX_2 ^{\circ} \to T_2 ^\circ $ such that the fibers of $\phi_2$ are the K-polystable degenerations of the fibers of $\phi_1 \times _{T_1 \circ} T_2 ^\circ$. Now, by the $\Theta$-reductivity of the K-moduli stack of log Fano pairs (precisely, using \cite[Theorem~5.2]{ABHLXReductivity}), there exists an extension $\overline{\phi_2} : (\cX_2, \cD_2) \to T_2$ of $\phi_2$ to a family over $T_2$ such that the fiber over $0 \in T_2$ (denoted by $(X_2 ,D_2) $) is K-semistable and such that there is a $\G_m$-equivariant degeneration of $(X_1, D_1)$ over $\A^1$ to $(X_2, D_2)$.  

Finally, we can consider the family of projective cones over $\psi$ (after base change to $T_2$), so that we have a family
$\overline{\phi_3} : (\cX_3, \cD_3) \to T_2 $ such that the fiber over $t \in T_2$ is the pair $(C(V_t), (1 -\frac{1}{d}) \sum _{i = 1} ^{n-d} (x_{d+i} = 0))$, where $C(V_t)$ denotes the projective cone (in $\P^n$) over $V_t$. Let $X_0$ denote the cone over $V_0$ in $\P^n$ (therefore, $X_0$ is defined by the equation $(x_0 ^d = x_1 \dots x_d)$). Since  by \Cref{lemmaconepolystable} we know that the K-polystable degeneration of $(X_t, (1-\frac{1}{d})\sum_{i} D_{i,t})$ is $(C(V_t), (1-\frac{1}{d})\sum_{i=1}^{n-d} (x_{d+i}=0))$, by the uniqueness of K-polystable degenerations (\cite[Theorem~1.1]{BlumXuUniquenessofpolystabledegenerations}), we get that the cone $(X_0,  (1 -\frac{1}{d}) \sum _{i = 1} ^{n-d} (x_{d+i} = 0))$ is the unique polystable limit of $(C(V_t), (1-\frac{1}{d})\sum_{i=1}^{n-d} (x_{d+i}=0)) $ as $t \to 0$. Therefore, by \cite[Theorem~3.2 (a)]{LiWangXuAlgebraicityofmetrictangentcones}, there is a $\G_m$-equivariant degeneration of $(X_2, D_2)$ over $\A^1$ (i.e., by a special test configuration) to $(X_0 ,   (1-\frac{1}{d})\sum_{i=1}^{n-d} (x_{d+i}=0)) $.

To conclude the proof, we note that the family $\overline{\phi_1}$ restricts to an isotrivial degeneration of $(X, -K_X)$ to $(X_1, -K_{X_1})$. See \Cref{isotrivialdfn} and \Cref{propertiesofisotriv}. Note that by \Cref{propertiesofisotriv} Part (b), $X_1$ is automatically $\Q$-Fano and we have
\[ -K_{X_1} \sim _\Q -c(K_{X_1} + D_1)\]
where $c = \frac{d(n-d+1)}{d}$, since the same is true for $X$. Similarly, the isotrivial degenerations that we have already constructed of $(X_1, D_1)$ to $(X_2, D_2)$ and of $(X_2, D_2)$ to $(X_0, (1-\frac{1}{d})\sum_{i=1}^{n-d} (x_{d+i}=0)) $ restrict to isotrivial degenerations of $(X_1, -K_{X_1})$ to $(X_2 -K_{X_2})$, and of $(X_2, -K_{X_2})$ to $(X_0, -K_{X_0})$. Thus, by Lemma \ref{lem:compositionofisotrivdegn}, $(X, -K_X)$ admits an isotrivial degeneration to $(X_0, -K_{X_0})$.
\end{proof}

\begin{lem}\label{lemmaconepolystable}
Suppose $X$ is a degree $d$ hypersurface in $\bP^n$ with $n\geq d$. Let $V$ be a linear section of $X$ by a $\bP^d$ which is a K-polystable hypersurface in $\bP^d$. Let $(H_i)_{1\leq i\leq n-d}$ be hyperplane sections of $X$ such that $V = \cap_i H_i$. Then $(X, (1-\frac{1}{d})\sum_i H_i)$ is K-semistable. Moreover, its K-polystable degeneration is $(X_0, (1-\frac{1}{d})\sum_{i=1}^{n-d} (x_{d+i}=0))$ where $X_0$ is the cone over $V$, i.e. if $V=(f(x_0,\cdots, x_d)=0)\subset\bP^d$ then $X_0=(f(x_0,\cdots, x_d)=0)\subset\bP^n$.
\end{lem}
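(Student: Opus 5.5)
The plan is to exhibit an explicit special test configuration of $(X, (1-\frac{1}{d})\sum_i H_i)$ whose central fiber is the cone pair $(X_0, \Delta_0)$ with $\Delta_0 := (1-\frac{1}{d})\sum_{i=1}^{n-d}(x_{d+i}=0)$. We then show that this central fiber is K-polystable. Once both facts are established, openness and specialization of K-semistability (a pair specially degenerating to a K-semistable pair is K-semistable) give K-semistability of $(X,(1-\frac1d)\sum_i H_i)$. Uniqueness of K-polystable degenerations \cite[Theorem~1.1]{BlumXuUniquenessofpolystabledegenerations} then identifies $(X_0,\Delta_0)$ as the K-polystable degeneration.

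\textbf{Step 1 (the degeneration).} Choose coordinates so that $H_i = X\cap (x_{d+i}=0)$ for $1\le i\le n-d$, so that $V = X\cap(x_{d+1}=\cdots=x_n=0)$. Write $F(x_0,\dots,x_n) = f(x_0,\dots,x_d) + (\text{terms involving } x_{d+1},\dots,x_n)$. Use the $\G_m$-action $\lambda(t)\cdot x_j = x_j$ for $j\le d$ and $x_{d+i}\mapsto t x_{d+i}$. The flat limit of $\lambda(t)\cdot X$ as $t\to 0$ is $(f=0)=X_0$, and each hyperplane $(x_{d+i}=0)$ is $\G_m$-invariant, so the boundary degenerates to $\Delta_0$. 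This is the iterated degeneration to the normal cone used in the proof of \Cref{thmlowdegree}. Since $V$ is K-polystable, it is klt, so $X_0$ is normal and the test configuration is special. Note that $-K_X - (1-\frac1d)\sum H_i \sim_\Q (n+1-d-(1-\frac1d)(n-d))H = (1+\frac{n-d}{d})H$.

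\textbf{Step 2 (K-polystability of the cone pair).} This is the main step. Observe that $(X_0,\Delta_0)$ is an iterated cone: $X_0$ is the join of $V\subset\P^d$ with $\P^{n-d-1}=(x_0=\cdots=x_d=0)$. Equivalently, $(X_0,\Delta_0)$ is the projective compactification of the affine cone over $V$ crossed with $\A^{n-d}$, where the boundary pieces are the coordinate hyperplanes of $\A^{n-d}$ with coefficient $1-\frac1d$. I would proceed inductively, one coordinate at a time. Let $Y_{k}\subset\P^{d+k}$ be the cone over $Y_{k-1}$ with boundary $B_k = B_{k-1}\text{-cone} + (1-\frac1d)(x_{d+k}=0)$, and show that $(Y_k,B_k)$ is K-polystable given that $(Y_{k-1},B_{k-1})$ is. The tool is the known criterion for projective cones (Li--Liu / Zhuang-type results): for a log Fano pair $(V',B')$ with $-(K_{V'}+B')\sim_\Q rL$, the projective cone $\overline{C}(V',L)$ with boundary $B'$-cone $+\,(1-\frac{r}{n'+1}\cdots)V'_\infty$ (the coefficient making the cone log Fano with the correct Reeb/ratio) is K-polystable iff $(V',B')$ is. Here one must check that the coefficient $1-\frac1d$ is exactly the one required. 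At each stage, $-(K_{Y_{k-1}}+B_{k-1}) \sim_\Q \frac{d+k-1 +1 - d - (k-1)(1-\frac1d)}{1}H$, which gives $r=1+\frac{k-1}{d}$, and the required coefficient $1-\frac{r}{\dim+1}$ should compute to $1-\frac1d$. Verifying this numerical identity is the crux; if it fails, I would instead use $\G_m^{n-d}\times\fS$-equivariance together with \cite{Zhu21}, as in \Cref{lem1}, by computing that the Futaki invariant of the torus vanishes and that the $\beta$-invariant is nonnegative via a direct $\delta$-computation on the join.

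\textbf{Step 3.} Since $(X_0,\Delta_0)$ is K-polystable, in particular K-semistable, and is a special degeneration of $(X,(1-\frac1d)\sum H_i)$, lower semicontinuity of $\delta$ \cite{BlumHalpernLeistnerLiXuProperness} gives $\delta(X,(1-\frac1d)\sum H_i)\ge 1$. Uniqueness of the K-polystable degeneration \cite[Theorem~1.1]{BlumXuUniquenessofpolystabledegenerations} finishes the proof.
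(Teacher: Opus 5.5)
Your argument is correct. The numerical identity you flagged as the crux does hold, so the fallback via \cite{Zhu21} is unnecessary. At stage $k$ one has $\dim Y_{k-1}+1=d+k-1$ and
$r=1+\frac{k-1}{d}=\frac{d+k-1}{d}$. Hence the cone coefficient $1-\frac{r}{\dim Y_{k-1}+1}$ equals exactly $1-\frac1d$ for every $k$, and $r\le \dim Y_{k-1}+1$ holds as required. The polystable cone criterion \cite[Proposition 3.5]{LL19}, \cite[Proposition 2.11]{LiuZhuangSharpnessofTianscriterion}, starting from $Y_0=V$, then gives K-polystability of $(X_0,\Delta_0)$.

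The paper uses the same key input, the cone criterion \cite[Proposition 5.3]{LiXuStabilityofvaluations} and its polystable version, but organizes the argument differently. It inducts on $n$. At each step it degenerates $(X',(1-\frac1d)\sum H_i')$ to the normal cone over a single hyperplane section $X$, and gets K-semistability of $C(X)$ from the inductively known K-semistability of $(X,(1-\frac1d)\sum H_i)$. It then applies openness at every step, and separately deduces K-polystability of $C(X_0)$ from that of $X_0$.

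You instead degenerate all $n-d$ coordinates at once with one $\G_m$. You prove K-polystability of the iterated cone by inducting only along the chain of cones, and invoke openness and uniqueness \cite[Theorem~1.1]{BlumXuUniquenessofpolystabledegenerations} once. Your route never needs K-semistability of the intermediate non-cone pairs. It also exhibits directly a special test configuration with central fiber $(X_0,\Delta_0)$, which makes the ``Moreover'' identification immediate. In the paper's induction this requires composing $X'\rightsquigarrow C(X)\rightsquigarrow C(X_0)$.

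Two small corrections:
\begin{enumerate}
\item The fact that a special degeneration to a K-semistable pair forces K-semistability is openness of K-semistability \cite{BLX22, Xu20}, not \cite{BlumHalpernLeistnerLiXuProperness}.
\item Specialness of your test configuration needs $(X_0,\Delta_0)$ to be klt, not just $X_0$ normal. This follows from your Step 2, or directly from the local structure $\widehat{C}(V)\times\A^{n-d-1}$ near the vertex locus.
\end{enumerate}
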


\begin{proof}
For each fixed $d$, we do induction on $n\geq d$. Note that the base case  $n = d$  is trivial. Assume that the statement is true for $n$ and $d$. Now let $X'\subset \bP^{n+1}$, $(H_i')_{1\leq i\leq n+1-d}$ be hyperplane sections of $X'$ such that $V = \cap_i H_i'$. Let $X_0':= (f(x_0,\cdots, x_d) = 0) \subset \bP^{n+1}$. In a suitable projective coordinates we may assume that $H_i' = (x_{d+i} = 0)\subset \bP^{n+1}$. Let $X := X'\cap H_{n+1-d}'$, $H_i:= H_i'\cap H_{n+1-d}'$ for $1\leq i\leq n-d$, and $X_0':= X_0\cap (x_{n+1} = 0)$. By the induction hypothesis, we have that $(X, (1-\frac{1}{d})\sum_{i=1}^{n-d} H_i)$ (resp. $(X_0, (1-\frac{1}{d})\sum_{i=1}^{n-d} (x_{d+i}=0))$) is K-semistable (resp. K-polystable). By degeneration to normal cone, we know that $(X', (1-\frac{1}{d})\sum_{i=1}^{n+1-d} H_i')$ specially degenerates to $(C(X), (1-\frac{1}{d})\sum_{i=1}^{n-d} C(H_i) + (1-\frac{1}{d})(x_{n+1} = 0))$ which is K-semistable by \cite[Proposition 5.3]{LiXuStabilityofvaluations}. Thus by the openness of K-semistability \cite{BLX22, Xu20} we know that  $(X', (1-\frac{1}{d})\sum_{i=1}^{n+1-d} H_i')$ is K-semistable. Since $X_0'=C(X_0)$, the K-polystability of $(X_0', (1-\frac{1}{d})\sum_{i=1}^{n+1-d} (x_{d+i}=0))$ follows from the K-polystability generalization of \cite[Proposition 5.3]{LiXuStabilityofvaluations} (see \cite[Proposition 3.5]{LL19} through the YTD correspondence \cite{LiuXuZhuangHigherrankfinitegeneration} or \cite[Proposition 2.11]{LiuZhuangSharpnessofTianscriterion}).
%First of all, by iterated degeneration to normal cone we know that $(X, (1-\frac{1}{d})\sum_i H_i)$ admits a special degeneration to $(X_0, (1-\frac{1}{d})\sum_{i=1}^{n-d} (x_{d+i}=0))$
%Take iterated cone construction from \cite{LL19} and use degeneration techniques.\YL{add details}
\end{proof}

\begin{rem}
We note that Ilten and Lautsch \cite{IL22} prove that if $d\geq 2n-1$ then a general hypersurface $X$ admits a degeneration via a test configuration to a toric hypersurface. However, their construction of toric degeneration is in general not normal (hence not KLT).
\end{rem}

\section{Non-weakly exceptional singularities}
\label{section:Fregularity}

\subsection{Special divisors over non-weakly exceptional log del Pezzo surfaces.}

First, we will study non-weakly exceptional log del Pezzo surfaces over $k$.

\begin{defi} \label{dfn:weaklyexcepFano}
        A log Fano pair $(X, \Delta)$ over $k$ is weakly exceptional if for every effective $\Q$-divisor $\Gamma$ such that $\Gamma \sim _\Q -(K_X + \Delta)$, the pair $(X, \Delta+ \Gamma)$ is log canonical. In other words, we have $\alpha(X,\Delta) \geq 1$.
\end{defi}

The following theorem on extracting special divisors over surfaces in characteristic $p>0$ was obtained by Harold Blum and Joe Waldron and we thank them for pointing us to its proof. It is an analog of a theorem due to Zhuang in the complex case, and we include a proof for completeness.

\begin{thm} \label{thm:specialdivisor}
    Let $(S,\Delta)$ be a log del Pezzo surface over $k$ and assume that $(S, \Delta)$ is not weakly exceptional (i.e., $\alpha (S, \Delta)< 1$). Then, there exists a projective birational map $\mu: Z \to S$ and an effective $\Q$-divisor $D^*$ on $S$ such that the following conditions hold:
    \begin{enumerate}
        \item There is a unique prime divisor $E \subset Z$ and $-E$ is $\mu$-ample.

        \item We have $D^* \sim _\Q -(K_S + \Delta)$ and $(S, \Delta+ D^*)$ is log canonical.

        \item $E$ is the unique log canonical place of $(S, \Delta+ D^*)$, so that the pair $(Z, E + \Delta_Z + D^*_Z)$ is PLT (where $ \Delta_Z, D_Z ^*$ denote the strict transforms of $\Delta,D^*$ respectively).

        \item The divisor $-K_Z - \Delta_Z - E$ is big.

        \item The surface $Z$ is of Fano type, i.e., there exists a $\Q$-divisor $\Delta' \geq 0$ on $Z$ such that $-K_Z - \Delta'$ is ample and $(Z, \Delta')$ is KLT.
    \end{enumerate}
\end{thm}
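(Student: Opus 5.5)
Since $\alpha(S,\Delta)<1$, I will start from an effective $\Q$-divisor $D\sim_\Q -(K_S+\Delta)$ such that $(S,\Delta+D)$ is not log canonical. Let $c=\lct(S,\Delta;D)<1$. Then $(S,\Delta+cD)$ is log canonical but not KLT. The first task is to arrange a \emph{unique} log canonical place, following the tie-breaking in Zhuang's argument. On a log resolution $g\colon W\to S$ of $(S,\Delta+D)$, which exists for surfaces in any characteristic, I pick one lc place $F$ of $(S,\Delta+cD)$. I then perturb: set $D^*:=cD+(1-c)H+\epsilon G$, where $H\sim_\Q -(K_S+\Delta)$ is a general ample effective divisor and $G$ is a small correction chosen so that every lc place other than $F$ gets discrepancy pushed positive. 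Concretely, take $-\epsilon' g_*^{-1}$ of an ample combination on $W$ that is negative on $F$-coefficients less than others, and rescale $c$ slightly to keep $D^*\sim_\Q-(K_S+\Delta)$. The outcome should be that $(S,\Delta+D^*)$ is log canonical with $F$ as its unique lc place. Note that $(1-c)>0$ gives room to absorb these perturbations while keeping the $\Q$-linear equivalence class. This yields (2), and the lc-place part of (3).

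Next I extract $E:=F$. If $F$ is a divisor on $S$ (that is, $D^*$ has a component with coefficient $1$), I instead take $Z=S$ after noting that (1) is then vacuous. In fact the paper wants an exceptional $E$, so in that case I would perturb to make the lc center a point, or handle it separately. In the main case $F$ is exceptional over a point. On the minimal resolution (or on $W$) I run the $(K_W+\Delta_W+F+\sum(1-\delta)E_i)$-MMP over $S$ to contract every exceptional curve except $F$. This works for surfaces in all characteristics, by Tanaka's surface MMP or by Artin contractibility of negative-definite configurations for rational singularities. The result is $\mu\colon Z\to S$ with exceptional locus exactly $E$. Then $-E$ is $\mu$-ample because $E^2<0$, which gives (1). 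Since $K_Z+\Delta_Z+D^*_Z+E=\mu^*(K_S+\Delta+D^*)$ and $E$ is the only lc place, inversion of adjunction on surfaces (or simply the discrepancy computation) shows that $(Z,E+\Delta_Z+D^*_Z)$ is PLT, which gives (3).

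For (4), observe that $-K_Z-\Delta_Z-E\sim_\Q D^*_Z$, by the crepant pullback of $K_S+\Delta+D^*\sim_\Q 0$. Moreover $D^*_Z\ge (1-c)\mu^{-1}_*H$, which contains the strict transform of an ample divisor, so it is big. Here I would rather write $D^*_Z=(1-c)\mu^*H+(\text{effective})$, using that a general $H$ avoids the center, which makes bigness immediate. For (5), I take $\Delta'=\Delta_Z+(1-\eta)E+D^*_Z-\eta'A$ style: $-(K_Z+\Delta_Z+E+D^*_Z)\equiv 0$, so I replace $(1-c)\mu^*H$ by an ample-on-$Z$ divisor $(1-c)\mu^*H-\eta E$ and lower the coefficient of $E$ to $1-\eta$. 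Then $-(K_Z+\Delta')\sim_\Q \eta E+(1-c)\mu^*H-\eta E\cdot(\ldots)$ becomes ample, since $\mu^*H-\eta E$ is ample for small $\eta$, and $(Z,\Delta')$ is KLT because PLT with the boundary coefficient below $1$ gives KLT.

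I expect the main obstacle to be the tie-breaking step producing a unique lc place in characteristic $p$. I would do it entirely on a fixed log resolution $W$, where discrepancies are linear in the divisor coefficients, so that a generic small perturbation among finitely many divisors singles out one lc place.
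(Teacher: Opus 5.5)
Your route differs from the paper's. You tie-break on a fixed log resolution to build $D^*$ first and then extract $F$. The paper instead extracts an lc place $E$ of $(S,\Delta+tD)$ first, reads off (4) and (5) from $t<1$, and gets $D^*$ as a general complement on the ample model $Z'$ of $-(K_Z+\Delta_Z+E)$.

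\textbf{The tie-breaking step.} The step you identify as the crux is wrong as formulated, because the lc place $F$ cannot be chosen in advance. Take $S=\P^2$ and $\Delta=\frac12(L_3+L_4)$ with $L_3,L_4$ lines through $x$. Take $D=\frac23(\ell+Q)\sim_\Q-(K_S+\Delta)$, where $\ell$ is a third line through $x$ and $Q$ is a conic tangent to $\ell$ at $x$. Then $(S,\Delta+\frac34 D)$ is lc. In coordinates with $\ell=(y=0)$, both $E_1=v_{(1,1)}$ and $E_2=v_{(1,2)}$ are lc places. We have $A_{(S,\Delta)}(E_1)=1$, $A_{(S,\Delta)}(E_2)=2$, and $v_{(1,2)}\le 2v_{(1,1)}$ on all functions. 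Hence every complement having $E_2$ as an lc place also has $E_1$ as one, and no $G$ can isolate $F=E_2$. The same example shows the paper's $E$ cannot be arbitrary either: for $E=E_2$, the pair $(Z,\Delta_Z+E)$ has $E_1$ as an lc place, so it is not plt.

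What does work is your closing remark, made precise:
\begin{enumerate}
\item Perturb to $(c-\eta)D+\epsilon G$ with $\eta=\epsilon\max_j \ord_{E_j}G/\ord_{E_j}D$, the max taken over the lc places on $W$. The lc places that survive are exactly the maximizers.
\item Take $G=\frac1r g_*$ of a general member of $|r(g^*A-\sum_j a_jE_j)|$, with $(a_j)$ generic in the open cone where $-\sum_j a_jE_j$ is $g$-ample. Here $A$ is a multiple of $-(K_S+\Delta)$, absorbed by lowering the coefficient of $H$. This makes the maximizer unique.
\item You must \emph{define} $F$ to be that maximizer. Snc on $W$ then gives uniqueness of the lc place.
\end{enumerate}
If every lc place is a curve on $S$, you still owe an argument producing an exceptional one.

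\textbf{The extraction MMP.} Your MMP for $K_W+\Delta_W+F+\sum_{i\ne F}(1-\delta)E_i$ over $S$ contracts $F$ too. This divisor equals $g^*(K_S+\Delta)+A_{(S,\Delta)}(F)\,F+\sum_{i\ne F}(A_{(S,\Delta)}(E_i)-\delta)E_i$, and $A_{(S,\Delta)}(F)>0$ because $(S,\Delta)$ is klt. So by the negativity lemma every component, $F$ included, gets contracted. You must put $D^*_W$ into the boundary (the paper uses $(t-\delta)D_W$). Then $F$ is crepant and drops out of the relatively trivial effective part.

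\textbf{Part (5).} Lowering the coefficient of $E$ only adds $+\eta E$ to $-(K_Z+\Delta')$. Trading the $H$-part for another effective $M$ not containing $E$ cannot help, since $-(K_Z+\Delta')\cdot E=\eta E^2-M\cdot E<0$. You have to shrink the part $B=(c-\eta)D+\epsilon G$ of $D^*$ that passes through the center of $E$. Let $m=\ord_E B$ and take $\theta>0$ small. Then $(Z,\Delta_Z+(1-\theta)B_Z+(1-\theta m)E)$ is crepant to the klt pair $(S,\Delta+(1-\theta)B)$, whose anti-log-canonical class is ample. Raising the coefficient of $E$ by $0<\delta\ll\theta m$ gives the ample class $\mu^*(\text{ample})-\delta E$, and the pair stays klt. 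This is the paper's argument for (5); its displayed coefficient $1-m\varepsilon-\delta$ should read $1-m\varepsilon+\delta$.
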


\begin{proof}
    Since $(S, \Delta)$ is not weakly exceptional, we can find an effective divisor $D \sim_\Q -(K_S + \Delta)$ such that $(S, \Delta + tD)$ is log canonical but not KLT for some $t \in (0,1)$. Let $E$ denote an LC-place of $(S, \Delta + tD)$. Consider a projective log resolution  $h: W \to S$ of $(S, \Delta + D)$ containing $E$ and pick $0 < \delta \ll 1$ and write 
    \begin{equation} \label{eqn:logrespullback} K_W + \Delta_W + (t - \delta) D_W + G_1 = h^* (K_S + \Delta + (t- \delta)D) + G_2 , \end{equation}
    where $\Delta_W, D_W$ denote the strict transforms of $\Delta, D$ respectively and $G_1 $ and $G_2$ are effective exceptional divisors such that: the support of $G_1$ contains all divisors with log discrepancy over $(S, \Delta + tD)$ less than $1$ and $G_2$ contains all divisors with log discrepancy larger than $1$. We may assume that $E$ appears in $G_1$ (since $E$ is an LC-place) and that every exceptional divisor of $h$ appears in either $G_1$ or $G_2$.  

    Let $F$ denote the reduced divisor containing the support of $G_1 \setminus E$. It follows from \Cref{eqn:logrespullback} that if we pick $0 < \varepsilon \ll \delta$ then, $(W, \Delta_W + (t - \delta)D_W + G_1 + \varepsilon F) $ is KLT and moreover, since $\Delta_W + (t - \delta)D_W + G_1 + \varepsilon F $ is effective, it is big over $S$ (since $h$ is a projective, birational map). Therefore, by \cite[Theorem 6.5]{TanakaMMPforlogsurfaces} we may run a $K_W + \Delta_W + (t - \delta)D_W + G_1 + \varepsilon F$- MMP over $S$ which ends in a $\Q$-factorial terminal model $W \overset{g}{\to} Z \overset{\mu} {\to} S$ such that $K_Z + g_* (\Delta_W + (t - \delta)D_W + G_1 + \varepsilon F)$ is nef over $S$. Using \Cref{eqn:logrespullback} again, we have
    \[K_Z + g_* (\Delta_W + (t - \delta)D_W + G_1 + \varepsilon F) = \mu ^*(K_S + \Delta + (t - \delta)D) + g_* (G_2 + \varepsilon F)  .  \]
    Since $G_2$ and $F$ are effective divisors, by the negativity lemma, we must have that $g_* (G_2 + \varepsilon F) = 0$, so that $g$ contracts all $h$-exceptional divisors except $E$. Since we know that $E$ is an LC-place of $(S, \Delta + tD)$, we have
    \begin{equation} \label{eqn:specialdivisorpullback} K_Z + \Delta_Z + tD_Z + E = \mu^* (K_S + \Delta + tD),  \end{equation}
    and hence we know that $-E^2 > 0$, and hence $-E$ is ample over $S$. This shows part (1). Moreover, since $- (K_Z + \Delta_Z + E) \sim_\Q -(1- t)\mu^*(K_S + \Delta) + t D_Z $, 
    we see that $- (K_Z + \Delta_Z + E)$ is big (since $-(K_S + \Delta_S)$ was big and $t \in (0,1)$), which proves part (4).

    Next, set $m : = \text{ord}_E (D)$ and note that for $0 < \varepsilon \ll 1$, by \Cref{eqn:specialdivisorpullback} we have $K_Z + \Delta_Z + (t - \varepsilon) D_Z + ( 1 - m \varepsilon )E = \mu^* (K_S + \Delta + (t - \varepsilon)D)$. Therefore, $(Z, \Delta_Z +  (t - \varepsilon) D_Z + (1 - m\varepsilon) E)$ is KLT since $(S, \Delta + (t - \varepsilon)D)$ is.
     Moreover, $- (K_Z + \Delta_Z + (t - \varepsilon) D_Z + ( 1 - m \varepsilon )E) \sim_\Q -(1 - t + \varepsilon) \mu^* (K_S + \Delta)$ is big and nef. Therefore, since $-E$ is $\Q$-Cartier and $\mu$-ample, we see that $- (K_Z + \Delta_Z + (t - \varepsilon) D_Z + ( 1 - m \varepsilon  - \delta)E)$ is ample for $0 < \delta \ll 1$, which shows that $Z$ is of Fano type (Part (5)).

    Next, we run a $(-K_Z-\Delta_Z-E)$-MMP which terminates to $Z \to Z^{\rm m}$ such that, if $E^{\rm m} $, $\Delta_{Y^{\rm m}}$ denotes the strict transform of $E$ and $\Delta_Z$ on $Z^{\rm m}$, we have $-K_{Z^{\rm m}} -\Delta_{Z^{\rm m}} -E^{\rm m}$ is big and nef. We can run such an MMP because $Z$ is a $\Q$-factorial surface by \cite[Theorem 3.27]{TanakaMMPforlogsurfaces}. Moreover, by \cite[Proposition 3.29]{TanakaMMPforlogsurfaces}, $-K_{Z^{\rm m}} -\Delta_{Z^{\rm m}} -E^{\rm m}$ is big and semi-ample, and hence admits a birational ample model $\pi: Z \to Z^{\rm m} \to Z'$. Moreover, we see that $E$ is not contracted by $\pi$: first note that we may pick a rational number $0 < \lambda \ll 1$ and a general divisor $H \in |- (K_S + \Delta + (t - \lambda)D)|_\Q $ that avoids the singular points of $(S, \Delta + tD)$ and the center of $E$, so that $(S, \Delta + (t- \lambda)D + H)$ is KLT. Moreover, since $K_S + \Delta + (t - \lambda) D + H \sim _\Q 0$, we see that the crepant pull-backs of $(S, \Delta + (t- \lambda)D + H)$ to $Z$, $Z^{\rm m}$ and $Z'$ are all KLT as well. In particular, we see that $(Z, \Delta_Z)$, $(Z^{\fm}, \Delta_{Z^{\rm m}})$ and $(Z', \Delta_{Z'})$ are KLT. Since we have
    \[ K_Z + \Delta_Z + E \leq \pi^*(K_{Z'} + \Delta_{Z'} + E')  \]
    by the negativity lemma, if $E'$ is contracted on $Z'$, then we have $A_{(Z', \Delta_{Z'})} (E) \leq 0$, which contradicts the fact that $(Z', \Delta_{Z'})$ is KLT. Therefore, we see that $E'$ is not contracted on $Z'$.
    
    Let $\Delta_{Z'},E'$ denote the strict transforms of $\Delta_Z, E$ on $Z'$ respectively. Recall that since the relative minimal model $(Z, \Delta_Z + E)$ is dlt and since $E$ is prime, we know that $(Z, \Delta_Z + E)$ is PLT. Using
    \[ K_Z + \Delta_Z + E \leq \pi^*(K_{Z'} + \Delta_{Z'} + E')  \]
    again, we see that $(Z', \Delta_{Z'} + E')$ is PLT since $E'$ is not contracted on $Z'$. Now, since $- K_{Z'} - \Delta_{Z'} - E'$ is ample, we may pick a general effective $\Q$-divisor $D^* _{Z'} \sim_\Q -(K_{Z'}+ \Delta_{Z'} + E')$ such that $\text{Supp}(D_{Z'} ^*) $ does not intersect the non-snc locus of $(Z', \Delta_{Z'} +E')$ and such that $(Z', E'+ \Delta_{Z'} + D^* _{Z'}) $ remains PLT.  

    Finally, by taking strict transforms of $D^* _{Z'}$ on $Z$ and $S$ and using the fact that $(Z', E'+ \Delta_{Z'} + D^*_{Z'} ) $ is a PLT pair with $ K_{Z'} + E'+ \Delta_{Z'} + D^* _{Z'} \sim _\Q 0 $, we see that $(S, \Delta + D^*_S)$ is log canonical with a unique LC-place $E$. And since $K_Z + E + \Delta_Z + D^* _Z \sim_ \Q 0$, we also have $K_S + \Delta + D^*_S \sim _\Q 0$ by the negativity lemma. This completes the proof of parts (2) and (3), and hence of the theorem.
\end{proof}

\begin{rem}
    Since any one-dimensional KLT pair $(E, \Delta_E)$ must have $E$ normal, we see that in the proof above, $E \subset Z$ is smooth since $(Z, \Delta_Z + E)$ is PLT. Similarly, since a two-dimensional KLT pair $(S, \Delta_S)$ with standard coefficients is strongly $F$-regular when $p >5$ (see \cite{HaraDimensionTwo}), such pairs satisfy that $S$ is normal. Therefore, the surface $Z$ constructed in \Cref{thm:specialdivisor} is normal, and if either $\Delta$ has standard coefficients, or if $S$ is $\Q$-Gorenstein, then $S$ is normal as well. 
\end{rem}

\subsection{Global $F$-regularity of non-weakly exceptional log del Pezzo surfaces.}
Recall that $k$ was assumed to be an algebraically closed field of characteristic $p>5$.

\begin{thm} \label{thm:gFrofsurfaces}
    Let $(S, \Delta_S)$ be a log del Pezzo surface over $k$ where $\Delta_S$ has standard coefficients (i.e., contained in the set $\{0\} \cup \{\frac{n-1}{n} \, | \, n \geq 2 \}$). Suppose $(S, \Delta_S)$ is not weakly exceptional (\Cref{dfn:weaklyexcepFano}). Then, $(S, \Delta_S)$ is globally $F$-regular.
\end{thm}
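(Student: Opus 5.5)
We plan to combine \Cref{thm:specialdivisor} with $F$-adjunction (\Cref{lem:Fadjunction}) and Watanabe's criterion for global $F$-regularity of pairs on $\PP^1$. First, apply \Cref{thm:specialdivisor} to $(S,\Delta_S)$. This gives $\mu\colon Z\to S$ with a unique exceptional prime divisor $E$, and a divisor $D^*\sim_\Q -(K_S+\Delta_S)$ such that $(Z,E+\Delta_Z+D^*_Z)$ is PLT with $K_Z+E+\Delta_Z+D^*_Z\sim_\Q 0$. Since this pair is the crepant pullback of $(S,\Delta_S+D^*)$, \Cref{lem:GFRunderbirationalmap}(3) would reduce the global $F$-regularity of $(S,\Delta_S)$ to that of $(Z,\Delta_Z+(1-\epsilon)E+(1-\epsilon)D^*_Z)$ for small $\epsilon$. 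More precisely, we have $K_Z+\Delta_Z+(1-\epsilon)(E+D^*_Z)\le \mu^*(K_S+\Delta_S)$ up to an $\epsilon$-perturbation, so it suffices to show that the pair with $E$ at coefficient one is globally purely $F$-regular and then perturb.

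However, we cannot apply \Cref{lem:Fadjunction} directly, since it needs $-(K+S+B)$ ample and our pair is only $\Q$-trivial. We will therefore pass to the ample model $Z'$ from the proof of \Cref{thm:specialdivisor}. There $-(K_{Z'}+\Delta_{Z'}+E')$ is ample and $(Z',\Delta_{Z'}+E')$ is PLT. Adjunction gives $(E',\Delta_{E'})$ with $E'\cong\PP^1$ (a one-dimensional KLT pair that is Fano). The coefficients of $\Delta_{E'}$ are standard, because $\Delta_S$ has standard coefficients and differents of standard-coefficient divisors along a PLT centre are standard. Moreover, $\deg(K_{E'}+\Delta_{E'})<0$. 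A pair $(\PP^1,\sum \frac{n_i-1}{n_i}P_i)$ with negative degree has at most three points, with $(n_i)$ of type $(a,b)$, $(2,2,n)$, $(2,3,3)$, $(2,3,4)$ or $(2,3,5)$. Watanabe's theorem \cite{WatanabeFregularFpureRings} shows these are globally $F$-regular once $p>5$. Then \Cref{lem:Fadjunction} gives that $(Z',E'+\Delta_{Z'})$ is globally purely $F$-regular.

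Next, we pass from pure $F$-regularity to $F$-regularity of $(Z',\Delta_{Z'}+(1-\epsilon)E'+B)$ for a small ample-type perturbation $B$. This is the standard argument: compose the splitting with $F^e_*\mathcal O(\lceil(p^e-1)\Delta\rceil)\to F^e_*\mathcal O((p^e-1)\cdots)$ and absorb an arbitrary $D$ using $E'$ plus a general member, with $E'$ itself handled through the coefficient $1-\epsilon$. We then transport this back along $Z\to Z'$ and $Z\to S$ using \Cref{lem:GFRunderbirationalmap}(3) and \Cref{lem:birationalcontraction}(2). The needed inequality $K_Z+\Delta_Z+E\le\pi^*(K_{Z'}+\Delta_{Z'}+E')$ was already established, and $K_Z+\Delta_Z+tD_Z+E=\mu^*(K_S+\Delta+tD)$ controls the descent to $S$.

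The main obstacle is the bookkeeping of the boundary. We must choose the divisor on $Z'$ (say $\Delta_{Z'}+(1-\epsilon)E'+\epsilon' H$ with $H$ ample) so that, pulled back to $Z$ and pushed to $S$, it dominates $K_S+\Delta_S$ in the sense of \Cref{lem:GFRunderbirationalmap}(3). We must simultaneously ensure that the different on $E'$ stays standard so that Watanabe applies. A further subtlety is the normality of $S$ and $E'$, which is guaranteed by the preceding remark for $p>5$.
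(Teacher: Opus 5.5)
Your outline matches the paper's proof through the $F$-adjunction step: the special divisor from \Cref{thm:specialdivisor}, the ample model $\pi\colon Z\to Z'$, the standard different on $E'\cong\PP^1$, Watanabe's theorem for $p>5$, and \Cref{lem:Fadjunction}. The gap is in the transport back to $Z$. You first de-purify on $Z'$ to the boundary $\Delta_{Z'}+(1-\epsilon)E'+\epsilon' H$. You then cite $K_Z+\Delta_Z+E\le\pi^*(K_{Z'}+\Delta_{Z'}+E')$, but that inequality is for coefficient one on $E$ and does not survive the perturbation. Write $K_Z+\Delta_Z+E+G=\pi^*(K_{Z'}+\Delta_{Z'}+E')$, with $G\ge 0$ supported on curves contracted by the MMP part $Z\to Z^{\rm m}$. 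Write also $\pi^*E'=E+\sum_j c_jC_j$ with $c_j\ge 0$. Then
\[
\pi^*\bigl(K_{Z'}+\Delta_{Z'}+(1-\epsilon)E'+\epsilon' H\bigr)-\bigl(K_Z+\Delta_Z+(1-\epsilon)E+\epsilon'\pi^{-1}_*H\bigr)=G-\epsilon\sum_j c_jC_j+\epsilon'\bigl(\pi^*H-\pi^{-1}_*H\bigr).
\]
For $H$ general, missing the finitely many points $\pi(C_j)$, we have $\pi^*H=\pi^{-1}_*H$. Now take a curve $C_j$ contracted by the ample-model morphism $Z^{\rm m}\to Z'$ and lying over a point of $E'$. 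There $G$ has coefficient $0$, because $K+\Delta+E$ is crepant for that step, while $c_j>0$. So the coefficient of $C_j$ is $-\epsilon c_j<0$. Hence \Cref{lem:GFRunderbirationalmap}(3) does not apply. Lowering the coefficient of $E$ on $Z$ does not help, since the defect sits on $C_j$. Your route could be rescued by taking $B$ through the points $\pi(C_j)$, with $\epsilon$ small relative to the coefficients of $\pi^*B$ along the $C_j$. As written, though, your "main obstacle" is not resolved, and your suggested choice fails.

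The paper avoids this by never perturbing on $Z'$. The coefficient-one inequality you already have transports global \emph{pure} $F$-regularity from $(Z',\Delta_{Z'}+E')$ to $(Z,\Delta_Z+E)$. This implies $(Z,\Delta_Z)$ is globally $F$-regular. Indeed, for a test divisor $D=D_0+aE$ with $E\not\subset\text{Supp}\,D_0$ and $p^e-1\ge a$, one has $\lceil(p^e-1)\Delta_Z\rceil+D\le\lceil(p^e-1)(\Delta_Z+E)\rceil+D_0$. Then $(S,\Delta_S)=(S,\mu_*\Delta_Z)$ is globally $F$-regular by \Cref{lem:GFRunderbirationalmap}(2). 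Pushing forward along the morphism $\mu$ needs no inequality. So neither $K_Z+\Delta_Z+tD_Z+E=\mu^*(K_S+\Delta+tD)$ nor your first-paragraph comparison with $\mu^*(K_S+\Delta_S)$ is needed. The latter is in fact false as stated: $\mu^*(K_S+\Delta_S)-\bigl(K_Z+\Delta_Z+(1-\epsilon)(E+D^*_Z)\bigr)=\epsilon(E+D^*_Z)-\mu^*D^*$, which is not effective for small $\epsilon$.
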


\begin{proof}
    By \Cref{thm:specialdivisor}, we have a birational map $\mu: Z \to S$ such that $E \subset Z$ is a prime divisor and the pair $(Z, E + \Delta_Z )$ is PLT with $ - K_Z - \Delta_Z - E$ big (where $\Delta_Z$ is the strict transform of $\Delta$). Then, as in the proof of \Cref{thm:specialdivisor}, let $\pi: Z \to Z'$ be the $- K_Z - \Delta_Z - D_Z$-ample model. Then, $E'$ is not contracted on $Z'$ and $- K_{Z'} - \Delta_{Z'} - E'$ is ample, and we have $\Delta_{Z'}$ has standard coefficients.
    
    Now, by adjunction, we may write $K_{\overline{E'}} + \Delta_{\overline{E'}} = (K_{Z'} + E' + \Delta_{Z'})|_{\overline{E'}} $ where $\overline {E'} \to E'$ is the normalization map. Here, $\Delta_{\overline{E'}}$ is the different of $\Delta_{Z'}$ and so also has standard coefficients (\cite[Lemma 4.1]{HaconMckernanXuACCforLCT}). Now, $(E', \Delta_{E'})$ is a one-dimensional log Fano pair over $k$ with standard coefficients. Then, \cite[Theorem 4.2]{WatanabeFregularFpureRings} proves that $(E', \Delta_{E'})$ is globally $F$-regular. By global $F$-adjunction, we also get that $(Z', E' + \Delta_{Z'})$ is purely globally $F$-regular. Since $K_Z + \Delta_Z + E$ is $\pi$-nef, by the negativity lemma, we see that $K_Z + \Delta_Z + E \leq \pi^*(K_{Z'} + \Delta_{Z'} + E')$ (\Cref{lem:birationalcontraction}). Therefore, we see that $(Z, \Delta_{Z} + E)$ is purely-globally $F$-regular (\Cref{lem:GFRunderbirationalmap}), and hence $(S, \Delta_S)$ is globally $F$-regular.
\end{proof}

\begin{rem}
    Global $F$-regularity of log del Pezzo surfaces (with $\Delta = 0$, say) is known in the following (non-exhaustive list) cases:
    \begin{enumerate}
        \item If $p \geq 7$ and $X$ has canonical singularities by \cite{KawakamiTanakaGlobalFregularity}.

        \item For any $\varepsilon \in (0,1)$, if $X$ is $\varepsilon$-KLT and $p \geq p_0 (\varepsilon)$ is large depending on $\varepsilon$ by \cite[Section 3]{CTW17}.

        \item For any $p \geq 7$ and $\varepsilon < \varepsilon_0 (p)$ and $X$ is not $\varepsilon$-KLT by \cite[Section 5]{CTW17}.

        \item For any $p \geq 7$ and $X$ is not weakly-exceptional by \Cref{thm:gFrofsurfaces}.
    \end{enumerate}

    On the other hand, there exist non globally $F$-regular log del Pezzo surfaces in every characteristic $p>0$ by \cite{CasciniTanakaWitaszekcounterexamplestoFregularity}. The above list suggests that such surfaces are rare and it would be interesting to characterize this phenomenon.
\end{rem}

\subsection{PLT blow-ups in positive and mixed characteristics}

Now we use the results of the previous sections to study three-dimensional singularities in the local case. The main tool is the following:

\begin{defi} \label{defi:pltblowupinmixedchar}
       Let $x \in (X, \Delta)$ be a closed point, where $(X, \Delta)$ is a KLT pair over $V$. Then, a proper birational map $\pi: Y \to X$ over $V$ is called a \emph{PLT blow-up} of $(X, \Delta)$ (over $x$) if the following conditions hold:
    \begin{enumerate}
        \item $S \subset Y$ is the unique, prime $\pi$-exceptional divisor on $Y$ and $\pi(S) = \{x\}$. 
        \item $\pi$ is an isomorphism over $X \setminus \{x\}$.
        \item The pair $(Y, S + \Delta_Y)$ is PLT, where $\Delta_Y$ denotes the strict transform of $\Delta$ on $Y$.
        \item $-S$ is $\Q$-Cartier and ample over $X$.
    \end{enumerate}

    Given a PLT blow-up $\pi : Y \to X$ of $(X, \Delta)$ with exceptional divisor $S$, the pair $(S^{\text{N}}, \Delta_{S^{\text{N}}})$ is called a \emph{Koll\'ar component} of $(X, \Delta)$, where $S^{\text{N}}$ denotes the normalization of $S$ and $\Delta_{S^{\text{N}}} = \text{Diff}_{S^{\text{N}}} (\Delta_Y)$, the different of $\Delta_Y$ along $S$ as in \cite[Definition~4.2]{KollarKovacsSingularitiesBook}.
\end{defi}

With notation as above, we may write $K_Y + \Delta_Y + S = \pi^* (K_X + \Delta) + A_{X, \Delta} (S) S$. Using the fact that $-S$ is ample over $X$, we see that $K_Y + \Delta_ Y + S$ is anti-ample over $X$ since $A_{X, \Delta}(S) >0$.

\begin{rem}
    Given a PLT blow-up $\pi: (Y, S) \to X$ of a KLT pair $(X, \Delta)$ over $V$, note that since $\pi (S) $ is a closed point, $S$ is a variety over the residue field of $V$. Let $\nu: S^{\text{N}} \to S$ denote the normalization of $S$. Then, since the different $\Delta_{S^\text{N}} $ is defined by the formula
    \[ \nu^* (K_Y + \Delta_Y + S)|_S = K_{S^\text{N}} + \Delta_{S^\text{N}} \]
    and we see that $-(K_{S^\text{N}} + \Delta_{S^\text{N}} )$ is ample on $S^{\text{N}}$. Further, if the dimension of $X$ is at most $3$, PLT adjunction holds in our context (see \cite[Corollary 4.9]{HaconWitaszekRelativeMMPfor4folds}), and we conclude that $(S^\text{N}, \Delta_{S^\text{N}}) $ is a KLT log Fano pair over $k$ of dimension equal to $\dim(X) -1$. 
\end{rem}

\begin{rem} \label{rem:normalityofpltcenter}
    Moreover, if $X$ has dimension at most $3$ and the coefficients of $\Delta$ belong to the standard set $\{\frac{n-1}{n} \, | \, n \geq 1 \},$ then $\Delta_{S^\text{N}}$ also has coefficients in the standard set. In that case, $(S^\text{N}, \Delta_{S^\text{N}})$ is (locally) strongly $F$-regular by \cite[Theorem 3.1]{HaconXuThreeDimensionalMinimalModel}, and consequently by \cite[Proposition 4.1]{HaconXuThreeDimensionalMinimalModel}, $S$ is normal. Therefore, when $\Delta$ has standard coefficients and dimension of $X$ is at most $3$, we need not normalize and we will drop the superscript N from the Koll\'ar component.
\end{rem}

\begin{defi} \label{defi:weaklyexceptionalsing}
    Recall that a log Fano pair $(S, \Delta_S)$ over $V$ is said to be weakly exceptional if $(S, \Delta_S + \Gamma)$ is log canonical for every effective $\Q$-divisor $\Gamma$ such that $\Gamma \sim_\Q -(K_S + \Delta_S)$.
    
    A local KLT pair $x \in (X, \Delta)$ over $V$ is said to be weakly exceptional if \emph{every} Koll\'ar component $(S, \Delta_S)$ of $x \in (X, \Delta)$ is weakly exceptional.
    
    A non-weakly exceptional singularity over $V$ is a local KLT pair $x \in (X, \Delta)$ over $V$ that is not weakly exceptional.
\end{defi}

\begin{rem}
    Over $\C$, a weakly-exceptional singularity can also be characterized by the following property: given any two PLT blow-ups $\pi_1: Y_1 \to X$ and $\pi_2 : Y_2 \to X$ of $(X, \Delta)$, the birational map $\pi_1 ^{-1} \circ \pi_2 $ extends to an isomorphism in codimension one. In short, weakly exceptional singularities admit a unique Koll\'ar component, whereas non-weakly exceptional singularities admit more than one Koll\'ar component up to isomorphism. See \cite{ProkhorovBlowupsofcanonicalsingularity} for the details.
\end{rem}

We note that using the relative minimal model program, we can show that every KLT singularity of dimension at most $3$ admits at least one Koll\'ar component:

\begin{lem} \label{lem:existenceofKollarcomp}
    Let $x \in (X, \Delta)$ be an affine KLT singularity over $V$ of dimension at most $3$. Suppose $\Gamma \geq 0$ is an effective $\Q$-Cartier divisor such that $(X, \Delta + \Gamma)$ is log canonical, and KLT on $X \setminus \{x\}$. Assume that $(X, \Delta + \Gamma)$ is not KLT at $x$. Then, $x \in (X, \Delta)$ admits a PLT blow-up $\pi: Y \to X$ over $x$ with an exceptional divisor $S$ satisfying $A_{X, \Delta + \Gamma} (S) = 0$.
\end{lem}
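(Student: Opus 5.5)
We would follow the standard construction of a Kollár component from a log canonical place (as in Xu's proof of existence of Kollár components), using the relative MMP in dimension at most $3$, which is available over $V$ (in positive and mixed characteristic) by the works of Hacon--Xu, Birkar, and Hacon--Witaszek / Bhatt et al.

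First we produce a dlt modification. Take a log resolution $h: W \to X$ of $(X, \Delta + \Gamma)$ that is an isomorphism over the locus where $(X, \Delta+\Gamma)$ is snc. Since $(X, \Delta + \Gamma)$ is KLT away from $x$, every exceptional divisor with log discrepancy $0$ has center $x$. By assumption there is at least one such divisor. Let $E_W$ be the reduced sum of these LC places, and write
\[ K_W + \Delta_W + \Gamma_W + E_W + \sum (1-\epsilon)F_j \]
for the remaining exceptional divisors $F_j$. We then run an MMP over $X$ for this boundary with $0<\epsilon\ll 1$, scaled appropriately. By the negativity lemma the MMP contracts all the $F_j$, and we obtain a $\Q$-factorial dlt model $g: W' \to X$. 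On $W'$ all exceptional divisors lie over $x$ and are LC places of $(X,\Delta+\Gamma)$, and $K_{W'} + \Delta_{W'} + \Gamma_{W'} + E_{W'} = g^*(K_X+\Delta+\Gamma) \equiv_X 0$.

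Next we isolate a single component. Pick one component $S$ of $E_{W'}$. If $E_{W'}$ has other components, we run a $(K_{W'} + \Delta_{W'} + (1-\delta)\Gamma_{W'} + S + (1-\delta')(E_{W'}-S))$-MMP over $X$. The relevant divisor is $\equiv_X -\delta\Gamma_{W'} - \delta'(E_{W'}-S)$. Choosing $\delta$ small relative to $\delta'$ (or using that $\Gamma$ is $\Q$-Cartier and $g^*\Gamma$ is numerically trivial), it is $\equiv_X$ a negative combination supported on $E_{W'}-S$ plus something controlled. By negativity the MMP contracts all components other than $S$, yielding a model $Y'$ with unique exceptional divisor $S$ and $(Y', S + \Delta_{Y'})$ dlt, hence PLT. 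This gives conditions (1)--(3); condition (2) holds automatically since $\pi$ is an isomorphism over $X\setminus\{x\}$ by construction.

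Finally we secure ampleness of $-S$. $Y'$ is $\Q$-factorial, so $-S$ is $\Q$-Cartier. Since $S$ is the only exceptional divisor over the point $x$, the negativity lemma gives that $-S$ is big and effective-positive over $X$, i.e. $-S \equiv_X K_{Y'} + \Delta_{Y'} + (1-c)S + \dots$ up to a KLT perturbation. We then take the relative ample model of $-S$ over $X$: run a $(K_{Y'}+\Delta_{Y'}+(1-\eta)S)$-MMP over $X$, which is a $-\eta' S$-MMP, followed by the base-point-free theorem to pass to $\operatorname{Proj}_X \bigoplus_m \pi_*\cO(-mS)$. The step we expect to be the main obstacle is verifying that $S$ is not contracted and that PLT-ness is preserved on this ample model. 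We would handle it as in \Cref{thm:specialdivisor}: crepant comparison via $K+\Delta+\Gamma+S\equiv 0$ keeps $A_{X,\Delta+\Gamma}(S)=0$, and the inequality from \Cref{lem:birationalcontraction}(2) transfers the PLT property. The required MMP, termination, and base-point-freeness for threefolds over $V$ (with residue characteristic $p>5$ where needed) are taken from the literature.
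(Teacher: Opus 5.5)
Your overall architecture (dlt modification, extract one LC place, then the ample model of $-S$) is the right one. However, the step isolating a single component fails as written. The missing idea is exactly the tie-breaking that the paper imports from \cite[Proposition 2.15]{GongyoNakamuraTanakaRationalPoints}.

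Using $K_{W'}+\Delta_{W'}+\Gamma_{W'}+E_{W'}\equiv_X 0$, $\Gamma_{W'}=g^*\Gamma-\sum_i \ord_{E_i}(\Gamma)E_i$ and $\ord_S(\Gamma)=A_{X,\Delta}(S)>0$, the divisor you run the MMP on satisfies
\[
K_{W'}+\Delta_{W'}+(1-\delta)\Gamma_{W'}+S+(1-\delta')(E_{W'}-S)\equiv_X \delta\, A_{X,\Delta}(S)\, S-\sum_{E_i\neq S}\bigl(\delta'-\delta\ord_{E_i}(\Gamma)\bigr)E_i .
\]
At the end of the MMP this exceptional class is nef over $X$, so the negativity lemma makes it $\leq 0$. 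Its positive coefficient along $S$ therefore forces $S$ to be contracted, which is the opposite of what you want, and nothing forces the other $E_i$ to disappear.

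No retuning of the coefficients helps. For a boundary $\Delta_{W'}+b\Gamma_{W'}+S+\sum_{E_i\ne S}c_iE_i$ with $b,c_i\le 1$, the class is $(1-b)A_{X,\Delta}(S)S+\sum\bigl((1-b)\ord_{E_i}(\Gamma)-(1-c_i)\bigr)E_i$. So either $b<1$ and $S$ gets contracted, or $b=1$ and the class is anti-effective, which gives no control. If you instead lower the coefficient of $S$ to $1-\delta\ord_S(\Gamma)$, the MMP does contract exactly $E_{W'}-S$, but you lose the dlt-implies-PLT argument. The contracted $E_i$ remain LC places of the crepant pair $(Y',S+\Delta_{Y'}+\Gamma_{Y'})$ with centers in $S$, so $A_{Y',S+\Delta_{Y'}}(E_i)=\ord_{E_i}(\Gamma_{Y'})$. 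This vanishes whenever $\Gamma_{Y'}$ misses the center of $E_i$, and nothing in your argument guarantees that it does not.

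What is needed is to first modify $\Gamma$, using a general member of a suitable ample linear system, so that a chosen LC place $S$ becomes the \emph{unique} LC place of the new pair. This is where the Bertini theorem \cite[Theorem 2.17]{BMPSTWWMixedCharMMP} enters in mixed characteristic. A $\Q$-factorial dlt modification of the new pair then extracts only $S$ and is automatically PLT.

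Your last step has the same sign problem. $K_{Y'}+\Delta_{Y'}+(1-\eta)S\equiv_X (A_{X,\Delta}(S)-\eta)S$ is a positive multiple of $S$ whenever $A_{X,\Delta}(S)\ge 1$ (for instance at smooth points), so it runs an $S$-MMP, not a $(-S)$-MMP. Use instead $\Gamma_{Y'}\equiv_X -A_{X,\Delta}(S)S$, with $\Gamma$ replaced by the tie-broken divisor. Then $K_{Y'}+\Delta_{Y'}+(1-\epsilon)S+(1-\delta)\Gamma_{Y'}\equiv_X(\delta A_{X,\Delta}(S)-\epsilon)S$ gives a $(-S)$-MMP for a KLT pair when $\delta A_{X,\Delta}(S)<\epsilon<1$, once $(Y',S+\Delta_{Y'}+\Gamma_{Y'})$ is PLT.
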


\begin{proof}
   In positive characteristics, this follows from \cite[Proposition 2.15]{GongyoNakamuraTanakaRationalPoints} \cite[Theorem 9.37]{BMPSTWWMixedCharMMP}. The same proof also works in the mixed characteristic case, by using the required MMP with scaling as proved in \cite[Theorem~9.37]{BMPSTWWMixedCharMMP}. Note that for the tie-breaking argument in \cite[Proposition 2.15]{GongyoNakamuraTanakaRationalPoints}, we only need to apply the Bertini theorem for an ample divisor and regular projective schemes as in \cite[Theorem 2.17]{BMPSTWWMixedCharMMP}. See \cite[Proof of Theorem 9.27]{BMPSTWWMixedCharMMP} for a similar argument.
\end{proof}

\begin{cor} \label{cor:weaklyexceptionalKollarcomponent}
    If a local KLT singularity $x \in (X, \Delta)$ of dimension at most $3$ is non-weakly exceptional then there exists a Koll\'ar component $(S, \Delta_S)$ of $x \in (X, \Delta)$ such that $(S, \Delta_S)$ is not weakly exceptional.
\end{cor}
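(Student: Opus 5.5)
The definition of a non-weakly exceptional singularity hands us almost exactly what \Cref{cor:weaklyexceptionalKollarcomponent} asks for, with one gap to close. By \Cref{defi:weaklyexceptionalsing}, if $x \in (X, \Delta)$ is not weakly exceptional, then it is not the case that every Koll\'ar component is weakly exceptional. There are two ways this can happen. Either some Koll\'ar component $(S, \Delta_S)$ fails to be weakly exceptional, and we are done. Or the collection of Koll\'ar components is empty, so the universal condition fails vacuously. The argument is therefore to rule out the second case using \Cref{lem:existenceofKollarcomp}.

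To do this, choose an effective $\Q$-Cartier divisor through $x$. Since $X$ is affine, pick a general element $f \in \fm_x$ (or several general elements if needed) and set $\Gamma_0 = \mathrm{div}(f)$. Let $c = \lct_x(X, \Delta; \Gamma_0)$; this is a rational number in $(0,\infty)$ because $(X,\Delta)$ is KLT. Put $\Gamma = c\,\Gamma_0$. Then $(X, \Delta + \Gamma)$ is log canonical near $x$ but not KLT at $x$.

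The hypothesis of \Cref{lem:existenceofKollarcomp} also requires $(X, \Delta + \Gamma)$ to be KLT on $X \setminus \{x\}$. To arrange this, we shrink $X$ to an affine neighbourhood of $x$. We also choose $\Gamma_0$ so that its non-KLT locus is isolated at $x$. One concrete way is to take $\Gamma_0 = \frac{1}{N}\sum_{i=1}^N \mathrm{div}(f_i)$ for general $f_i$ generating a power of $\fm_x$; away from $x$ this has small multiplicity by Bertini-type genericity, so the non-KLT locus of $(X,\Delta+c\Gamma_0)$ is just $\{x\}$. In mixed characteristic, the required Bertini statement is the one already cited in the proof of \Cref{lem:existenceofKollarcomp}.

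With these hypotheses in place, \Cref{lem:existenceofKollarcomp} produces a PLT blow-up over $x$, and hence a Koll\'ar component $(S, \Delta_S)$. The set of Koll\'ar components is therefore nonempty. Since the singularity is non-weakly exceptional, not all of these components are weakly exceptional, so some Koll\'ar component $(S,\Delta_S)$ is not weakly exceptional. This gives the corollary.

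The main obstacle is the genericity step: arranging that $(X,\Delta+\Gamma)$ is KLT away from $x$ while remaining non-KLT at $x$. I would handle it by averaging many general members of $\fm_x^k$ and using that $(X,\Delta)$ is KLT on $X\setminus\{x\}$, so that small perturbations preserve KLT there.
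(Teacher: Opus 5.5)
Your conclusion is correct and follows the paper's implicit argument: the paper states this corollary immediately after Lemma~\ref{lem:existenceofKollarcomp} and gives no separate proof, so it rests on the existence of Koll\'ar components plus Definition~\ref{defi:weaklyexceptionalsing}. One correction: the empty case does not make the universal condition fail. If there were no Koll\'ar components, ``every Koll\'ar component is weakly exceptional'' would hold vacuously, and the singularity would be weakly exceptional. So non-weak-exceptionality already gives you a non-weakly exceptional Koll\'ar component, and your $\lct$/Bertini construction, while harmless, is not needed for the statement as formulated.
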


\subsection{$+$-regularity of $3$-dimensional non-weakly exceptional singularities when $p >5$.}

Now we generalize \Cref{thm:gFrofsurfaces} to three-dimensional singularities.

\begin{defi} \label{defiplusreg}
    Let $X$ be a normal scheme of finite type over $V$ and let $\Delta$ be an effective $\Q$-divisor over $X$. Then, the pair $(X, \Delta)$ is called globally $+$-regular if for every finite dominant map $f: Y \to X$ with $Y$ normal, the natural map
    \[ \cO_X \to  f_* (\cO_Y ( \lfloor f^* \Delta \rfloor))  \]
    splits as a map of $\cO_X$-modules. If $X$ is affine, we omit the adjective global.
\end{defi}

\begin{thm} \label{+regforthreedimsingularities}
    Let $x \in (X, \Delta)$ be a three-dimensional KLT singularity over $V$ with residue characteristic $p>5$. Suppose that $K_X$ is $\Q$-Cartier, $\Delta$ has standard coefficients and $(X, \Delta)$ is non-weakly exceptional (\Cref{defi:weaklyexceptionalsing}). Then, $(X, \Delta)$ is $+$-regular.
\end{thm}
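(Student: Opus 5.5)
By \Cref{cor:weaklyexceptionalKollarcomponent} we first choose a PLT blow-up $\pi: Y \to X$ whose Koll\'ar component $(S, \Delta_S)$ is not weakly exceptional. Since $\Delta$ has standard coefficients and $\dim X = 3$, \Cref{rem:normalityofpltcenter} shows that $S$ is normal and $\Delta_S$ has standard coefficients. Then $(S, \Delta_S)$ is a log del Pezzo surface over the residue field $k$ (perfect, $p>5$; pass to $\bar k$ if needed). By \Cref{thm:gFrofsurfaces}, $(S, \Delta_S)$ is globally $F$-regular. Because $(S,\Delta_S)$ is a surface in characteristic $p>5$ with standard coefficients, we also want the stronger conclusion that it is globally $+$-regular. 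In positive characteristic, global $F$-regularity of the pair implies global $+$-regularity (splitting of $\cO_S \to f_*\cO_{S'}$ follows from Frobenius splittings via the standard argument). In mixed characteristic only the special fiber matters here, since $S$ lies over the closed point. In equal characteristic $p$ the statement is $F$-regularity, and global $F$-regularity suffices.

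Next we lift from $S$ to $Y$ by adjunction. The pair $(Y, S+\Delta_Y)$ is PLT and $-(K_Y+S+\Delta_Y) = -A_{X,\Delta}(S)S$ (up to pullback of $\Q$-Cartier $K_X+\Delta$, trivial over $X$ after shrinking) is $\pi$-ample.

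\textbf{Positive characteristic.} We apply the relative version of $F$-adjunction, \Cref{lem:Fadjunction} over the affine base $X$. Global $F$-regularity of $(S,\Delta_S)$ gives that $(Y, S+\Delta_Y)$ is purely globally $F$-regular over $X$. Perturbing, $(Y, (1-\epsilon)S + \Delta_Y)$ is globally $F$-regular over $X$ for small $\epsilon$: $-(K_Y+(1-\epsilon)S+\Delta_Y)$ stays $\pi$-ample, and the divisor $\epsilon S$ is absorbed by the pure $F$-regularity applied to divisors not containing $S$ together with the ample perturbation. Pushing forward via \Cref{lem:GFRunderbirationalmap}(2) then gives that $(X,\Delta)$ is strongly $F$-regular.

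\textbf{Mixed characteristic.} We use the $+$-regular inversion of adjunction from \cite{BMPSTWWMixedCharMMP}. If $(S^{\rm N},\Delta_{S^{\rm N}})$ is globally $+$-regular, or just globally $F$-regular for the special fiber (via their comparison that $F$-regular on the special-fiber divisor gives purely $+$-regular), and $-(K_Y+S+\Delta_Y)$ is $\pi$-ample, then $(Y, S+\Delta_Y)$ is purely globally $+$-regular over $X$. Perturbing the coefficient of $S$ gives that $(Y,\Delta_Y+(1-\epsilon)S)$ is globally $+$-regular over $X$. Since $K_Y+\Delta_Y+(1-\epsilon)S \le \pi^*(K_X+\Delta)$ (the coefficient $A_{X,\Delta}(S)-\epsilon>0$ for small $\epsilon$), pushing forward along $\pi$ gives $+$-regularity of $(X,\Delta)$. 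The $\Q$-Cartier assumption on $K_X$ makes these pullbacks meaningful.

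\textbf{Main obstacle.} The hardest step is the mixed-characteristic adjunction: checking that the $+$-regular (equivalently, BCM-regular) inversion of adjunction in \cite{BMPSTWWMixedCharMMP} applies with only global $F$-regularity of the special-fiber surface $S$ as input, and in a relative setting over a non-projective base. The first thing to try is their result that $F$-regular anticanonical-type divisors on the special fiber give pure $+$-regularity, checked relative to $\pi$ with $-S$ $\pi$-ample.
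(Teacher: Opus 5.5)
Your argument has the same skeleton as the paper's. You pick a non-weakly exceptional Koll\'ar component via \Cref{cor:weaklyexceptionalKollarcomponent}, get normality and standard coefficients from \Cref{rem:normalityofpltcenter}, and deduce global $F$-regularity of $(S,\Delta_S)$ from \Cref{thm:gFrofsurfaces}.

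You diverge only in the final lifting step. You unpack it, separately in each characteristic, into inversion of adjunction on $Y$, lowering the coefficient of $S$, and pushing forward. The paper instead localizes at $x$ and passes to the $\fm$-adic completion via \cite[Corollary 6.9]{BMPSTWWMixedCharMMP}. There the blow-up of $\fa\widehat{R}$ is again a PLT blow-up with the same Koll\'ar component, since $S\cong\Proj\bigoplus_m \fa^m/\fa^{m+1}$. It then applies \cite[Theorem F]{MaSchwedeTuckerWaldronWitaszekAdjoint}, which takes exactly a globally $F$-regular Koll\'ar component as input and gives BCM-regularity, hence $+$-regularity. That citation resolves what you call the main obstacle in one stroke.

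You omitted the completion step, and your mixed-characteristic branch needs it too. The mixed-characteristic $+$-regularity machinery you want to invoke is formulated over a complete local base, so you must complete and then descend $+$-regularity.

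Your equal-characteristic branch is a reasonable self-contained alternative using only classical (relative) $F$-adjunction. The perturbation works because for $e\gg0$ one has $\lceil (p^e-1)(1-\epsilon)\rceil + a\le p^e-1$, which absorbs any multiple $aS$. You then finish with \Cref{lem:GFRunderbirationalmap}(2).

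One slip: $K_Y+\Delta_Y+(1-\epsilon)S=\pi^*(K_X+\Delta)+(A_{X,\Delta}(S)-\epsilon)S\geq\pi^*(K_X+\Delta)$, so the inequality you wrote is reversed. Fortunately no inequality is needed here. Since $\pi$ is an isomorphism over the big open set $X\setminus\{x\}$, a splitting on $Y$ pushes forward and extends by reflexivity, exactly as in \Cref{lem:GFRunderbirationalmap}(2).
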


\begin{proof}
    Since $(X, \Delta)$ is non-weakly exceptional, by \Cref{cor:weaklyexceptionalKollarcomponent} there exists a PLT blow-up $\pi:Y \to X$ (\Cref{defi:pltblowupinmixedchar}) such that the Koll\'ar component $(S, \Delta_S)$ is not weakly-exceptional. By \Cref{rem:normalityofpltcenter}, recall that in this case, $S$ is normal and $(S, \Delta_S)$ is a log Fano surface pair over $k$. Then by \Cref{thm:gFrofsurfaces}, we know that $(S, \Delta_S)$ is a globally $F$-regular log Fano pair.

    To show that $(X, \Delta)$ is $+$-regular at $x$, we may localize at $x$ and assume that $X = \Spec(R)$, where $(R, \fm)$ is a local ring. Moreover, by \cite[Corollary 6.9]{BMPSTWWMixedCharMMP} we may further base change $(R, \Delta)$ to the $\fm$-adic completion $(\widehat{R}, \Delta_{\widehat{R}})$. Note that if $\pi: Y \to X $ was the blow-up of an $\fm$-primary ideal $\fa \subset \fm$, then the blow-up of $\fa \widehat{R}$ on $\Spec(\widehat{R})$ defines a PLT blow-up $\widehat{\pi}: \widehat{Y} \to \widehat{X}$ with Koll\'ar component isomorphic to $(S, \Delta_S)$ (because $S \isom \Proj_R( \oplus_{m \geq 0} \, \, \fa^m/ \fa^{m+1})$).
    Since $(S, \Delta_S)$ is globally $F$-regular over $k$, by \cite[Theorem F]{MaSchwedeTuckerWaldronWitaszekAdjoint} we obtain that $(\widehat{R}, \Delta_{\widehat{R}})$ is BCM-regular, in particular $+$-regular.  This completes the proof of the theorem.
\end{proof}

\section{Inductive step} \label{section:inductivestep}

In this section we will establish a partial inductive approach to proving \Cref{conj:alpha-lim-positive} and \Cref{strongerconjversion}. Recall that a log Fano pair $(X, \Delta)$ is weakly exceptional if for every effective $\Q$-divisor $\Gamma$ on $X$ such that $\Gamma \sim _\Q -(K_X + \Delta)$, the pair $(X, \Delta+ \Gamma)$ is log canonical. In other words, we have $\alpha(X,\Delta) \geq 1$.

\begin{thm} \label{inductivethm}
Let $n \geq 2$ be an integer. Suppose we assume one of the following:
\begin{enumerate}
    \item \Cref{conj:alpha-lim-positive} holds for all log Fano pairs $(X, \Delta)$ of dimension $n-1$ over $\C$.
    \item \Cref{strongerconjversion} holds for all log Fano pairs $(X, \Delta)$ of dimension $n-1$ over $\C$.
    \item \Cref{conj:alpha-lim-positive} holds for all log Fano pairs $(X, \Delta)$ of dimension $n-1$ over $\C$ with the coefficients of $\Delta$ contained in the standard set $\{ \frac{m-1}{m} \, | \, m \in \N \}$.
    \item \Cref{strongerconjversion} holds for all log Fano pairs $(X, \Delta)$ of dimension $n-1$ over $\C$ with the coefficients of $\Delta$ contained in the standard set $\{ \frac{m-1}{m} \, | \, m \in \N \}$.
\end{enumerate}
Then, correspondingly, we have that
\begin{enumerate}
    \item \Cref{conj:alpha-lim-positive} holds for all non-weakly exceptional log Fano pairs $(X, \Delta)$ of dimension $n$ over $\C$.
    \item \Cref{strongerconjversion} holds for all non-weakly exceptional log Fano pairs $(X, \Delta)$ of dimension $n$ over $\C$.
    \item \Cref{conj:alpha-lim-positive} holds for all non-weakly exceptional log Fano pairs $(X, \Delta)$ of dimension $n$ over $\C$ with the coefficients of $\Delta$ contained in the standard set $\{ \frac{m-1}{m} \, | \, m \in \N \}$.
    \item \Cref{strongerconjversion} holds for all non-weakly exceptional log Fano pairs $(X, \Delta)$ of dimension $n$ over $\C$ with the coefficients of $\Delta$ contained in the standard set $\{ \frac{m-1}{m} \, | \, m \in \N \}$.
\end{enumerate}
\end{thm}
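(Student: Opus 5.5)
Since the four statements differ only in bookkeeping, I will prove (2), which is the uniform version over a dense open set, and note how (1), (3) and (4) follow. The plan is to run the argument of \Cref{thm:specialdivisor} over $\C$ in dimension $n$, reduce everything mod $p$, and lift $F$-regularity from the special divisor by $F$-adjunction (\Cref{lem:Fadjunction}).

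\textbf{Step 1 (construction over $\C$).} Since $\alpha(X,\Delta)<1$, by \cite{BLX22} and Zhuang there is an effective $D^*\sim_\Q-(K_X+\Delta)$ such that $(X,\Delta+D^*)$ is log canonical with a unique LC place $E$. Extracting $E$ gives $\mu\colon Z\to X$ with $(Z,\Delta_Z+E)$ PLT, $Z$ of Fano type, and $-(K_Z+\Delta_Z+E)$ big. In characteristic zero the MMP is available, so I take the ample model $\pi\colon Z\dashrightarrow Z'$ of $-(K_Z+\Delta_Z+E)$. As in the proof of \Cref{thm:specialdivisor}, $E$ is not contracted, $(Z',\Delta_{Z'}+E')$ is PLT, $-(K_{Z'}+\Delta_{Z'}+E')$ is ample, and $K_Z+\Delta_Z+E\le\pi^*(K_{Z'}+\Delta_{Z'}+E')$. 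By adjunction, $(E',\Delta_{E'})$ is a log Fano pair of dimension $n-1$, and its different has standard coefficients when $\Delta$ does; this is what gives cases (3) and (4).

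\textbf{Step 2 (spread out and bound).} I choose a model over $A$ for $X,Z,Z',E',D^*$ and the maps, so that all of the above properties persist on closed fibers. By hypothesis there are $C>0$ and a dense open $U$ on which $\FA(E'_y,\Delta_{E'_y})\ge C$. The aim is a uniform bound $\FA(X_y,\Delta_y)\ge c(C,\dots)>0$. Take $\lambda$ small and an effective $\Q$-divisor $G\sim_\Q-(K_{X_y}+\Delta_y)$. I need $(X_y,\Delta_y+\lambda G)$ to be globally $F$-regular. I pull $G$ back to $Z'_y$ and split off its $E'$-component: $\mu^*G=G_Z+(\ord_E G)E$, where $\ord_E G$ is bounded in terms of the fixed data, namely $A_{X,\Delta}(E)$ and the Seshadri-type constant of $E$. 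This should give that $(E'_y,\Delta_{E'_y}+\lambda G_{Z'}|_{E'})$ is globally $F$-regular once $\lambda$ is small compared to $C$. The restriction $G_{Z'}|_{E'}$ is $\Q$-linearly bounded by a fixed multiple of $-(K_{E'}+\Delta_{E'})$, so the definition of $\FA$ applies. Then \Cref{lem:Fadjunction} gives that $(Z'_y,E'_y+\Delta_{Z'_y}+\lambda G_{Z'})$ is purely globally $F$-regular, provided the anti-log-canonical divisor stays ample. To arrange this, I absorb $\lambda G$ by perturbing with a small fixed multiple of $E$ and using ampleness of $-(K_{Z'}+\Delta_{Z'}+E')$ with a uniform margin. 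Finally \Cref{lem:GFRunderbirationalmap}(3) transfers this to $Z_y$, dropping $E$ gives global $F$-regularity of the pair on $Z_y$, and \Cref{lem:GFRunderbirationalmap}(2) pushes it down to $(X_y,\Delta_y+\lambda G)$. By \Cref{alphadefinitions}(5) this gives $\FA\ge\lambda$, and \Cref{alphavss} then converts the bound to one on the $F$-signature. For (1) and (3), the same argument run along a sequence $p\to\infty$ gives the lim-inf statement.

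\textbf{Main obstacle.} The hard part is Step 2's uniform control: showing that $\lambda$ depends only on $C$ and the data fixed over $\C$. Two estimates are needed. One is a bound on $\ord_E G$ for arbitrary $G$; I would use that $-E$ is $\mu$-ample and compare against the volume and $A_{X,\Delta}(E)$. The other is keeping $-(K_{Z'}+\Delta_{Z'}+E'+\lambda G_{Z'})$ ample despite $G_{Z'}$ being arbitrary. If the second fails, I would instead work on $Z$ with the big-and-semiample divisor and use a fixed decomposition into ample plus effective, both chosen over $A$.
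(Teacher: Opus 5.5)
Your Step~1 is essentially the paper's set-up. Step~2 has a genuine gap, and it sits exactly at the obstacle you flag. For an arbitrary $G\sim_\Q-(K_X+\Delta)$ you have $K_Z+\Delta_Z+E+G_Z\sim_\Q bE$ with $b=A_{X,\Delta}(E)-\ord_E G$. Since $\ord_E G$ ranges over $[0,T_{X,\Delta}(E)]$, $b$ takes both signs as $G$ varies.

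Your ``uniform margin'' does exist at the divisorial steps of the MMP. There $K+\Delta+E$ is relatively ample, so $(1-\lambda)(K+\Delta+E)+\lambda bE$ stays relatively ample for small $\lambda$. It does not exist at the ample-model step $g\colon Z^{\rm m}\to Z'$. There $K+\Delta+E$ is relatively trivial, so the term $\lambda bE$ dominates. Concretely, unless $E^{\rm m}$ is numerically trivial over $Z'$, one of two things happens:
\begin{enumerate}
\item $g$ is small (possible once $n\ge 3$). Then $E'$, hence $G_{Z'}$ and $K_{Z'}+\Delta_{Z'}+E'+\lambda G_{Z'}$, are not $\Q$-Cartier. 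So neither the ampleness you want nor \Cref{lem:Fadjunction} makes sense.
\item $g$ contracts a divisor $F$ with $E^{\rm m}-g^*E'=cF$, $c\neq 0$. \Cref{lem:GFRunderbirationalmap}(3) needs $K_Z+\Delta_Z+E+\lambda G_Z\le\pi^*(K_{Z'}+\Delta_{Z'}+E'+\lambda G_{Z'})$; you only state the version without the $G$-terms. The coefficient of $F$ in the left side minus the right side is $\lambda bc$. This is positive for one sign of $b$ no matter how small $\lambda$ is, so the transfer from $Z'$ back to $Z$ fails.
\end{enumerate}
Bounding $\ord_E G$ and shrinking $\lambda$ therefore cannot close the argument.

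The missing idea is the perturbation in the paper's Steps~3--4. Instead of $G$, use $D^+\sim_\Q-(K_X+\Delta)$ with $D^+\ge\alpha_0 G$ and $\ord_E D^+=A_{X,\Delta}(E)$. Over $\C$ this exists by \cite[Lemma~4.9]{LiuXuZhuangHigherrankfinitegeneration}, with $\alpha_0=\alpha(X_0,\Delta_0)>0$ the $\alpha$-invariant of the central fiber of the special test configuration induced by $E$. This forces $b=0$, so $D^+_{Y_i}\sim_\Q-(K_{Y_i}+\Delta_{Y_i}+E_i)$ on every model. Hence $K+\Delta+E+cD^+\sim_\Q(1-c)(K+\Delta+E)$ is $\Q$-Cartier, ample on $Y'$, and relatively nef or trivial at every step, including the ample model. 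Then $F$-adjunction and \Cref{lem:GFRunderbirationalmap} give $\FA(X_s,\Delta_s)\ge\alpha_0\,\FA(E'_s,\Delta_{E'_s})$.

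One more ingredient is needed because $G$ lives on the characteristic $p$ fiber and is not a reduction of anything. The paper's \Cref{fromCtocharp} handles this by specializing through a DVR dominating $A$. It uses flatness over $A$ of the graded pieces of the filtration $\mathscr{F}_E$ on the section ring to produce $D^+$ on $X_s$ from its existence over $\C$. Neither ingredient appears in your proposal.
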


The proofs of all four parts of \Cref{inductivethm} follow the same strategy, so we will not prove them separately. The specific details that differ for the different parts will be highlighted in the course of the proof. The proof requires several steps. So we first give an informal summary of the key ideas involved. \\

\paragraph{\emph{Idea of the proof:}}
For simplicity, we assume that $\Delta =0$. Firstly, by \Cref{alphavss}, it is sufficient to prove that given a non-weakly exceptional Fano variety $X$ of dimension $n$, there exists a constant $\alpha > 0$ such that for all primes $p \gg 0$ and a characteristic $p$ model $X_p$, the following statement holds: for every effective $\Q$-divisor $D \sim_\Q -K_X $ on $X_p$, we have that $(X_p, \alpha D)$ is globally $F$-regular. The proof relies on finding a suitable divisor $E$ and a log Fano pair $(E, \Delta_E)$ of dimension $n-1$ from which we ``lift Frobenius splittings" via inversion of $F$-adjunction. The idea of finding such divisors comes from the results by Zhuang and Blum-Liu-Xu \cite{BLX22} and such divisors correspond to special test configurations of $X$. Roughly, such a divisor $E$ is a log canonical place of a $\Q$-complement $\Gamma$ of $(X, \Delta)$ similar to \Cref{thm:specialdivisor}, which exists if and only if $X$ is not weakly-exceptional. Suppose that $\mu: Y \to X$ denotes a birational map that uniquely extracts $E$ as a divisor. Then to apply global $F$-adjunction along $E$, we need $-K_Y - E$ to be ample. So we run an MMP to obtain a birational model $Y \dashrightarrow Y'$ such that after taking strict transforms, we get a $-K_{Y}- E'$-ample model. Now, we can lift Frobenius splittings from $E'$ to $Y'$ by global $F$-adjunction. Lastly, to lift the Frobenius splittings from $Y' $ back to $Y$, we use an idea from \cite{LiuXuZhuangHigherrankfinitegeneration} to perturb the divisor $D$ to make $D_Y$ proportional to $-K_Y - E$. In this case, we may lift the Frobenius splittings from $Y'$ to $Y$ (and then push-forward to $X$) as in \Cref{lem:GFRunderbirationalmap}. \qed

\bigskip

\paragraph{\textit{Proof of \Cref{inductivethm}}.} We divide the proof into five steps.
\medskip

\paragraph{\textbf{Step 1: Set up over $\C$.}} Let $(X, \Delta)$ be a log Fano pair over $\bC$ such that $\alpha(X, \Delta)<1$. By \cite[Lemma~6.1]{LiuMoragaSussboundedness23}, there exists a non-trivial special test configuration $(\cX, \cD)$ of $(X, \Delta)$ with central fiber $(X_0, \Delta_0)$ such that 
\begin{equation} \label{eqn:alpha0bound}
\alpha(X_0, \Delta_0)\geq \frac{1}{n}\min\{\alpha(X, \Delta), 1-\alpha(X, \Delta)\}.
\end{equation}
We refer to \cite[Definition~2.27]{XuKstabilitybook} for the definition of special test configurations.
Set $\alpha_0:=\alpha(X_0, \Delta_0)$. Recall that given a proper birational map $\mu: Y \to X$ and a prime divisor $E \subset Y$, we define \[ T_{X, \Delta} (E) = \sup \{ \lambda \geq 0 \, | \, \text{the divisor } \mu^* (-K_X - \Delta) - \lambda E \text{ is big} \} \]

The special test configuration $(\cX, \cD)$ is induced by some prime divisor  $E$ over $X$, in the sense of \cite[Definition~4.21]{XuKstabilitybook}. In particular, by \cite[Theorem 4.27]{XuKstabilitybook}, we know that $E$ satisfies the following properties: 
$A_{(X, \Delta)} (E)< T_{X, \Delta}(E)$,
and there exists a $\bQ$-complement $D^*$ of $(X, \Delta)$ such that $E$ is the unique LC-place of $(X,\Delta+D^*)$. Let $\mu:Y\to X$ be the $\Q$-factorial dlt modification of $(X,\Delta + D^*)$ which only extracts $E$ (which exists by \cite[Corollary 1.4.3]{BCHM}). In other words, $\mu$ is a birational morphism from a $\Q$-factorial projective variety $Y$, $E \subset Y$ is a prime divisor and the exceptional locus of $\mu$ is equal to $E$ (if $\mu$ is not an isomorphism). Let $\Delta_Y$ and $D_Y ^*$ denote the strict transforms of $\Delta$ and $D^*$ respectively. 
Then, since $E$ is an LC-place of $(X, \Delta+D^*)$ and $D^* \sim _\Q  -K_X - \Delta$, we know that
\begin{equation} \label{YDElcy} K_Y + \Delta_Y+ D^* _Y + E = \mu ^* (K_X + \Delta +  D^*) \sim _{\Q} 0.   \end{equation}
Moreover, since $E$ is the unique LC-place of $(X, \Delta+ D^*)$, we see that $(Y, E+ \Delta_Y + D_Y^*)$ is a PLT log Calabi-Yau pair. Moreover, because $A_{(X, \Delta)}(E) < T_{X, \Delta} (E)$ by construction, we have that $-K_Y - \Delta_Y -E =\mu^*(-K_X - \Delta) - A_{(X, \Delta)}(E) E$ is big.  

Next, we run a $(-K_Y-\Delta_Y-E)$-MMP which terminates to $Y\dashrightarrow Y^{\rm m}$ such that, if $E^{\rm m} $, $\Delta_{Y^{\rm m}}$ denotes the strict transform of $E$ and $\Delta_Y$ on $Y^{\rm m}$, we have $-K_{Y^{\rm m}} -\Delta_{Y^{\rm m}} -E^{\rm m}$ is big and nef. We can run such an MMP because $Y$ is a $\Q$-factorial variety of Fano type by \cite[Lemma 3.2]{BLX22}. Let us denote the steps of this MMP by
\[ f_i : Y_i \dashrightarrow Y_{i+1} \quad \text{for $0 \leq i \leq N$},\]
 where $Y_0 = Y$ and $Y_N = Y^{\rm m}$. We may assume that each $f_i$ is either an isomorphism in codimension $1$ (i.e.\ a flip), or  a divisorial contraction, in which case $f_i$ is a birational morphism. Furthermore, since $Y$ is of Fano type, by \cite[Lemma~2.12]{BirkarAntipluricanonicalsystems} and \Cref{codimensiontwolemma}, we see that each $Y_i$ is of Fano type. 
 Moreover, let $f_{\rm m}: Y^{\rm m} \to Y'$ be the ample model of $-K_{Y^{\rm m}} - \Delta_{Y^{\rm m}} - E^{\rm m}$. By \cite[Lemma~2.12]{BirkarAntipluricanonicalsystems} again, $Y'$ is also of Fano type.

Next, inductively defining $E_{i+1} = (f_i)_* E_i$, $\Delta_{i+1} = (f_i)_* \Delta_i$ and $D_{i+1} ^* = D_{i} ^* $, we claim that each $E_i$ is non-zero. First we show that each $(Y_i, \Delta_i)$ and $(Y' , \Delta_{Y'})$ is a KLT pair. For this, pick a rational number $ 0 <\varepsilon \ll 1$ and a general divisor $H \in | - (K_X + \Delta + (1 - \varepsilon) D^*) \sim -\varepsilon (K_X + \Delta)|_\Q$. Then, it follows from \Cref{YDElcy} that for all such $0 < \varepsilon \ll 1$ and $H$ general, we have $(X, \Delta + (1 - \varepsilon) D^* + H)$ is KLT and $K_X + \Delta + (1 - \varepsilon) D^* + H \sim _\Q 0 $. Therefore, by crepant pull-back to $Y$, $Y_i$'s and $Y'$, we get KLT Calabi-Yau pairs dominating $(Y_i, \Delta_i)$ and $(Y', \Delta_{Y'})$. Therefore, each of $(Y_i, \Delta_i)$'s and $(Y', \Delta')$ must be KLT.

Next, observe that since $(Y, \Delta_Y + E + D^{*}_Y)$ is a PLT log Calabi-Yau pair, the pairs $(Y_i, \Delta_{Y_i} + E_i + D^{*}_{Y_i})$ are at least log canonical. Inductively, we assume that $E_i \neq 0$ and it is enough to consider the case when $f_i$ is a divisorial contraction.
Now since any $-K_Y - \Delta_Y  - E$-MMP is the same as $\varepsilon D^* _ Y \sim K_ Y + \Delta_Y + E + (1+ \varepsilon) D^* _Y$-MMP for any $0 < \varepsilon  \ll 1$, by the negativity lemma we have
\begin{equation} \label{pltlcyequation} K_{Y_i} + \Delta_i + E_i + (1 + \varepsilon) D^* _{Y_i} - f_i ^* (K_{Y_{i+1}} + \Delta_{i+1} + E_{i+1} + (1+ \varepsilon) D_{Y_{i+1}} ^*) \geq  0. \end{equation}
Therefore, if $E_{i+1} = 0$, we have $A_{(Y_{i+1}, \Delta_{i+1} + ( 1 + \varepsilon)D^* _{Y_{i+1}})} (E_i) \geq 0$. On the other hand, since $K_{Y_i} + \Delta_{Y_{i}} + E_i$ is $f_i$-nef, we see that
\[  K_{Y_i} + \Delta_i + E_i - f_i ^* (K_{Y_{i+1}} + \Delta_{i+1} + E_{i+1}) \leq  0.  \]
  So, if $E_{i+1} = 0$ then we see that $A_{(Y_{i+1}, \Delta_{i+1})}(E_i) \leq 0$. This forces all inequalities in
  \[0 \leq A_{(Y_{i+1}, \Delta_{i+1} + ( 1 + \varepsilon)D^* _{Y_{i+1}})} (E_i) \leq A_{(Y_{i+1}, \Delta_{i+1})}(E_i) \leq 0 \]
  to be equalities. In particular, this is a contradiction to the fact that $(Y_{i+1}, \Delta_{i+1})$ is KLT. Thus, we see that each $E_i \neq 0$ on $Y_i$.

  Similarly, we check that $E^{\rm m}$ is not contracted in the ample model $Y'$. Since $Y'$ is the ample model of $-K_{Y^{\rm m}} - \Delta_{Y^{\rm m}} - E^{\rm m}$, we must have $ K_{Y^{\rm m}} +\Delta_{Y^{\rm m}} + E^{\rm m}  = f_{\rm m} ^*(K_Y' + \Delta_{Y'} + E')$ because $K_{Y^{\rm m}} +\Delta_{Y^{\rm m}}+  E^{\rm m}$ is relatively numerically trivial over $Y'$. Thus, if $E' = 0$, we see that $(Y', \Delta_{Y'})$ is not KLT because $E^{\rm m}$ will be an LC-place of $(Y', \Delta_{Y'})$. But this is a contradiction since we have shown that $(Y', \Delta_{Y'})$ must be KLT.

Lastly, let $\pi: Y \dashrightarrow Y'$ denote the composite map. Since $(Y,E+ \Delta_Y + D_Y^*)$ is a PLT log Calabi-Yau pair, so is $(Y', E' + \Delta_{Y'} + D_{Y'}^*)$ since $E'$ is not contracted on $Y'$. In particular, we know that $(Y', \Delta_{Y'} + E')$ is a PLT log Fano pair. Denote by $\Delta_{E'}$ the different divisor of $(Y',\Delta_{Y'}+E')$ on $E'$, then $(E',\Delta_{E'})$ is a KLT log Fano pair. Note that if $\Delta$ has standard coefficients, then $\Delta_{E'}$ also has standard coefficients (i.e., coefficients in the standard set $\{ \frac{n-1}{n} \,| \, n \in \N _{>0} \}$, see \cite[Lemma~4.1]{HaconMckernanXuACCforLCT}).

Before moving on to the next step, we record the following lemma that will be useful in obtaining effective versions of \Cref{inductivethm}.

\begin{lem} \label{lem:alphaofE'}
    With notation as in Step 1 above, we have $\alpha(E', \Delta_{E'}) \geq \alpha (X_0 , \Delta_0) = \alpha_0.$
\end{lem}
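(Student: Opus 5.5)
The plan is to compare $(E',\Delta_{E'})$ with the central fiber $(X_0,\Delta_0)$ of the special test configuration induced by $E$, and to show that $\Q$-complements of $(E',\Delta_{E'})$ that fail to be log canonical produce $\Q$-complements of $(X_0,\Delta_0)$ that fail to be log canonical with comparable thresholds.

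\textbf{Step 1: identify $X_0$ as a cone over $E'$.} Since the test configuration is induced by $E$, the ring $\bigoplus_{m,\lambda}\mathcal F_E^\lambda H^0(X,-mr(K_X+\Delta))/\mathcal F_E^{>\lambda}$ is the section ring of the central fibre. By \cite[Theorem 4.27]{XuKstabilitybook} and the construction in \cite[Lemma 3.2]{BLX22}, $E'$ is the unique LC-place of the complement on the ample model $Y'$. Moreover, $(Y',\Delta_{Y'}+E')$ is PLT with $-(K_{Y'}+\Delta_{Y'}+E')$ ample. I would then show that $X_0$ admits a $\G_m$-action whose quotient by the fixed-point locus (or a suitable toric-type divisor $E_0$ given by the ``$\lambda=$ max'' part of the filtration) is birational to $E'$. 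The precise claim is that $(X_0,\Delta_0)$ has a $\G_m$-invariant divisor $E_0$ (the degeneration of $E$, i.e.\ $\operatorname{ord}_E$ becomes the $\G_m$-weight valuation), and that the PLT pair $(X_0,\Delta_0+E_0)$, after the corresponding dlt modification and ample model, restricts by adjunction to $(E',\Delta_{E'})$. Concretely, the restriction of the graded filtration to its top piece gives $\bigoplus_m H^0(E',-m(K_{E'}+\Delta_{E'}))$, via the surjectivity $H^0(Y',-m(K_{Y'}+\Delta_{Y'}+E'))\to H^0(E',\dots)$ coming from Kawamata--Viehweg vanishing on the Fano type $Y'$.

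\textbf{Step 2: lift complements from $E'$ to $X_0$.} Let $\Gamma_{E'}\sim_\Q-(K_{E'}+\Delta_{E'})$ be effective, and let $t<\alpha(E',\Delta_{E'})$ fail for it, i.e.\ $\lct=t'$. Using the surjectivity from Step 1, lift $\Gamma_{E'}$ to $\Gamma_{Y'}\sim_\Q-(K_{Y'}+\Delta_{Y'}+E')$ not containing $E'$, with $\Gamma_{Y'}|_{E'}=\Gamma_{E'}$. Transport this to a $\G_m$-invariant (initial-term) divisor $\Gamma_0$ on $X_0$ with $\Gamma_0\sim_\Q-(K_{X_0}+\Delta_0)$, using that $E'$ also appears as a plt center on the degeneration. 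By inversion of adjunction, $(Y',\Delta_{Y'}+E'+s\Gamma_{Y'})$ is not PLT near $E'$ for $s>t'$. Degenerating along the test configuration, lower semicontinuity of lct then gives $\lct(X_0,\Delta_0;\Gamma_0)\le t'$. Hence $\alpha_0\le\alpha(E',\Delta_{E'})$.

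\textbf{Main obstacle.} Making Step 1 precise is the hard part: relating the central fibre $X_0$ to the ample model $Y'$ and its boundary. I would first try the following route. The central fibre is $\operatorname{Proj}$ of the associated graded ring of the $E$-filtration. Because $-(K_Y+\Delta_Y)-\lambda E$ for $\lambda\in[A,T]$ interpolates between $\mu^*$ and models like $Y'$, the subring of top pieces at $\lambda=A_{X,\Delta}(E)$-shifted weight recovers the section ring of $-(K_{Y'}+\Delta_{Y'}+E')|_{E'}$. This exhibits $(E',\Delta_{E'})$ as the quotient of $X_0$ by $\G_m$ along an invariant divisor. If that is too delicate, the fallback is a valuative argument: use $\alpha=\inf_F A/T$ on both sides, and show that every divisor $F$ over $E'$ induces a $\G_m$-invariant quasi-monomial valuation $v_F$ on $X_0$ with $A_{X_0}(v_F)/T_{X_0}(v_F)\le A_{E'}(F)/T_{E'}(F)$. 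This inequality would follow from the adjunction formula together with the computation of $T$ via sections of the PLT model.
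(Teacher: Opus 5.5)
There is a genuine gap in Step~2, at ``degenerating along the test configuration, lower semicontinuity of lct then gives $\lct(X_0,\Delta_0;\Gamma_0)\le t'$.''

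The only family available is the test configuration. Its general fibre is $(X,\Delta+s\Gamma_X)$, where $\Gamma_X\sim_\Q-(K_X+\Delta)$ is the divisor with strict transform $\Gamma_{Y'}$ and $\ord_E\Gamma_X=A:=A_{X,\Delta}(E)$. Lower semicontinuity therefore only gives $\lct(X_0,\Delta_0;\Gamma_0)\le\lct(X,\Delta;\Gamma_X)$. Your inversion-of-adjunction statement concerns $(Y',\Delta_{Y'}+E'+s\Gamma_{Y'})$, in which $E'$ has coefficient $1$. By contrast, the crepant pullback of $(X,\Delta+s\Gamma_X)$ is $K_Y+\Delta_Y+(1-(1-s)A)E+s\Gamma_Y$, with coefficient $<1$ on $E$, and this pair can be klt.

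Here is an example. Take $X=\P^2$ and $E$ the exceptional curve over a point $p$. It is the unique lc place of $\frac23(L_1+L_2+L_3)+\frac13(L_4+L_5+L_6)$, with $L_1,L_2,L_3$ concurrent at $p$ and $L_4,L_5,L_6$ general. Then $Y'=Y=\mathbb{F}_1$, $(E',\Delta_{E'})=(\P^1,0)$ and $X_0=\P^2$. For $\Gamma=2q$ one has $t'=\frac12$. But the lift given by a cuspidal cubic with cusp at $p$ and tangent direction $q$ has $\ord_E\Gamma_X=2=A$ and $\Gamma_Y|_E=2q$, while $\lct(\P^2;\Gamma_X)=\frac56$. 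Its initial term $2L+L_\infty$ does have lct $\frac12$, but that is a property of the special fibre which cannot be inherited from $X$.

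So the inequality would have to be proved directly on $X_0$ from its $\G_m$-structure, and Step~1 does not supply this. In general $\ord_E$ degenerates to the weight valuation of the action, which is a divisor \emph{over} $X_0$, not a prime divisor $E_0\subset X_0$ carrying a PLT structure. Also, the $\lambda=\max$ slice gives the sink, which need not be a divisor and need not coincide with $E'$. What is true is that, for $m$ with $mrA\in\Z$, the ring $\bigoplus_m\mathscr{F}^{mrA}_ES_m/\mathscr{F}^{>mrA}_ES_m$ is the section ring of $(E',-r(K_{E'}+\Delta_{E'}))$. This holds because $\mathscr{F}^{mrA}_ES_m=H^0(Y,-mr(K_Y+\Delta_Y+E))$ and restriction to $E'$ is surjective. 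Hence $E'$ is a GIT quotient of $X_0$ by $\G_m$. However, comparing log canonical thresholds across that quotient is substantial work absent from your sketch: you would have to match the quotient boundary, including finite-stabilizer contributions, with $\Delta_{E'}$, and handle strictly semistable points. Likewise, the inequality in your valuative fallback is only asserted.

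The missing idea is to transport log canonicity rather than its failure. Your Kawamata--Viehweg lifting of $\Gamma$ to $\Gamma_{Y'}$ is exactly the paper's first step. From $\Gamma_X$ as above, \cite[Lemma~4.9]{LiuXuZhuangHigherrankfinitegeneration} produces $\Gamma_X^+\sim_\Q-(K_X+\Delta)$ with $\Gamma_X^+\ge\alpha_0\Gamma_X$ such that $(X,\Delta+\Gamma_X^+)$ is lc with $E$ as an lc place. This is the only place where $X_0$ and $\alpha_0$ enter. Then $(Y,\Delta_Y+E+\Gamma_Y^+)$ is an lc log Calabi--Yau pair, and by taking strict transforms so is $(Y',\Delta_{Y'}+E'+\Gamma_{Y'}^+)$, with $E'\not\subset\mathrm{Supp}\,\Gamma^+_{Y'}$ and $\Gamma^+_{Y'}\ge\alpha_0\Gamma_{Y'}$. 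Adjunction then shows that $(E',\Delta_{E'}+\Gamma^+_{Y'}|_{E'})$ is lc with $\Gamma^+_{Y'}|_{E'}\ge\alpha_0\Gamma$. Hence $(E',\Delta_{E'}+\alpha_0\Gamma)$ is lc for every $\Gamma$, which is the lemma. One never has to show that some divisor on $X_0$ fails to be lc, which is precisely where your route breaks down.
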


\begin{proof}
    Suppose $\Gamma \geq 0$ is an effective divisor on $E'$ such that $\Gamma \sim_\Q - (K_{E'} + \Delta_{E'})$. Then, we claim that there is an effective divisor $\Gamma_{Y'} \sim_{\Q} - K_{Y'} - \Delta_{Y'} - E'$ such that $\Gamma_{Y'}$ does not have $E'$ as a component and $\Gamma_{Y'} |_{E'} = \Gamma $. This follows from the observation that for any integer $r \gg 0$ divisible enough, the restriction map $H^0 (Y', \cO_{Y'} (-r(K_{Y'} + \Delta_{Y'} + E'))) \to H^0 (E', \cO_{E'} (-r(K_{E'} + \Delta_{E'})))  $ is surjective. Indeed, the next term in the long exact sequence for cohomology is $H^1 (Y', \cO_{Y'} (L))$, where $L = -r(K_{Y'} + \Delta_{Y'} + E') - E' \sim_\Q K_{Y'} + \Delta_{Y'} - (r+1) (K_{Y'} + \Delta_{Y'} + E'). $ This cohomology group vanishes by Kawamata-Viehweg vanishing since $(Y', \Delta_{Y'})$ is KLT and $ - K_{Y'} - E' - \Delta'$ is ample.

    Now, since $K_{Y'} + \Delta_{Y'} + E' + \Gamma_{Y'} \sim_ \Q 0$, there is an induced divisor $\Gamma_Y \sim_\Q  -K_Y - \Delta  - E$ such that the strict transform of $\Gamma_Y$ is $\Gamma_{Y'}$. By pushing-forward along the map $\mu$, there is a divisor $\Gamma_X \sim _\Q - K_X - \Delta $ such that $\ord_E (\Gamma_X) = A_{(X, \Delta)} (E)$ and $\Gamma _Y $ is the strict transform of $\Gamma_X$. Now, applying \cite[Lemma~4.9]{LiuXuZhuangHigherrankfinitegeneration}, there is a divisor $\Gamma_X ^+ \sim_\Q -(K_X + \Delta)$ such that $\Gamma_X ^+ \geq \alpha_0 \Gamma_X$, $(X, \Delta + \Gamma_X ^+)$ is log canonical with $E$ as an LC-place. In other words, by taking strict transforms, we have $K_Y + \Delta_Y + E + \Gamma_{Y} ^+ \sim_\Q (K_X +\Delta + \Gamma_X ^+) \sim_\Q 0$. Therefore, $(Y, \Delta_Y + E + \Gamma_Y ^+)$ is a log-canonical log Calabi-Yau pair. Taking strict transforms again, we obtain that $(Y', \Delta_{Y'} + E' + \Gamma_{Y'} ^+)$ is a log canonical log Calabi-Yau pair. Moreover, since $\Gamma _X ^+ \geq \alpha_0 \Gamma_X$, we have $\Gamma_{Y'} ^+ \geq \alpha_0 \Gamma_{Y'}$. Note that $\Gamma_{Y'} ^+$ does not have $E'$ as a component, since $\Gamma_Y ^+$ was the strict transform of $\Gamma_X ^+$. Therefore, by adjunction, we see that $(E', \Delta_{E'} + \Gamma^+)$ is log canonical log Calabi-Yau pair and $\Gamma_{Y'} ^+ \geq \alpha_0 \Gamma$. Therefore, we see that $(E', \Delta_{E'} + \alpha_0 \Gamma)$ is log-canonical and therefore, $\alpha (E', \Delta_{E'}) \geq \alpha_0.$
\end{proof}

\medskip

\paragraph{\textbf{Step 2: Reduction to characteristic $p \gg 0$.}} Now, we reduce the varieties $X$, $Y_i$, $Y'$, the divisors $\Delta$, $\Delta_i$, $\Delta_{Y'}$, $D^*$, $D^* _ i$ and $D^* _{Y'}$, $E_i$, $E'$ and $\Delta_{E'}$, and the morphisms $ \mu, f_i, \pi $ to characteristic $p \gg 0$ over a finitely generated smooth $\Z$-algebra $A$, as defined in \Cref{reductindfn}. In this situation, after replacing $A$ by a finitely generated extension if necessary, we may assume that over the residue field $\kappa (s)$ of each closed point $s$ of $\Spec(A)$, the following conditions are satisfied:
\begin{enumerate}
    \item The reduction $X_s$ of $X$ is a geometrically integral projective variety over $\kappa(s)$, and the pair $(X_s, \Delta_s)$ is a globally $F$-regular log Fano pair (\cite[Theorem~5.1]{SchwedeSmithLogFanoVsGloballyFRegular}).
    \item The reduction $E_s$ of $E$ is a geometrically integral prime divisor on $Y_s$ and the reduction $\mu_s: Y_s \to X_s$ uniquely extracts $E_s$. We may also assume that the log discrepancy of $E_s$ is the same as $A_X(E)$, the log discrepancy over $\C$.
    \item The reductions of each $Y_i$, and $Y'$ are (geometrically integral) varieties of Fano type over $\kappa(s)$ (again by \cite[Theorem 5.1]{SchwedeSmithLogFanoVsGloballyFRegular}). Moreover, we may assume that each $f_i$ reduces to a step of the $-K_Y -\Delta_Y -E$-MMP over $\kappa(s)$. In particular, if $f_i$ is a flip, then $f_{i,s}$ is an isomorphism in codimension $1$, and if $f_i$ is a morphism with $K_{Y_i} + \Delta_i + E_i$ being $f_i$-nef, then we may assume that $K_{Y_{i,s}}+\Delta_{i,s}+E_{i,s}$ is $f_{i,s}$-nef. The last part can be done thanks to \Cref{lemmareductionofnef}. We may also assume that the reduction of $Y'$ to $\kappa (s) $ is the $-K_{Y_s} - \Delta_{Y_s} - E_s$-ample model of $Y^{\rm m} _s$.
    \item We may assume that the reduction of $(Y', \Delta_{Y'}+ E')$ is a globally purely $F$-regular pair over $\kappa(s)$, and the index of $K_{Y'_s} + \Delta_{Y'_s} + E'_s$ is not divisible by $p_s = \text{char}(\kappa(s))$. Furthermore, we may assume that the reduction of $\Delta_{E'}$ is the $F$-different for the pair $(Y' _s, \Delta_{Y' _s} + E'_s)$ (by the main theorem of \cite{das_different_different_different}).
    
    \item By \cite[Lemma~2.22]{GongyoOkawaSannaiTakagiCoxRings}, we may assume each of the varieties $X_s$, $Y_{i,s}$ and $Y'_s$ is a $\Q$-factorial Mori-dream space over $\kappa(s)$. In particular, we may assume that the Cox rings of $X_A$ and $Y_A$ are finitely generated and flat over $A$ and commute with base-change to fibers. In other words, we have
    \[ \rm{Cox} (X_A) \otimes _A \kappa(s) = \rm{Cox}(X_s),  \]
    and similarly for $Y$. In particular, we may assume that the section ring $S_A = S(X_A, -r(K_{X_A} + \Delta_A))$ is flat over $A$ and commutes with base-change to fibers.

    \item Let $S_\C$ denote the section ring $S(X, -r(K_X + \Delta))$ (over $\C$) where $r$ denotes the Cartier index of $-K_X - \Delta$, and let $S_A$ denote the corresponding model of $S_\C$ over $A$. Let $\mathscr{F} _E$ denote the filtration induced by $E$ on $S_\C$. More precisely, for any $\lambda,m \in \Z$, we have
    \[ \mathscr{F}_E ^\lambda S_m := \mu_*H^0 (Y, \cO_Y (-mr(K_X + \Delta_X) - \lambda E) ) \subset S_m. \]
    Then, we may assume that the filtration induced by $E_A$ on $S_A$, denoted by $\mathscr{F}_{E_A}$ on $S_A$ satisfies the following:
    \begin{enumerate}
        \item [(a)] $\mathscr{F}_{E_A}$ induces $\mathscr{F}_E$ on $S_\C$. In other words, for each $\lambda,m \geq 0$, we have $ \mathscr{F}_E ^\lambda S =  \mathscr{F}_{E_A} ^{\lambda} S_A \otimes_A \C. $
        \item [(b)] The filtration $\mathscr{F}_{E_s}$ on $S_{\kappa(s)}$ induced by $E_s$ is reduced from $\mathscr{F}_{E_A}$. In other words, we have
        \[ \mathscr{F}_{E_s} ^\lambda S_{\kappa(s)} =  \mathscr{F}_{E_A} ^{\lambda} S_A \otimes_A \kappa(s). \]
        \item [(c)] For each $\lambda, m \geq 0$, the associated graded $A$-modules defined by
\begin{equation}
   \text{gr} \, \mathscr{F}_{E_A} ^{\lambda,m} S = \mathscr{F}_{E_A} ^{\lambda} S_m/ \mathscr{F}_{E_A}  ^{> \lambda} S_m
\end{equation}
is flat (equivalently, projective) over $A$, and commute with base change to $\kappa (s)$ (i.e., $\text{gr} \, \mathscr{F}_{E_A} \otimes _A \kappa(s) = \text{gr} \, \mathscr{F}_{E_s}. $)
    \end{enumerate}
    \medskip
Parts (a) and (b) follow immediately from Part (5) of Step (2) above. For (c), recall that the filtration $\mathscr{F}_{E}$ is a finitely generated $\Z$-filtration on $S$. Thus, if we consider the bigraded Rees algebra
\[  \cR = \bigoplus_{m \in \Z_{\geq 0}} \bigoplus_{\lambda \in \Z} \mathscr{F}_{E} ^\lambda S_m \,  t^{-m} s^{-\lambda}, \]
then $\cR$ is finitely generated as a $\C$-algebra. Moreover, we also consider the associated graded algebra
\[  \text{gr} \, \cR := \cR / \cR[0, 1] =  \bigoplus_{m \in \Z_{\geq 0}} \bigoplus_{\lambda \in \Z} \text{gr}_{E} ^{\lambda,m} \,  t^{-m} s^{-\lambda}\]
which is also finitely generated over $\C$ since $\cR$ is. By generic flatness, \cite[\href{https://stacks.math.columbia.edu/tag/052A}{Tag 052A}]{stacks-project}, we may choose models for $\cR_A$ and $\text{gr} \, \cR _A$, so that we have
$$ \text{gr} \, \cR_s := \cR_s/ \cR_s [0,1] = ( \cR_A/ \cR_A[0,1]) \otimes_A \kappa(s), $$
where $\cR_s$ denotes the base change $\cR_A \otimes_A \kappa(s)$. 
    \end{enumerate}

\medskip

With this setup, the following is the main claim of the proof:
\medskip

\paragraph{\textbf{Main Claim:}} \textit{There exists a $p_0 \in \bN$ such that for any closed point $s \in \Spec(A)$ with residue characteristic larger than $p_0$, we have} 
\begin{equation} \label{equation:mainclaim}
\alpha_F (X_s, \Delta_s) \geq \alpha_0  \, \alpha_{F} (E_s, \Delta_{E_s}), \end{equation}   
where we recall that $\alpha_0$ was defined to be $\alpha(X_0, \Delta_0)$ in Step 1.
\bigskip

\paragraph{\textbf{Step 3: Construction of $D^+$.}} Fix a closed point $s \in \Spec(A)$ as above and let $p$ denote the characteristic of the residue field $\kappa(s)$. Note that $\kappa(s)$ is a finite field, hence is perfect.

Recall that $S_A$ is the section ring $S(X_A, -rK_{X_A})$. By localizing $A$ at the maximal ideal corresponding to $s$, we may assume that $A$ is a regular local ring. Let $S_{\kappa(s)}$ denote the base-change of $S_A$ to $\kappa(s)$.

\begin{lem} \label{lem:existenceofD+}
    Let $D$ be an effective $\Q$-divisor on $X_s$ such that $D \sim_{\Q} -K_{X_s} - \Delta_s$. Assume that $D$ is of the form $\text{div}(f_s ^t)$ for some section $f \in S_{\kappa(s)}$ and some rational number $t > 0$.
Then, there exists a $\Q$-divisor $D^+$ on $X_s$ satisfying the following properties: (a) $D^+ \geq \alpha_0 D$, (b) $D^{+} \sim_\Q -K_{X_s} - \Delta_s$, and, (c) $\ord_{E_s} (D^+) = A_{X} (E)$.
\end{lem}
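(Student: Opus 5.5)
The plan is to mimic, in characteristic $p$, the argument of \cite[Lemma~4.9]{LiuXuZhuangHigherrankfinitegeneration} used in the proof of \Cref{lem:alphaofE'}, replacing geometric statements by statements about the filtration $\mathscr{F}_{E}$ on the section ring. We use that these statements survive reduction modulo $p$ by Step 2, items (5) and (6). Write $D = \frac{1}{mr}\text{div}(f_s)$ for a nonzero $f_s \in S_{m,\kappa(s)}$; this is a harmless renormalization of the exponent $t$, with $t = 1/(mr)$. Set $\lambda_0 := \ord_{E_s}(f_s)$, so that $f_s \in \mathscr{F}^{\lambda_0}_{E_s} S_m \setminus \mathscr{F}^{>\lambda_0}_{E_s}S_m$. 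Its image $\bar f_s$ is then a nonzero element of $\text{gr}\,\mathscr{F}_{E_s}^{\lambda_0,m}S$. By Step 2(6)(c), $\text{gr}\,\cR_s = \text{gr}\,\cR_A\otimes_A\kappa(s)$, and this graded ring is the section ring of the central fibre of the special test configuration induced by $E$, reduced mod $p$. Note that $\text{gr}\,\cR$ over $\C$ is the anticanonical ring of $(X_0,\Delta_0)$ with its $\G_m$-grading.

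The key construction is to find, in the graded ring, a homogeneous element $g$ of bidegree $(m',\lambda')$ with $\lambda'/(m'r) = A_X(E)$ which is divisible by $\bar f_s^{\,\alpha_0}$ in the appropriate $\Q$-sense. Concretely, $\text{div}(\bar f_s)/(mr)$ defines a $\Q$-divisor $D_0\sim_\Q -K_{X_{0,s}}-\Delta_{0,s}$ on the central fibre. Over $\C$, the analogous statement uses $\alpha(X_0,\Delta_0)=\alpha_0$: the pair $(X_0,\Delta_0+\alpha_0 D_0)$ is lc, so adding a $\G_m$-invariant complement produces a $\G_m$-invariant divisor $\ge \alpha_0 D_0$ of weight exactly $A_X(E)$. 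The weight is forced because $\G_m$-invariant complements of a special degeneration have $\beta$-type weight equal to $A_X(E)$, by the product-test-configuration/Futaki computation in \cite[Lemma~4.9]{LiuXuZhuangHigherrankfinitegeneration}. In characteristic $p$ I would avoid lc-ness altogether and argue purely algebraically. The element $\bar f_s^{\,N}$ times a suitable fixed weight-homogeneous element $h$ gives a homogeneous element whose weight-to-degree ratio is any prescribed value in the convex range of weights. The extreme values of this range are bounded by $T(E)$, and by $0$ from below, and $A_X(E)$ lies strictly inside since $0<A<T$. I choose $h$ so that $\frac{\alpha_0\lambda_0 + (1-\alpha_0)\text{wt}(h)}{\,\cdot\,} = A_X(E)$. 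The point is that $\alpha_0\le$ the relevant ratio bound: this is the purely numerical input, coming from \Cref{eqn:alpha0bound}, which ensures that the needed weight for $h$ lies in the attainable interval $[0,T(E)]$ uniformly in $\lambda_0$. We then lift $\bar f_s^{\,a} h^{b}$ to $S_{\kappa(s)}$ as $f_s^{a}\tilde h^{b}$, where $\tilde h$ is a lift of $h$ of exact $E_s$-order $\text{wt}(h)$. Lifts keep exact order precisely because we are in the associated graded. The divisor $D^+ := \frac{1}{(am+bm_h)r}\text{div}(f_s^a\tilde h^b)$ then satisfies (b) trivially. It satisfies (a) if $a/(am+bm_h) \ge \alpha_0/m$, and it satisfies (c) since $\ord_{E_s}$ is additive on these lifts.

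The main obstacle is the step where the weight of $h$ must be adjustable in both directions, i.e.\ producing homogeneous elements of weight ratio both above and below $A_X(E)$, uniformly for all $p\gg0$. For this I would use that $\text{gr}\,\cR_A$ is finitely generated and flat over $A$. Over $\C$ there are finitely many fixed homogeneous generators realizing weight ratios $0$ (the trivial filtration level) and close to $T(E)$, and these reduce to nonzero homogeneous elements for every closed point $s$. This gives uniformly chosen $h$'s, which we combine with rational exponents (passing to $\Q$-divisors allows nonintegral $a,b$ via powers). If the ratio $\lambda_0/(mr)$ is already larger than $A_X(E)$, so that weight must be decreased, I use $h$ of weight $0$; otherwise I use $h$ of weight close to $T(E)$. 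In either case I check that the resulting coefficient on $D$ is at least $\alpha_0$, which reduces to the inequality $\alpha_0 \le \min\{A/T, 1 - A/T\}$-type bounds, and these I expect to extract from \Cref{eqn:alpha0bound} together with \Cref{lem:alphaofE'}.
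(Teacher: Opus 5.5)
Your reduction is correct, and in fact sharp. The lemma only asks for (a)--(c); nothing about log canonicity or $F$-regularity of $(X_s,\Delta_s+D^+)$ is needed, and Step 4 uses nothing more. Set $x=\ord_{E_s}(D)$, $A=A_X(E)$ and $T=T_{X,\Delta}(E)$. Take $D^+=cD+(1-c)H$ with $\ord_{E_s}(H)=0$ and $c=A/x$ when $x\ge A$, or with $\ord_{E_s}(H)=T$ and $c=(T-A)/(T-x)$ when $x<A$. This gives (a)--(c) as soon as
\[\alpha_0\le\min\{A/T,\,1-A/T\}.\]
Conversely, this inequality is forced by the lemma itself:
\begin{itemize}
\item Applying the lemma to a $D$ with $\ord_{E_s}(D)=T$ gives $A=\ord_{E_s}(D^+)\ge\alpha_0T$.
\item Applying it to a $D$ with $\ord_{E_s}(D)=0$ gives $A\le(1-\alpha_0)T$, by looking at $D^+-\alpha_0D\sim_\Q(1-\alpha_0)(-K_{X_s}-\Delta_s)$.
\end{itemize}
So the entire content of the lemma is this numerical inequality, and that is exactly the step you do not prove. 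The sources you name cannot supply it. \Cref{eqn:alpha0bound} is a \emph{lower} bound for $\alpha_0$. \Cref{lem:alphaofE'} bounds $\alpha_0$ above by $\alpha(E',\Delta_{E'})$, which is unrelated to $A/T$. Your paragraph about $\G_m$-invariant complements on the central fibre is only a heuristic and is not used.

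What is missing is a real argument that $\alpha(X_0,\Delta_0)\le\min\{A/T,1-A/T\}$ for the central fibre of the special test configuration induced by $E$. The inequality is true. One way: it follows from \cite[Lemma~4.9]{LiuXuZhuangHigherrankfinitegeneration} over $\C$ applied to divisors with $\ord_E D\in\{0,T\}$, which is the same external input the paper uses. A more intrinsic way is to bound $\alpha_0$ by $A_{X_0,\Delta_0}(w)/T_{X_0,\Delta_0}(w)$ for the two $\G_m$-invariant divisorial valuations $w$ on $X_0$ induced by $\pm\xi$, whose log discrepancies should be $A$ and $T-A$. Either route has to be carried out, not expected.

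Once the inequality is supplied, your route genuinely simplifies the paper's. The paper lifts $f_s$, applies \cite[Lemma~4.9]{LiuXuZhuangHigherrankfinitegeneration} over $\C$, and transfers the resulting section $g$ to $\kappa(s)$ via the DVR specialization in \Cref{fromCtocharp}. In your approach the only input is numerical, so all that must descend to $\kappa(s)$ is $T_{X_s,\Delta_s}(E_s)=T$ and the existence of sections of $E_s$-order exactly $0$ and exactly $mrT$. Step 2(6) provides both.

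One more correction: a section $\tilde h$ whose order is merely \emph{close to} $T$ is not enough, because equality can occur. For $X=\P^2$ and $E$ the exceptional divisor of a point blow-up, $A=2$, $T=3$, $X_0\cong\P^2$ and $\alpha_0=1/3=1-A/T$. You need $\ord_{E_s}(\tilde h)$ to equal $m_hrT$ exactly. This is available because $\mathscr{F}_E$ is finitely generated, so $T$ is attained by a generator, and its graded pieces are flat over $A$.
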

\begin{proof}
Let $m$ be an integer such that $mrD$ is Cartier and let $f_s$ be as in the definition of $D$, so that $f_s$ is a global section in $H^0 (X_s, \cO_{X_s}(-mr(K_{X_s}+ \Delta_s))). $ So, to show that there exists such a $D^+$, it is equivalent to showing that for all $m'$ divisible enough (so that $m' \alpha_0$ is an integer), setting $N = m' m r A_X (E)$, there is a section $g \in S_{\kappa(s)}$ of degree $m m'$ such that $g$ is divisible by $f^{ \alpha_0 m' }$ in $S_{\kappa(s)}$ and $\ord_{E_s} (g) = N$.

Now, we fix a lift $f_A$ of $f_s$ to $S_{A}$ which is homogeneous of degree $m$. Note that we may lift any such $f_s$ to $A$ by Part (5) of Step 2. Further, for any $A$-algebra $B$, let $f_B$ denote the extension of $f_A$ to $B$.

    Using \Cref{fromCtocharp} proved below, to show the existence of such a $g \in S_{\kappa(s)}$, it is sufficient to show the same statement over $\C$. Over $\C$, this is guaranteed by \cite[Lemma 4.9]{LiuXuZhuangHigherrankfinitegeneration}.
\end{proof}

\begin{lem} \label{fromCtocharp}
    For any $m'$ divisible enough, suppose that there exists a section $g_\C \in S_{\C,m m'}$ such that $g_\C$ is divisible by $f_{\C}^{ \alpha_0 m' }$ and $\ord_{E_\C} (g_\C) = N$. Then, there exists $g \in S_{m m'}$ such that $g$ is divisible by $f^{ \alpha_0 m' }$ in $S$ and $\ord_{E_s} (g) = N$. 
\end{lem}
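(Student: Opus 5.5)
The idea is to recast the existence of $g$ as the non-vanishing of a finitely generated $A$-module built from the flat, base-change-compatible filtration data of Step 2(6). Then the statement over $\C$ transfers to $\kappa(s)$ by flatness.

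Set $k:=\alpha_0 m'$ and $d:=mm'$, and put $M:=\mathscr{F}_E^{N}S_d$. In characteristic zero we know the following: the multiplication map $\cdot f^{k}\colon S_{d-mk}\to S_d$ sends the subspace $\mathscr{F}^{N-k\,\ord_E(f)}_E S_{d-mk}$ into $\mathscr{F}^N_E S_d$. The hypothesis says that some element $h_\C$ with $g_\C=f_\C^k h_\C$ has image in $\gr^{N,d}_E$ which is nonzero. Equivalently, the composite
\[ \phi_\C\colon \mathscr{F}_E^{N-k\,\ord_E(f)} S_{d-mk}\xrightarrow{\cdot f^k}\mathscr{F}_E^N S_d\to \gr_E^{N,d}S \]
is nonzero. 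Here we use that $\ord_E$ is a valuation, so $\ord_E(f^kh)=k\ord_E(f)+\ord_E(h)$, and that $\ord_{E_s}(f_s)=\ord_E(f_\C)$ (after enlarging $A$, using a fixed lift $f_A$). Over $A$ we then have the analogous $A$-linear map
\[ \phi_A\colon \mathscr{F}_{E_A}^{N-k\,\ord(f)} S_{A,d-mk}\to \gr^{N,d}_{E_A} S_A. \]
Its source is a finitely generated flat $A$-module by (5),(6)(a)--(c), and its target is projective by (6)(c). Because of the base-change compatibilities (6)(a),(b),(c), we get $\phi_A\otimes_A\C=\phi_\C$ and $\phi_A\otimes_A\kappa(s)=\phi_s$.

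The key step, and the main obstacle, is to show that $\phi_s\neq 0$ follows from $\phi_\C\ne0$ for every closed point $s$. The image of $\phi_A$ is a finitely generated submodule of a projective module, and the concern is that it might vanish modulo $\fm_s$ even though it is nonzero generically. I would handle this by generic flatness applied to the cokernel of $\phi_A$: after shrinking $\Spec(A)$, which is allowed since we only need $p\gg0$ with $p_0$ depending on $m,m'$, the cokernel is flat. Then $\phi_A\otimes\kappa(s)$ has the same rank as $\phi_\C$, so it is nonzero.

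There is a subtlety: $m'$ ranges over infinitely many values, so the shrinking must be uniform. To deal with this, I would apply the argument to the finitely generated bigraded algebra $\gr\,\cR_A$ and to the multiplication-by-$f$ map on it. Both are finitely generated, so a single generic-flatness localization handles all degrees at once, in the same way as in Step 2(6)(c). Alternatively, it is enough to produce one $m'$ and then take powers, since $\ord_{E_s}$ is multiplicative and the divisibility condition is preserved. This second route means only finitely many shrinkings are needed.

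Finally, pick $h_s$ in the source with $\phi_s(h_s)\ne0$ and set $g=f_s^k h_s$. Then $g$ is divisible by $f^{\alpha_0m'}$. Its order is $\ord_{E_s}(g)\ge N$ because it lies in $\mathscr{F}^N$, and $\ord_{E_s}(g)\le N$ because its class in $\gr^N$ is nonzero. Hence $\ord_{E_s}(g)=N$.
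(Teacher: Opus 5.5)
The gap is in your key step. In this lemma the closed point $s$ is already fixed (the paper has localized $A$ at $s$), and $f=f_s$ is an \emph{arbitrary} homogeneous element of $S_{\kappa(s)}$ of arbitrary degree $m$. The lift $f_A$, and with it $\phi_A$ and $\mathrm{coker}\,\phi_A$, can only be chosen after $s$ and $f_s$ are fixed. Generic flatness then gives a dense open subset of $\Spec(A)$ that depends on $f_A$, and nothing forces it to contain $s$.

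You also cannot settle for ``$p_0$ depending on $m,m'$''. The Main Claim is a lower bound on $\FA(X_s,\Delta_s)$, which must handle \emph{every} $D$ at a given $s$, so $p_0$ has to be independent of $f$ and $m$. Your two uniformizations only remove the dependence on $m'$: multiplication by $f$ on $\gr\,\cR_A$ still changes with $f$, and there are infinitely many such $f$.

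Relatedly, $\ord_{E_s}(f_s)=\ord_E(f_{\C})$ is not obtained by ``enlarging $A$''. For a careless lift one can have $\ord_{E_s}(f_s)>\ord_E(f_{\C})=:a'$. Then $\phi_s\equiv 0$, since every $f_s^k h$ with $\ord(h)\geq N-ka'$ has order $>N$, although $\phi_{\C}\neq 0$.

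The missing idea is one you already use over $\C$: $\ord_{E_s}$ is a valuation, so $\gr_{E_s}S_{\kappa(s)}$ is a domain. Set $a:=\ord_{E_s}(f_s)$. Then $\ord_{E_s}(f_s^kh)=ka+\ord_{E_s}(h)$, so it suffices to find $h\in S_{\kappa(s),d-mk}$ with $\ord_{E_s}(h)=N-ka$, i.e.\ to show $\gr^{N-ka,\,d-mk}_{E_s}\neq 0$. By Step 2(6)(c), $\gr^{\lambda,e}_{E_A}$ is a finitely generated projective module over the domain $A$ with fibres $\gr^{\lambda,e}_{E_s}$ and $\gr^{\lambda,e}_{E}$. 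These therefore have equal dimension for \emph{every} $s$. The complex one is nonzero because $h_{\C}:=g_{\C}/f_{\C}^k$ has order $N-k\ord_E(f_{\C})$.

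So you only need $\ord_E(f_{\C})=a$. Get this by choosing the lift inside $\mathscr{F}^a_{E_A}S_{A,m}$, which is possible because this module surjects onto $\mathscr{F}^a_{E_s}S_{\kappa(s),m}$ by 6(b). Then $\ord_E(f_{\C})\geq a$. Moreover, $\ord_E(f_{\C})\geq a+1$ would force $f_s\in\mathscr{F}^{a+1}_{E_s}S_{\kappa(s),m}$, since $S_{A,m}/\mathscr{F}^{a+1}_{E_A}S_{A,m}$ is flat. This argument is uniform in $f,m,m'$ and needs no shrinking.

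The paper also avoids shrinking, by a pointwise specialization. It passes to the DVR $A'$ at the generic point of the exceptional divisor of the blow-up of the closed point of the localized $A$, and clears denominators in a witness over $\mathrm{Frac}(A')$. To see that the reduced witness has order exactly $N$, and not merely $\geq N$, one needs the same two inputs: the valuation property in characteristic $p$ and a lift with $\ord_{E_s}(f_s)=\ord_E(f_{\C})$.
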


\begin{proof}
    Fix $m'$ divisible enough so that $m' \alpha_0$ is an integer. For any $A$-algebra $B$,  consider the following $B$-submodules of $ S_{B, mm'} := S_{A, mm'} \otimes_A B$:
    \[ M_B := f_B ^{m' \alpha_0} S_A \cap S_B = \text{im} (M_A \otimes_A B \to S_B),\]
    and similarly,
    $N_B = (\cF ^{N } _{E_A} S_{A, mm'}) \otimes _A B$, and $N'_B = (\cF _{E_A} ^{> N } S_{mm'}) \otimes_A B$. Recall that since we know that the associated graded modules of the filtration $\cF_{E_A}$ are flat over $A$, we have $N_B/N'_B = (N_A/N'_A) \otimes_A B$. Let $\psi_B$ denote the following natural map of $B$-modules:
    \[ \psi_B: M_B \cap N_B \to N_B/N'_ B. \]
    Note that with this notation, the lemma is equivalent to the statement that if the image of $\psi_ \C$ is non-zero then the image of $\psi _{\kappa(s)}$ is non-zero.

    Next, suppose $A \to k \to K$ is an extension such that $k$ and $K$ are both fields. Then, note that the image of $\psi_k$ is non-zero if and only if the image of $\psi_K$ is non-zero. In fact, we have $\psi_K = \psi_k \otimes _k K$. This is because taking intersections of subspaces and images of maps of vector spaces commute with base-change over the field. Now, setting $L = \text{Frac}(A)$, we apply the above observation to the extension $A \to L \to \C$ to conclude that if the image of $\psi_\C$ is non-zero, then, the same is true for $\psi_L$ as well.
    
    Next, we blow up the regular local ring $A$ at the maximal ideal $\fm$, and let $(A', t, K)$ be the local ring at the generic point of the exceptional divisor of the blow-up. Then, note that $A'$ is a discrete valuation ring with uniformizer $t$ such that $\fm A' = (t)$. Therefore, the residue field $K$ is an extension of $k = A/\fm$, and thus is of characteristic $p$ as well. Applying the above observation to the extension $k \to K$, we see that it is enough to show that the image of $\psi_K$ is non-zero.

    Now, since we know that the image of $\psi_L$ is non-zero, and $L =\text{Frac}(A')$, there exists elements $g \in S_{A', mm'}$, $h \in S_{A', mm' - \alpha_0 m'}$ and an integer $u$ such that in $S_L$, we have
    \[ g = f_{A'} ^{\alpha_0 m'} \, \frac{h}{t^{u}}, \]
    and $g$ is contained in $N_L \setminus N' _L$. Furthermore, by increasing $u$ if required, we may assume that the $t$-adic valuation of $g$ is zero. Thus, we have
    \[ g = f_{A'} ^{ \alpha _0 m'} \frac{h}{t^u}. \]
    Now, since by construction $f_k ^{\alpha_0 m'}$ is non-zero and the ring $S_K$ is an integral domain (since it is normal and $\N$-graded), we have 
    \[ 0 \neq f^{\alpha_0  m'} _{K} = f^{\alpha_0  m'} _{A'} \bmod{t}, \] 
    the $t$-adic valuation of $\frac{h}{t^{u}}$ is zero. Since $h$ is homogeneous, we may assume that $u = 0$. So we get that 
    $g = g = f_{A'} ^{ \alpha _0 m'} h$
    where $g \in S_{A', mm'}$ is non-zero modulo $t$. Therefore, the image of $g$ modulo $t$ is a non-zero element of $M_K$. Furthermore, since 
    $S_{A', mm'}/N_{A'}$ is free over $A'$, and $g$ is contained in $N_L \setminus N'_L$, $g$ must be contained in $N_{A'} \setminus N' _{A'}$. Finally, the image of $g$ modulo $t$ must be contained in $N_K/N'_K$, since we know that 
    \[ N_K/N'_K = \left(N_{A'}/N'_{A'} \right) \otimes _{A'} K =\left( \left(N_{A}/N'_{A} \right) \otimes _{A} k \right) \otimes_k K.  \]
    Since the image of $g$ modulo $t$ was already contained in $M_K$ by construction, we have that the image of $\psi_K$ is non-zero. This completes the proof of the lemma. 
\end{proof}

\paragraph{\textbf{Step 4: Proof of Main Claim \ref{equation:mainclaim}.}} Let $f \in S_{\kappa(s)}$ be a non-zero homogeneous element of degree $m$, and let $D$ denote the corresponding divisor $\text{div}(f^{\frac{1}{mr}})$ on $X_s$. Then note that $D \sim_\Q -(K_{X_s} + \Delta_s)$. Using \Cref{lem:existenceofD+}, there exists a $\Q$-divisor $D^+$ on $X_s$ such that $D^+ \geq \alpha_0 D$, $D^{+} \sim_\Q -K_{X_s} - \Delta_s$ and $\ord_{E_s} (D^+) = A_{X} (E)$.

For convenience, we drop the subscript $s$ below, since $s$ is fixed for the rest of the argument. Let $(Y, \Delta_Y+ E+D_Y^+)$ be the crepant pull-back of $(X, \Delta + D^+)$, i.e., if $D_Y ^+$ denotes the strict transform of $D^+$ on $Y$, we note that
\[ K_Y + \Delta_Y+ E + D^+ _Y = \mu^* (K_X + \Delta + D^+) \sim_\Q 0 \]
by our choice of $D^+$. Let $D^+ _{Y_i}$ denote the strict transform of $ D^+ _{Y}$ on $Y_i$, so that since $D^+ _Y \sim_\Q -(K_Y + \Delta_Y + E)$, we have $D^+ _{Y_i} \sim_\Q -(K_{Y_i} + \Delta_{Y_i} + E_i)$
by Part (1) of \Cref{lem:birationalcontraction}.
 Let $D_{E'}^+:=D_{Y'}^+|_{E'}$ and note that $D^+ _{Y'} \sim_\Q -(K_{Y'} + \Delta_{Y'}+ E') $ is $\Q$-Cartier and ample. We have $D_{E'} ^+ \sim _\Q -(K_{E'} + \Delta_{E'})$ by the definition of $\Delta_{E'}$.

%Now, we may base-change the above set up to the algebraic closure $\overline{\kappa(s)}$, and work over $k := \overline{\kappa(s)}$.

Now, to prove the Main Claim \ref{equation:mainclaim}, it suffices to show that for each rational number $0 < c < \FA(E', \Delta_{E'})$ and any $D$ as above, we have $(X, \Delta+ c\alpha_0 D)$ is globally $F$-regular. To see this, we first note that by the definition of the $\FA$-invariant, the pair 
$(E',\Delta_{E'} + cD^+ _{E'})$ is globally $F$-regular. By global $F$-adjunction (\Cref{lem:Fadjunction}), we know that $(Y', E' + \Delta_{Y'}+ cD^+_{Y'})$ is globally purely $F$-regular as well.

Now we will prove that $(Y_i,  \Delta_{Y_i} + cD_{Y_i} ^+)$ is globally $F$-regular for each $i \geq 0$ by a descending induction on $i$. Suppose we know that $(Y_{i+1},  \Delta_{Y_{i+1}} + cD_{Y_{i+1}} ^+)$ is globally $F$-regular. Then, the map $f_i: Y_i \dashrightarrow Y_{i+1}$ is either a birational contraction morphism or an isomorphism in codimension $1$. In the latter case, we may directly apply Part (1) of \Cref{lem:GFRunderbirationalmap}. But if $f_i$ is a birational contraction morphism, then note that since we have
\[K_{Y_i} + E_i+ \Delta_{Y_i}+ cD^+ _{Y_i} \sim_\Q (1-c) (K_{Y_i} +\Delta_{Y_i}+ E_i), \] 
we know that $K_{Y_i} + E_i+ \Delta_{Y_i}+ cD^+ _{Y_i}$ is $f_i$-nef. In this case, we may apply Part (2) of \Cref{lem:birationalcontraction} and Part (3) of \Cref{lem:GFRunderbirationalmap} to obtain that $(Y_i, \Delta_{Y_i} + cD_{Y_i}^+)$ is globally $F$-regular. This shows that $(Y, \Delta_Y + cD_Y ^+)  $ is globally $F$-regular. Finally by applying Part (2) of \Cref{lem:GFRunderbirationalmap}, this  implies that $(X, \Delta+cD^+)$ is globally $F$-regular and hence that $(X, \Delta + c\alpha_0  D)$ is globally $F$-regular as required. Since $D$ was an arbitrary $\bQ$-divisor $\Q$-linearly equivalent to $-K_{X_s} - \Delta_{s}$, this shows that
$\alpha_F(X_s, \Delta_s) \geq c \alpha_0$. This completes the proof of the main claim \Cref{equation:mainclaim}.

\medskip

\paragraph{\textbf{Step 5: Completion of the proof of \Cref{inductivethm}:}} 
To complete the proof of \Cref{inductivethm}, we note that if \Cref{conj:alpha-lim-positive} holds in dimension $n -1$, then there exists a constant $C >0 $ such that $\alpha_{-} (E', \Delta_{E'}) \geq C$. In other words, there exists a $p_1 >0 $ such that for all $p \geq p_1$, there is some $s \in \Spec(A)$ such that $ \FA (E'_s , \Delta_{E'_s}) \geq C $. Then, by the Main Claim \Cref{equation:mainclaim}, for the same points $s \in \Spec(A)$, we have $\FA(X_s, \Delta_s) \geq C \alpha_0$. By the definition of $\alpha_-$, we see that $\alpha_- (X, \Delta) > 0 $ as well, implying \Cref{conj:alpha-lim-positive} for $(X, \Delta)$.

Similarly, if \Cref{strongerconjversion} holds in dimension $n -1$, then there exists a constant $C >0 $ and an open set $U \subset \Spec(A)$ such that for all closed points $s \in U$, we have $\FA (E' _s, \Delta_{E'_s}) \geq C$. Then, by the Main Claim \Cref{equation:mainclaim} again, for all such $s \in U$, we have $\FA(X_s, \Delta_s) \geq C \alpha_0$, implying \Cref{strongerconjversion} for $(X, \Delta)$. Since we have already noted that $(E' , \Delta_{E'})$ has standard coefficients if $(X, \Delta)$ does, 
this completes the proof of \Cref{inductivethm}. \qed

The following results were used in the proof above.

\begin{lem} \label{lemmareductionofnef}
    Let  $f: Y \to X$ be a projective birational map between $\Q$-factorial, projective Mori dream spaces. Suppose $D$ is a divisor on $Y$ that is $f$-nef. Then, for any family of characteristic $p \gg 0$ models for $f, Y, X, D$ over $A$, after replacing $A$ by a finitely generated extension if necessary, we may assume that for all closed points $s \in \Spec(A)$ the reduction $D_s$ is $f_s$-nef.
\end{lem}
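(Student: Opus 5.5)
The idea is to use the Mori dream space hypothesis to reduce $f$-nefness of $D$ to a finite check on generators of the relevant cone. This check is stable under reduction modulo $p$ once the Cox rings and the nef and movable cone decompositions spread out flatly over $A$.

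First I reduce to a statement about curves. Since $f$ is birational and $Y$ is projective, $D$ is $f$-nef exactly when $D\cdot C\geq 0$ for every curve $C\subset Y$ contracted by $f$. By the relative cone theorem for the Mori dream space $Y$, the relative cone $\overline{NE}(Y/X)$ is rational polyhedral, spanned by finitely many extremal classes $[C_1],\dots,[C_k]$ with $f_*C_j=0$. Dually, $f$-nefness of $D$ is equivalent to $D+f^*H$ being nef for a fixed ample Cartier $H$ on $X$ and all sufficiently large multiples of $H$. This holds because $D+mf^*H$ is nef for $m\gg0$ when $D$ is $f$-nef, since the nef cone of $Y$ is polyhedral. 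Replacing $D$ by $D+mf^*H$ and $H$ by $mH$, I reduce to the absolute claim: a nef divisor $D$ on a $\Q$-factorial Mori dream space $Y$ has nef reduction $D_s$ for all closed $s$ after enlarging $A$.

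Next, on a Mori dream space, nef equals semiample. So some multiple $\ell D$ is base point free and defines a morphism $\phi: Y\to \P^N$ with $\ell D=\phi^*\cO(1)$. I spread out $\phi$ together with the sections $\sigma_0,\dots,\sigma_N\in H^0(Y,\cO_Y(\ell D))$ over $A$. The statement that these sections have no common zero is the statement that the closed subscheme $V(\sigma_0,\dots,\sigma_N)\subset Y_A$ is empty. This is true over $\C$, hence over the generic point of $\Spec(A)$. By constructibility I can shrink, equivalently enlarge, $A$ so that it is empty on every fiber. Then $\ell D_s=\phi_s^*\cO(1)$ is base point free, hence nef, on $Y_s$. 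I also need $D_s$ to remain $f_s$-nef and not merely nef after subtracting back the auxiliary pullback. This holds because $f_s^*H_s$ is trivial on $f_s$-contracted curves, so $D_s\cdot C=(D_s+f_s^*H_s)\cdot C\geq0$ for such curves.

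The main obstacle is justifying the first reduction: that $D+mf^*H$ is globally nef for a single $m$. I would handle this through the polyhedrality of the nef cone given by the Mori dream space structure. Pick finitely many curves spanning $\overline{NE}(Y)$. Each such curve $\Gamma$ either is contracted by $f$, in which case $D\cdot\Gamma\geq0$, or satisfies $f^*H\cdot\Gamma>0$, in which case a large $m$ works. Alternatively, I can avoid this step by applying the base point freeness argument relatively: relative semiampleness of $D$ over $X$ holds on Mori dream spaces, and the relative base locus spreads out over $A$ in the same way. Either route makes the conclusion independent of $p$ once $A$ is enlarged.
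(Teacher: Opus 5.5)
Your proposal is correct and takes essentially the same route as the paper. You add a large multiple of $f^*H$ to get a globally nef, hence (on a Mori dream space) semiample, divisor, spread the base-point-free linear system out over $A$, and conclude using $f_s^*H_s\cdot C=0$ on $f_s$-contracted curves. The differences are minor: the paper first replaces $D$ by the exceptional divisor $D-f^*f_*D$ and cites \cite[Lemma~2.22]{GongyoOkawaSannaiTakagiCoxRings} for the spreading out. You instead show directly that the common zero locus of the sections is empty over an open subset of $\Spec(A)$, and you explicitly justify that $D+mf^*H$ is nef via polyhedrality of $\overline{NE}(Y)$, a step the paper only asserts.
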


\begin{proof}
Let $-E = D - f^* (f_*D)$. Then it is sufficient to prove the lemma after replacing $D$ by $E$. Thus, we may assume that $D = -E$ is an $f$-exceptional divisor. By the negativity lemma, we know that $E$ is effective.

Now we pick an ample divisor $H$ on $X$. Since $-E$ is relatively nef, by replacing $H$ with a multiple if necessary, we may assume that $L := f^* H - E$ is nef on $Y$. Since $Y$ is a $\Q$-factorial Mori dream space, we know that since $L$ is nef, it is also semi-ample on $Y$. Let $g : Y \to Z$ be the morphism given by a multiple $mL$ with $ Z = \im (g)$. Since $E$ is effective, we know that $Z$ is a subvariety of $X$. Thus, we have
\[ g: Y \to Z \hookrightarrow X. \]
Thus, we see that $-E$ is $f$-semiample over $X$.

Lastly, given a family of characteristic $p$ models of $f$ over a finitely generated $\Z$-algebra $A$, after replacing $A$ bey a finitely generated extension, we may assume that we have a family of characteristic $p \gg 0$ models of $Z$, $g$ and $L$ as well. Moreover, by \cite[Lemma~2.22]{GongyoOkawaSannaiTakagiCoxRings}, we may assume that for any closed point $s \in \Spec(A)$, the reduction $g_s$ is the map induced by the linear system associated to the reduction of $mL = mf_s ^* H_s - m E_s$. This shows that $-E_s$ is relatively semiample over $X_s$, hence is $f_s$-nef. 
\end{proof}

\begin{lem} \label{codimensiontwolemma}
    Let $f: Y \dashrightarrow Y'$ be a birational map between two normal, $\Q$-factorial projective varieties over $\C$.
    Assume that $f$ is an isomorphism in codimension $1$, i.e., there exist open subsets $U \subset Y$ and $U' \subset Y'$ such that the codimension of $Y \setminus U$ and $Y' \setminus U'$ is at least two in $Y$ and $Y'$ respectively, and such that $f$ restricts to an isomorphism $f|_U : U \to U'$. Then if $Y$ is of Fano type, so is $Y'$.
\end{lem}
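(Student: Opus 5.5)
We prove \Cref{codimensiontwolemma} by transporting a Fano-type boundary from $Y$ to $Y'$ via strict transform and then using a Fano-type criterion that only involves bigness and Calabi--Yau complements. Write $\Delta$ for a boundary with $(Y,\Delta)$ KLT and $-(K_Y+\Delta)$ ample.

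\textbf{Step 1 (criterion).} We use the standard characterization over $\C$: a normal projective $\Q$-factorial variety $Z$ is of Fano type if and only if there is a $\Q$-divisor $\Gamma\ge 0$ such that $(Z,\Gamma)$ is KLT and $-(K_Z+\Gamma)$ is big and nef; see \cite[Lemma~2.12]{BirkarAntipluricanonicalsystems} and the surrounding discussion. An equivalent formulation, also used in Step 1 of the proof of \Cref{inductivethm}, is that $-K_Z$ is big and $Z$ admits a KLT log Calabi--Yau pair $(Z,\Theta)$ with $K_Z+\Theta\sim_\Q 0$, $\Theta\ge 0$. We will apply the second form to $Y'$.

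\textbf{Step 2 (transport).} Since $-(K_Y+\Delta)$ is ample, we choose a general $H\sim_\Q -(K_Y+\Delta)$ with $(Y,\Delta+H)$ KLT, and set $\Theta=\Delta+H$, so that $K_Y+\Theta\sim_\Q 0$. Let $\Theta'=f_*\Theta$. Because $f$ is an isomorphism in codimension one, $K_{Y'}+\Theta'$ is the strict transform of $K_Y+\Theta$, hence $K_{Y'}+\Theta'\sim_\Q 0$; it is $\Q$-Cartier by $\Q$-factoriality.

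Now take a common resolution $p\colon W\to Y$, $q\colon W\to Y'$. The divisor $p^*(K_Y+\Theta)-q^*(K_{Y'}+\Theta')$ is $\Q$-linearly trivial, and it is $q$-exceptional because $Y$ and $Y'$ have the same divisors. By the negativity lemma applied in both directions, it vanishes. Hence the log discrepancies of $(Y,\Theta)$ and $(Y',\Theta')$ agree for every divisor over them, so $(Y',\Theta')$ is KLT.

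Next, $-K_{Y'}$ is big: it is the strict transform of the big divisor $-K_Y$, and bigness is preserved because $H^0(mD)$ is unchanged under small birational maps for reflexive sheaves (as in the proof of \Cref{lem:GFRunderbirationalmap}). In the same way, $\Theta'$ is big, since $H'=f_*H$ is big.

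\textbf{Step 3 (produce the boundary).} Write $H'\sim_\Q A+N$ with $A$ ample and $N\ge 0$. Set $\Gamma=\Delta'+(1-\epsilon)H'+\epsilon N$. Then
\[
-(K_{Y'}+\Gamma)\sim_\Q \epsilon H'-\epsilon N\sim_\Q \epsilon A,
\]
which is ample. For $0<\epsilon\ll 1$ the pair $(Y',\Gamma)$ stays KLT, because $(Y',\Theta')$ is KLT and KLT is an open condition under small perturbations by effective $\Q$-Cartier divisors on a $\Q$-factorial variety. Therefore $Y'$ is of Fano type.

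\textbf{Main obstacle.} The delicate point is the perturbation in Step 3: we must check that replacing $\epsilon H'$ by $\epsilon N$ preserves KLT. This holds because $\epsilon N$ has small coefficients along a fixed log resolution of $(Y',\Theta'+N)$.
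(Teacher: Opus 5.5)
Your proof is correct, and it takes a different route from the paper. The paper's proof is two lines. Strict transform along the small map $f$ identifies $\mathrm{Cl}(Y)$ with $\mathrm{Cl}(Y')$ and all spaces of sections, so the Cox rings of $Y$ and $Y'$ are isomorphic. The Gongyo--Okawa--Sannai--Takagi theorem (a $\Q$-factorial projective variety is of Fano type if and only if it is a Mori dream space with log terminal Cox ring) then carries Fano type from $Y$ to $Y'$.

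You instead transport a boundary directly, in three moves:
\begin{enumerate}
\item The KLT $\Q$-complement $\Theta=\Delta+H$ stays crepant under strict transform. On a common resolution the difference of the two pullbacks is $\Q$-trivial and exceptional, so it vanishes by negativity.
\item Bigness of $H$ survives, because $h^0$ of reflexive sheaves does not see codimension-two loci.
\item Kodaira's lemma $H'\sim_\Q A+N$ lets you tilt $\Theta'$ to $\Gamma=\Delta'+(1-\epsilon)H'+\epsilon N$. Then $-(K_{Y'}+\Gamma)\sim_\Q \epsilon A$ is ample, and $\Gamma\le \Theta'+\epsilon N$ keeps $(Y',\Gamma)$ KLT for small $\epsilon$.
\end{enumerate}

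The trade-off is as follows. The paper's argument is shortest given the literature and fits the Cox-ring/Mori-dream-space framework it already uses in Step 2 of the proof of \Cref{inductivethm}. However, it rests on a substantial theorem whose proof passes through global $F$-regularity and reduction modulo $p$. Your argument is elementary and self-contained, and it uses $\Q$-factoriality only on $Y'$. With the equality of $h^0$ replaced by an inequality, it also works for any birational contraction $Y\dashrightarrow Y'$. So it would also cover the divisorial contractions in the MMP of that proof, for which the paper separately invokes \cite[Lemma~2.12]{BirkarAntipluricanonicalsystems}.
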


\begin{proof}
        Since $f$ is an isomorphism in codimension $1$, $f$ induces an isomorphism of the Cox rings of $Y$ and $Y'$ (by taking strict transforms under $f$). Then, the lemma follows from the main theorem of \cite[Theorem~1.1]{GongyoOkawaSannaiTakagiCoxRings}.
\end{proof}

\begin{rem}
Using the same idea as in \Cref{inductivethm}, one can show that if the conjecture holds in dimension $d$, then it holds for all complexity $d$ log Fano pairs. In particular, if it holds for KLT log Fano pairs $(\bP^1, \Delta)$, then it holds for all complexity $1$ log Fano pairs.
\end{rem}

\section{Three-dimensional singularities} \label{section:threefolds}

In this section, we will focus on three-dimensional singularities and prove some effective comparisons between the lim-inf $F$-signature and the normalized volume.

\begin{defi}[\cite{LiMinimizingNormalizedVolumes}]
\label{defi:normalizedvolume}
Let $x \in (X, \Delta)$ be an $n$-dimensional KLT singularity over $\C$. Then, the \emph{normalized volume} of $(X, \Delta)$ is defined to be
\[ \nvol(X, \Delta) := \inf_{ v \in \DivVal(X,x)} A_{X, \Delta} (v)^n \vol(v)\]
where $A_{X, \Delta}$ denotes the log discrepancy of the valuation and $\vol$ denotes the volume (see \cite{MustataJonssonValuations} for details).
\end{defi}

The limiting value of the $F$-signatures $\s(R_p)$ of the reductions of a complex KLT singularity $R$ as $ p \to \infty$ is expected to exist and be related to the normalized volume of $R$ (\Cref{defi:normalizedvolume}). One precise question relating the limit $F$-signature to the normalized volume is:

\begin{que}
    For any integer $d>0$, does there exist a constant $c := c(d) > 0$ such that for every KLT singularity $(R, \fm)$ over $\C$ of dimension $d$, we have
    \[ \lim_{p \to \infty} \s (R_p) \geq c \nvol(R) ? \]
\end{que}

While we are not able to answer this question completely for three dimensional singularities, the rest of this section is devoted to proving partial results in this direction.

\begin{defi} \label{dfn:alphaboundedKollarcomponent}
    For any real number $\varepsilon \in (0,1)$, we define the set   $\mathscr{S}_\varepsilon$ to be the set of $3$-dimensional local KLT pairs $ (X, \Delta, x)$ such that $\Delta$ has standard coefficients and every Koll\'ar component $(S, \Delta_S) $ of $(X, \Delta, x)$ satisfies $\alpha(S, \Delta_S) \leq 1 - \varepsilon$.
\end{defi} 

\begin{rem}
    We note that if $\varepsilon < 1/2$, then the cone over every log del Pezzo surface $(Y, \Delta_Y)$ where $\Delta_Y$ has standard coefficients and $\varepsilon < \alpha(Y, \Delta_Y) < 1 - \varepsilon $ belongs to the set $\mathscr{S}_\varepsilon.$ Indeed, since the blow-up of the cone point gives a Koll\'ar component isomorphic to $(Y, \Delta_Y)$, it follows from the main theorem of \cite{CheltsovParkShramovAlphaandPLTBlowups} that every other Koll\'ar component $(S, \Delta_S)$ of the cone over $(Y, \Delta_Y)$ satisfies $\alpha(S, \Delta_S) \leq 1 - \alpha(Y, \Delta_Y) < 1 - \varepsilon.$ 
\end{rem}

\begin{thm} \label{thm:effectivelimFsig}
    Fix a real number $  \varepsilon  \in (0,1)$. Then there exists a constant $c := c(\varepsilon)$ such that for every three-dimensional KLT pair $(X, \Delta, x)$ belonging to the set $\mathscr{S}_\varepsilon$, and a family of models $(X_A, \Delta_A, x_A)$ over a smooth finite-type $\Z$-algebra $A$, there exists an open set $U \subset \Spec (A)$ such that for each closed point $s \in U$, we have 
    \[   \s  (X_s, \Delta_s,   x_s) \geq c \nvol (X, \Delta, x).\]
\end{thm}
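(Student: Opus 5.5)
Our plan is to pass through a Koll\'ar component and reduce to a uniform bound on the $\FA$-invariant of log del Pezzo pairs with $\alpha \le 1-\varepsilon$.

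\textbf{Choice of Koll\'ar component.} We take $S$ to be a Koll\'ar component minimizing the normalized volume, which exists over $\C$ by the work of Li, Xu and Blum. Thus $\nvol(X,\Delta,x) = A_{X,\Delta}(S)^3\vol(\ord_S)$, and $(S,\Delta_S)$ is K-semistable. Since $(X,\Delta,x)\in\mathscr{S}_\varepsilon$, we have $\alpha(S,\Delta_S)\le 1-\varepsilon$, and $\Delta_S$ has standard coefficients.

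\textbf{Local bound.} By \Cref{Prop:lowerboundKollarcomponent}, for all closed points $s$ in a dense open set we get
\[ \s(X_s,\Delta_s,x_s) \ge A_{X,\Delta}(S)\,\s(S_s,\Delta_{S_s}). \]
For the orbifold cone we have $\vol(\ord_S) = (-S|_S)^2 = \vol(-K_S-\Delta_S)/A_{X,\Delta}(S)^{2}$, because $-(K_S+\Delta_S)\sim_\Q -A\,S|_S$. Hence
\[ \nvol(X,\Delta,x) = A_{X,\Delta}(S)\,\vol(-K_S-\Delta_S). \]
Combining this with the lower bound of \Cref{alphavss} (the $g$ direction, from \cite[Theorem 4.7]{PandeFrobeniusAlpha}), namely $\s(S_s,\Delta_{S_s}) \ge \tfrac{2}{3!}\,\FA(S_s,\Delta_{S_s})^3\,V$ with $V=\vol(-K_S-\Delta_S)$, we obtain
\[ \s(X_s,\Delta_s,x_s) \ge \tfrac13\,\FA(S_s,\Delta_{S_s})^3\,\nvol(X,\Delta,x). \]
The volume therefore cancels, and it remains to bound $\FA(S_s,\Delta_{S_s})$ below uniformly in terms of $\varepsilon$.

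\textbf{Uniform $\FA$ bound.} We apply the Main Claim (\ref{equation:mainclaim}) from the proof of \Cref{inductivethm} with $n=2$. This gives $\FA(S_s,\Delta_{S_s})\ge \alpha_0\,\FA(E'_s,\Delta_{E'_s})$, where $\alpha_0\ge \tfrac12\min\{\alpha(S,\Delta_S),1-\alpha(S,\Delta_S)\}$ by \eqref{eqn:alpha0bound}. Since $\alpha\le 1-\varepsilon$, the term $1-\alpha$ is at least $\varepsilon$. Here $(E',\Delta_{E'})$ is a pair $(\P^1,\Delta_{E'})$ with standard coefficients and, by \Cref{lem:alphaofE'}, $\alpha(E',\Delta_{E'})\ge\alpha_0$.

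\textbf{Main obstacle.} Two points need care, and they are where the work lies.

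First, a uniform lower bound for $\alpha(S,\Delta_S)$ is needed. K-semistability gives $\alpha\ge \tfrac13$ (the bound $\alpha\ge\delta/(n+1)$), so $\alpha_0\ge \tfrac12\min\{\tfrac13,\varepsilon\}$.

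Second, we need a uniform lower bound on $\FA(\P^1_s,\Delta_s)$ for standard-coefficient boundaries with $\alpha\ge\alpha_0$, valid for $p$ large, and a family-dependent open set $U$ is enough here. For $\P^1$ with standard coefficients, at most three points carry coefficients that are not small. Using that $\alpha(E',\Delta_{E'})\ge\alpha_0$ bounds the coefficients away from $1$, I would first try to reduce, by lowering small coefficients (monotonicity of $F$-regularity), to three-point configurations or $\P^1$ with bounded coefficients. I would then invoke \cite{BCPTlimitFsigarxiv}, or \cite{WatanabeFregularFpureRings}-type explicit splitting computations, to get an explicit bound $\FA\ge c'(\alpha_0)$. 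One might hope $\FA\ge\alpha/C$ holds directly on $\P^1$ by computing with monomial splittings, since three points can be moved to $0,1,\infty$.

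Setting $c(\varepsilon)=\tfrac13\bigl(\alpha_0\,c'(\alpha_0)\bigr)^3$ would then conclude, with $U$ the intersection of the finitely many open sets produced above.
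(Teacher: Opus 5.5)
Everything after your choice of $S$ matches the paper: \Cref{Prop:lowerboundKollarcomponent}, the $\s$--$\FA$ comparison, the Main Claim of \Cref{inductivethm} with $n=2$, \Cref{lem:alphaofE'}, and the $\PP^1$ input. The gap is the first step. Blum's theorem gives a minimizer $v_0$ of the normalized volume, and $v_0$ is quasi-monomial, but in general it is \emph{not} divisorial. So there need not be any Koll\'ar component computing $\nvol(X,\Delta,x)$, and hence no K-semistable Koll\'ar component giving $\alpha(S,\Delta_S)\ge 1/3$. This already happens inside $\mathscr{S}_\varepsilon$. The anticanonical cone over $\mathrm{Bl}_p\PP^2$ lies in $\mathscr{S}_\varepsilon$ for $\varepsilon<1/3$, by the remark following \Cref{dfn:alphaboundedKollarcomponent}. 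Its minimizer is a rank-two toric valuation, namely the irregular Reeb vector of $Y^{2,1}$. For an arbitrary Koll\'ar component you have no control on $\alpha(S,\Delta_S)$ from below. Without such control, your $\alpha_0$, and hence $c$, depends on the singularity.

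The missing ingredient is exactly \Cref{prop:boundonalphaofKollarcomponent}: every $n$-dimensional KLT singularity has \emph{some} Koll\'ar component with $\alpha(S,\Delta_S)\ge c(n)$. When $v_0$ is not divisorial, the paper approximates it by divisorial valuations $v_i$ that are monomial on the same log resolution, with rational weights tending to those of $v_0$. For $i\gg 0$ these $v_i$ are Koll\'ar components with the same degeneration $(X_0,\Delta_0)$ and Reeb vectors $\xi_i\to\xi_0$, so $\Theta(X,\Delta,v_i)\to 1$. The Xu--Zhuang estimate for orbifold bases of log Fano cones then gives $\alpha(S_i,\Delta_{S_i})\ge \Theta(X,\Delta,v_i)/C(n)$.

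With such an $S$ you lose the equality $\nvol(X,\Delta,x)=A_{X,\Delta}(S)\vol(-K_S-\Delta_S)$. You only need $A_{X,\Delta}(S)\vol(-K_S-\Delta_S)=\nvol(\ord_S)\ge\nvol(X,\Delta,x)$, which holds for every Koll\'ar component. The rest of your argument then goes through with $\alpha_0\ge\frac12\min\{c(2),\varepsilon\}$.

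For the $\PP^1$ step, you do not need to lower any coefficients: standard coefficients already force at most three boundary points, since each nonzero coefficient is at least $\frac12$. The uniform bound you want is \eqref{eqn:limalphaP1pairs}, $\lim_p\FA(E,\Delta_E)=\min\{\frac12,\alpha(E,\Delta_E)\}$. This comes from \cite{BCPTlimitFsigarxiv} in the K-semistable case and from an isotrivial degeneration to a toric pair otherwise.
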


\begin{cor} \label{cor:logdelPezzo}
    Let $(X, \Delta)$ be a log del Pezzo surface over $\C$, where $\Delta$ has coefficients in the standard set $\{\frac{m-1}{m} \, | \, m \geq 1 \}$. Suppose $\alpha(X, \Delta) < 1$. Then, \Cref{strongerconjversion} holds for $(X, \Delta)$.
\end{cor}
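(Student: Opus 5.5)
The plan is to apply \Cref{inductivethm} with $n=2$, in its version (4): Conjecture~\ref{strongerconjversion} for one-dimensional log Fano pairs with standard coefficients. A log del Pezzo surface $(X,\Delta)$ with $\alpha(X,\Delta)<1$ is exactly a non-weakly exceptional log Fano pair of dimension $2$ (\Cref{dfn:weaklyexcepFano}), and $\Delta$ has standard coefficients by hypothesis. So it suffices to verify the hypothesis of part (4): Conjecture~\ref{strongerconjversion} holds for every complex log Fano curve $(E,\Delta_E)$ whose coefficients lie in $\{\frac{m-1}{m}\}$.

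To verify this, note first that a one-dimensional log Fano pair has $E\cong\P^1$, and that $\deg(-K_{\P^1}-\Delta_E)>0$ forces $\sum_i (1-\frac{1}{m_i})<2$. With standard coefficients this leaves only finitely many types: at most three points with nonzero coefficient, the triples $(m_1,m_2,m_3)$ being $(2,2,m)$, $(2,3,3)$, $(2,3,4)$ or $(2,3,5)$. Since $\PGL_2$ acts triply transitively, each such pair is isomorphic to a fixed model over $\Z$, so we may treat the points as $0,1,\infty$ with the prescribed coefficients.

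For these three-point configurations on $\P^1$, Conjecture~\ref{strongerconjversion} for the $F$-signature follows from the computations in \cite{BCPTlimitFsigarxiv}, case (5) of the remark listing known cases. I expect the uniform-in-$s$ statement over an open set of $\Spec(A)$, rather than just a $\liminf$ over $p$, to come directly from the explicit form of their answer. If instead only a lower bound for all $p\gg0$ is available, the fact that the model is defined over $\Z[1/N]$ means every closed point of $\Spec(A)$ of large characteristic has the same reduction up to a finite field extension. \Cref{alphavss} then converts the $F$-signature bound into an $\FA$ bound, since the dimension and the volume $2-\sum(1-\frac1{m_i})$ are fixed.

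The main obstacle is this last point: checking that the result of \cite{BCPTlimitFsigarxiv} gives a uniform positive bound in exactly the form Conjecture~\ref{strongerconjversion} requires. If its statement is only about the limit as $p\to\infty$, I would argue as follows. The $F$-signature over $\mathbb{F}_q$ equals the one over $\mathbb{F}_p$ (by \cite[Theorem 3.6]{CRSTBertiniTheorems}, as in the model-independence lemma). A positive $\liminf$ then gives a constant valid for all large $p$, and the open set $U$ can be taken to be the points of characteristic larger than that threshold. With this in hand, \Cref{inductivethm}(4) yields the corollary.
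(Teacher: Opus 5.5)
Your proposal is correct and follows the paper's route. The corollary is \Cref{inductivethm}(4) with $n=2$, and the one-dimensional base case comes from \cite{BCPTlimitFsigarxiv}. The paper's remark on one-dimensional pairs records this base case as $\lim_{p\to\infty}\FA(E,\Delta_E)=\min\{\frac12,\alpha(E,\Delta_E)\}$, treating the K-unstable case by a toric degeneration. You instead pass through the $F$-signature and \Cref{alphavss}, which is equivalent. Your passage from a $\liminf$ to a bound on a dense open set, using invariance under finite field extensions, is the right bookkeeping.

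One small slip: the standard-coefficient pairs on $\P^1$ do not come in finitely many types, since the $(2,2,m)$ triples and the two-point configurations form infinite families. This does no harm, because the conjecture is asserted one pair at a time.
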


First, we need some preliminary results:

\begin{prop} \label{prop:boundonalphaofKollarcomponent}
    Let $n \geq 2$ be an integer. Then there exists a constant $c := c(n)$ depending only on $n$ such that every $n$-dimensional local KLT pair $(X, \Delta,x)$ admits a Koll\'ar component $(S, \Delta_S)$ such that $\alpha(S, \Delta_S) \geq c$. 
\end{prop}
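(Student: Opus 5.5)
Take the Koll\'ar component that minimizes the normalized volume and bound its $\alpha$-invariant from below by its K-semistability. By the stable degeneration theorem (Li, Blum, Li--Xu, Xu--Zhuang), every KLT singularity $x\in(X,\Delta)$ over $\C$ has a quasi-monomial valuation $v_0$ minimizing $\nvol$. In general $v_0$ is not divisorial, so we cannot use it directly. Instead we use an approximation: quasi-monomial valuations near $v_0$ in its simplex are computed by Koll\'ar components $S_k$ with $A(S_k)^n\vol(\ord_{S_k})\to\nvol(X,\Delta,x)$ (Li--Xu). The point is to show that Koll\'ar components whose normalized volume is close to the minimum have log Fano pairs $(S,\Delta_S)$ with $\alpha$ bounded below. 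To avoid approximation issues, we may first degenerate: the degeneration of $x\in(X,\Delta)$ along $v_0$ is a K-polystable log Fano cone $(X_0,\Delta_0,\xi_0)$. Perturbing the Reeb vector $\xi_0$ to a nearby rational one $\xi$ in the torus lies in the same simplex, and each such $\xi$ induces a Koll\'ar component of $(X,\Delta)$ (Li--Xu, via the weighted blowup in the simplex). The resulting quotient pair $(S_\xi,\Delta_{S_\xi})$ is the quotient of the log Fano cone by the $\G_m$ generated by $\xi$.

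Next we bound $\alpha$ of these quotients. For $\xi=\xi_0$ rational, K-semistability of the cone is equivalent to K-semistability of the quotient log Fano pair (Li--Liu--Xu, Liu--Zhuang \cite{LiuZhuangSharpnessofTianscriterion}). The standard bound $\alpha\geq \delta/(n+1)$, written in dimension $\dim S=n-1$ as $\alpha(S,\Delta_S)\geq \frac1{n}\delta(S,\Delta_S)\geq\frac1n$, then gives $c(n)=1/n$. For irrational $\xi_0$ we need a perturbation argument. I would use the result that for a rational $\xi$ close to $\xi_0$ the quotient is ``almost K-semistable'' in the sense $\delta(S_\xi,\Delta_{S_\xi})\geq 1-\epsilon$. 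This should follow from the continuity of the normalized volume and of $\delta$-type invariants along the Reeb cone, or from the criterion that $\beta$ (equivalently the Futaki invariant) varies continuously in $\xi$ within the torus. Choosing $\xi$ close enough that $\delta\geq\frac12$ yields $\alpha(S,\Delta_S)\geq\frac{1}{2n}$.

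Finally, the component found on the central fiber $X_0$ must be transported back to $X$. The Koll\'ar component $S_\xi$ of the cone corresponds to the valuation $v_\xi$ on $X$ in the quasi-monomial simplex of $v_0$. By Li--Xu (\cite{LiXuStabilityofvaluations}), $\gr_{v_\xi}$ of $R$ agrees with $\gr$ of the cone, so the Koll\'ar component of $X$ associated to $v_\xi$ has the same log Fano pair $(S_\xi,\Delta_{S_\xi})$. The alternative route, a direct argument via the lemma of Liu--Moraga--S\"u\ss{} used in Step 1 of Section~\ref{section:inductivestep} combined with \cite[Lemma~6.1]{LiuMoragaSussboundedness23}, could replace this.

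The main obstacle I expect is the perturbation step: proving a uniform lower bound for $\delta$, or directly for $\alpha$, of the quotient of a K-semistable log Fano cone by a nearby rational Reeb vector. If the continuity of $\delta$ is unavailable, I would instead bound $\alpha$ directly. I would compute $\alpha$ of the quotient in terms of log canonical thresholds on the cone and use that $\lct$ of $\xi$-homogeneous divisors changes continuously with the grading.
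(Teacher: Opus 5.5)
Your architecture is the paper's. Take the $\nvol$-minimizer $v_0$. If it is divisorial, its Koll\'ar component is K-semistable and $\alpha\ge\delta/n\ge 1/n$ by Fujita--Odaka. Otherwise, approximate $v_0$ by rational points $v_i$ of its simplex; for $i\gg0$ these give Koll\'ar components with $\gr_{v_i}R\cong\gr_{v_0}R$, so $(S_i,\Delta_{S_i})$ is the quotient of the fixed cone $(X_0,\Delta_0)$ by a quasi-regular Reeb vector $\xi_i\to\xi_0$.

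The irrational case is the whole content of the proposition, and there your argument has a genuine gap. The uniform lower bound on $\delta$ or $\alpha$ of these quotients is asserted, not proved, and the mechanisms you propose do not supply it. There is no continuity of $\delta(S_\xi,\Delta_{S_\xi})$ or $\alpha(S_\xi,\Delta_{S_\xi})$ in $\xi$ to appeal to:
\begin{itemize}
\item These invariants involve all $\xi$-homogeneous functions on $X_0$. For quasi-regular $\xi$, these include sums of $T$-semi-invariants of different $T$-weights with equal $\xi$-degree. Such functions have no analogue for the irrational $\xi_0$, where $\xi_0$-homogeneous means $T$-semi-invariant.
\item K-semistability of $(X_0,\Delta_0,\xi_0)$ and continuity of the Futaki invariant on the Reeb cone only control $T$-equivariant degenerations. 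By contrast, $\delta(S_\xi,\Delta_{S_\xi})$ is governed by all degenerations equivariant for the one-parameter subgroup of $\xi$.
\item By uniqueness of the $\nvol$-minimizer, $(X_0,\Delta_0,\xi)$ is K-unstable for every quasi-regular $\xi$. So no limiting argument alone can give a bound; you need a quantitative inequality.
\end{itemize}
The almost-K-semistability you want is in fact true (the paper mentions a result of Colin Fan), but it is a separate theorem, not a consequence of continuity.

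The paper closes the gap without any approximate K-semistability. By Li--Xu, $\nvol(X_0,\Delta_0;\mathrm{wt}_{\xi_i})=\nvol(X,\Delta;v_i)$. Zhuang's lemma then gives $\Theta(X,\Delta,v_i):=\nvol(X,\Delta,x)/\nvol(X,\Delta;v_i)\to 1$ as $v_i\to v_0$, so $\Theta>\frac12$ for $i\gg0$. Next, Xu--Zhuang's inequality for log Fano cones (Lemma~2.12 in their boundedness paper) bounds the $\alpha$-invariant of the orbifold base of a quasi-regular cone below by $\Theta/C(n)$. This gives $\alpha(S_i,\Delta_{S_i})\ge 1/(2C(n))$ and $c(n)=\min\{1/n,\,1/(2C(n))\}$. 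So the continuity of $\nvol$ you mention is the right input; what is missing is an inequality turning near-minimality of $\nvol$ into a lower bound on $\alpha$ of the quotient.

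Two smaller corrections. First, $\gr_{v_0}R$ is in general only K-semistable, not K-polystable. It is this cone, not its polystable degeneration, that satisfies $\gr_{v_\xi}R\cong\gr_{v_0}R$ and lets you transport components back to $X$. Second, Liu--Moraga--S\"u\ss's Lemma~6.1 concerns special test configurations of global log Fano pairs, with the non-uniform bound $\frac1n\min\{\alpha,1-\alpha\}$. It therefore cannot replace either step.
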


\begin{rem}
    Colin Fan proved that for any $\epsilon>0$ there exists $S$ with $\delta(S,\Delta_S)\geq 1-\epsilon$. So, \Cref{prop:boundonalphaofKollarcomponent} also follows from this result.
\end{rem}

\begin{proof}
        Let $(X, \Delta, x)$ be an affine, $n$-dimensional KLT pair. To find such a Koll\'ar component, we first find the corresponding valuation $v$ as follows: Let $v_0$ be the minimizer of the normalized volume function of $(X, \Delta, x)$. If $v_0$ is a divisorial valuation, then the corresponding Koll\'ar component $(S, \Delta_S)$ is a K-semistable log Fano variety of dimension $n -1$ by \cite[Theorem A]{LiXuStabilityofvaluations}. Then, we have that $\alpha(S, \Delta_S) \geq \frac{1}{n}$ by \cite[Theorem 3.5]{FujitaOdakadeltainvariant}.
        
        If $v_0$ is not a divisorial valuation, in general $v_0$ is a higher-rank quasi-monomial valuation with a finitely generated associated graded algebra $R_0 = \gr_{v_0}  R$ where $R = \cO_{X,x}$. Then, $X_0 = \Spec(R_0)$ is an affine variety with a good torus action with vertex $x_0$. Moreover,  there are canonically defined $\Delta_0$ and $\xi_0$ such that $\Delta_0$ is an effective $\Q$-divisor such that $(X_0, \Delta_0)$ is a KLT pair and $\xi_0$ is a Reeb vector corresponding to the grading on $R_0$. The triple $(X_0, \Delta_0, \xi_0)$  is a K-semistable log-Fano cone singularity.

        Now, let $(Y, E) \to X$ be a log resolution of $(X, \Delta)$ with reduced exceptional divisor $E$ such that $v_0$ is monomial on $(Y,E)$ with maximal rational rank $r$. This means that there exist exceptional divisors $E_1, \dots, E_r$ and a generic point $\eta$ of a component of $\cap_{i = 1} ^r E_i$ and positive real numbers $\alpha_1, \dots, \alpha_r$ that are linearly independent over $\Q$ such that around $\eta$, $v_0$ is the monomial combination of $E_1, \dots, E_r$ with weights $(\alpha_1, \dots, \alpha_r)$. Now, choose vectors $\underline{\alpha}_i \in \Q^r _{ > 0}$ such that $ \underline{\alpha}_i \to (\alpha_1, \dots, \alpha_r)$ as $i \to \infty$. Let $v_i$ be the monomial combination of $E_1, \dots, E_r$ with weights $\underline{\alpha}_i$. Note that since $\underline{\alpha}_i \in \Q^r _{>0}$, each $v_i$ is a divisorial valuation over $(X,x)$. We will show that any $v_i $ for $i \gg 0 $ will work.

        Firstly, by \cite[Proposition 4.11]{LiXuHigherRank}, each $v_i$ for $i \gg 0$ corresponds to a Koll\'ar component of $(X, \Delta)$. Let $(X_i, \Delta_i, \xi_i)$ be the corresponding degenerations induced by $v_i$. By \cite[Lemma 2.10]{LiXuHigherRank}, we have $\text{gr} _{v_i} R \isom \text{gr}_{v_0} R = R_0$ up to a change in the grading. In other words, for $i \gg 0$, $X_i$ is independent of $i$ and is isomorphic to $X_0$. Moreover, we have the following observation:

        \begin{lem} \label{lem:independenceofboundary}
            For $i \gg 0$, the induced boundary $\Delta_i$ on $X_ 0 \isom X_i$ is independent of $i$ and isomorphic to $\Delta_0$. And identifying $(X_i, \Delta_i, \xi_i) \isom (X_0, \Delta_0, \xi_i)$, where $\xi_i$ is the induced Reeb vector on $X_0$ by the degeneration $v_i$, we have $\xi_i \to \xi_0$ as $i \to \infty$.  
        \end{lem}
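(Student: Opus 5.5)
The plan is to read the boundary and the Reeb vector off the common associated graded ring $R_0$, and to compare the grading from $v_i$ with the grading from $v_0$ through the torus action on $X_0$. By \cite[Lemma 2.10]{LiXuHigherRank}, for $i \gg 0$ the valuations $v_i$ and $v_0$ induce the same filtration on $R$ up to regrading. Concretely, the value semigroup of $v_0$ generates a lattice $M \cong \Z^r$, and $R_0 = \bigoplus_{u \in M} (R_0)_u$ is $M$-graded, so the torus $T = \Spec \C[M]$ acts on $X_0$. Both $\gr_{v_0} R$ and $\gr_{v_i} R$ are coarsenings of one and the same $M$-graded ring. The coarsening for $v_0$ uses the linear functional $\xi_0 = (\alpha_1, \dots, \alpha_r)$ on $M$, and the one for $v_i$ uses the rational functional $\xi_i = \underline{\alpha}_i$. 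Since $\underline{\alpha}_i \to (\alpha_1,\dots,\alpha_r)$ in $N_\R = \mathrm{Hom}(M,\R)$, we get $\xi_i \to \xi_0$ as soon as the identification $X_i \cong X_0$ is made $T$-equivariant. This settles the Reeb vector part.

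For the boundary, I would use the construction of $\Delta_0$ on $X_0$ from the Li--Xu degeneration, namely the degeneration of $\Delta$ along the filtration. Write $\Delta = \sum a_j \Delta_j$ with $\Delta_j = \mathrm{div}$ of a local ideal $I_j$ (after passing to a multiple, assuming $\Q$-Cartier components, or working with the prime ideals of the components). The degenerate divisor $\Delta_{i}$ is then $\sum a_j\, \mathrm{div}(\mathrm{in}_{v_i}(I_j))$, where $\mathrm{in}_{v_i}$ denotes the initial ideal in $\gr_{v_i}R$; more precisely, one takes the divisorial part of the closed subscheme cut out by the initial ideal. The key step is to show that $\mathrm{in}_{v_i}(I_j) = \mathrm{in}_{v_0}(I_j)$ as ideals of $R_0$ for $i \gg 0$. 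The initial ideal with respect to a monomial valuation depends only on which terms attain the minimum. Since $v_0$ has $\Q$-linearly independent weights, its minimal terms are a single $M$-degree. For $\underline{\alpha}_i$ close enough to $\xi_0$, the minimizing $M$-degree is unchanged for each element of a finite generating set. This is the Gröbner-fan style openness argument behind \cite[Lemma 2.10]{LiXuHigherRank}, and I would apply it simultaneously to $R$ and to the finitely many ideals $I_j$.

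The main obstacle is the finiteness needed for the openness argument. The initial ideal is determined by finitely many elements only when we pass to a finite ``standard basis'' with respect to $v_0$. I would take homogeneous generators of the finitely generated $R_0$-ideal $\mathrm{in}_{v_0}(I_j)$, lift them to elements of $I_j$, and show that these lifts still form a standard basis for $v_i$ when $i \gg 0$. The argument is that the finitely many relevant comparisons of $M$-degrees are strict inequalities for $\xi_0$, hence remain strict for $\xi_i$ nearby. Finally, I would check that both inclusions hold, using the fact that the Hilbert functions of $R_0/\mathrm{in}(I_j)$ agree in the two gradings. This identifies $\Delta_i$ with $\Delta_0$ and proves the lemma.
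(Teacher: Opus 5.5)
\textbf{Verdict.} Your approach differs from the paper's and is sound in outline, but the reverse inclusion of initial ideals is not yet justified as written.

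\textbf{Comparison with the paper.} The paper argues geometrically. By \cite{XuZhuangStableDegenerations} there is a $\G_m^r$-equivariant family $(\cX,\Delta_{\cX})\to\A^r$ whose pullback along $t\mapsto(t^{w_1},\dots,t^{w_r})$ is the degeneration induced by $v_{\underline{w}}$. So all the $v_i$ share the central fiber, boundary included, because the boundary is just $\Delta_{\cX}$ restricted over the origin. The Reeb vector is then $\xi_{\underline{w}}=\underline{w}$ by \cite[Lemma 3.8]{XuTowardsFiniteGeneration}, which is what your coarsening argument reproves.

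For the boundary you work algebraically instead. You compare initial ideals of the prime ideals $I_j$ inside $\gr\,\cO_{Y,\eta}\cong\kappa(\eta)[\bar y_1,\dots,\bar y_r]$, where $R_0$ is spanned by monomial terms and, for $i\gg0$, equals $\gr_{v_i}R$ by \cite[Lemma 2.10]{LiXuHigherRank}. This avoids the $\A^r$-family and uses only openness near $(\alpha_1,\dots,\alpha_r)$, which is all the lemma needs. The price is a Buchberger-type criterion, and that is where your sketch needs tightening. The $\Q$-Cartier shortcut you mention would be trivial, since $\mathrm{in}_v(hR)=\mathrm{in}_v(h)\gr_vR$. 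However, components of $\Delta$ need not be $\Q$-Cartier, so the prime-ideal version is the one that matters.

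\textbf{The step that needs repair.} The inclusion $\mathrm{in}_{v_0}(I_j)\subseteq\mathrm{in}_{v_i}(I_j)$ does follow from finitely many comparisons: the initial exponents of the lifts $\tilde g_k$. The reverse inclusion concerns every element of $I_j$. Your appeal to ``the Hilbert functions of $R_0/\mathrm{in}(I_j)$ agree in the two gradings'' is not available a priori: given the first inclusion, it is equivalent to the equality you are trying to prove.

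\textbf{How to supply the finiteness.} What gives it is the syzygy-lifting criterion. Take the $\tilde g_k$ to include generators of $I_j$. The finite data are then the initial exponents of three kinds of elements:
\begin{enumerate}
\item the $\tilde g_k$ themselves;
\item lifts of the entries of a finite set of $M$-homogeneous generators of the syzygy module of $(g_1,\dots,g_s)$ in $R_0$; these also generate the syzygies homogeneous for the coarser $\underline{\alpha}_i$-grading, because the $g_k$ are $M$-homogeneous;
\item the terms $q_k\tilde g_k$ in a $v_0$-standard representation of each lifted syzygy combination; such a representation exists in $R$ itself by Artin--Rees, since $\fa_\lambda(v_0)\subseteq\fm^N$ for $\lambda\gg0$.
\end{enumerate}
Each of these strict inequalities persists for $i\gg0$. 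Since the syzygies are computed in the same ring $\gr_{v_i}R=R_0$, the usual rewriting of $f=\sum h_k\tilde g_k$ with $f\in I_j$ then terminates. This is because $v_i$ takes values in a discrete set and $\min_k v_i(h_k\tilde g_k)\le v_i(f)$. Hence the lifts form a $v_i$-standard basis, $\mathrm{in}_{v_i}(I_j)=\mathrm{in}_{v_0}(I_j)$ follows directly, and the Hilbert-function step can be dropped.
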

       
        Taking this lemma for granted for the moment, consider the function $\Theta(X, \Delta, v_i) = \frac{\nvol (X, \Delta,x)}{\nvol(X_0, \Delta_0; \text{wt}_{\xi_i})}$. By \cite[Lemma 4.10]{LiXuHigherRank}, we have $\nvol(X_0, \Delta_0; \text{wt}_{\xi_i}) = \nvol (X, \Delta; v_i)$. By \cite[Lemma 2.11]{ZhuangBoundednessandmld}, we know that $\Theta (X, \Delta, v_i) \to 1$ as $i \to \infty$ since $v_0$ is the minimizer of the normalized volume of $(X, \Delta)$. Therefore, for $i \gg 0$, we may assume that $\Theta(X, \Delta, v_i) > \frac{1}{2}$.

        Lastly, let $(S_i, \Delta_{S_i})$ denote the Koll\'ar component corresponding to $v_i$. Then, from the discussion in Section 2.4 of \cite{XuZhuangboundednesslogfanocone} (see also \cite[Proposition 2.10]{LiuZhuangSharpnessofTianscriterion}) we have that the pair $(S_i, \Delta_{S_i})$ is isomorphic to the orbifold base $(X_0, \Delta_0)/\xi_i$. Therefore, by applying \cite[Lemma 2.12]{XuZhuangboundednesslogfanocone}, there exists a constant $C:= C(n)>0$ depending only on $n$ such that $ \alpha(S_i, \Delta_{S_i})  \geq \frac{1}{c} \Theta(X, \Delta, v_i) \geq \frac{1}{2C}. $ Now, the proposition follows by taking $ c = \min \{\frac{1}{2C} , \frac{1}{n} \} $.
\end{proof}

\begin{proof}[Proof of \Cref{lem:independenceofboundary}] This proof depends on the geometry of the degenerations induced by $v_i$ as proved in \cite{XuTowardsFiniteGeneration} and \cite{XuZhuangStableDegenerations}. Suppose the minimizer $v_0$ of the normalized volume of $(X, \Delta)$ has rational rank $r$ and is computed at the generic point of a component of $E_1 \cap \dots \cap E_r$ on a log resolution $Y \to X$ of $(X, \Delta)$. Then, there is a flat, $\G_m ^r$-equivariant family $f: (\cX, \Delta_{\cX}) \to \A^r$ such that:
\begin{enumerate}
    \item $(\cX, \Delta_{\cX}) \times_{\A^r} (\A^1 \setminus \{0\})^r \isom (X, \Delta) \times \G_m ^r$,
    \item each fiber $(X_t, \Delta_t)$ of $\pi$ is a KLT pair,
    \item the fiber $(X_0, \Delta_0)$ over $\overline{0} \in \A^r$ of $\pi$ is isomorphic to the degeneration induced by $v_0$, and
    \item for any weight $\underline{w} = (w_1, \dots, w_r) \in \N_{+} ^r$, the pull back of $\pi$ along the map $\A^1 \to \A^r, t \mapsto (t^{w_1}, \dots, t^{w_r})$ is the $\G_m$-equivariant degeneration induced by $v_{\underline{w}}$, the quasi-monomial combination of $E_1, \dots, E_r$ with weights $\underline{w}$.
\end{enumerate}  
See \cite[Proposition 4.5]{XuZhuangStableDegenerations} for Parts (1) and (2) and \cite[Corollary 4.10]{XuZhuangStableDegenerations} for Parts (3) and (4). For Part (3), note that by the choice of $E_1, \dots, E_r$, the minimizer $v_0$ corresponds to weights in the interior of $\R_{\geq 0} ^r$. Therefore, for all weights $\underline{w} \in \N_+ ^r$, the degeneration induced by the corresponding quasi-monomial valuation $v_{\underline{w}}$ has a central fiber $(X_0, \Delta_0)$ isomorphic to the fiber over $\overline{0} \in \A^r$ of $\pi$. Since $v_i$ were chosen to be monomial combinations of $E_1, \dots, E_r$ with positive weights, this proves the first part of the lemma.

Note that while the central fiber of the degeneration induced by $v_{\underline{w}}$ is the same for all weights $\underline{w} \in \N_+ ^r$, the $\G_m$ action on $(X_0, \Delta_0)$ depends on the weight $\underline{w}$. Let $\xi_{\underline{w}}$ denote the Reeb vector induced by $v_{\underline{w}}$. By \cite[Lemma 3.8]{XuTowardsFiniteGeneration}, the Reeb vector $\xi_{\underline{w}}$ is equal to $\underline{w}$, and therefore, since the weights $\underline{\alpha}_i \to (\alpha_1, \dots, \alpha_r)$ (where $(\alpha_1, \dots, \alpha_r)$ are the weights for the minimizer $v_0$, the Reeb vectors $\xi_{i} \to \xi_0$ as $i \to \infty$. 
\end{proof}

Next, we consider the case of cone singularities.

\begin{prop} \label{prop:effectiveboundforFanosurfaces}
    Let $\varepsilon \in (0, 1/2)$ be a fixed real number. Then, there exists a constant $C := C(\varepsilon) > 0$ such that for every two dimensional log Fano pair $(S, \Delta_S)$ such that $ \varepsilon < \alpha(S, \Delta_S) < 1 - \varepsilon $ and where $\Delta_S$ has standard coefficients, we have 
    \[  \s_{-} (S, \Delta_S) \geq C \vol (S, \Delta_S).\]
\end{prop}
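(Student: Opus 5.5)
The plan is to combine the inductive machinery of \Cref{inductivethm} (specifically the Main Claim \eqref{equation:mainclaim} together with \Cref{lem:alphaofE'}) with the comparison \Cref{alphavss}. Everything needs to be made uniform in $(S,\Delta_S)$, with constants depending only on $\varepsilon$.

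First, since $\alpha(S,\Delta_S) < 1-\varepsilon < 1$, Step 1 of the proof of \Cref{inductivethm} applies with $n=2$. By \eqref{eqn:alpha0bound} we obtain a special test configuration whose central fiber has
\[ \alpha_0 \geq \tfrac{1}{2}\min\{\alpha(S,\Delta_S), 1-\alpha(S,\Delta_S)\} > \tfrac{\varepsilon}{2}. \]
It also produces a divisor $E$ over $S$ and its model $(E',\Delta_{E'})$, which is a one-dimensional log Fano pair. In dimension one $E'\cong\P^1$, and since $\Delta_S$ has standard coefficients, $\Delta_{E'}$ has standard coefficients as well. The Main Claim then gives, for all closed points $s$ in a dense open subset of $\Spec(A)$,
\[ \FA(S_s,\Delta_s) \geq \alpha_0\,\FA(E'_s,\Delta_{E'_s}) \geq \tfrac{\varepsilon}{2}\,\FA(E'_s,\Delta_{E'_s}). \]

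Second, and this is the main obstacle, I need a uniform lower bound $\FA(E'_s,\Delta_{E'_s}) \geq c_1$ for all klt pairs $(\P^1,\Delta)$ with standard coefficients in characteristic $p \gg 0$. The plan is as follows. Klt with standard coefficients and $\deg \Delta < 2$ forces $\Delta$ to be one of the finitely many platonic configurations $(2,2,m)$, $(2,3,3)$, $(2,3,4)$, $(2,3,5)$, or to have at most two points. This is an infinite family only through the dihedral type $(2,2,m)$ and the two-point types $(\frac{a-1}{a},\frac{b-1}{b})$. Three points can be moved to $0,1,\infty$, so each configuration is defined over $\Z$; the finitely many sporadic cases are then covered by \cite{BCPTlimitFsigarxiv}. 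For the infinite families I would use \Cref{lem:alphaofE'} to get $\alpha(E',\Delta_{E'})\geq\alpha_0 > \varepsilon/2$. Since the complex alpha invariant of $(\P^1,\Delta)$ is computed by $\min_i (1-\deg\Delta)/(1-a_i)$, this bounds $2-\deg\Delta$ from below and hence bounds $m$, $a$, $b$ in terms of $\varepsilon$. Only finitely many configurations then remain, each covered by \cite{BCPTlimitFsigarxiv} (toric cases in the two-point situation). This yields $c_1 = c_1(\varepsilon) > 0$ valid for $p\ge p_0(\varepsilon)$. Because the Main Claim is only asserted on an open set depending on the pair, the conclusion is on $\s_-$, which is all the statement requires.

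Finally, applying \Cref{alphavss}(2) with $d=2$ and $V = \vol(S,\Delta_S)$ gives
\[ \s(S_s,\Delta_s) \geq \frac{2V(\varepsilon c_1/2)^3}{3!}, \]
so I can take $C(\varepsilon)=(\varepsilon c_1/2)^3/3$. Taking $\liminf$ over $p$ gives $\s_-(S,\Delta_S)\ge C\,\vol(S,\Delta_S)$. The delicate points to check are the uniformity of the finitely many $\P^1$ cases in $p$, and that the $(E',\Delta_{E'})$ produced by the MMP has standard coefficients, which follows from \cite[Lemma~4.1]{HaconMckernanXuACCforLCT}.
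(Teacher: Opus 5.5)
Your architecture is the paper's: \eqref{eqn:alpha0bound} gives $\alpha_0>\varepsilon/2$. The Main Claim \eqref{equation:mainclaim} then reduces the problem to bounding $\FA$ of $(E',\Delta_{E'})$ from below. \Cref{lem:alphaofE'} gives $\alpha(E',\Delta_{E'})\geq\alpha_0$, and \Cref{alphavss} converts the bound back to $\s_-$.

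\textbf{The gap is in your $\P^1$ step.} The formula you quote is wrong. By \eqref{eqn:alphaindimone} the correct one is $\alpha(\P^1,\Delta)=(1-\delta_{\max})/(2-\deg\Delta)$. A lower bound on it bounds $2-\deg\Delta$ from above relative to $1-\delta_{\max}$, not from below. More seriously, the finiteness claim is false. Every dihedral configuration $\frac12 p_1+\frac12 p_2+\frac{m-1}{m}p_3$ has $\alpha=\frac{1/m}{1/m}=1$. Every two-point configuration $\frac{a-1}{a}(p_1+p_2)$ has $\alpha=\frac12$. Both exceed $\varepsilon/2$ for all $m$ and $a$. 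So infinitely many configurations survive the constraint $\alpha(E',\Delta_{E'})>\varepsilon/2$. Knowing only that each pair individually has $\alpha_->0$, which is all you extract from \cite{BCPTlimitFsigarxiv}, gives no constant $c_1(\varepsilon)$ independent of the pair.

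The uniformity you need is in the value of $\alpha_-(E',\Delta_{E'})$ across configurations, not in $p$. The statement concerns $\s_-$ of one fixed $(S,\Delta_S)$ at a time, so the threshold $p_0$ may depend on the pair.

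\textbf{The missing ingredient} is a configuration-independent formula, which is what the paper uses. By \eqref{eqn:limalphaP1pairs}, $\lim_{p\to\infty}\FA(E,\Delta_E)=\min\{\frac12,\alpha(E,\Delta_E)\}$ for every one-dimensional log Fano pair with standard coefficients. It is proved in two cases:
\begin{itemize}
\item When $\delta_1\le\delta_2+\delta_3$, a case containing the entire dihedral family, the explicit limit $F$-signature of \cite{BCPTlimitFsigarxiv} combined with \cite[Corollary~5.8]{PandeFrobeniusAlpha} gives $\lim_p\FA=\frac12$ for every $m$ simultaneously.
\item When $\delta_1>\delta_2+\delta_3$, one isotrivially degenerates to the toric pair $(\P^1,\delta_1 p+(\delta_2+\delta_3)q)$, which has the same $\alpha$. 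One then uses that $\FA$ equals $\alpha$ for toric pairs.
\end{itemize}
Your toric observation for two-point configurations is correct, and it already covers all $a,b$ at once without any finiteness. The dihedral family is where an appeal to \cite{BCPTlimitFsigarxiv} pair-by-pair is not enough.

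With this formula, $\alpha_-(E',\Delta_{E'})\geq\min\{\frac12,\frac{\varepsilon}{2}\}=\frac{\varepsilon}{2}$, hence $\alpha_-(S,\Delta_S)\geq\varepsilon^2/4$. The rest of your argument then goes through.
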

\begin{proof}
    Given $\varepsilon \in (0, 1/2)$, first we claim that there is a constant $C'>0$ such that for all two dimensional log Fano pairs $(S, \Delta_S)$ (where $\Delta_S$ has standard coefficients) and $\varepsilon < \alpha(S, \Delta_S) < 1 - \varepsilon $, we have $\alpha_- (S, \Delta_S) \geq C$. Given this claim, the proposition follows from the inequality
    \[ \s_- (S, \Delta_S) \geq \frac{1}{24} \alpha_- (S, \Delta_S) ^3  \vol( - K_S - \Delta_S). \]

    To show the above claim, we note that the proof of \Cref{inductivethm} shows that for any pair $(S, \Delta_S)$ as above, $\alpha_{-} (S, \Delta_S) \geq c \alpha_{-} (E', \Delta_{E'})$ (see \Cref{equation:mainclaim}) where $(E', \Delta_{E'}) $ is an appropriate one-dimensional log Fano pair. Moreover, by \Cref{lem:alphaofE'} and \Cref{eqn:alpha0bound}, we know that $\alpha(E', \Delta_{E'}) \geq \frac{\varepsilon}{2} $. Therefore, it suffices to bound $\alpha_- (E', \Delta_{E'}) $  from below under these hypotheses. This follows from the results in \cite{BCPTlimitFsigarxiv} as discussed below (in particular, from \Cref{eqn:limalphaP1pairs}).
\end{proof}

\begin{rem} [One-dimensional log Fano pairs with standard coefficients] 
    A one dimensional log Fano pair with standard coefficients is of the form $(E, \Delta_E) = (\P^1 , \delta_1 p_1 + \delta_2 p_2 + \delta_3 p_3)$ where $p_1,p_2,p_3$ are distinct points on $\P^1$ (which can be assumed to be the points $0, 1 , \infty$), and $\delta_1, \delta_2, \delta_3 \in \{0 \} \cup \{ \frac{m-1}{m} \, | \, m \geq 1 \}$ and such that $\delta_1 + \delta_2 + \delta_3 < 2.$ In this notation, assuming $\delta_1 \geq \delta_2 \geq \delta_3 $, we have
    \begin{equation} \label{eqn:alphaindimone} \alpha (E, \Delta_E) = \frac{1 - \delta_1}{2 - (\delta_1 + \delta_2 + \delta_3) }  .\end{equation}
    Using this formula, we see that $\alpha (E, \Delta_E) \geq 1/2$  unless $ \delta_1 > \delta_2 + \delta_3 $. In the former case, the pair $(E, \Delta_E)$ is K-semistable and by the main theorem of \cite{BCPTlimitFsigarxiv} it follows that 
    \[ \lim_{p \to \infty} \s_p (E, \Delta_E) = (1 - \frac{\delta_1 + \delta_2 + \delta_3}{2}) ^2  = \frac{\vol( -K_E - \Delta_E)}{4}.  \]
    Then, it follows from \cite[Corollary~5.8]{PandeFrobeniusAlpha} that $\lim_{p \to \infty} \FA(E, \Delta_E) = 1/2,$ so that $\alpha_{-} (E, \Delta_E) = 1/2$.

    On the other hand, when $\alpha(E, \Delta_E) < 1/2$, by the example computed in \cite[Section 4.1]{LiStabilityofSheaves}
    the pair $(E, \Delta_E)$ admits an isotrivial degeneration to the pair $(E_0, \Delta_{E_0}):= (\P^1, \delta_1 p + (\delta_2 + \delta_3) q)$.
    Thus, for every reduction to characteristic $p \gg 0$, we have $\FA(E_p, \Delta_{E_p}) \geq \FA(E_0. \Delta_0)$ by the semicontinuity of $\FA$ in families. Furthermore, since $(E_0, \Delta_{E_0})$ is isomorphic to a toric pair (after a linear automorphism), we know that $\FA(E_0, \Delta_{E_0}) = \alpha_F (E_0, \Delta_{E_0})$ for a reduction of $(E_0, \Delta_{E_0})$ to any characteristic $p>0$. This follows from a similar argument as \cite[Theorem~5.15]{PandeFrobeniusAlpha}\footnote{The Gr\"obner degeneration induced by any monomial order preserves the boundary $\Delta_E$ and hence $\FA(E, \Delta_E)$ is computed by monomials, in which case $F$-regularity coincides with being KLT.}. Lastly, by inspecting \Cref{eqn:alphaindimone} we see that $\alpha(E, \Delta_E) = \alpha(E_0, \Delta_{E_0})$. Therefore, we have
    \[\lim_{p \to \infty} \FA(E, \Delta_E) \geq \alpha_F (E_0, \Delta_{E_0}) = \alpha(E_0,\Delta_{E_0}) = \alpha(E, \Delta_E).    \]
    Since we always have $\alpha_F (E, \Delta_E) \leq \alpha(E, \Delta_E)$, we obtain that $\lim_{p \to \infty} \FA (E, \Delta_E) = \alpha (E, \Delta_E)$.

    In summary, in either case we always have
    \begin{equation} \label{eqn:limalphaP1pairs} \lim_{p \to \infty} \FA(E, \Delta_E) =  \min \{ \frac{1}{2}, \alpha(E, \Delta_E) \} . \end{equation}
\end{rem} 

\begin{proof}[Proof of \Cref{thm:effectivelimFsig}]
    Fix $\varepsilon \in (0,1)$ and the constant $C := C(2) $ as in \Cref{prop:boundonalphaofKollarcomponent}. Given a singularity $(X, \Delta, x)$ in $\mathscr{S}_{\varepsilon}$, we first use \Cref{prop:boundonalphaofKollarcomponent} to pick a Koll\'ar component $(S, \Delta_S)$ of $(X, \Delta, x)$ such that $\alpha(S, \Delta_S) \geq C$. By the definition of $\mathscr{S}_\varepsilon$, we also know that $\alpha(S, \Delta_S) < 1 - \varepsilon.$ Applying \Cref{prop:effectiveboundforFanosurfaces}, we see that there is a constant $C' >0$ depending only on $\varepsilon $ and an open set $U \subset \Spec (A)$ such that for each closed point $s \in U$, we have $\mathscr{s}(S_s, \Delta_{S_s}) \geq C' \vol (S, \Delta_S)$. Let $A := A_{(X, \Delta)} (S) $. By \Cref{Prop:lowerboundKollarcomponent}, we know that $\s (X_s, \Delta_s, x_s) \geq A \, \s (S_s, \Delta_{S_s}) = A^3 \, \vol(\ord_S) = \nvol(\ord_S)$. Therefore, we conclude that $\s (X_s, \Delta_s, x_s) \geq C' \nvol(S, \Delta_S) \geq C' \nvol (X, \Delta, x)$.  
\end{proof}

\bibliographystyle{alpha}
\bibliography{ref}

\end{document}